\documentclass[12pt]{article}
\pdfoutput=1
\usepackage{hyperref}
\usepackage{times}

\usepackage{amsmath}
\usepackage{amsfonts}
\usepackage{amssymb}
\usepackage{amsthm}
\usepackage{layout}
\usepackage[spanish]{babel}
\usepackage[dvips]{color}
\usepackage[latin1]{inputenc}

\voffset -2cm \hoffset -1.7cm

\advance\textwidth by 3cm

\advance\textheight by 3.5cm

\headsep=22pt
\parskip=1.5mm
\parindent=2mm

\theoremstyle{plain}
\newtheorem{teorema}{Teorema}[section]
\newtheorem{coro}[teorema]{Corolario}
\newtheorem{lema}[teorema]{Lema}
\newtheorem{prop}[teorema]{Proposición}
\theoremstyle{definition}
\newtheorem{obse}[teorema]{Observación}
\newtheorem{ejem}[teorema]{Ejemplo}
\newtheorem{defi}[teorema]{Definición}
%\hyphenation{es-ta-ble-ce-re-mos exis-ten-cia
%re-pre-sen-ta-cio-nes pos-te-rior-men-te pro-ba-bi-li-dad
%devol-ver coe-fi-cien-tes ob-te-ne-mos va-lo-res cre-ci-mi-en-to
%ra-zo-na-mien-to brownia-no sa-tis-fa-cen in-de-pen-dien-te
%ge-ne-ral in-fi-ni-te-si-mal es-pe-ran-za di-fe-ren-cial
%di-fe-ren-cia-les li-nea-li-dad ope-ra-do-res de-fi-ni-mos
%Edi-ta-do ge-ne-ra-da fa-mi-lia-ri-za-do de-fi-ni-mos Brownia-nos}

\numberwithin{equation}{subsection}

\newcommand{\E}{\mathcal{E}}
\newcommand{\V}{\mathcal{V}}
\newcommand{\R}{\mathrm{I\!R}}
\newcommand{\ba}{\mathbf{B}}
\newcommand{\F}{\mathcal{F}}
\newcommand{\A}{\mathcal{A}}
\newcommand{\C}{\mathcal{C}}
\newcommand{\M}{\mathcal{M}}

\newcommand{\B}{\mathcal{B}}
\newcommand{\X}{X_s^{t,x}}

\newcommand{\Z}{Z_s^{t,x}}
%\allowdisplaybreaks

\begin{document}
%\layout
\begin{center}
%\vspace*{2.2cm}

\textbf{\huge{Cálculo Estocástico, EDEs y EDPs}}
\end{center}

\begin{center}
\large{\textsc{Notas de Clase}}

\vspace{2cm}

\large\textbf{Rafael Serrano}
\end{center}

\begin{center}
\large{\textsc{Universidad del Rosario}\\Calle 12C No. 4-69\\
Bogot\'a, Colombia}

\end{center}

\tableofcontents

\section*{Notación}
\addcontentsline{toc}{section}{Notación}

\noindent $a\wedge b:$ el mínimo entre $a$ y $b$

\noindent $a\vee b:$ el máximo entre $a$ y $b$

\noindent $A^c:$ complemento de A

\noindent $\partial A:$ frontera de A

\noindent $\bar{A}:=A\cup\partial A:$ clausura de A

\medskip\noindent $\mathbf{1}_A(x)=\begin{cases}
1 \ &\text{si} \ x\in A\\
0 \ &\text{si} \ x\in A^c.
\end{cases}$

\medskip\noindent Si $x\in\R^d: \ |x|^2:=\sum_{i=1}^d|x_i|^2$

\smallskip\noindent $B(x,r):=\{y\in\R^d:|y-x|<r\}$

\smallskip\noindent $B[x,r]:=\{y\in\R^d:|y-x|\le r\}$

\smallskip\noindent $\langle\cdot,\cdot\rangle:$ producto escalar

\noindent $e_i:=(\delta_{ij})_{1\le j\le d}, \ 1\le i\le d,$ donde
$\delta_{ij}$ es el símbolo de Kronecker.

\medskip\noindent Si $z\in\R^{d\times m}: \ z^*=$ transpuesta de
$z$

\noindent $Tr(z)=$ traza de $z$

\noindent $||z||^2:=Tr(zz^*)=\sum_{i=1}^d\sum_{j=1}^m|z_{ij}|^2$

\medskip\noindent Para $E\subseteq\R^d$ abierto
notaremos:

\smallskip\noindent $\C(E)=\{f:E\to\R\mid f$ es continua\}

\noindent $\C(\bar{E})=\{f\in\C(E):f$ es uniformemente continua
sobre subconjuntos acotados de $E\}$

\noindent Así, si $f\in\C(\bar{E})$ entonces $f$ se extiende
continuamente sobre $\bar{E}.$

\smallskip\noindent $\C_b(E)=\{f\in\C(E):f$ es acotada\}

\noindent $\C^k(E)=\{f:E\to\R\mid f$ es $k-$veces continuamente
diferenciable\}

\smallskip\noindent $\C^k(\bar{E})=\{f\in\C^k(E):$ las derivadas
parciales de $f$ hasta el orden $k$ tienen extensiones continuas
sobre $\bar{E}\}$

\smallskip\noindent $\C_b^k(E)=\{f\in\C^k(E):f$ y sus derivadas
parciales hasta orden $k$ son acotadas\}

\noindent $\C^\infty(E)=\bigcap_{k=1}^\infty\C^k(E)$

\medskip\noindent $\C\bigl([0,T]\times E\bigr)=\{f:[0,T]\times E\to\R\mid f$
es continua\}

\noindent $\C^{1,2}((0,T)\times E)=\{f\in\C([0,T]\times
E):\frac{\partial f}{\partial t}, \ \frac{\partial f}{\partial
x_i}, \ \frac{\partial^2 f}{\partial x_i\partial x_j}$ son
continuas sobre $(0,T)\times E\}$

\smallskip\noindent $\C^{1,2}\bigl([0,T)\times E\bigr)=\{f\in\C^{1,2}((0,T)\times
E):\frac{\partial f}{\partial t}, \ \frac{\partial f}{\partial
x_i}, \ \frac{\partial^2 f}{\partial x_i\partial x_j}$ tienen
extensiones continuas sobre $[0,T)\times E\}$

\medskip Si $f\in\C^1(E),$ el gradiente de $f$ con respecto
a $x$ se define por \[\nabla_x f:=\Bigl(\frac{\partial f}{\partial
x_1 },\cdots,\frac{\partial f}{\partial x_d}\Bigr).\]Para las
derivadas de orden mayor usaremos la notación
\[D_x^\alpha f=\frac{\partial^{|\alpha|}f}{\partial x_1^{\alpha_1}\cdots\partial
x_d^{\alpha_d}}\] donde $\alpha=(\alpha_1,\ldots,\alpha_d), \
\alpha_i=0,1,2,\ldots$ y $|\alpha|=\alpha_1+\cdots+\alpha_d.$

\medskip Dados dos espacios medibles $(E^1,\E^1), (E^2,\E^2),$ y una función
$f:E^1\to E^2,$ diremos que $f$ es $\E^1/\E^2-$medible si para
todo $A\in\E^2$ se tiene $f^{-1}(A)\in\E^1.$

\medskip Para $x\in\R^d$ fijo, se define la medida de Dirac
concentrada en $x,$ denotada por $\epsilon_x,$ como
$\epsilon_x(A):=\mathbf{1}_A(x).$

\bigskip\noindent $f_n\xrightarrow[n\to\infty]{c.s}f$ \
convergencia \textit{casi siempre}

\medskip\noindent $f_n\xrightarrow[n\to\infty]{\mathbf{P}}f$ \
convergencia en probabilidad

\newpage\section*{Introducción}
\addcontentsline{toc}{section}{Introducción}

Este documento tiene como propósito recopilar y presentar de la manera más auto-contenida posible resultados esenciales del cálculo estocástico y de la teoría de ecuaciones diferenciales estocásticas y su conexión con problemas de valor final de ecuaciones diferenciales parciales lineales de segundo orden.

El material aquí incluido ha sido tomado en gran parte de los libros \cite{friedman2}, \cite{karatzas}, \cite{tudor} y \cite{blanco}. Para la lectura de estas notas se requieren buenos conocimientos de teoría de la probabilidad y ecuaciones diferenciales. Conocimientos básicos de teoría de la medida y procesos estocásticos son recomendables pero no obligatorios.

\section{Cálculo estocástico}
\subsection{Procesos estocásticos}
A lo largo de estas notas $(\Omega,\F,\mathbf{P})$ será un espacio
de probabilidad fijo. Diremos que un evento $A\in\F$ ocurre
\textit{casi siempre} (abreviado \textit{c.s.}) si
$\mathbf{P}(A)=1.$
\begin{defi}
Un \textit{proceso estocástico con parámetro de tiempo continuo}
es una familia $X=(X_t)_{t\geq 0}$ de variables aleatorias
definidas sobre $(\Omega,\F,\mathbf{P})$ y con valores en un
espacio medible $(E,\mathcal{E})$ llamado \textit{espacio de
estados.}
\end{defi}
Para cada $\omega\in\Omega$ fijo, la función $t\mapsto
X_t(\omega)$ es llamada la \textit{trayectoria} o {\it
realización} del proceso $X$ asociada a $\omega.$ En ocasiones
restringiremos el parámetro de tiempo $t$ a un intervalo contenido
en $[0,\infty).$

\begin{defi}
Sean $X=(X_t)_{t\geq 0}, \ Y=(Y_t)_{t\geq 0}$ procesos
estocásticos definidos sobre el mismo espacio de probabilidad
$(\Omega,\F,\mathbf{P})$ y con valores en $(E,\E).$ Diremos que
\begin{description}
  \item[(a)] $X$ es una \textit{modificación} o {\it versión } de $Y,$ o que $X$ y $Y$ son \textit{equivalentes,}
  si para todo $t\geq 0$ se tiene que $\mathbf{P}(X_t=Y_t)=1,$
  \item[(b)] $X$ y $Y$ son {\it indistinguibles} si $\mathbf{P}(X_t=Y_t,\ \forall t\geq
  0)=1.$
\end{description}
\end{defi}
Claramente la segunda propiedad implica la primera. Sin embargo,
dos procesos pueden ser equivalentes pero tener trayectorias
completamente distintas. El siguiente es un típico ejemplo:
\begin{ejem}
Considere una variable aleatoria $\tau$ real positiva con
distribución continua, y sean $X_t\equiv 0$ y
\[Y_t(\omega):=
\begin{cases}
  0, & \ \text{si} \ t\neq\tau(\omega),\\
  1, & \ \text{si} \ t=\tau(\omega),
\end{cases}\]
para $t\geq 0.$ Entonces $Y$ es una modificación de $X,$ pues
\[\mathbf{P}(Y_t=X_t)=\mathbf{P}(\tau\neq t)=1, \ \ \ \forall t\geq 0.\]
Sin embargo, $\mathbf{P}(X_t=Y_t, \ \forall t\geq 0)=0.$
\end{ejem}

Para nuestro propósito, la mayoría de las veces el espacio de
estados será el espacio Euclídeo $d-$dimensional equipado con su
$\sigma-$álgebra de \textit{Borel} e.d. $E=\R^d, \ \E=\B(\R^d),$
donde $\B(U)$ denota la $\sigma-$álgebra generada por los
conjuntos abiertos de un espacio topológico $U.$

\begin{defi}
Diremos que un proceso $X=(X_t)_{t\geq 0}$ con valores en $\R^d$
es \textit{continuo c.s.} (resp. \textit{continuo por la derecha
c.s.}, resp. \textit{continuo por la izquierda c.s.}) si para casi
todo $\omega\in\Omega$ la aplicación $t\mapsto X_t(\omega)$ es
continua (resp. continua por la derecha, resp. continua por la
izquierda).
\end{defi}

\begin{teorema}
Sean $X=(X_t)_{t\geq 0}$ y $Y=(Y_t)_{t\geq 0}$ dos procesos con
valores en $\R^d,$ equivalentes y continuos por la derecha c.s.
(resp. continuos por la izquierda c.s.), entonces $X$ y $Y$ son
indistinguibles.
\end{teorema}
\begin{proof}
Por ser equivalentes, $\mathbf{P}(X_r\neq Y_r)=0$ para todo
$r\in\mathbb{Q}_+=\mathbb{Q}\cap[0,\infty).$ Sea
\[G=\bigcup_{r\in\mathbb{Q}_+}\{X_r\neq Y_r\},\]
entonces $\mathbf{P}(G)=0.$ Si $\omega\notin G$ entonces
$X_t(\omega)=Y_t(\omega), \ \forall t\in\mathbb{Q}_+,$ y por la
continuidad a derecha (resp. continuidad a izquierda)
$X_t(\omega)=Y_t(\omega), \ \forall t\geq 0,$ es decir,
\[\{X_t\neq Y_t\}\subseteq G, \ \ \ \forall t\geq 0,\]
luego $\bigcup_{t\geq 0}\{X_t\neq Y_t\}\subseteq G,$ lo que
implica $\mathbf{P}(X_t=Y_t, \ \forall t\geq 0)=1.$
\end{proof}

Cuando decimos que un proceso estocástico $X=(X_t)_{t\geq 0}$ es
una familia de variables aleatorias con espacio de estados común
$(E,\E),$ implícitamente estamos diciendo que cada $X_t$ es
$\F/\E-$medible. Sin embargo, $X$ es realmente una función de dos
variables $(t,\omega),$ y en ocasiones es conveniente tener
medibilidad con respecto a ambas variables:

\begin{defi}
Sea $X=(X_t)_{t\geq 0}$ un proceso estocástico definido sobre
$(\Omega,\F,\mathbf{P})$ y con espacio de estados $(E,\E).$
Diremos que $X$ es un proceso \textit{medible} si la aplicación
\begin{align*}
  \bigl([0,\infty)\times\Omega,\B\bigl([0,\infty)\bigr)\otimes\F\bigr)&\longrightarrow(E,\E))\\
  (t,\omega)&\longmapsto X_t(\omega)
\end{align*}
es $\B\bigl([0,\infty)\bigr)\otimes\F/\E-$medible.
\end{defi}

\begin{defi} Una filtración $\{\F_t\}_{t\geq 0}$ en $(\Omega,\F)$
es una familia de sub-$\sigma$-álgebras, contenidas en $\F,$ tales
que $\F_t\subseteq\F_s$ si $t<s.$ Al sistema
$(\Omega,\F,\{\F_t\}_{t\geq 0},\mathbf{P})$ se le llama
\textit{espacio de probabilidad filtrado.}
\end{defi}

\begin{ejem}
Para un proceso estocástico $X=(X_t)_{t\geq 0}$ fijo, la familia
de sub-$\sigma-$álgebras $\{\F_t^X\}_{t\geq 0}$ definida por
\[\F_t^X:=\sigma(X_s:0\le s\le t), \ \ \ t\geq 0\]
es una filtración en $(\Omega,\F)$ y se denomina la
\textit{filtración canónica} asociada al proceso $X.$
\end{ejem}

\noindent\textbf{Notación.} Para una filtración $\{\F_t\}_{t\geq
0}$ fija notaremos
\[\F_\infty:=\sigma\left(\textstyle\bigcup\nolimits_{\, t\geq 0}\F_t\right), \ \ \ \
\F_{t+}:=\textstyle\bigcap\nolimits_{s>t}\F_s.\]

\begin{defi}
(a) Una filtración $\{\F_t\}_{t\geq 0}$ se dice \textit{continua a
derecha} si $\F_t=\F_{t+}$ para cada $t\geq 0.$

{}{} (b) Diremos que una filtración $\{\F_t\}_{t\geq 0}$
\textit{satisface las condiciones usuales} si es continua a
derecha y $\F_0$ contiene todos los conjuntos $\mathbf{P}-$nulos
de $\Omega.$
\end{defi}

\begin{defi}\label{defiadaptado}
Sea $(\Omega,\F,\{\F_t\}_{t\geq 0},\mathbf{P})$ un espacio de
probabilidad filtrado y sea $X=(X_t)_{t\geq 0}$ un proceso
estocástico definido sobre $(\Omega,\F,\mathbf{P})$ y con espacio
de estados $(E,\E).$ Diremos que
\begin{description}
  \item[(i)] $X$ es \textit{adaptado} a $\{\F_t\}_{t\geq 0}$ si para cada $t\geq 0,$ $X_t$ es $\F_t/\E-$medible,
  es decir, $\F_t^X\subseteq\F_t$ para cada $t\geq 0.$
  \item[(ii)] $X$ es \textit{progresivamente medible} con respecto
  a $\{\F_t\}_{t\geq 0}$ si para cada $t\geq 0$ la aplicación
\begin{align*}
  X\bigr|_{[0,t]\times\Omega}:\bigl([0,t]\times\Omega,\B\bigl([0,t]\bigr)\otimes\F_t\bigr)&\longrightarrow(E,\E)\\
  (s,\omega)&\longmapsto X_s(\omega)
\end{align*}
es $\B\bigl([0,t]\bigr)\otimes\F_t/\E-$medible.
\end{description}
\end{defi}
{} En ocasiones, escribiremos solo $\F_t-$medible en vez de
$\F_t/\E-$medible, si no hay lugar a confusión sobre la
$\sigma-$álgebra $\E.$

{} Evidentemente todo proceso prog. medible es adaptado y medible.
El siguiente teorema, debido a Chung \& Doob \cite{chung}, nos
dice que el recíproco, en cierto sentido, también es válido:
\begin{prop}
Sea $X=(X_t)_{t\geq 0}$ un proceso real medible y adaptado a una
filtración $\{\F_t\}_{t\geq 0}.$ Entonces $X$ posee una versión
progresivamente medible.
\end{prop}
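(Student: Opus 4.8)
The plan is to reduce to a bounded process, build explicit progressively measurable approximations of $X$ out of dyadic time--averages, and pass to the limit. Since $\arctan$ maps $\R$ homeomorphically onto a bounded interval, $\arctan X$ is bounded, measurable and adapted, and a progressively measurable modification of $\arctan X$ yields one of $X$ through $\tan$; so I may assume $|X|\le1$ from now on.

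The technical heart is the claim that \emph{if $X$ is bounded, jointly measurable and adapted, then for each $t\ge0$ the variable $A_t:=\int_0^t X_r\,dr$ has an $\F_t$-measurable version}. A direct Fubini argument only gives $\F$-measurability --- this is where a naive proof meets circularity --- so I would argue by orthogonal projection in $L^2$. Put $Y:=A_t$. One has $\|Y-\mathbf{E}[Y\mid\F_t]\|_2^2=\mathbf{E}[Y^2]-\mathbf{E}\bigl[\mathbf{E}[Y\mid\F_t]^2\bigr]$; by Fubini $\mathbf{E}[Y^2]=\int_0^t\mathbf{E}[YX_r]\,dr$, and for $r\le t$ the variable $X_r$ is $\F_r\subseteq\F_t$-measurable, so $\mathbf{E}[YX_r]=\mathbf{E}\bigl[X_r\,\mathbf{E}[Y\mid\F_t]\bigr]$; integrating back yields $\mathbf{E}[Y^2]=\mathbf{E}\bigl[\mathbf{E}[Y\mid\F_t]^2\bigr]$, hence $Y=\mathbf{E}[Y\mid\F_t]$ a.s. In particular every increment $A_b-A_a=\int_a^bX_r\,dr$ has an $\F_b$-measurable version.

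Next, for $n\ge1$ and $k:=\lfloor2^ns\rfloor$, I set $X^n_s:=2^n\bigl(A_{k2^{-n}}-A_{(k-1)2^{-n}}\bigr)$ if $k\ge1$ and $X^n_s:=X_0$ if $k=0$. On $[0,t]\times\Omega$ this is a countable sum of terms $\mathbf{1}_{[k2^{-n},(k+1)2^{-n})\cap[0,t]}(s)\,\xi^n_k(\omega)$, where $\xi^n_k$ is an $\F_{k2^{-n}}\subseteq\F_t$-measurable version of $2^n\bigl(A_{k2^{-n}}-A_{(k-1)2^{-n}}\bigr)$ furnished by the previous step, so each $X^n$ is progressively measurable. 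For fixed $\omega$ the map $r\mapsto X_r(\omega)$ is bounded, hence locally integrable, and $X^n_s(\omega)$ is its mean over a block of length $2^{-n}$ lying in $(s-2^{1-n},s]$; comparing that block with the interval from its left endpoint up to $s$, the Lebesgue differentiation theorem gives $X^n_s(\omega)\to X_s(\omega)$ for almost every $s$. By Fubini the set $\{(s,\omega):X^n_s(\omega)\not\to X_s(\omega)\}$ is null for $\mathrm{Leb}\otimes\mathbf{P}$ (Lebesgue measure times $\mathbf{P}$), so $\tilde X_s:=\limsup_nX^n_s$ (set equal to $0$ wherever this is infinite) is progressively measurable and agrees with $X$ outside a $(\mathrm{Leb}\otimes\mathbf{P})$-null set; by Fubini again, $\mathbf{P}(\tilde X_t=X_t)=1$ for every $t$ outside a Lebesgue-null set $N$.

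The step I expect to be the genuine obstacle is upgrading this to \emph{every} $t$: at a time where $r\mapsto X_r\in L^1(\mathbf{P})$ fails to be left-continuous one cannot expect $X^n_t\to X_t$ even in probability, so $N$ need not be empty. I would repair it by setting $\hat X_s:=X_s$ for $s\in N$ and $\hat X_s:=\tilde X_s$ for $s\notin N$; this is trivially a modification of $X$, and it remains progressively measurable once one verifies that $(s,\omega)\mapsto\mathbf{1}_N(s)X_s(\omega)$ is $\B([0,t])\otimes\F_t$-measurable on $[0,t]\times\Omega$ --- immediate when $N$ is countable, and in general the delicate point settled in \cite{chung}. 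That verification, together with the $\F_t$-measurability of the time integral, is where the real work concentrates; the rest is bookkeeping.
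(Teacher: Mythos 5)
First, a remark on context: the paper does not prove this proposition at all --- it states it, attributes it to Chung and Doob, and refers the reader to Meyer for ``la extensa demostraci�n'', proving instead only the much easier right-/left-continuous case. So there is no proof in the text to compare yours with; your proposal has to stand against the genuine theorem.

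Most of what you write is correct and nicely executed. The reduction to a bounded process is harmless. The projection identity giving $\int_0^tX_r\,dr=E\bigl[\int_0^tX_r\,dr\,\big|\,\F_t\bigr]$ a.s.\ is valid (boundedness justifies both applications of Fubini and the $L^2$ computation), so each dyadic increment of $A$ does have an $\F_{k2^{-n}}$-measurable version and each $X^n$ is progressively measurable. Since your averaging block has length $2^{-n}$ and sits inside $[s-2^{1-n},s]$, the ``regular family'' form of the Lebesgue differentiation theorem applies and gives $X^n_s(\omega)\to X_s(\omega)$ for Lebesgue-a.e.\ $s$, for a.e.\ $\omega$; the two Fubini passes then produce a progressively measurable $\tilde X$ and a Lebesgue-null Borel set $N$ with $\mathbf{P}(\tilde X_s=X_s)=1$ for every $s\notin N$.

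The gap is the last step, and it is not a deferred technicality: it is the whole theorem. To make $\hat X=\mathbf{1}_{N^c}\tilde X+\mathbf{1}_NX$ progressively measurable you must show that $(s,\omega)\mapsto\mathbf{1}_N(s)X_s(\omega)$ is $\B([0,t])\otimes\F_t$-measurable on $[0,t]\times\Omega$. That process is again only measurable and adapted, so what you need is precisely a progressively measurable modification of $\mathbf{1}_NX$ --- the statement being proved, now for a process supported on a null set of times where your averaging machinery returns nothing (all dyadic averages of $\mathbf{1}_NX$ vanish identically, so iterating the construction cannot help). Nor can you count on $N$ being countable: already for $X_s=\mathbf{1}_C(s)\,\xi$ with $C$ the Cantor set and $\xi$ an $\F_0$-measurable variable, the exceptional set contains $C$. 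The obstruction is structural: Lebesgue differentiation can only ever control almost every $t$, whereas the proposition demands $\mathbf{P}(\tilde X_t=X_t)=1$ for \emph{every} $t$. The known proofs therefore avoid time-averaging altogether; they exploit the fact that $s\mapsto X_s$ is a Borel map from $[0,t]$ into $L^0(\mathbf{P})$ with separable range, and construct by measurable selection countably-valued approximations $s\mapsto X_{\tau_n(s)}$ with $\tau_n(s)\le s$ and $X_{\tau_n(s)}\to X_s$ in probability for every single $s$; progressive measurability of these approximants is then immediate and a Borel--Cantelli extraction finishes. As it stands, your argument proves the weaker (and genuinely easier) statement that $X$ coincides with a progressively measurable process at Lebesgue-almost every time.
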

La extensa demostración de este resultado se puede encontrar en
\cite{meyer}. Sin embargo, dado que casi todos los procesos de
nuestro interés son continuos a derecha (o a izquierda), se puede
establecer, bajo estas condiciones, un resultado similar y cuya
prueba es mucho más fácil:

\begin{teorema}
Si $X=(X_t)_{t\geq 0}$ es un proceso con valores en $\R^d,$
continuo por la derecha (o por la izquierda) y adaptado a una
filtración $\{\F_t\}_{t\geq 0},$ entonces es \textit{prog.
medible.}
\end{teorema}
\begin{proof}
Asumamos que $X$ es continuo por la derecha c.s. (el caso continuo
por la izquierda es similar): sea $t\geq 0$ fijo, y para cada
$n\geq 1$ defina $f_n:[0,t]\times\Omega\to\R^d$ por
\[f_n(s,\omega):=
\begin{cases}
 X_0(\omega)& \ \text{si} \ \ \ s=0,\\
 X_{(j-1)t/2^n}(\omega)& \ \text{si} \ \ \ (j-1)t/2^n<s\le jt/2^n,
 \ \ \ j=1,\ldots,2^n.
\end{cases}\]
Por la continuidad a derecha,
$f_n(s,\omega)\xrightarrow[n\to\infty]\,X|_{[0,t]\times\Omega}(s,\omega)$
para casi todo $(s,\omega)\in[0,t]\times\Omega,$ y cada $f_n$ es
$\B([0,t])\otimes\F_t/\B(\R^d)-$medible, pues para todo
$A\in\B(\R^d)$ se tiene que
\begin{align*}
f_n^{-1}(A)&=\{(s,\omega)\in[0,t]\times\Omega:f_n(s,\omega)\in A\}\\
&=\left(\{0\}\times\{X_0\in
A\}\right)\cup\left(\textstyle\bigcup\limits_{\,
j=1}^{2^n}\left(\tfrac{(j-1)t}{2^n},\tfrac{jt}{2^n}\right]
\times\left\{X_{(j-1)t/2^n}\in
A\right\}\right)\in\B([0,t])\otimes\F_t.
\end{align*}
Entonces el límite $X\bigr|_{[0,t]\times\Omega}$ es también
$\B([0,t])\otimes\F_t/\B(\R^d)-$medible.
\end{proof}

\begin{defi}
Sea $(\Omega,\F,\{\F_t\}_{t\geq 0},\mathbf{P})$ un espacio de
probabilidad filtrado. Una variable aleatoria
$\tau:(\Omega,\F)\to[0,\infty]$ se llama un \textit{tiempo de
parada} con respecto a $\{\F_t\}_{t\geq 0}$ (o $\F_t-$tiempo de
parada) si el evento $\{\tau\le t\}\in\F_t$ para todo $t\geq 0.$
\end{defi}

\begin{defi}\label{tiempoentrada}
Sea $X=(X_t)_{t\geq 0}$ con espacio de estados $(\R^d,\B(\R^d))$ y
sea $A\in\B(\R^d).$ La variable aleatoria
$\tau_A:\Omega\to[0,\infty]$ definida por
\begin{equation}\label{tiempoentrada1}
\tau_A(\omega):=\inf\{t\geq 0:X_t(\omega)\in A\}
\end{equation}
se denomina la \textit{primera entrada} (o \textit{primera
visita}) del proceso $X$ al conjunto $A.$
\end{defi}

\begin{prop}
Sean $X, \ A$ y $\tau_A$ como en la definición
\ref{tiempoentrada},
\begin{description}
  \item[(a)] Si $X$ es continuo a derecha y $A$ es un conjunto
  abierto entonces $\tau_A$ es un $\F_{t+}-$tiempo de parada.
  \item[(b)] Si $X$ es continuo y $A$ es un conjunto
  cerrado entonces $\tau_A$ es un $\F_t-$tiempo de parada.
\end{description}
\end{prop}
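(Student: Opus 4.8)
The plan is to prove each part by expressing the event $\{\tau_A\le t\}$ or $\{\tau_A<t\}$ in terms of countably many events already known to lie in the relevant $\sigma$-algebra, exploiting the path-regularity of $X$ together with the topological nature of $A$ (open versus closed).

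For part (a), I would first observe that for an open set $A$, right-continuity of the paths means that if $X_s(\omega)\in A$ for some $s$, then $X_u(\omega)\in A$ for all $u$ in a small right-neighborhood of $s$; in particular the path hits $A$ at some rational time arbitrarily close from the right. The key claim is therefore
\[
\{\tau_A<t\}=\bigcup_{\substack{r\in\mathbb{Q}_+\\ r<t}}\{X_r\in A\}.
\]
The inclusion $\supseteq$ is immediate from the definition of the infimum. For $\subseteq$, if $\tau_A(\omega)<t$ there is $s<t$ with (by the infimum and openness of $A$, possibly after moving slightly to the right to land strictly inside $A$) $X_s(\omega)\in A$; right-continuity then gives $X_r(\omega)\in A$ for some rational $r\in(s,t)$. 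Since each $\{X_r\in A\}$ is $\F_r\subseteq\F_t$-measurable (adaptedness and $A\in\B(\R^d)$), the union is in $\F_t$, so $\{\tau_A<t\}\in\F_t$ for every $t$. Finally
\[
\{\tau_A\le t\}=\bigcap_{n\ge 1}\{\tau_A<t+\tfrac1n\}\in\bigcap_{n\ge 1}\F_{t+1/n}=\F_{t+},
\]
which is exactly the assertion that $\tau_A$ is an $\F_{t+}$-stopping time.

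For part (b), with $A$ closed and $X$ (everywhere) continuous, I would introduce the open $\varepsilon$-neighborhoods $A_\varepsilon:=\{x:\operatorname{dist}(x,A)<1/m\}$ and argue that the path reaches $A$ by time $t$ iff for every $m$ it comes within distance $1/m$ of $A$ at some rational time $\le t$. Concretely, the target identity is
\[
\{\tau_A\le t\}=\bigcap_{m\ge 1}\;\bigcup_{\substack{r\in\mathbb{Q}_+\\ r\le t}}\{\,\operatorname{dist}(X_r,A)<\tfrac1m\,\}.
\]
Here one direction uses that continuity makes $s\mapsto\operatorname{dist}(X_s,A)$ continuous, so rational times suffice to detect approach to $A$; the other direction uses that if the continuous path comes within $1/m$ of the closed set $A$ at rational times $r_m\le t$, a subsequence of $r_m$ converges to some $s\le t$ with $X_s(\omega)\in A$ (closedness), whence $\tau_A(\omega)\le t$. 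Each set $\{\operatorname{dist}(X_r,A)<1/m\}$ is $\F_r\subseteq\F_t$-measurable since $x\mapsto\operatorname{dist}(x,A)$ is Borel (indeed continuous), so the countable intersection of countable unions lies in $\F_t$, giving the $\F_t$-stopping time property directly (no passage to $\F_{t+}$ needed).

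The main obstacle, in both parts, is the rigorous justification of the two nontrivial inclusions in those set identities — i.e. showing that path-regularity genuinely lets one replace the uncountable infimum over all $s$ by a countable family of events. In (a) the subtle point is that $X_{\tau_A}$ itself need not lie in the open set $A$ (the infimum may not be attained, or is attained on the boundary), so one must use that $X_s\in A$ for some $s$ strictly before $t$ and then push to a nearby rational via right-continuity; I would handle the edge case $\tau_A(\omega)=0$ separately. In (b) the delicate step is the compactness/subsequence argument extracting a limit time $s\le t$ and invoking closedness of $A$ to conclude $X_s(\omega)\in A$; one should note this is where full continuity (not merely right-continuity) and closedness of $A$ are both essential, which also explains why one only gets an $\F_{t+}$-time in (a) but a genuine $\F_t$-time in (b).
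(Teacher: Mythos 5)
Your part (a) is correct and is essentially the paper's own proof: the identity $\{\tau_A<t\}=\bigcup_{r\in\mathbb{Q}_+,\,r<t}\{X_r\in A\}$ (the paper states it with $t+\varepsilon$ in place of $t$), followed by $\{\tau_A\le t\}=\bigcap_{n\ge 1}\{\tau_A<t+\tfrac1n\}\in\F_{t+}$.

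Part (b) is also correct, but you organize it differently. The paper uses the same two ingredients --- the open neighborhoods $A_m=\{x:d(x,A)<\tfrac1m\}$ and the reduction to rational times --- but works at the level of the random variables: it shows the entrance times $\tau_{A_m}$ are nondecreasing and dominated by $\tau_A$, proves by a pathwise limit argument (using $X_{\tau_{A_m}}\in\partial A_m\subseteq A_n$ for $m>n$ and $\bigcap_n A_n=A$) that $\tau_{A_m}\nearrow\tau_A$, and then concludes from $\{\tau_A\le t\}=\bigcap_m\{\tau_{A_m}<t\}$ together with the formula of part (a) applied to each open $A_m$. You instead write the single identity
\[\{\tau_A\le t\}=\bigcap_{m\ge 1}\;\bigcup_{\substack{r\in\mathbb{Q}_+\\ r\le t}}\bigl\{d(X_r,A)<\tfrac1m\bigr\}\]
and verify it directly, the nontrivial inclusion by extracting a convergent subsequence of the rational witnesses $r_m\in[0,t]$ and invoking continuity of the path, continuity of $d(\cdot,A)$, and closedness of $A$. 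Both routes are sound; yours is more self-contained and treats $t=0$ uniformly, while the paper's yields the additional and independently useful fact that the approximating entrance times increase to $\tau_A$. One detail you should make explicit in your forward inclusion: when $\tau_A(\omega)=t$ exactly, the definition of the infimum only produces times $s$ with $X_s\in A$ that may exceed $t$, so you need the observation that for a continuous path and a closed set the infimum is attained (take $s_k\downarrow\tau_A$ with $X_{s_k}\in A$ and pass to the limit to get $X_{\tau_A}\in A$); this supplies the point $s=\tau_A\le t$ with $d(X_s,A)=0$ from which your rational approximation proceeds. This is the same continuity-plus-closedness reasoning you already use in the reverse inclusion, so it is a one-sentence fix rather than a gap in the method.
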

\begin{proof}
\textbf{(a)} Usando la continuidad por la derecha se deduce
fácilmente que
\begin{equation}\label{tiempoentrada2}
\{\tau_A<t+\varepsilon\}
=\bigcup_{\substack{s\in\mathbb{Q}_+\\
s<t+\varepsilon}}\{X_s\in A\}\in\F_{t+\varepsilon}, \ \ \
\forall\varepsilon>0,
\end{equation}
para cada $t\geq 0,$ luego
\[\{\tau_A\le t\}
=\bigcap_{\varepsilon>0}\{\tau_A<t+\varepsilon\}\in\bigcap_{\varepsilon>0}\F_{t+\epsilon}=\F_{t+}.\]

{}{}\textbf{(b)} Para cada $x\in\R^d$ sea
$d(x,A):=\inf\{|x-y|:y\in A\},$ y considere la sucesión de
vecindades abiertas de $A$ dada por $A_n:=\{x\in E:
d(x,A)<\frac{1}{n}\}, \ n\geq 1.$ Entonces $A_n\downarrow A,$ y
por (\ref{tiempoentrada2}) cada $\tau_{A_n}$ satisface
\[\{\tau_{A_n}<t\}\in\F_t, \ \ \forall t\geq 0.\]
La sucesión $\{\tau_{A_n}\}_{n\geq 1}$ es una sucesión
no-decreciente y dominada por $\tau_A,$ luego existe el límite
$\eta:=\lim_{n\to\infty}\tau_{A_n}\le\tau_A.$ Note que

{}{} si $\tau_A(\omega)=0,$ entonces $\tau_{A_n}(\omega)=0, \
\forall n\geq 1;$

{}{} si $\tau_A(\omega)>0,$ existe un entero $k=k(\omega)\geq 1$
tal que
\[\tau_{A_n}=0 \ \text{para} \ 1\le n\le k, \ \ \text{y} \ \ 0<\tau_{A_n}<\tau_{A_{n+1}}<\tau_A, \ \
\text{para todo} \ n\geq k.\] Vamos a probar que $\eta=\tau_A.$
Para esto es suficiente probar que sobre el conjunto
\[\{\tau_A>0, \ \eta<+\infty\}\]
se cumple la desigualdad $\eta\geq\tau_A.$ Si $\tau_A(\omega)>0$ y
$\eta(\omega)<+\infty,$ por continuidad de $X,$
$X_{\eta}(\omega)=\lim_{n\to\infty}X_{\tau_{A_n}}(\omega)$ y
$X_{\tau_{A_m}}(\omega)\in\partial A_m\subseteq A_n, \ \forall
m>n\ge k(\omega).$ Haciendo $m\to\infty$ se obtiene que
$X_{\eta}(\omega)\in A_n, \ \forall n\ge k(\omega),$ y así
$X_\eta(\omega)\in\bigcap_{n=1}^\infty A_n=A.$ Por lo tanto
$\tau_A\le\eta$ y $\tau_A=\lim_{n\to\infty}\tau_{A_n}.$ Se sigue
finalmente que
\begin{align*}
\{\tau_A=0\}&=\{X_0\in A\}\in\F_0,\\
\{\tau_A\le t\}&=\bigcap_{n=1}^\infty\{\tau_{A_n}<t\}\in\F_t, \ \
\ \ \forall t>0.
\end{align*}
\end{proof}
La parte \textbf{(b)} del anterior teorema, debida a Wentzell
\cite{wentzell}, será de gran utilidad en los capítulos 2 y 5,
pues casi todos los tiempos de parada allí considerados serán de
la forma (\ref{tiempoentrada1}) con $A$ abierto y $X$ continuo.

\begin{defi}
Sea $\tau$ un tiempo de parada con respecto a una filtración
$\{\F_t\}_{t\geq 0}.$ Se define la $\sigma-$álgebra $\F_\tau$ como
\[\F_\tau:=\{A\in\F:A\cap\{\tau\le t\}\in\F_t, \ \forall t\geq 0\}.\]
\end{defi}

\begin{prop}
(a) Si $\tau$ es un $\F_t-$tiempo de parada, entonces $\tau$ es
$\F_\tau-$medible.

{}{}(b) Si $\tau,\,\eta$ son $\F_t-$tiempos de parada, entonces
$\tau\wedge\eta, \ \tau\vee\eta$ también lo son. Si además
$\tau\le\eta,$ entonces $\F_\tau\subseteq\F_\eta.$

{}{}(c) Si $\{\tau_n\}_{n\geq 1}$ son $\F_t-$tiempos de parada,
$\sup_{n\geq 1}\tau_n$ también lo es.

{}{}(d) Si $\tau$ es un $\F_t-$tiempo de parada y
$\eta:\Omega\to[0,+\infty]$ es $\F_\tau-$medible tal que
$\eta\geq\tau,$ entonces $\eta$ es también un $\F_t-$tiempo de
parada. En particular, la suma de dos $\F_t-$tiempos de parada es
también un $\F_t-$tiempo de parada.

{}{}(e) Si $\tau,\,\eta$ son $\F_t-$tiempos de parada entonces
$\F_{\tau\wedge\eta}=\F_\tau\cap\F_\eta.$
\end{prop}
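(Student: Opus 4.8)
The plan is to derive all five items from pure set-algebra involving the events $\{\tau\le t\}$, $\{\eta\le t\}$ and the defining property $A\cap\{\tau\le t\}\in\F_t$ of $\F_\tau$; no analysis is needed, since $\F_t$ is a $\sigma$-algebra and $\F_\tau$ is as well (the latter being immediate from $\{\tau\le t\}\in\F_t$, stability of $\F_t$ under complements and countable unions). I will repeatedly invoke the elementary fact that a map $\xi:\Omega\to[0,+\infty]$ is measurable for a sub-$\sigma$-algebra $\G\subseteq\F$ as soon as $\{\xi\le t\}\in\G$ for every $t\ge0$, because the sets $[0,t]$ generate $\B([0,+\infty])$.

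For (a) I fix $s\ge0$ and observe that for every $t\ge0$ one has $\{\tau\le s\}\cap\{\tau\le t\}=\{\tau\le s\wedge t\}\in\F_{s\wedge t}\subseteq\F_t$, while $\{\tau\le s\}\in\F_s\subseteq\F$; hence $\{\tau\le s\}\in\F_\tau$, and by the generator remark $\tau$ is $\F_\tau$-medible. Item (c) is the one line $\{\sup_n\tau_n\le t\}=\bigcap_n\{\tau_n\le t\}\in\F_t$. For the first assertions of (b) I use $\{\tau\wedge\eta\le t\}=\{\tau\le t\}\cup\{\eta\le t\}$ and $\{\tau\vee\eta\le t\}=\{\tau\le t\}\cap\{\eta\le t\}$, both in $\F_t$; and if $\tau\le\eta$ and $A\in\F_\tau$, then $\{\eta\le t\}\subseteq\{\tau\le t\}$ gives $A\cap\{\eta\le t\}=\bigl(A\cap\{\tau\le t\}\bigr)\cap\{\eta\le t\}\in\F_t$, so $A\in\F_\eta$ and $\F_\tau\subseteq\F_\eta$.

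For (d): since $\eta$ is $\F_\tau$-medible, $\{\eta\le t\}\in\F_\tau$, hence $\{\eta\le t\}\cap\{\tau\le t\}\in\F_t$ by definition of $\F_\tau$; but $\eta\ge\tau$ forces $\{\eta\le t\}\subseteq\{\tau\le t\}$, so $\{\eta\le t\}\in\F_t$ for all $t$ and $\eta$ is a stopping time. For the ``en particular'', given stopping times $\tau,\sigma$ I note that $\tau\vee\sigma$ is a stopping time by (b); by (a) and the inclusions $\F_\tau,\F_\sigma\subseteq\F_{\tau\vee\sigma}$ (again from (b)), both $\tau$ and $\sigma$ are $\F_{\tau\vee\sigma}$-medible, hence so is $\tau+\sigma$; and $\tau+\sigma\ge\tau\vee\sigma$, so (d) applied with $\tau\vee\sigma$ in place of $\tau$ and $\tau+\sigma$ in place of $\eta$ shows $\tau+\sigma$ is a stopping time. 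For (e): $\tau\wedge\eta\le\tau$ and $\tau\wedge\eta\le\eta$ give $\F_{\tau\wedge\eta}\subseteq\F_\tau\cap\F_\eta$ by (b); conversely, if $A\in\F_\tau\cap\F_\eta$ then $A\cap\{\tau\wedge\eta\le t\}=\bigl(A\cap\{\tau\le t\}\bigr)\cup\bigl(A\cap\{\eta\le t\}\bigr)\in\F_t$, so $A\in\F_{\tau\wedge\eta}$.

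I expect no genuine difficulty: the single place that requires a moment of care is the ``en particular'' clause of (d), where one is tempted to apply (d) with $\eta=\tau+\sigma$ and $\tau$ itself, but $\tau+\sigma$ need not be $\F_\tau$-medible (e.g. $\tau\equiv0$), so one must first replace $\tau$ by $\tau\vee\sigma$. Everything else is routine bookkeeping with the events $\{\cdot\le t\}$ and the defining property of $\F_\tau$.
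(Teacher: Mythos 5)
Your proof is correct and follows essentially the same route as the paper's: the same set identities for (a)--(c) and (e), the same use of $\{\eta\le t\}=\{\eta\le t\}\cap\{\tau\le t\}$ in (d), and the same device of passing through $\tau\vee\sigma$ to handle the sum. Incidentally, your identities $\{\tau\wedge\eta\le t\}=\{\tau\le t\}\cup\{\eta\le t\}$ and $\{\tau\vee\eta\le t\}=\{\tau\le t\}\cap\{\eta\le t\}$ are the correct ones --- the paper's proof of (b) has the union and intersection interchanged, though the conclusion is unaffected since $\F_t$ is closed under both operations.
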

\begin{proof}
(a) Debido a que $\B\bigl([0,+\infty)\bigr)$ es generada por los
intervalos de la forma $[0,s], \ s\geq 0,$ basta con verificar que
$\tau^{-1}\bigl([0,s]\bigr)=\{\tau\le s\}\in\F_\tau, \ \forall
s\geq 0.$ Pero esto es trivial, en vista de que para cada $s\geq
0,$
\[\{\tau\le s\}\cap\{\tau\le t\}=\{\tau\le s\wedge t\}\in\F_{s\wedge t}\subseteq\F_t, \ \ \forall t\geq 0.\]

{}{}(b) Para todo $t\geq 0,$
\begin{align*}
\{\tau\wedge\eta\le t\}&=\{\tau\le t\}\cap\{\eta\le t\}\in\F_t\\
\{\tau\vee\eta\le t\}&=\{\tau\le t\}\cup\{\eta\le t\}\in\F_t.
\end{align*}
Si $\tau\le\eta,$ para todo $A\in\F_\tau$ se tiene que
\[A\cap\{\eta\le t\}=A\cap\{\tau\le t\}\cap\{\eta\le t\}\in\F_t, \ \ \forall t\geq 0\]
luego $A\in\F_\eta.$

{}{}(c) $\{\sup_{n\geq 1}\tau_n\le
t\}=\bigcap_{n=1}^\infty\{\tau_n\le t\}\in\F_t.$

{}{}(d) Dado que $\eta$ es $\F_\tau-$medible, $\{\eta\le
t\}\in\F_\tau$ para todo $t\geq 0,$ es decir \[\{\eta\le
t\}=\{\eta\le t\}\cap\{\tau\le t\}\in\F_t, \ \ \ \forall t\geq 0\]
lo que implica que $\eta$ es un $\F_t-$tiempo de parada.

{} Si $\tau$ y $\eta$ son $\F_t-$tiempos de parada, entonces
$\tau\vee\eta$ es también un $\F_t-$tiempo de parada y
$\tau+\eta\geq\tau\vee\eta.$ Como $\tau$ es $\F_\tau-$medible,
$\eta$ es $\F_\eta-$medible y
$\F_\tau,\F_\eta\subseteq\F_{\tau\vee\eta}$ entonces $\tau+\eta$
es $\F_{\tau\vee\eta}-$medible. Con esto y la primera parte de (d)
se concluye que $\tau+\eta$ es también un $\F_t-$tiempo de parada.

{}{}(e) Dado que $\tau\wedge\eta\le\tau$ y
$\tau\wedge\eta\le\eta,$ la inclusión
$\F_{\tau\wedge\eta}\subseteq\F_\tau\cap\F_\eta$ es consecuencia
inmediata de la última afirmación de (b). Sea ahora
$A\in\F_\tau\cap\F_\eta.$ Entonces $A\cap\{\tau\wedge\eta\le
t\}=\bigl[A\cap\{\tau\le t\}\bigr]\cup\bigl[A\cap\{\eta\le
t\}\bigr]\in\F_t, \ \forall t\geq 0.$
\end{proof}

\begin{prop}
Sean $X=(X_t)_{t\geq 0}$ un proceso estocástico con valores en
$(E,\E)$ y sea $\tau$ un tiempo de parada con respecto a una
filtración $\{\F_t\}_{t\geq 0}.$ Si $X$ es progresivamente medible
con respecto a $\{\F_t\}_{t\geq 0}$ entonces la v.a.
$X_\tau\mathbf{1}_{\{\tau<\infty\}}$ es $\F_\tau-$medible y el
proceso $(X_{t\wedge\tau})_{t\geq 0}$ es también progresivamente
medible con respecto a $\{\F_t\}_{t\geq 0}.$
\end{prop}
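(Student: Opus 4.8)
El plan es reducir las afirmaciones a la medibilidad de composiciones de aplicaciones medibles, factoriz\'andolas a trav\'es del espacio producto $[0,t]\times\Omega$ y aplicando la hip\'otesis de medibilidad progresiva. La observaci\'on clave es que, por ser $\tau$ un tiempo de parada, tanto $\tau\wedge t$ como $s\wedge\tau$ (para $s\le t$) son $\F_t$-medibles y toman valores en $[0,t]$, de modo que al componerlas con $X\bigr|_{[0,t]\times\Omega}$ solo se utiliza la informaci\'on que proporciona la medibilidad progresiva.

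Primero establecer\'ia que, para cada $t\ge 0$ fijo, la v.a.\ $\omega\mapsto X_{\tau(\omega)\wedge t}(\omega)$ es $\F_t/\E$-medible. En efecto, $\tau\wedge t$ es $\F_t$-medible con valores en $[0,t]$ (pues $\{\tau\wedge t\le s\}=\{\tau\le s\}\in\F_s\subseteq\F_t$ si $0\le s<t$, y vale $\Omega$ si $s\ge t$), as\'i que la aplicaci\'on $\omega\mapsto(\tau(\omega)\wedge t,\omega)$ es $\F_t/\bigl(\B([0,t])\otimes\F_t\bigr)$-medible por serlo cada una de sus componentes; al componerla con $X\bigr|_{[0,t]\times\Omega}$, que por hip\'otesis es $\B([0,t])\otimes\F_t/\E$-medible, se concluye. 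De aqu\'i sigue la $\F_\tau$-medibilidad de $X_\tau\mathbf{1}_{\{\tau<\infty\}}$: dados $A\in\E$ y $t\ge 0$, sobre $\{\tau\le t\}$ se tiene $\tau<\infty$ y $X_\tau=X_{\tau\wedge t}$, luego
\[\{X_\tau\mathbf{1}_{\{\tau<\infty\}}\in A\}\cap\{\tau\le t\}=\{X_{\tau\wedge t}\in A\}\cap\{\tau\le t\}\in\F_t,\]
lo cual, junto con la medibilidad de $X_\tau\mathbf{1}_{\{\tau<\infty\}}$ respecto de $\F$ (inmediata, por ejemplo, de la igualdad puntual $X_\tau\mathbf{1}_{\{\tau<\infty\}}=\lim_n X_{\tau\wedge n}\mathbf{1}_{\{\tau\le n\}}$), da el resultado por la definici\'on de $\F_\tau$.

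Para la medibilidad progresiva de $(X_{t\wedge\tau})_{t\ge 0}$ fijar\'ia $t\ge 0$ y considerar\'ia la aplicaci\'on $(s,\omega)\mapsto(s\wedge\tau(\omega),\omega)$ sobre $[0,t]\times\Omega$. Como $s\wedge\tau(\omega)=s\wedge\bigl(\tau(\omega)\wedge t\bigr)\le t$, esta toma valores en $[0,t]\times\Omega$ y es $\bigl(\B([0,t])\otimes\F_t\bigr)/\bigl(\B([0,t])\otimes\F_t\bigr)$-medible: su segunda componente es la proyecci\'on, y la primera es el m\'inimo de $(s,\omega)\mapsto s$ y $(s,\omega)\mapsto\tau(\omega)\wedge t$, ambas $\B([0,t])\otimes\F_t$-medibles con valores en $[0,t]$. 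Componi\'endola de nuevo con $X\bigr|_{[0,t]\times\Omega}$ se obtiene que $(s,\omega)\mapsto X_{s\wedge\tau(\omega)}(\omega)$ es $\B([0,t])\otimes\F_t/\E$-medible, como se quer\'ia. El \'unico paso que exige algo de cuidado en toda la prueba es, precisamente, verificar que al truncar por $t$ se permanece en $[0,t]\times\Omega$, que es donde la medibilidad progresiva puede aplicarse; lo dem\'as son comprobaciones rutinarias.
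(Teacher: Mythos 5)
Tu propuesta es correcta y sigue esencialmente el mismo camino que la demostraci\'on del texto: la identidad $\{X_\tau\mathbf{1}_{\{\tau<\infty\}}\in A\}\cap\{\tau\le t\}=\{X_{t\wedge\tau}\in A\}\cap\{\tau\le t\}$ para la primera afirmaci\'on, y la factorizaci\'on de $(s,\omega)\mapsto X_{s\wedge\tau(\omega)}(\omega)$ como la compuesta de $\psi(s,\omega)=(s\wedge\tau(\omega),\omega)$ con $X\bigr|_{[0,t]\times\Omega}$ para la segunda. La \'unica diferencia es cosm\'etica: t\'u verificas la medibilidad de $(s,\omega)\mapsto s\wedge\tau(\omega)$ como m\'inimo de dos aplicaciones medibles, mientras que el texto calcula directamente la preimagen de $[0,s]$.
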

\begin{proof}
Sea $A\in\E.$ Dado que
\[\{X_\tau\mathbf{1}_{\{\tau<\infty\}}\in A\}\cap\{\tau\le t\}
=\{X_{t\wedge\tau}\in A\}\cap\{\tau\le t\},\] basta con demostrar
la segunda afirmación de la proposición: para $t\geq 0$ fijo, la
aplicación $(s,\omega)\mapsto s\wedge\tau(\omega)$ es
$\B\bigl([0,t]\bigr)\otimes\F_t/\B\bigl([0,t]\bigr)-$medible, pues
para cada $s\le t$
\begin{align*}
\{&(r,\omega)\in[0,t]\times\Omega:r\wedge\tau(\omega)\le s\}\\
&=\bigl([0,s]\times\Omega\bigr)\cup\bigl([0,t]\times\{\tau\le
s\}\bigr)\in\B\bigl([0,t]\bigr)\otimes\F_s\subseteq
\B([0,t])\otimes\F_t.
\end{align*}
En consecuencia la función $\psi(s,\omega):=(s\wedge\tau(\omega),
\omega)$ de $[0,t]\times\Omega$ en sí mismo es
$\B\bigl([0,t]\bigr)\otimes\F_t/\B\bigl([0,t]\bigr)\otimes\F_t-$medible.
Por hipótesis
\begin{align*}
  X\bigr|_{[0,t]\times\Omega}:\bigl([0,t]\times\Omega,\B\bigl([0,t]\bigr)\otimes\F_t\bigr)&\longrightarrow(E,\E)\\
  (s,\omega)&\longmapsto X_s(\omega)
\end{align*}
es $\B\bigl([0,t]\bigr)\otimes\F_t/\E-$medible. Por lo tanto, la
compuesta \[(s,\omega)\mapsto
\bigl(X\bigr|_{[0,t]\times\Omega}\circ\psi\bigr)(s,\omega)=X_{s\wedge\tau(\omega)}(\omega),\]
es $\B\bigl([0,t]\bigr)\otimes\F_t/\E-$medible
\end{proof}

\begin{defi} Un proceso real $X=(X_t)_{t\geq 0}$ con
$E(|X_t|)<+\infty, \ \forall t\geq 0,$ y adaptado a una filtración
$\{\F_t\}_{t\geq 0}$ es llamado una \textit{martingala} (resp.
\textit{super-martingala}, resp. \textit{sub-martingala}) con
respecto a $\{\F_t\}_{t\geq 0}$ si
\[E[X_s\mid\F_t]=X_t, \ \ \ \text{(resp. \ $\le,$ \ resp. \ $\ge)$ \ \ \ para todo $s\geq t\geq 0.$}\]
\end{defi}
\begin{obse}
Note que si $X$ es una martingala entonces $E[X_t]=E[X_0], \
\forall t\geq 0.$
\end{obse}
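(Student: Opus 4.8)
El plan es obtener la igualdad tomando esperanzas en la propiedad de martingala y usando la propiedad de torre (ley de la esperanza total). Primero fijar\'ia un $t\geq 0$ arbitrario y aplicar\'ia la definici\'on de martingala al par $0\leq t$: esto tiene sentido porque $X$ es adaptado (luego $X_0$ es $\F_0$-medible) y porque $E|X_t|<\infty$ por la definici\'on de martingala, y da $E[X_t\mid\F_0]=X_0$.

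A continuaci\'on tomar\'ia esperanza a ambos lados. Por la propiedad de torre —es decir, $E\bigl[E[Z\mid\G]\bigr]=E[Z]$ para toda v.a.\ integrable $Z$ y toda sub-$\sigma$-\'algebra $\G\subseteq\F$— el lado izquierdo es $E[X_t]$ y el derecho es $E[X_0]$; la hip\'otesis de integrabilidad garantiza que todas las esperanzas (condicionales e iteradas) est\'an bien definidas. Como $t\geq 0$ era arbitrario, se concluye $E[X_t]=E[X_0]$ para todo $t\geq 0$.

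No anticipo ning\'un obst\'aculo de fondo: la afirmaci\'on es consecuencia inmediata de la definici\'on, y el \'unico punto a cuidar es invocar expl\'icitamente la integrabilidad $E|X_t|<\infty$ que forma parte de dicha definici\'on. De manera equivalente, podr\'ia razonar que la aplicaci\'on $t\mapsto E[X_t]$ es constante —tomando esperanza en $E[X_s\mid\F_t]=X_t$ para cualesquiera $s\geq t\geq 0$ se obtiene $E[X_s]=E[X_t]$— y luego evaluar en $t=0$.
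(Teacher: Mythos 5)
Tu argumento es correcto y es exactamente el que la observaci\'on da por sobreentendido: se aplica la propiedad de martingala con $s=t$ y $t=0$ para obtener $E[X_t\mid\F_0]=X_0$, y la propiedad de torre $E\bigl[E[X_t\mid\F_0]\bigr]=E[X_t]$ da la igualdad, siendo la integrabilidad $E|X_t|<+\infty$ parte de la definici\'on tal como se\~nalas. No hay nada que objetar.
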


Los siguientes tres teoremas son fundamentales en la teoría de
martingalas. Omitiremos sus demostraciones debido a que se
requieren algunos resultados de martingalas con parámetro de
tiempo discreto y el concepto de integrabilidad uniforme (ver por
ejemplo \cite{doob,blanco}).

\begin{teorema}[Primera desigualdad de Doob]
Sea $X=(X_t)_{t\geq 0}$ una $\F_t-$martingala continua por la
derecha tal que $E(|X_t|^p)<\infty, \ \forall t\in[0,T],$ con
$p\geq 1.$ Entonces
\[\mathbf{P}\biggl(\sup_{\, t\in [0,T]}|X_t|\geq\varepsilon\biggr)
\le\frac{1}{\varepsilon^p}E\bigl(|X_T|^p\bigr).\]
\end{teorema}
\begin{teorema}[Segunda desigualdad de Doob]
Sea $X=(X_t)_{t\geq 0}$ una $\F_t-$martingala continua por la
derecha tal que $E(|X_t|^p)<\infty, \ \forall t\in[0,T],$ con
$p>1.$ Entonces
\[E\biggl[\sup_{\, t\in [0,T]}|X_t|^p\biggr]
\le\left(\frac{p}{p-1}\right)^p E\bigl(|X_T|^p\bigr).\]
\end{teorema}
\begin{teorema}[Teorema de parada de Doob]
Sea $(X_t)_{t\geq 0}$ una martingala real con respecto a una
filtración $\{\F_t\}_{t\ge 0}$ tal que existe una v.a.
$X_\infty\in L^1(\Omega,\F,\mathbf{P})$ para la cual
\[X_t=E[X_\infty\mid\F_t], \ \ \ \forall t\geq 0,\]
y sean $\tau, \ \eta$ tiempos de parada con respecto a
$\{\F_t\}_{t\ge 0}$ tales que $\tau\le\eta.$ Si $X$ es continuo a
derecha c.s. entonces
\[X_\tau=E[X_\eta\mid\F_\tau]=E[X_\infty\mid\F_\tau]
, \ \ \ \text{c.s.}\]
\end{teorema}

{} Este último teorema es un caso particular de un teorema más
general para sub-martingalas uniformemente integrables dado por
Doob y denominado ``Optional sampling", ver \cite{karatzas}.

\begin{teorema}\label{lemarepmart}
Sea $X=(X_t)_{t\geq 0}$ un proceso real adaptado a una filtración
$\{\F_t\}_{t\ge 0}$ tal que para todo $\F_t-$tiempo de parada
acotado $\tau$ se tiene
\[E[X_\tau]=E[X_0].\]
Entonces $X$ es una martingala con respecto a $\{\F_t\}_{t\ge 0}.$
\end{teorema}
\begin{proof}
Sean $t\ge s\ge 0$ y $A\in\F_s.$ Entonces
$\tau:=s\mathbf{1}_{A^c}+t\mathbf{1}_{A}$ es un tiempo de parada
con $\tau=\tau\wedge t.$ Esto implica
\[E[X_0]=E[X_\tau]=E[X_s\mathbf{1}_{A^c}+X_t\mathbf{1}_{A}]=E[X_s\mathbf{1}_{A^c}]+E[X_t\mathbf{1}_{A}].\]
Por otro lado, el tiempo de parada $\tau\equiv s$ satisface
\[E[X_0]=E[X_s]=E[X_s\mathbf{1}_{A^c}]+E[X_s\mathbf{1}_{A}].\]
Por lo tanto, para todo $A\in\F_s$ tenemos
$E[X_t\mathbf{1}_A]=E[X_s\mathbf{1}_A].$ Usando la definición de
esperanza condicional, obtenemos $E[X_t\mid\F_s]=X_s.$
\end{proof}
{} Usando el teorema de parada de Doob se puede fácilmente ver que
el recíproco también es cierto.

{}\begin{defi} Un \textit{Movimiento Browniano} o \textit{Proceso
de Wiener} unidimensional estándar es un
proceso estocástico $(W_t)_{t\geq 0}$ con valores en $\R$ que
satisface
\begin{description}
  \item[(i)] $W_0=0$ c.s.
  \item[(ii)] para todo $0\le t\le s,$ el incremento $W_s-W_t$ tiene distribución normal con media cero y
  varianza $t-s.$
  \item[(iii)] para todo $0\le u\le r\le s\le t,$ el incremento $W_t-W_s$ es
  independiente de $W_r-W_u.$
\end{description}
\end{defi}

\begin{obse}
La función de probabilidad conjunta de las variables aleatorias
{}$W_{t_1},W_{t_2},\ldots,W_{t_n},$ con
$0<t_1<t_2<\cdots<t_n<\infty,$ está dada por
\begin{equation}\label{fpcmovbrown}
f_{t_1\cdots
t_n}(x_1,\ldots,x_n)=f_{t_1}(x_1)\,f_{t_2-t_1}(x_2-x_1)\cdots
f_{t_n-t_{n-1}}(x_n-x_{n-1}),
\end{equation}
donde \[f_t(x)=\frac{1}{\sqrt{2\pi
t}}\exp\left(-\frac{x^2}{2t}\right).\] El sistema de
distribuciones dado en (\ref{fpcmovbrown}) satisface las
condiciones del teorema de consistencia de Kolmogorov, ver
\cite{friedman2,karatzas,tudor,blanco}. Esto garantiza la
existencia del movimiento Browniano estándar.

{} Existen sin embargo otros métodos, aún más técnicos, para
garantizar la existencia del movimiento Browniano. Una
construcción basada en convergencia débil y en la aproximación por
caminatas aleatorias puede ser encontrada en \cite{billingsley}.
Otro es el método del espacio de Hilbert \cite{karatzas}. La idea
original de esta construcción fue dada por Paul Levy en 1948 y
simplificada posteriormente por Ciesielski en 1961.
\end{obse}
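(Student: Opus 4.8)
The plan is to turn the existence assertion into a bare verification of the hypotheses of Kolmogorov's consistency theorem for the finite-dimensional laws encoded in (\ref{fpcmovbrown}), and then to read the defining properties (i)--(iii) of Brownian motion off those laws. First I would fix the statement I want to invoke: for a finite ordered set $F=\{t_1<\dots<t_n\}\subset[0,\infty)$ let $\mu_F$ be the Borel probability measure on $\R^n$ with density $f_{t_1\cdots t_n}$, with the convention $f_0:=\epsilon_0$, the Dirac mass at $0$ in the notation fixed at the start, so that times equal to $0$ are allowed; for an unordered tuple $(s_1,\dots,s_n)$ of distinct times set $\mu_{(s_1,\dots,s_n)}$ equal to the image of $\mu_F$ under the permutation of coordinates that sorts the $s_i$ increasingly. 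Since $\R$ is a Polish space, Kolmogorov's theorem produces a probability space $(\Omega,\F,\mathbf{P})$ carrying a process $(W_t)_{t\ge0}$ with exactly these finite-dimensional distributions, provided the family $\{\mu_F\}$ is (a) compatible under coordinate permutations and (b) projective, i.e.\ the pushforward of $\mu_F$ under the deletion of any single coordinate coincides with $\mu_{F'}$ for the corresponding smaller set $F'$.

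The one genuinely analytic ingredient is that $\{f_t\}_{t>0}$ is a convolution semigroup, $\int_\R f_s(y)\,f_t(z-y)\,dy=f_{s+t}(z)$ for all $s,t>0$, which is the elementary computation that the density of the sum of two independent centered Gaussians of variances $s$ and $t$ is $f_{s+t}$ (completion of the square in the exponent). Granting this, the projective condition (b) is checked by integrating out one variable $x_k$ in the product $f_{t_1\cdots t_n}$: for $1<k<n$ the two adjacent Gaussian factors $f_{t_k-t_{k-1}}(x_k-x_{k-1})\,f_{t_{k+1}-t_k}(x_{k+1}-x_k)$ collapse to $f_{t_{k+1}-t_{k-1}}(x_{k+1}-x_{k-1})$ by the semigroup identity; integrating out $x_n$ uses $\int_\R f_{t_n-t_{n-1}}(x_n-x_{n-1})\,dx_n=1$; and integrating out $x_1$ gives $\int_\R f_{t_1}(x_1)\,f_{t_2-t_1}(x_2-x_1)\,dx_1=f_{t_2}(x_2)$, once more the semigroup identity. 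Condition (a) is then built into the definition of $\mu_{(s_1,\dots,s_n)}$ and needs only the routine check that these pushforwards are mutually consistent under composition of permutations.

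It remains to recover (i)--(iii). Property (i) is immediate since $\mu_{\{0\}}=\epsilon_0$, so $W_0=0$ a.s. For (ii) and (iii), fix $0=t_0<t_1<\dots<t_n$ and apply the linear change of variables $(x_1,\dots,x_n)\mapsto(x_1,\,x_2-x_1,\,\dots,\,x_n-x_{n-1})$, whose Jacobian is $1$: it carries $\mu_{\{t_1,\dots,t_n\}}$ to the product measure on $\R^n$ with marginals $f_{t_i-t_{i-1}}$. Hence the increments $W_{t_i}-W_{t_{i-1}}$ are jointly independent and $W_{t_i}-W_{t_{i-1}}$ has the $N(0,t_i-t_{i-1})$ law, which yields at once the prescribed Gaussian law of an arbitrary increment and the independence of increments over disjoint time intervals.

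I expect the real obstacle to be none of the above but the proof of Kolmogorov's extension theorem itself -- the measure-theoretic construction of the projective limit on the product space $\R^{[0,\infty)}$ -- which is exactly why the authors merely cite it; a secondary subtlety is that the canonical process so obtained lives on a product $\sigma$-algebra on which ``$t\mapsto W_t$ is continuous'' is not even a measurable event, so producing a continuous modification is a separate argument (Kolmogorov's continuity criterion) and, notably, is not demanded by the definition of Brownian motion used here. Under that definition the only non-routine step beyond the cited theorem is the Gaussian convolution identity; everything else is bookkeeping.
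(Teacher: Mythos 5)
Your verification is correct and is exactly the standard argument that the paper's observación delegates to the cited references (the paper itself gives no proof, only pointing to \cite{friedman2,karatzas,tudor,blanco}): the Gaussian convolution identity $\int_\R f_s(y)f_t(z-y)\,dy=f_{s+t}(z)$ yields projectivity of the family $\{\mu_F\}$, permutation compatibility is built into the definition, Kolmogorov's theorem then produces the process, and the unit-Jacobian change of variables to increments recovers properties (i)--(iii) of the definition (in fact you get full joint independence of increments, which is stronger than the pairwise independence (iii) actually requires). Your closing caveat is also consistent with the paper, which treats path continuity separately via the Kolmogorov--\u{C}entsov criterion in the theorem that follows; the only typographical discrepancy is that the paper's definition states the variance of $W_s-W_t$ as $t-s$ where it should read $s-t$, and your argument produces the correct (positive) variance.
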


{} Si $X$ es una variable aleatoria normal con media cero y
varianza $\sigma$ entonces
\[E(X^{2n})=\frac{(2n)!}{2^n n!}\sigma^{2n}, \ \ \ E(X^{2n+1})=0, \ \ \ \ \ \ n=0,1,2,\ldots\]
En particular, para el movimiento Browniano estándar $(W_t)_{t\geq
0}$
\[E(|W_t-W_s|^4)=3|t-s|^2, \ \ \ \ \ 0\le s<t.\]
Usando esto junto con en el teorema de Kolmogorov-\u{C}entsov
\cite{karatzas} se obtiene
\begin{teorema} El movimiento Browniano unidimensional estándar $(W_t)_{t\geq 0}$ posee una versión continua.
\end{teorema}
{} De aquí en adelante haremos referencia a la versión continua
del movimiento Browniano.{}

Otra propiedad importante de las trayectorias del movimiento
Browniano es la no-diferenciabilidad: si una función $f:\R\to\R$
es continuamente diferenciable en $[s,t]$ y
\[\pi_n=\{s=t_{n,1},t_{n,2},\ldots,t_{n,m_n}=t\}, \ \ \ n\geq 1\]
es una sucesión de particiones de $[s,t]$ con
$|\!|\pi_n|\!|=\max_{1\le j\le
m_n}(t_{n,j}-t_{n.j-1})\xrightarrow[n\to\infty]\,0,$ entonces
\[\sum_{j=1}^{m_n}(f(t_{n,j})-f(t_{n,j-1}))^2\xrightarrow[n\to\infty]\,0.\]
Tal conclusión no se tiene para el movimiento Browniano. En cambio, se tiene lo siguiente
\begin{teorema}\label{v2browniano}
Si $\pi_n=\{s=t_{n,1},t_{n,2},\ldots,t_{n,m_n}=t\}, \ n\geq 1,$
una sucesión de particiones de $[s,t]$ con
$|\!|\pi_n|\!|\xrightarrow[n\to\infty]\,0,$ entonces
\[S_n=\sum_{j=1}^{m_n}(W_{t_{n,j}}-W_{t_{n,j-1}})^2\xrightarrow[n\to\infty]{L^2}t-s.\]
Si además $\sum_{n=1}^\infty|\!|\pi_n|\!|<+\infty,$ entonces
$S_n\xrightarrow[n\to\infty]{c.s}t-s.$
\end{teorema}
\begin{proof}
Para esta demostración, definimos $t_j=t_{n,j}, \ m=m_n.$ Con esta notación, se tiene
\[S_n-(t-s)=\sum_{j=1}^{m}\bigl[(W_{t_j}-W_{t_{j-1}})^2-(t_j-t_{j-1})\bigr].\]
Dado que los sumandos son independientes y de esperanza cero
\begin{align*}
E\bigl(|S_n-(t-s)|^2\bigr)
&=E\biggl[\sum_{j=1}^{m}\bigl[(W_{t_j}-W_{t_{j-1}})^2-(t_j-t_{j-1})\bigr]^2\biggr]\\
&=\sum_{j=1}^{m}E\bigl[(Y_j^2-1)(t_j-t_{j-1})\bigr]^2,
\end{align*}
donde \[Y_j:=\frac{W_{t_j}-W_{t_{j-1}}}{(t_j-t_{j-1})^{1/2}}.\]
Dado que los $Y_j$ están igualmente distribuidos con distribución
normal, se sigue
\[E\bigl(|S_n-(t-s)|^2\bigr)=E(|Y_1^2-1|^2)\sum_{j=1}^m(t_j-t_{j-1})^2
\le
E(|Y_1^2-1|^2)\cdot(t-s)|\!|\pi_n|\!|\xrightarrow[n\to\infty]\,0.\]
Además, se tiene la siguiente desigualdad
\[D_n:=2\sum_{k=1}^n(t_k-t_{k-1})^2\le 2(t-s)|\!|\pi_n|\!|.\]
Así, la condición $\sum_{n=1}^\infty|\!|\pi_n|\!|<+\infty$ implica que
$\sum_{n=1}^\infty D_n<+\infty$ y como consecuencia
$S_n\xrightarrow[n\to\infty]{c.s}t-s,$ ya que
$S_n\xrightarrow[n\to\infty]{L^2}t-s.$
\end{proof}

\begin{coro}
Si $\sum_{n=1}^\infty|\!|\pi_n|\!|<+\infty,$ y
$\pi_n\subseteq\pi_{n+1}$ para todo $n\geq 1,$ entonces
\[\sum_{k=1}^n|W_{t_{n,k}}-W_{t_{n,k-1}}|\xrightarrow[n\to\infty]{c.s}\infty.\]
En particular c.s. las trayectorias del movimiento Browniano
tienen variación infinita sobre cada intervalo compacto y como
consecuencia c.s. las trayectorias en ningún punto son derivables.
\end{coro}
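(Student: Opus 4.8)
The plan is to read off everything from Teorema~\ref{v2browniano}. Write $V_n:=\sum_k\bigl|W_{t_{n,k}}-W_{t_{n,k-1}}\bigr|$ for the variation sum in the statement, $M_n:=\max_k\bigl|W_{t_{n,k}}-W_{t_{n,k-1}}\bigr|$ for the largest of the increments, and recall $S_n=\sum_k(W_{t_{n,k}}-W_{t_{n,k-1}})^2$. Bounding each square by $M_n\,|W_{t_{n,k}}-W_{t_{n,k-1}}|$ and summing over $k$ yields the single inequality that drives the whole argument,
\[
S_n\le M_n\,V_n .
\]

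Next I would control $M_n$. The hypothesis $\sum_n\|\pi_n\|<+\infty$ forces $\|\pi_n\|\to 0$; since we work with the continuous version of $W$, for (almost) every $\omega$ the path $r\mapsto W_r(\omega)$ is continuous on the compact interval $[s,t]$, hence uniformly continuous there, and therefore $M_n\to 0$ c.s. On the other hand, because $\pi_n\subseteq\pi_{n+1}$, subdividing an interval can only increase the sum of absolute increments (triangle inequality), so $V_n$ is non-decreasing and admits a limit $V\in[0,+\infty]$. If the event $\{V<+\infty\}$ had positive probability, then on it $S_n\le M_nV_n\le M_nV\to 0$, contradicting the conclusion $S_n\xrightarrow[n\to\infty]{c.s}t-s$ of Teorema~\ref{v2browniano} together with $t-s>0$. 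Hence $V_n\uparrow+\infty$ c.s., which is the asserted divergence. (The bound $V_n\ge S_n/M_n$ already forces divergence without the nestedness assumption; monotonicity of $V_n$ is merely a convenience.)

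For the remaining statements: the total variation of the path over $[s,t]$ is $\ge\sup_n V_n=+\infty$ c.s., hence c.s. infinite; running this for every pair of rational endpoints, taking the countable union of the exceptional null sets, and using that the variation is monotone under inclusion of intervals, one gets that c.s. the path has infinite variation on every compact interval. The last clause, nowhere differentiability, is the step I expect to be the genuine obstacle: infinite variation on every interval does not by itself rule out differentiability at an isolated point --- for instance $r\mapsto r^2\sin(1/r^2)$ (with value $0$ at $0$) is differentiable everywhere yet has infinite variation on every neighbourhood of $0$ --- so what the quadratic-variation identity actually delivers is only that the path is c.s. not of bounded variation on any subinterval, in particular not continuously differentiable there. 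To obtain nowhere differentiability in the strong pointwise sense one needs a separate argument relying on the Gaussian law of the increments (Paley--Wiener--Zygmund); I would either weaken the statement to ``c.s.\ not of bounded variation on any subinterval'' or invoke that classical result.
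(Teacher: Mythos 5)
Tu demostración de la afirmación principal es esencialmente la misma que la del texto: la desigualdad $S_n\le M_n V_n$ es exactamente (\ref{mbnodif1}), y combinada con $S_n\to t-s>0$ c.s.\ (Teorema \ref{v2browniano}) y $M_n\to 0$ c.s.\ (continuidad uniforme de las trayectorias) se obtiene $V_n\to\infty$; de paso corriges el l\'imite que el texto escribe en (\ref{mbnodif3}), donde por errata aparece $t-s\neq 0$ en lugar de $0$, y observas con raz\'on que la hip\'otesis $\pi_n\subseteq\pi_{n+1}$ no es necesaria para esta parte. Tu reparo sobre la \'ultima cl\'ausula tambi\'en es leg\'itimo: la variaci\'on infinita en todo intervalo no implica por s\'i sola la no derivabilidad en ning\'un punto (tu ejemplo $r\mapsto r^2\sin(1/r^2)$ lo muestra), de modo que esa consecuencia, tal como la enuncia el corolario y sin m\'as argumento, s\'olo da que las trayectorias no son de variaci\'on acotada en ning\'un subintervalo; la no derivabilidad puntual en sentido fuerte requiere el argumento cl\'asico de Paley--Wiener--Zygmund, que el texto tampoco suple.
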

\begin{proof}
Tenemos que
\begin{equation}\label{mbnodif1}
S_n\le\max_{1\le k\le
n}|W_{t_{n,k}}-W_{t_{n,k-1}}|\cdot\sum_{k=1}^n|W_{t_{n,k}}-W_{t_{n,k-1}}|,
\end{equation}
y por el teorema anterior
\begin{equation}\label{mbnodif2}
\sum_{k=1}^n|W_{t_{n,k}}-W_{t_{n,k-1}}|^2\xrightarrow[n\to\infty]{c.s}t-s\neq
0.
\end{equation}
Por la continuidad uniforme sobre $[s,t]$ del movimiento Browniano
se tiene que
\begin{equation}\label{mbnodif3}
\max_{1\le k\le
n}|W_{t_{n,k}}-W_{t_{n,k-1}}|\xrightarrow[n\to\infty]{c.s}t-s\neq
0.
\end{equation}
Usando las relaciones (\ref{mbnodif1}), (\ref{mbnodif2}) y
(\ref{mbnodif3}) se obtiene el resultado requerido.
\end{proof}

\subsection{La integral estocástica}
\subsubsection{Definición y propiedades}
Sea $(W_t)_{t\geq 0}$ un movimiento Browniano unidimensional
estándar con $W_0=0$ y sea $\{\F_t\}_{t\geq 0}$ la
filtración generada por $(W_t)_{t\geq 0}$ aumentada con los
conjuntos $\mathbf{P}-$nulos de $\Omega,$ es decir
\[\F_t=\sigma\bigl(\mathcal{N}\cup\{W_s: 0\leq s\le t\}\bigr),
\ \ \ t\geq 0 \] donde $\mathcal{N}=\{E\subseteq\Omega : \exists
G\in \F,\, G\supseteq E \text{ y } \mathbf{P}(G)=0\}.$ Se puede
probar que esta filtración satisface las condiciones usuales, ver
\cite{karatzas}.

{} Nuestro objetivo en esta sección es mostrar la existencia, en
cierto sentido, de la integral estocástica
\[\int_0^t X_s\,dW_s\]
para un tipo especial de procesos $(X_t)_{t\geq 0}.$ Debido a que
las trayectorias del movimiento Browniano son no-diferenciables y
tampoco de variación acotada, dicha integral no se puede definir
en el sentido Lebesgue-Stieljes. Aún así su construcción se
asemeja a la de la integral de Riemman, ya que se define primero
para procesos escalonados y luego por aproximación se extiende a
una clase más general de procesos.

\begin{defi}
Sean $0\le t_0<t<+\infty$ y $p\geq 1.$ Notaremos con $H^p[t_0,t]$
el espacio vectorial de los procesos $(X_s)_{s\geq 0}$ con valores
en $\R$ y adaptados a $\{\F_t\}_{t\geq 0}$ tales que
\[\int_{t_0}^t|X_s|^p\,ds<+\infty \ \ \ \text{c.s.}\]
y con $M^p[t_0,t]$ el subconjunto de $H^p[t_0,t]$ de los procesos
$(X_s)_{s\geq 0}$ que satisfacen
\[E\int_{t_0}^t|X_s|^p\,ds<+\infty.\]
\end{defi}

\begin{defi}
Diremos que $X\in H^p[t_0,t]$ es un \textit{proceso escalonado} o
\textit{proceso simple} si es de la forma
\[X_s=\sum_{i=0}^{n-1}e_i\mathbf{1}_{[t_i,t_{i+1})}(s)\] donde
$t_0<t_1<\cdots<t_n=t$ es una partición del intervalo $[t_0,t]$ y
cada $e_i$ es una variable aleatoria real $\F_{t_i}-$medible,
$0\le i\le n-1.$
\end{defi}

\begin{lema}[Aproximación por procesos escalonados]\label{aproxlema}
Sea $X\in H^2[t_0,t].$ Entonces existe una sucesión de procesos
escalonados $(X^n)_{n\geq 1}$ en $H^2[t_0,t]$ tal que
\begin{equation}\label{aproxsimples1}
\lim_{n\to\infty}\int_{t_0}^t|X_s-X_s^n|^2\,ds=0 \ \ \ \ \
\text{c.s.}
\end{equation}
\end{lema}
\begin{proof}
Sin pérdida de generalidad asumiremos $t_0=0.$ La prueba se hará
en 3 pasos:

{}{}\textbf{Paso 1.} Si $X$ es acotado y tiene trayectorias
continuas c.s., para cada $n\geq 1$ se define el proceso simple
\[X_s^n:=\sum_{k=0}^{n-1}X_{kt/n}\mathbf{1}_{A_{n,k}}(s), \ \ \ \
\ \text{donde} \
A_{n,k}=\bigl[\tfrac{kt}{n},\tfrac{(k+1)t}{n}\bigr), \ 0\le k\le
n.\] De la continuidad del proceso se tiene que
$X_s^n\xrightarrow[n\to\infty]{c.s}X_s,$ y por convergencia
dominada
\[\int_0^t|X_s^n-X_s|^2\,ds\xrightarrow[n\to\infty]{c.s.}0.\]

{}{}\textbf{Paso 2.} Si $X$ es acotada, definimos $G_s:=\int_0^s
X_r\,dr,$ y para cada $n\geq 1$
\[X_s^n:=\frac{G_s-G_{(s-1/n)\wedge 0}}{1/n}\]
el cual es continuo y acotado. El teorema fundamental del cálculo
implica que $X_s^n\xrightarrow{c.s}X_s,$ y de nuevo por
convergencia dominada se obtiene (\ref{aproxsimples1}).

{}{}\textbf{Paso 3.} Por último, para $X\in H^2[0,t]$ arbitrario
se define
\[X_s^n:=X_s\cdot\mathbf{1}_{\{|X_s|\le n\}}.\]
Cada proceso $X^n$ es acotado y satisface $|X_s^n-X_s|^2\le
2|X_s|^2.$ Usando el hecho de que $\int_0^t\!|X_s|^2 ds<+\infty$
c.s. y convergencia dominada obtenemos
\[\lim_{n\to\infty}\int_{t_0}^t|X_s-X_s^n|^2\,ds=0 \ \ \ \ \
\text{c.s.}\]
\end{proof}

\begin{lema}\label{aproxsimples2}
Sea $X\in M^2[t_0,t].$ Entonces existe una sucesión de procesos
simples $(X^n)_{n\geq 1}$ en $M^2[t_0,t]$ tal que
\begin{equation}
E\int_{t_0}^t|X_s-X_s^n|^2\,ds\xrightarrow[n\to\infty]\,0.
\end{equation}
\end{lema}
\begin{proof}
Dado que en particular $X\in H^2[t_0,t],$ por el lema anterior
existe una sucesión de procesos simples $(\widetilde{X}^n)_{n\geq
1}$ en $H^2[t_0,t]$ que satisfacen (\ref{aproxsimples1}). Para
cada $N>0$ definimos
\[\Pi_N(s):=
\begin{cases}
\,s               & \text{si}\ \,|s|\le N\\
N\frac{s}{|s|}  & \text{si}\ \,|s|>N
\end{cases} \]
Dado que $|\Pi_N(s)-\Pi_N(r)|\le|s-r|,$
\[\int_{t_0}^t|\Pi_N(X_s)-\Pi_N(\widetilde{X}_s^n)|^2\,ds\le
\int_{t_0}^t|X_s-\widetilde{X}_s^n|^2\,ds\xrightarrow[n\to\infty]{c.s}0\]
y de la desigualdad $(a+b)^2\le 2(a^2+b^2),$
\[\int_{t_0}^t|\Pi_N(X_s)-\Pi_N(\widetilde{X}_s^n)|^2\,ds\le 4tN^2.\]
Aplicando convergencia dominada en la variable $\omega$ tenemos
\begin{equation}\label{aproxsimples3}
E\int_{t_0}^t|\Pi_N(X_s)-\Pi_N(\widetilde{X}_s^n)|^2\,ds\xrightarrow[n\to\infty]\,0.
\end{equation}
Por otro lado
$|\Pi_N(X_s(\omega))-X_s(\omega)|\xrightarrow[N\to\infty]\,0$ y
$|\Pi_N(X_s(\omega))-X_s(\omega)|^2\le |X_s(\omega)|^2$ para todo
$(s,\omega)\in[t_0,t]\times\Omega,$ y de nuevo por convergencia
dominada, pero esta vez en las variables $s$ y $\omega$ se obtiene
\[E\int_{t_0}^t|\Pi_N(X_s)-X_s|^2\,ds\xrightarrow[N\to\infty]\,0.\]
De aquí se deduce que para todo entero positivo $k$ existe $N_k$
tal que
\[E\int_{t_0}^t|\Pi_{N_k}(X_s)-X_s|^2\,ds<\frac{1}{4k}\]
y de (\ref{aproxsimples3}) se sigue que existe $n_k$ tal que
\[E\int_{t_0}^t|\Pi_{N_k}(X_s)-\Pi_{N_k}(\widetilde{X}_s^{n_k})|^2\,ds<\frac{1}{4k}.\]
Tomando la sucesión de procesos simples
$X_s^k:=\Pi_{N_k}(\widetilde{X}_s^{n_k})$ obtenemos
\begin{align*}
E&\int_{t_0}^t|X_s-X_s^k|^2\,ds\\
&\le 2\,E\!\int_{t_0}^t|X_s-\Pi_{N_k}(X_s)|^2\,ds
+2\,E\!\int_{t_0}^t|\Pi_{N_k}(X_s)
-\Pi_{N_k}(\widetilde{X}_s^{n_k})|^2\,ds\xrightarrow[k\to\infty]\,0.
\end{align*}
\end{proof}

\begin{defi}[Integral estocástica para procesos escalonados]
Sea $X$ un proceso escalonado en $H^2[t_0,t]$ de la forma
\[X_s=\sum_{i=0}^{n-1}e_i\mathbf{1}_{[t_i,t_{i+1})}(s)\] donde
cada $e_i$ es $\F_{t_i}-$medible y $t_0<t_1<\cdots<t_n=t.$ La
variable aleatoria
\[\sum_{i=0}^{n-1}e_i(W_{t_{i+1}}-W_{t_i}),\]
es denotada por
\[\int_{t_0}^t X_s\,dW_s\]
y es llamada la \textit{integral estocástica} o \textit{integral
de Itô} de $X$ con respecto al movimiento Browniano $(W_s)_{s\geq
0}.$ Note que si $t_0=0$ entonces $\int_0^t\!X_s\,dW_s$ es
$\F_t-$medible.
\end{defi}

\begin{prop}\label{itolineal1}
Sean $X^1,X^2$ procesos escalonados en $H^2[t_0,t]$ y sean
$\alpha,\beta\in\R.$ Entonces $\alpha X^1+\beta X^2\in H^2[t_0,t]$
y
\[\int_{t_0}^t(\alpha X_s^1+\beta X_s^2)\,dW_s
=\alpha\int_{t_0}^t X_s^1\,dW_s+\beta\int_{t_0}^t X_s^2\,dW_s.\]
\end{prop}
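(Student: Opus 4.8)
El primer paso será verificar que $\alpha X^1+\beta X^2\in H^2[t_0,t]$: esto es inmediato, pues la adaptabilidad se conserva por combinaciones lineales y, de la desigualdad $(a+b)^2\le 2a^2+2b^2$, se obtiene $\int_{t_0}^t|\alpha X_s^1+\beta X_s^2|^2\,ds\le 2\alpha^2\int_{t_0}^t|X_s^1|^2\,ds+2\beta^2\int_{t_0}^t|X_s^2|^2\,ds<+\infty$ c.s. El verdadero contenido de la proposición es la identidad para las integrales, y la dificultad principal —modesta, pero necesaria— es que $X^1$ y $X^2$ vienen a priori con particiones distintas de $[t_0,t]$, de modo que la expresión $\sum_i e_i^{(1)}(W_{t_{i+1}}-W_{t_i})$ está definida respecto a una partición diferente que la de $X^2$.

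Por ello el segundo paso es un refinamiento común: dadas las particiones $t_0<t_1^{(1)}<\cdots<t_{n_1}^{(1)}=t$ asociada a $X^1$ y $t_0<t_1^{(2)}<\cdots<t_{n_2}^{(2)}=t$ asociada a $X^2$, se considera la partición $\pi=\{s_0<s_1<\cdots<s_N=t\}$ obtenida uniendo los puntos de ambas. Observo que cada proceso escalonado admite una representación respecto a cualquier refinamiento de su partición original: si $X^1_s=\sum_i e_i^{(1)}\mathbf{1}_{[t_i^{(1)},t_{i+1}^{(1)})}(s)$ y $[s_j,s_{j+1})\subseteq[t_i^{(1)},t_{i+1}^{(1)})$, entonces sobre $[s_j,s_{j+1})$ el proceso vale $e_i^{(1)}$, que es $\F_{t_i^{(1)}}$-medible y por tanto $\F_{s_j}$-medible (ya que $t_i^{(1)}\le s_j$). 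Así $X^1_s=\sum_{j=0}^{N-1}g_j^{(1)}\mathbf{1}_{[s_j,s_{j+1})}(s)$ con $g_j^{(1)}$ $\F_{s_j}$-medible, y análogamente $X^2_s=\sum_{j=0}^{N-1}g_j^{(2)}\mathbf{1}_{[s_j,s_{j+1})}(s)$. El punto clave aquí es comprobar que la integral estocástica no depende de la representación elegida: si un mismo proceso escalonado se escribe respecto a dos particiones, una refinamiento de la otra, la suma $\sum e_i(W_{t_{i+1}}-W_{t_i})$ es la misma. Esto se sigue por telescopaje: si $[t_i,t_{i+1})$ se subdivide en $[s_j,s_{j+1})\cup\cdots\cup[s_{j+k},s_{j+k+1})$, el coeficiente común $e_i$ factoriza y $\sum_{l=0}^{k}(W_{s_{j+l+1}}-W_{s_{j+l}})=W_{t_{i+1}}-W_{t_i}$.

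El tercer paso concluye la demostración. Con ambos procesos escritos respecto a la partición común $\pi$, el proceso $\alpha X^1+\beta X^2$ es también escalonado respecto a $\pi$, con coeficientes $\alpha g_j^{(1)}+\beta g_j^{(2)}$, cada uno $\F_{s_j}$-medible. Aplicando directamente la definición de la integral estocástica para procesos escalonados:
\[\int_{t_0}^t(\alpha X_s^1+\beta X_s^2)\,dW_s
=\sum_{j=0}^{N-1}(\alpha g_j^{(1)}+\beta g_j^{(2)})(W_{s_{j+1}}-W_{s_j})
=\alpha\sum_{j=0}^{N-1}g_j^{(1)}(W_{s_{j+1}}-W_{s_j})
+\beta\sum_{j=0}^{N-1}g_j^{(2)}(W_{s_{j+1}}-W_{s_j}),\]
y cada una de las dos sumas de la derecha coincide, por la invariancia bajo refinamiento establecida en el paso anterior, con $\int_{t_0}^t X^1_s\,dW_s$ y $\int_{t_0}^t X^2_s\,dW_s$ respectivamente. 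Esto da la identidad buscada. No anticipo obstáculos serios: todo es álgebra lineal finita combinada con la aditividad de los incrementos del movimiento Browniano; el único cuidado técnico es el lema implícito de independencia de la representación, que conviene enunciar (aunque sea en una línea) antes de usarlo.
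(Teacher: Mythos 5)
Tu demostración es correcta y sigue esencialmente el mismo camino que la del texto: refinar a una partición común y aplicar la definición de la integral para procesos escalonados junto con álgebra lineal finita. El único valor añadido es que tú explicitas (vía el telescopaje de los incrementos) la invariancia de la integral bajo refinamiento de la partición, detalle que el texto deja impl\'icito en su par\'entesis sobre el refinamiento com\'un, y verificas adem\'as la pertenencia de $\alpha X^1+\beta X^2$ a $H^2[t_0,t]$.
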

\begin{proof}
Usaremos una partici\'{o}n $t_0<t_1<\cdots<t_n=t$ tal que
\[X^1=\sum_{i=0}^{n-1}e_i\mathbf{1}_{[t_j,t_{i+1})} \ \ \ \text{y}
\ \ \ X^2=\sum_{i=0}^{n-1}c_i\mathbf{1}_{[t_i,t_{i+1})}\] donde
$e_i$ y $c_i$ son variables aleatorias $\F_{t_i}$-medibles para
cada $i$ (si las particiones en las fórmulas de $X^1$ y $X^2$ son
distintas, podemos encontrar un refinamiento común de ambas
particiones). Entonces
\[\alpha X^1+\beta X^2=\sum_{j=0}^{n-1}(\alpha e_{j}+\beta
c_{j})\mathbf{1}_{[t_{j},t_{j+1})}\] y
\begin{align*}
\int_{t_0}^t(\alpha X_s^1+\beta X_s^2)\,dW_s &
=\sum_{j=0}^{n-1}(\alpha e_{j}+\beta c_{j})(W_{t_{j+1}}-W_{t_{j}})\\
&=\alpha\sum_{j=0}^{n-1}e_{j}(W_{t_{j+1}}-W_{t_{j}})+\beta\sum_{j=0}
^{n-1}c_{j}(W_{t_{j+1}}-W_{t_{j}})\\
&=\alpha\int\nolimits_{t_{0}}^{t}X_s^1\,dW_{s}+\beta\int\nolimits_{t_{0}
}^{t}X_s^2\,dW_{s}.
\end{align*}
\end{proof}

\begin{prop}\label{propitosimples}
Si $X$ es un proceso escalonado en $M^2[t_0,t]$ entonces
\begin{align}
E\int_{t_0}^t X_s\,dW_s&=0,\label{espcero1}\\
E\biggl[\Bigl|\int_{t_0}^t
X_s\,dW_s\Bigr|^2\biggr]&=E\int_{t_0}^t|X_s|^2\,ds.\label{isometria1}
\end{align}
\end{prop}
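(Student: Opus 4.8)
Let $X_s=\sum_{i=0}^{n-1}e_i\mathbf{1}_{[t_i,t_{i+1})}(s)$ be a step process in $M^2[t_0,t]$, so that each $e_i$ is $\F_{t_i}$-measurable and, since $X\in M^2$, $E[e_i^2]<\infty$ for every $i$. Write $\Delta_i W:=W_{t_{i+1}}-W_{t_i}$, so that by definition $\int_{t_0}^t X_s\,dW_s=\sum_{i=0}^{n-1}e_i\,\Delta_i W$. The whole proof rests on two elementary facts about the increments: for $j>i$, $\Delta_j W$ is independent of $\F_{t_j}$ (hence of $e_i$, $e_j$, and $\Delta_i W$, all of which are $\F_{t_j}$-measurable), and $E[\Delta_j W]=0$, $E[(\Delta_j W)^2]=t_{j+1}-t_j$.

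For (\ref{espcero1}): I would compute $E[e_i\,\Delta_i W]$ by conditioning on $\F_{t_i}$. Since $e_i$ is $\F_{t_i}$-measurable and $\Delta_i W$ is independent of $\F_{t_i}$ with mean zero, $E[e_i\,\Delta_i W\mid\F_{t_i}]=e_i\,E[\Delta_i W]=0$, hence $E[e_i\,\Delta_i W]=0$; summing over $i$ gives $E\int_{t_0}^t X_s\,dW_s=0$. (One should note $e_i\Delta_i W$ is genuinely integrable: $E|e_i\,\Delta_i W|\le E[e_i^2]^{1/2}E[(\Delta_i W)^2]^{1/2}<\infty$ by Cauchy–Schwarz, so the conditioning is legitimate.)

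For (\ref{isometria1}): expand the square, $\bigl(\sum_i e_i\Delta_i W\bigr)^2=\sum_i e_i^2(\Delta_i W)^2+2\sum_{i<j}e_i e_j\,\Delta_i W\,\Delta_j W$, and take expectations term by term. For the diagonal terms, condition on $\F_{t_i}$: $E[e_i^2(\Delta_i W)^2\mid\F_{t_i}]=e_i^2\,E[(\Delta_i W)^2]=e_i^2(t_{i+1}-t_i)$, so $E[e_i^2(\Delta_i W)^2]=E[e_i^2](t_{i+1}-t_i)$, and summing gives $\sum_i E[e_i^2](t_{i+1}-t_i)=E\int_{t_0}^t|X_s|^2\,ds$. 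For the cross terms with $i<j$, condition on $\F_{t_j}$: the factor $e_i e_j\,\Delta_i W$ is $\F_{t_j}$-measurable, $\Delta_j W$ is independent of $\F_{t_j}$ with mean zero, so $E[e_i e_j\,\Delta_i W\,\Delta_j W\mid\F_{t_j}]=e_i e_j\,\Delta_i W\,E[\Delta_j W]=0$; hence every cross term vanishes. Combining the two contributions yields (\ref{isometria1}).

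The only real subtlety — and the step I would be most careful about — is the integrability needed to justify splitting the expectation of the square into a (finite) sum of expectations and to legitimize each conditional-expectation computation: one must check $e_i^2(\Delta_i W)^2$ and $e_i e_j\,\Delta_i W\,\Delta_j W$ are in $L^1$. The first has expectation $E[e_i^2](t_{i+1}-t_i)<\infty$ by the $M^2$ hypothesis and independence; the second is bounded in $L^1$ by Cauchy–Schwarz together with the first. Since the sum is finite (only $n$ terms), no convergence issue arises once each term is integrable, so the interchange is trivial. Everything else is bookkeeping with the tower property $E[\,\cdot\,]=E[E[\,\cdot\mid\F_{t_i}]]$.
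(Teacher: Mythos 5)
Your proof is correct and follows essentially the same route as the paper's: decompose $\int_{t_0}^t X_s\,dW_s=\sum_i e_i\Delta_i W$, kill each term of the first moment and each cross term of the second moment using the independence of $\Delta_j W$ from $\F_{t_j}$ together with $E[\Delta_j W]=0$, and evaluate the diagonal terms to get $\sum_i E[e_i^2](t_{i+1}-t_i)$. The only cosmetic difference is that you phrase the independence via conditioning and the tower property where the paper factors expectations of products of independent variables directly, and you are somewhat more explicit about the $L^1$ checks that legitimize this; both are fine.
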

\begin{proof}
Si $X$ es de la forma
$X_s=\sum_{i=0}^{n-1}e_i\mathbf{1}_{[t_i,t_{i+1})}(s)$ entonces
\[E\int_{t_0}^t|X_s|^2\,ds=\sum_{i=0}^{n-1}E(e_i^2)\cdot(t_{i+1}-t_i)\]
que es finito por hipótesis, y en particular $E(|e_i|)<+\infty.$
Además $E(|W_{t_{i+1}}-W_{t_i}|)<+\infty.$ Dado que $e_i$ es
$\F_{t_i}-$ medible mientras que $W_{t_{i+1}}-W_{t_i}$ es
independiente de $\F_{t_i},$
\[E[e_i(W_{t_{i+1}}-W_{t_i})]=E(e_i)\cdot E(W_{t_{i+1}}-W_{t_i})=0, \ \ \ 0\le i\le n-1.\]
Sumando sobre $i$ se obtiene (\ref{espcero1}). Por otro lado
\begin{align*}
\Bigl|\int_{t_0}^t
X_s\,dW_s\Bigr|^2&=\sum_{i=0}^{n-1}\sum_{j=0}^{n-1}e_i
e_j(W_{t_{i+1}}-W_{t_i})(W_{t_{j+1}}-W_{t_j})\\
&=\sum_{i=0}^{n-1}e_i^2(W_{t_{i+1}}-W_{t_i})^2+\sum_{i<j}e_i
e_j(W_{t_{i+1}}-W_{t_i})(W_{t_{j+1}}-W_{t_j}).
\end{align*}
Si $0\le i<j\le n-1$ entonces $W_{t_{j+1}}-W_{t_j}$ es
independiente de $e_i e_j(W_{t_{i+1}}-W_{t_i}),$ luego
\[E[e_i
e_j(W_{t_{i+1}}-W_{t_i})(W_{t_{j+1}}-W_{t_j})=E[e_i
e_j(W_{t_{i+1}}-W_{t_i})(W_{t_{j+1}}-W_{t_j})\cdot
E[W_{t_{i+1}}-W_{t_i}]=0,\] y de nuevo por la independencia entre
$e_i$ y $W_{t_{i+1}}-W_{t_i},$
\[E[e_i^2(W_{t_{i+1}}-W_{t_i})^2]=E(e_i^2)\cdot
E[(W_{t_{i+1}}-W_{t_i})^2]=E(e_i^2)\cdot(t_{i+1}-t_i), \ 0\le i\le
n-1.\] Por lo tanto
\[E\biggl[\Bigl|\int_{t_0}^t
X_s\,dW_s\Bigr|^2\biggr]=\sum_{i=0}^{n-1}E(e_i^2)\cdot(t_{i+1}-t_i)
=E\int_{t_0}^t|X_s|^2\,ds.\]
\end{proof}

\begin{lema}\label{lemadef}
Para todo proceso escalonado $X\in H^2[t_0,t]$ y para todo
$\varepsilon>0,\, N>0,$
\[\mathbf{P}\biggl(\Bigl|\int_{t_0}^t
X_s\,dW_s\Bigr|>\varepsilon\biggr)
\le\mathbf{P}\biggl(\int_{t_0}^t|X_s|^2\,ds>N\biggr)+\frac{N}{\varepsilon^2}.\]
\end{lema}
\begin{proof}
Para $X$ de la forma
$X_s=\sum_{i=0}^{n-1}e_i\mathbf{1}_{[t_i,t_{i+1})}(s)$ definimos
\[\phi_N(s)=
\begin{cases}
X_s, \ \ & \text{si}\ \,t_k\le s<t_{k+1}\ \ \text{y} \ \
\sum_{i=0}^k e_i^2(t_{i+1}-t_i)\le N,\\
0, \ \ & \text{si}\ \,t_k\le s<t_{k+1}\ \ \text{y} \ \
\sum_{i=0}^k e_i^2(t_{i+1}-t_i)>N.
\end{cases}\]
Entonces $\phi_N\in H^2[t_0,t]$ y
\[\int_{t_0}^t \phi_N^2(s)\,ds=\sum_{i=0}^\nu e_i^2(t_{i+1}-t_i)\]
donde $\nu$ es el entero más grande tal que $\sum_{i=0}^\nu
e_i^2(t_{i+1}-t_i)\le N, \ \nu\le n-1,$ y por esta razón
\[E\int_{t_0}^t \phi_N^2(s)\,ds\le N.\] Además $X_s-\phi_N(s)=0$
para todo $s\in[t_0,t]$ si $\int_{t_0}^t |X_s|^2\,ds\le N,$ luego
\[\mathbf{P}\biggl(\Bigl|\int_{t_0}^t
X_s\,dW_s\Bigr|>\varepsilon\biggr)
\le\mathbf{P}\biggl(\Bigl|\int_{t_0}^t
\phi_N(s)\,dW_s\Bigr|>\varepsilon\biggr)
+\mathbf{P}\biggl(\int_{t_0}^t|X_s|^2\,ds>N\biggr),\] y por la
desigualdad de Chebyshev
\begin{align*}
\mathbf{P}\biggl(\Bigl|\int_{t_0}^t
\phi_N(s)\,dW_s\Bigr|>\varepsilon\biggr)&=\mathbf{P}\biggl(\Bigl|\int_{t_0}^t
\phi_N(s)\,dW_s\Bigr|^2>\varepsilon^2\biggr)\le
\frac{1}{\varepsilon^2}\,E\biggl[\Bigl|\int_{t_0}^t
\phi_N(s)\,dW_s\Bigr|^2\biggr]\\
&=\frac{1}{\varepsilon^2}\,E\int_{t_0}^t|\phi_N(s)|^2\,ds\le
\frac{N}{\varepsilon^2},
\end{align*} obteniendo el resultado.
\end{proof}
Ahora procedemos a definir la integral estocástica para cualquier
proceso $X$ en $H^2[t_0,t]\,:$ por el lema \ref{aproxsimples1}
existe una sucesión $(X^n)_{n\geq 1}$ de procesos simples en
$H^2[t_0,t]$ tal que
\[\int_{t_0}^t|X_s^n-X_s|^2\,ds\xrightarrow[n\to\infty]{c.s.}0.\]
En particular
\[\int_{t_0}^t|X_s^n-X_s|^2\,ds\xrightarrow[n\to\infty]{\mathbf{P}}0.\]
Usando la desigualdad $(a+b)^2\le 2a^2+2b^2$ se obtiene
\[\int_{t_0}^t|X_s^n-X_s^m|^2\,ds\le 2\int_{t_0}^t|X_s^n-X_s|^2\,ds
+2\int_{t_0}^t|X_s-X_s^m|^2\,ds\]
y por lo tanto
\begin{equation}\label{itocauchy1}
\int_{t_0}^t|X_s^n-X_s^m|^2\,ds\xrightarrow[n,m\to\infty]{\mathbf{P}}0.
\end{equation}
Por el lema anterior para todo $\varepsilon>0, \ \rho>0,$
\[\mathbf{P}\biggl(\Bigl|\int_{t_0}^t
(X_s^n-X_s^m)\,dW_s\Bigr|>\varepsilon\biggr)
\le\mathbf{P}\biggl(\int_{t_0}^t|X_s^n-X_s^m|^2\,ds
>\varepsilon^2\rho\biggr)+\rho,\]
y en virtud de (\ref{itocauchy1}) se tiene que
\[\limsup_{n,m\to\infty}\mathbf{P}\biggl(\Bigl|\int_{t_0}^t
X_s^n\,dW_s-\int_{t_0}^t X_s^m\,dW_s\Bigr|>\varepsilon\biggr)
\le\rho.\] Como $\rho>0$ es arbitrario se sigue entonces que la
sucesión $\int_{t_0}^t\!X_s^n\,dW_s, \ n\geq 1$ es de Cauchy en
probabilidad y por lo tanto existe una variable aleatoria real,
que denotaremos por $\int_{t_0}^t X_s\,dW_s,$ tal que
\[\int_{t_0}^t\!X_s^n\,dW_s\xrightarrow[n\to\infty]{\mathbf{P}}\int_{t_0}^t\!X_s\,dW_s.\]

\begin{defi}
A la variable aleatoria $\int_{t_0}^t X_s\,dW_s$ la llamaremos la
\textit{integral estocástica} o \textit{integral de Itô} de $X$
con respecto al movimiento Browniano $(W_s)_{s\geq 0}.$
\end{defi}
La anterior definición es independiente de la sucesión
$(X^n)_{n\geq 1}.$ En efecto: si $(\widetilde{X}^n)_{n\geq 1}$ es
otra sucesión en $H^2[t_0,t]$ que converge a $X$ en el sentido que
\[\int_0^t|\widetilde{X}_s^n-X_s|^2\,ds\xrightarrow[n\to\infty]{c.s.}0\]
entonces la sucesión $(Y^n)_{n\geq 1}$ definida por
\[Y^{2n}=X^n, \ \ Y^{2n+1}=\widetilde{X}^n\] también converge a $X$
en el mismo sentido, y por lo que hemos probado
$\int_{t_0}^t\!Y_s^n\,dW_s$ es una sucesión que converge en
probabilidad. Se sigue entonces que los límites (en probabilidad)
de $\int_{t_0}^t\!X_s^n\,dW_s$ y de
$\int_{t_0}^t\!\widetilde{X}_s^n\,dW_s$ son iguales c.s. (pues son
subsucesiones de una sucesión que converge en probabilidad).

\begin{teorema} Sean $X,\bar{X}$ procesos en $H^2[t_0,t]$
y sean $\alpha,\beta\in\R.$ Entonces
\begin{equation}\label{itolineal2}
\int_{t_0}^t(\alpha X_s+\beta\bar{X}_s)\,dW_s =\alpha\int_{t_0}^t
X_s\,dW_s+\beta\int_{t_0}^t\bar{X}_s\,dW_s.
\end{equation}
\end{teorema}
\begin{proof}
Sean $(X^n)_{n\geq 1}$ y $(\bar{X}^n)_{n\geq 1}$ sucesiones de
procesos simples en $H^2[t_0,t]$ tal que
\[\int_{t_0}^t|X_s^n-X_s|^2\,ds\xrightarrow[n\to\infty]{c.s.}0 \ \ \ \text{y} \ \ \
\int_{t_0}^t|\bar{X}_s^n-\bar{X}_s|^2\,ds\xrightarrow[n\to\infty]{c.s.}0.\]
Aplicando la proposición \ref{itolineal1}
\[\int_{t_0}^t(\alpha X_s^n+\beta\bar{X}_s^n)\,dW_s
=\alpha\int_{t_0}^t
X_s^n\,dW_s+\beta\int_{t_0}^t\bar{X}_s^n\,dW_s, \ \ \forall n\geq
1\] y tomando el límite en probabilidad a cada lado se obtiene
(\ref{itolineal2}).
\end{proof}

\begin{teorema}\label{propitom2}
Si $X$ es un proceso cualquiera en $M^2[t_0,t]$ entonces
\begin{align}
E\int_{t_0}^t X_s\,dW_s&=0,\label{espcero2}\\
E\biggl[\Bigl|\int_{t_0}^t
X_s\,dW_s\Bigr|^2\biggr]&=E\int_{t_0}^t|X_s|^2\,ds.\label{isometria2}
\end{align}
\end{teorema}
La última igualdad suele llamarse \textbf{Isometría de
Itô.}
\begin{proof} Por el lema \ref{aproxsimples2} existe una
sucesión de procesos simples $(X^n)_{n\geq 1}$ en $M^2[t_0,t]$ tal
que
\[E\int_{t_0}^t|X_s^n-X_s|^2\,ds\xrightarrow[n\to\infty]\,0.\]
Por la proposición \ref{propitosimples}
\[E\biggl[\Bigl|\int_{t_0}^t
X_s^n\,dW_s-\int_{t_0}^t
X_s^m\,dW_s\Bigr|^2\biggr]=E\int_{t_0}^t|X_s^n-X_s^m|^2\,ds\xrightarrow[n,m\to\infty]\,0.\]
Entonces $\int_{t_0}^t X_s^n\,dW_s$ es una sucesión de Cauchy en
$L^2$ y por lo tanto converge a un límite, que debe ser igual c.s.
a $\int_{t_0}^t X_s\,dW_s,$ debido a que la convergencia en $L^2$
implica la convergencia en probabilidad. En particular
\[E\int_{t_0}^t X_s\,dW_s=\lim_{n\to\infty}\,E\!\int_{t_0}^t\!
X_s^n\,dW_s=0,\] y por continuidad de la norma
\[E\biggl[\Bigl|\int_{t_0}^t
X_s\,dW_s\Bigr|^2\biggr]=\lim_{n\to\infty}\,E\biggl[\Bigl|\int_{t_0}^t
X_s^n\,dW_s\Bigr|^2\biggr]=\lim_{n\to\infty}\,E\!\int_{t_0}^t|X_s|^2\,ds
=E\int_{t_0}^t|X_s|^2\,ds\]
\end{proof}

\begin{teorema}\label{teoremaprob}
Si $X\in H^2[t_0,t]$ entonces para todo $\varepsilon>0,\, N>0,$
\[\mathbf{P}\biggl(\Bigl|\int_{t_0}^t
X_s\,dW_s\Bigr|>\varepsilon\biggr)
\le\mathbf{P}\biggl(\int_{t_0}^t|X_s|^2\,ds>N\biggr)+\frac{N}{\varepsilon^2}.\]
\end{teorema}
\begin{proof}
Dado que en un espacio de probabilidad la convergencia c.s.
implica la convergencia en probabilidad, por el lema
\ref{aproxlema} existe una sucesión de procesos simples
$(X^n)_{n\geq 1}$ en $H^2[t_0,t]$ tal que
\begin{equation}\label{simplesprob}
\int_{t_0}^t|X_s^n-X_s|^2\,ds\xrightarrow[n\to\infty]{\mathbf{P}}0
\end{equation}
y por definición de la integral estocástica
\begin{equation}\label{simplesprob1}
\int_{t_0}^t\!X_s^n\,dW_s\xrightarrow[n\to\infty]{\mathbf{P}}\int_{t_0}^t\!X_s\,dW_s.
\end{equation}
Aplicando el lema \ref{lemadef} a cada $X^n$ tenemos
\[\mathbf{P}\biggl(\Bigl|\int_{t_0}^t
X_s^n\,dW_s\Bigr|>\varepsilon'\biggr)
\le\mathbf{P}\biggl(\int_{t_0}^t|X_s^n|^2\,ds>N'\biggr)+\frac{N'}{(\varepsilon')^2}.\]
Tomando el límite cuando $n\to\infty$ y usando
(\ref{simplesprob}), (\ref{simplesprob1}) se obtiene
\[\mathbf{P}\biggl(\Bigl|\int_{t_0}^t
X_s\,dW_s\Bigr|>\varepsilon\biggr)
\le\mathbf{P}\biggl(\int_{t_0}^t|X_s|^2\,ds>N\biggr)+\frac{N'}{(\varepsilon')^2}.\]
para todo $\varepsilon'>\varepsilon, \ N<N'.$ Haciendo
$\varepsilon'\uparrow\varepsilon$ y $N'\downarrow N$ se obtiene lo
requerido.
\end{proof}

\begin{teorema}\label{aproxprob}
Sea $X\in H^2[t_0,t]$ y sea $(X^n)_{n\geq 1}$ una sucesión en
$H^2[t_0,t]$ tal que
\[\int_{t_0}^t|X_s^n-X_s|^2\,ds\xrightarrow[n\to\infty]{\mathbf{P}}0,\]
entonces
\[\int_{t_0}^t\!X_s^n\,dW_s\xrightarrow[n\to\infty]{\mathbf{P}}\int_{t_0}^t\!X_s\,dW_s.\]
(note que los procesos $X^n$ no son necesariamente procesos
simples).
\end{teorema}
\begin{proof}
Por el teorema anterior, para todo $\varepsilon>0, \ \rho>0,$
\[\mathbf{P}\biggl(\Bigl|\int_{t_0}^t
(X_s^n-X_s)\,dW_s\Bigr|>\varepsilon\biggr)
\le\mathbf{P}\biggl(\int_{t_0}^t|X_s^n-X_s|^2\,ds>\varepsilon^2\rho\biggr)+\rho.\]
Haciendo $n\to\infty$ y usando la hipótesis y el hecho de que
$\rho>0$ es arbitrario se obtiene el resultado.
\end{proof}

\begin{teorema}\label{aproxsumas}
Sea $X\in H^2[t_0,t]$ con trayectorias continuas c.s., y sea
$(\pi_n)_{n\geq 1}$ una sucesión de particiones
\[t_0=t_{n,0}<t_{n,1}<\cdots<t_{n,m_n}=t\]
del intervalo $[t_0,t],$ donde $m_n$ el número de puntos en la
partición $\pi_n,$ tal que $|\!|\pi_n|\!|\xrightarrow[n\to\infty]\,0.$
Entonces
\[\sum_{k=0}^{m_n-1}X_{t_{n,k}}(W_{t_{n,k+1}}
-W_{t_{n,k}})\xrightarrow[n\to\infty]{\mathbf{P}}\int_{t_0}^t
X_s\,dW_s.\]
\end{teorema}
\begin{proof}
Introduzcamos la sucesión de procesos escalonados $(X^n)_{n\geq
1}$ definidos por
\[X_t^n:=X_{t_{n,k}}, \ \ \ \text{si} \ t\in [t_{n,k},t_{n,k+1}), \ \ 0\le k\le m_n-1.\]
Entonces $X_t^n\xrightarrow[n\to\infty]\,X_t$ uniformemente en
$t\in[t_0,t)$ c.s. Por lo tanto
\[\int_{t_0}^t|X_s^n-X_s|^2\,ds\xrightarrow[n\to\infty]{\mathbf{c.s}}0,\]
y por el teorema \ref{aproxprob}
\[\sum_{k=0}^{m_n-1}X_{t_{n,k}}(W_{t_{n,k+1}}-W_{t_{n,k}})
=\int_{t_0}^t\!X_s^n\,dW_s\xrightarrow[n\to\infty]{\mathbf{P}}\int_{t_0}^t\!X_s\,dW_s.\]
\end{proof}

\begin{ejem}\label{ejemplo1}
Sea $(\pi_n)_{n\geq 1}$ una sucesión de particiones
\[0=t_{n,0}<t_{n,1}<\cdots<t_{n,n}=t\]
de $[0,t]$ tal que $|\!|\pi_n|\!|\xrightarrow[n\to\infty]\,0.$ Usando
la identidad $a(b-a)=\frac{1}{2}(b^2-a^2-(b-a)^2)$ y el teorema
\ref{aproxsumas} se sigue que
\begin{align*}
\int_{t_0}^t
W_s\,dW_s&=\lim_{n\to\infty}\sum_{k=0}^{n-1}W_{t_{n,k}}(W_{t_{n,k+1}}
-W_{t_{n,k}})\\
&=\tfrac{1}{2}\lim_{n\to\infty}\sum_{k=0}^{n-1}(W_{t_{n,k+1}}^2-
W_{t_{n,k}}^2-(W_{t_{n,k+1}}-W_{t_{n,k}})^2)\\
&=\tfrac{1}{2}W_t^2-\tfrac{1}{2}W_0^2
-\tfrac{1}{2}\lim_{n\to\infty}\sum_{k=0}^{n-1}(W_{t_{n,k+1}}-W_{t_{n,k}})^2
\end{align*}
donde $\lim_{n\to\infty}$ es tomado como el límite en
probabilidad. Por el teorema \ref{v2browniano} el último límite
converge en $L^2,$ y por lo tanto en probabilidad, a $t.$ En
conclusión
\[\int_0^tW_s\,dW_s=\tfrac{1}{2}W_t^2-\tfrac{1}{2}t.\]
\end{ejem}

\subsubsection{La integral de Itô como proceso estocástico}
Sea $X\in H^2[0,T]$ y consideremos el proceso estocástico
\[I(t):=
\begin{cases}
\,0, \ \ \ & \text{si} \ t=0,\\
\int_0^t X_s\,dW_s, \ \ \ & \text{si} \ 0<t\le T.
\end{cases}\]
Note que $I(t)$ es $\F_t-$medible.

\begin{lema}
Si $\xi$ y $\xi_1,\xi_2,\ldots$ son variables aleatorias cuadrado
integrables tales que $\xi_{n}\rightarrow \xi$ en $L^{2}$ cuando
$n\rightarrow\infty,$ entonces
\[E[\xi_{n}\mid \mathcal{G}]\xrightarrow[n\to\infty]{L^2}E[\xi\mid\mathcal{G}]\]
para cualquier $\sigma$-álgebra $\mathcal{G}$ sobre $\Omega$
contenida en $\mathcal{F}$.
\end{lema}
\begin{proof}
Por la desigualdad de Jensen
\[\Bigl|E[\xi_{n}\mid\mathcal{G}]-E[\xi\mid\mathcal{G}]\Bigr|^2
=\bigl|E[\xi_{n}-\xi\mid \mathcal{G}]\bigr|^2 \le
E\bigl[\left|\xi_{n}-\xi\right|^2\mid\mathcal{G}\bigr]\] lo cual
implica que
\begin{align*}
E\Bigl[\bigl|E[\xi_{n}\mid \mathcal{G}]-E[\xi\mid
\mathcal{G}]\bigr| ^{2}\Bigr]&\leq
E\Bigl[E\bigl[\left|\xi_{n}-\xi\right|^2\mid \mathcal{G}\bigl]\Bigr]  \\
&=E\left[\left|\xi_{n}-\xi\right|^2\right]\xrightarrow[n\to\infty]\,0.
\end{align*}
\end{proof}

\begin{teorema}\label{propmartingala}
Si $X\in M^2[0,T]$ entonces el proceso $I(t), t\in [0,T],$ es una
martingala con respecto a $\{\F_t\}_{t\in [0,T]}.$
\end{teorema}
\begin{proof}
Sean $0\leq s<t\le T$ y supongamos primero que $X$ es escalonado
de la forma $X_s=\sum_{j=0}^{n-1}e_{j}1_{[t_{j,}t_{j+1})}(s)$
donde $0=t_{0}<t_{1}<\cdots<t_{k}=s<t_{k+1}<\cdots<t_{n}=t.$ Tal
partición siempre puede ser obtenida adicionando el punto $s$ si
es necesario. Entonces \[E\left[\int_0^t
X_r\,dW_r\Bigl|\F_s\right]=\sum_{j=0} ^{n-1}E\left[\left.
e_{j}(W_{t_{j+1}}-W_{t_{j}})\right|\F_s\right].\]
Si $j<k$ entonces $e_{j}$ y $W_{t_{j+1}}-W_{t_{j}}$ son $\mathcal{F}_{s}%
$-medibles y
\[
E\left[\left. e_{j}(W_{t_{j+1}}-W_{t_{j}})\right| \F_s\right]
=e_{j}(W_{t_{j+1}}-W_{t_{j}}),\] si $j\geq k$ entonces
$\F_s\subseteq\F_{t_{j}}$ y
\begin{align*}
E\left[
e_{j}(W_{t_{j+1}}-W_{t_{j}})\mid\F_s\right]&=E\left[E[e_{j}(W_{t_{j+1}}-W_{t_{j}})\mid
\F_{t_{j}}]\bigl|\F_s\right]\\
& =E\left[e_{j}E[W_{t_{j+1}}-W_{t_{j}}\mid\F_{t_{j}
}]\bigl|\F_{s}\right]  \\
& =E[e_{j}|\F_s]\cdot E[W_{t_{j+1}}-W_{t_{j}}]=0
\end{align*}
pues $e_{j}$ es $\F_{t_j}-$medible y $W_{t_{j+1}}-W_{t_{j}}$ es
independiente de $\mathcal{F}_{t_{j}}.$ De esto se tiene que
\[E\left[\int_0^t
X_r\,dW_r\Bigl|\F_s\right]=\sum_{j=0}^{k-1}%
e_{j}(W_{t_{j+1}}-W_{t_{j}})=\int_{0}^{s}X_r\,dW_{r}.\] Para $X\in
M^2[0,T]$ arbitrario, gracias al lema \ref{aproxsimples2} existe
una sucesión de procesos simples $(X^n)_{n\geq 1}$ en $M^2[0,T]$
tal que
\[E\int_0^T|X_r^n-X_r|^2\,dr\xrightarrow[n\to\infty]\,0,\]
y como vimos en la demostración del teorema \ref{propitom2},
\[\int_0^t X_r^n\,dW_r\xrightarrow[n\to\infty]{L^2}\int_0^t X_r\,dW_r\]
y por el lema anterior
\[\int_0^s X_r^n\,dW_r=E\left[\int_0^t
X_r^n\,dW_r\Bigl|\F_s\right]\xrightarrow[n\to\infty]{L^2}E\left[\int_0^t
X_r\,dW_r\Bigl|\F_s\right].\] Por unicidad del límite en $L^2$ se
obtiene
\[E\left[\int_0^t X_r\,dW_r\Bigl|\F_s\right]=\int_{0}^{s}X_r\,dW_{r} \ \ \ \ \text{c.s.}\]
\end{proof}

\begin{lema}\label{lemacasitodo}
Sean $X,\widetilde{X}\in H^2[0,T]$ tales que
$X_t(\omega)=\widetilde{X}_t(\omega)$ para todo
$(t,\omega)\in[0,T]\times\Omega_0,$ $\Omega_0\subset\Omega.$
Entonces
\begin{equation}\label{lemacont}
\int_0^t X_s\,dW_s=\int_0^t\widetilde{X}_s\,dW_s, \ \ \ \text{para
casi todo} \ \omega\in\Omega_0,
\end{equation}
y para todo $t\in[0,T].$
\end{lema}
\begin{proof}
Sean $(X^n)_{n\geq 1}, \ (\widetilde{X}^n)_{n\geq 1}$ sucesiones
de procesos simples en $H^2[0,T]$ tales que
\[\int_0^t|X_s^n-X_s|^2\,ds\xrightarrow[n\to\infty]{c.s.}0, \ \ \ \
\int_0^t|\widetilde{X}_s^n-\widetilde{X}_s|^2\,ds\xrightarrow[n\to\infty]{c.s.}0.\]
De la hipótesis podemos escoger dichas sucesiones tales que si
$\omega\in\Omega_0$ entonces {}
$X_t^n(\omega)=\widetilde{X}_t^n(\omega)$ para todo $t\in[0,T].$
Por lo tanto, de la definición de la integral de Itô para procesos
escalonados tenemos
\[\int_0^t X_s^n\,dW_s=\int_0^t\widetilde{X}_s^n\,dW_s, \ \ \ \text{en} \ \Omega_0,\]
para todo $t\in[0,T].$ Tomando el límite cuando $n\to\infty$ se
sigue (\ref{lemacont}).
\end{proof}

\begin{teorema}\label{teoremacont}
Si $X\in H^2[0,T]$ entonces el proceso $I(t), \ t\in [0,T],$
tiene una versión con trayectorias continuas.
\end{teorema}
\begin{proof}
\textbf{Paso 1.} Suponga primero que $X\in M^2[0,T].$ Sea
$(X^n)_{n\geq 1}$ una sucesión de procesos simples en $M^2[0,T]$
tal que
\[E\int_0^T|X_r^n-X_r|^2\,dr\xrightarrow[n\to\infty]\,0.\]
Por la continuidad de $(W_t)_{t\geq 0}$ los procesos
\[I_n(t):=\int_0^t X_s^n\,dW_s, \ \ \ t\in [0,T],\]
tienen trayectorias continuas c.s. y por el teorema
\ref{propmartingala} $I_n-I_m$ es una martingala para cada $n,m.$
Usando la primera desigualdad de Doob para martingalas tenemos
\begin{align*}
\mathbf{P}\Bigl(\sup_{\ t\in [0,T]}|I_n(t)-I_m(t)|>\varepsilon\Bigr)
&\le\frac{1}{\varepsilon^2}E\biggl[\Bigl|\int_0^T[I_n(t)-I_m(t)]\,dW_t\Bigr|^2\biggr]\\
&=\frac{1}{\varepsilon^2}\,E\!\int_0^T|I_n(t)-I_m(t)|^2\,dt\xrightarrow[n,m\to\infty]\,0.
\end{align*}
Tomando $\varepsilon=1/2^k$ se puede escoger una sucesión
$(n_k)_{k\geq 1}$ tal que
\[\mathbf{P}\Bigl(\sup_{\ t\in [0,T]}|I_{n_k}(t)-I_{n_{k+1}}(t)|>\frac{1}{2^k}\Bigr)<\frac{1}{k^2}, \
\ \ \ k\geq 1.\] Dado que $\sum_{k=0}^\infty k^{-2}<+\infty,$ el
lema de Borel-Cantelli implica que
\[\mathbf{P}\biggl(\limsup_{k\to\infty}\,\Bigl(\sup_{\ t\in [0,T]}|I_{n_k}(t)-I_{n_{k+1}}(t)|
>\frac{1}{2^k}\Bigr)\biggr)=0\] o en forma equivalente
\[\mathbf{P}\biggl(\liminf_{k\to\infty}\,\Bigl(\sup_{\ t\in [0,T]}|I_{n_k}(t)-I_{n_{k+1}}(t)|
\le\frac{1}{2^k}\Bigr)\biggr)=1\] Es decir que para casi todo
$\omega\in\Omega$ existe un $k_0(\omega)\geq 1$ suficientemente
grande tal que
\[|I_{n_k}(t)-I_{n_{k+1}}(t)|\le\frac{1}{2^k}, \ \ \forall t\in[0,T], \ \
\text{si} \ k\geq k_0,\]
y por lo tanto, con probabilidad 1, $I_{n_k}(\cdot)$ converge
uniformemente en $[0,T].$ El límite $J(t)$ es por lo tanto
continuo en $t\in[0,T]$ para casi todo $\omega\in\Omega.$ Dado que
\[\int_0^t X_r^n\,dW_r\xrightarrow[n\to\infty]{L^2}\int_0^t X_r\,dW_r\]
por unicidad del límite se sigue que $I(t)=J(t)$ c.s. para todo
$t\in[0,T].$

{}{}\textbf{Paso 2.} Considere ahora el caso general $X\in
H^2[0,T].$ Para $N>0$ definimos el proceso
$X^N_t:=X_t\cdot\mathbf{1}_{[0,N]}\left(\int_0^t
|X_s|^2\,ds\right).$ Entonces $X^N\in M^2[0,T]$ y por la primer
parte de la demostración el proceso
\[J_N(t):=\int_0^t X_s^N\,dW_s, \ \ \ t\in [0,T],\] posee una
versión continua. Sea
$\Omega_N:=\bigl\{\omega\in\Omega:\int_0^T|X_s(\omega)|^2\,ds<N\bigr\}.$
Si $M>N$ entonces $X^N_t(\omega)=X_t^M(\omega)$ para todo
$(t,\omega)\in[0,T]\times\Omega_N.$ Por el lema anterior se sigue
que $J_N(t)=J_M(t), \ \forall t\in[0,T]$ y para casi todo
$\omega\in\Omega_N$ si $N<M,$ por lo tanto
\[\widetilde{J}(t):=\lim_{M\to\infty}J_M(t)\]
tiene trayectorias continuas para casi todo $\omega\in\Omega_N.$
Dado que $\Omega_N\uparrow\Omega$ entonces
$\mathbf{P}(\Omega_N)\uparrow 1$ si $N\to\infty,$ y así
$\widetilde{J}(t), \ t\in [0,T],$ es un proceso con trayectorias
continuas c.s. Dado que para cada $t\in(0,T],$
\[\mathbf{P}\biggl(\int_{t_0}^t
|X_s-X_s^M|^2\,ds>0\biggr)=\mathbf{P}\biggl(\int_{t_0}^t
|X_s|^2\,ds>M\biggr)\xrightarrow[M\to\infty]\,0,\] tenemos, por el
teorema \ref{aproxprob}, que
\[J_M(t)\xrightarrow[M\to\infty]{\mathbf{P}}\int_0^t X_s\,dW_s=I(t).\]
En consecuencia, $I(t)$ posee la versión continua
$\widetilde{J}(t).$
\end{proof}

\begin{obse}
De ahora en adelante siempre nos referiremos a la versión continua
de la integral estocástica.
\end{obse}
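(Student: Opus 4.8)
The plan is to follow the classical two-stage argument: first establish continuity when $X\in M^2[0,T]$, then localize to reach the general case $X\in H^2[0,T]$.

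\textit{Stage 1: the case $X\in M^2[0,T]$.} By Lemma \ref{aproxsimples2} I would pick simple processes $(X^n)$ in $M^2[0,T]$ with $E\int_0^T|X^n_s-X_s|^2\,ds\to 0$. For each $n$ the running integral $I_n(t):=\int_0^t X^n_s\,dW_s$ is a finite sum of terms $e_i(W_{t\wedge t_{i+1}}-W_{t\wedge t_i})$, hence has continuous trajectories because $W$ does. By Theorem \ref{propmartingala}, $I_n-I_m$ is a (continuous) $\{\F_t\}$-martingale, so the first Doob inequality with exponent $p=2$, together with the It\^o isometry of Theorem \ref{propitom2}, gives
\[\mathbf{P}\Bigl(\sup_{t\in[0,T]}|I_n(t)-I_m(t)|>\varepsilon\Bigr)\le\frac{1}{\varepsilon^2}E\bigl[|I_n(T)-I_m(T)|^2\bigr]=\frac{1}{\varepsilon^2}E\int_0^T|X^n_s-X^m_s|^2\,ds\xrightarrow[n,m\to\infty]\,0.\]
Taking $\varepsilon=2^{-k}$ I would extract a subsequence $(n_k)$ with $\mathbf{P}(\sup_t|I_{n_k}-I_{n_{k+1}}|>2^{-k})<k^{-2}$; Borel--Cantelli then produces, for a.e.\ $\omega$, an index $k_0(\omega)$ beyond which $\sup_t|I_{n_k}-I_{n_{k+1}}|\le 2^{-k}$, so $(I_{n_k})$ converges uniformly on $[0,T]$ to a process $J$ with continuous trajectories a.s. Since $I_n(t)\to I(t)$ in $L^2$ for each fixed $t$ (isometry again), uniqueness of limits forces $J(t)=I(t)$ a.s., so $J$ is the sought continuous version.

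\textit{Stage 2: the general case $X\in H^2[0,T]$.} I would truncate via $X^N_t:=X_t\,\mathbf{1}_{[0,N]}\bigl(\int_0^t|X_s|^2\,ds\bigr)$, which lies in $M^2[0,T]$, so by Stage 1 the process $J_N(t):=\int_0^t X^N_s\,dW_s$ has a continuous version. On $\Omega_N:=\{\int_0^T|X_s|^2\,ds<N\}$ one has $X^N\equiv X^M$ pathwise for every $M>N$, hence Lemma \ref{lemacasitodo} yields $J_N(t)=J_M(t)$ for all $t\in[0,T]$ and a.e.\ $\omega\in\Omega_N$. Since $\Omega_N\uparrow\Omega$, the process $\widetilde J(t):=\lim_{M\to\infty}J_M(t)$ is well defined and has continuous trajectories a.s. Finally, from $\mathbf{P}\bigl(\int_0^t|X_s-X^M_s|^2\,ds>0\bigr)=\mathbf{P}\bigl(\int_0^t|X_s|^2\,ds>M\bigr)\to 0$ and Theorem \ref{aproxprob} we get $J_M(t)\to I(t)$ in probability for each $t$, so uniqueness of limits in probability identifies $\widetilde J$ as a version of $I$.

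\textit{Main obstacle.} The crux is upgrading the $L^2$-convergence of $I_n(t)$ at each individual time to \emph{uniform} convergence along a subsequence, which is precisely what preserves continuity in the limit; this is exactly where Doob's maximal inequality is indispensable, since control of each marginal alone says nothing about the supremum over $t$. The truncation and patching in Stage 2 are routine once Lemma \ref{lemacasitodo} is in hand, the only delicate point being the bookkeeping of the countably many null sets attached to the events $\Omega_N$.
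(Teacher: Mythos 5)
Your proposal is correct and follows essentially the same two-step argument as the paper's proof of the continuity theorem: Doob's maximal inequality plus the It\^o isometry and Borel--Cantelli to get a uniformly convergent subsequence in the $M^2$ case, then truncation via $X^N_t=X_t\mathbf{1}_{[0,N]}\bigl(\int_0^t|X_s|^2\,ds\bigr)$ and the patching lemma for the general $H^2$ case. In fact your displayed bound $\frac{1}{\varepsilon^2}E\bigl[|I_n(T)-I_m(T)|^2\bigr]=\frac{1}{\varepsilon^2}E\int_0^T|X^n_s-X^m_s|^2\,ds$ is the correct form of the step that the paper's own display garbles slightly.
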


\begin{teorema}
Sea $X\in H^2[0,T].$ Entonces para todo $\varepsilon>0, \ N>0,$
\[\mathbf{P}\biggl(\sup_{\, t\in [0,T]}\Bigl|\int_0^t
X_s\,dW_s\Bigr|>\varepsilon\biggr)
\le\mathbf{P}\biggl(\int_0^t|X_s|^2\,ds>N\biggr)+\frac{N}{\varepsilon^2}.\]
\end{teorema}
\begin{proof}
Con la misma notación del paso 2 del teorema anterior tenemos
\begin{align*}
\mathbf{P}&\biggl(\sup_{\, t\in [0,T]}\Bigl|\int_0^t
X_s\,dW_s\Bigr|>\varepsilon\biggr)\\
&\le\mathbf{P}\biggl(\sup_{\, t\in [0,T]}\Bigl|\int_0^t
X_s\,dW_s-\int_0^t
X_s^N\,dW_s\Bigr|>0\biggr)+\mathbf{P}\biggl(\sup_{\, t\in [0,T]}\Bigl|\int_0^t X_s^N\,dW_s\Bigr|>\varepsilon\biggr).
\end{align*}
Dado que $J_N(t), \, t\in [0,T],$ es una martingala continua,
usando la primera desigualdad de Doob para martingalas se obtiene
\begin{equation}\label{probsupint1}
\mathbf{P}\biggl(\sup_{\, t\in [0,T]}\Bigl|\int_0^t
X_s^N\,dW_s\Bigr|>\varepsilon\biggr)\le\frac{1}{\varepsilon^2}E\biggl[\Bigl|\int_0^T
X_s^N\,dW_s\Bigr|^2\biggr]\le\frac{1}{\varepsilon^2}\,E\!\int_0^T|X_s^N|^2\,ds\le
\frac{N}{\varepsilon^2}.
\end{equation}
Ahora, sobre el conjunto
$\Omega_N=\bigl\{\omega\in\Omega:\int_0^T|X_s(\omega)|^2\,ds\le
N\bigr\}$ tenemos que $X_t=X_t^N$ para $t\in [0,T].$ Podemos
entonces aplicar el lema \ref{lemacasitodo} y obtener
\[\int_0^t X_s\,dW_s=\int_0^t X_s^N\,dW_s, \ \ \ \text{para casi todo} \
\omega\in\Omega_N, \ \ \forall t\in [0,T].\]
Dado que ambas integrales son procesos continuos, la última
relación se tiene para todo $t\in [0,T]$ y para todo
$\omega\in\Omega'_N\subseteq\Omega_N$ con
$\mathbf{P}(\Omega_N\setminus\Omega'_N)=0,$ es decir
\begin{equation}\label{probsupint2}
\mathbf{P}\biggl(\sup_{\, t\in [0,T]}\Bigl|\int_0^t
X_s\,dW_s-\int_0^t X_s^N\,dW_s\Bigr|>0\biggr)\le
\mathbf{P}(\Omega\setminus\Omega_N)=\mathbf{P}\biggl(\int_0^t|X_s|^2\,ds>N\biggr).
\end{equation}
Combinando las desigualdades (\ref{probsupint1}) y
(\ref{probsupint2}) se sigue el resultado.
\end{proof}

{}{} Una consecuencia inmediata del anterior teorema es el
\begin{coro}\label{corosup}
Sea $X\in H^2[0,T]$ y sea $(X^n)_{n\geq 1}$ una sucesión en
$H^2[0,T]$ tal que
\[\int_0^T|X_s^n-X_s|^2\,ds\xrightarrow[n\to\infty]{\mathbf{P}}0,\]
entonces
\[\sup_{t\in [0,T]}\left|\int_0^t\!X_s^n\,dW_s
-\int_0^t\!X_s\,dW_s\right|\xrightarrow[n\to\infty]{\mathbf{P}}0.\]
\end{coro}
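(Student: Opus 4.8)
The plan is to reduce everything to the preceding theorem applied to the difference process $X^n-X$, followed by a routine two-$\varepsilon$ argument. First I would observe that $H^2[0,T]$ is a vector space, so $X^n-X\in H^2[0,T]$ for every $n$, and by linearity of the It\^o integral (equation (\ref{itolineal2})) one has, for each fixed $t\in[0,T]$,
\[\int_0^t X_s^n\,dW_s-\int_0^t X_s\,dW_s=\int_0^t (X_s^n-X_s)\,dW_s \quad\text{a.s.}\]
Since by Theorem \ref{teoremacont} (and the convention that we always take the continuous version) all the integrals involved are continuous processes in $t$, the two sides above are continuous modifications of one another, hence indistinguishable; in particular their suprema over $[0,T]$ coincide almost surely:
\[\sup_{t\in[0,T]}\Bigl|\int_0^t X_s^n\,dW_s-\int_0^t X_s\,dW_s\Bigr|=\sup_{t\in[0,T]}\Bigl|\int_0^t (X_s^n-X_s)\,dW_s\Bigr| \quad\text{a.s.}\]

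Next I would apply the preceding theorem to $X^n-X\in H^2[0,T]$: for all $\varepsilon>0$ and $N>0$,
\[\mathbf{P}\biggl(\sup_{t\in[0,T]}\Bigl|\int_0^t (X_s^n-X_s)\,dW_s\Bigr|>\varepsilon\biggr)\le\mathbf{P}\biggl(\int_0^T|X_s^n-X_s|^2\,ds>N\biggr)+\frac{N}{\varepsilon^2}.\]
Now fix $\varepsilon>0$ and $\delta>0$. First choose $N>0$ with $N/\varepsilon^2<\delta/2$. With this $N$ held fixed, the hypothesis that $\int_0^T|X_s^n-X_s|^2\,ds\to 0$ in probability yields an index $n_0$ such that $\mathbf{P}\bigl(\int_0^T|X_s^n-X_s|^2\,ds>N\bigr)<\delta/2$ for all $n\ge n_0$. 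Combining the last two displays gives, for $n\ge n_0$,
\[\mathbf{P}\biggl(\sup_{t\in[0,T]}\Bigl|\int_0^t X_s^n\,dW_s-\int_0^t X_s\,dW_s\Bigr|>\varepsilon\biggr)<\delta.\]
Since $\delta>0$ was arbitrary, the probability on the left tends to $0$ as $n\to\infty$, which is precisely the asserted convergence in probability of the suprema.

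I do not expect any serious obstacle: the corollary is essentially the preceding theorem restated for the difference $X^n-X$. The only step deserving a word of care is the passage from the fixed-$t$ identity provided by linearity to the identity of the suprema; this is what forces one to work with the continuous modifications of the stochastic integrals (rather than with their raw limits in probability), and it is legitimate thanks to the remark following Theorem \ref{teoremacont} that fixes the continuous version once and for all.
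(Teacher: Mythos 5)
Tu demostración es correcta y sigue exactamente el camino que el texto tiene en mente: el corolario se enuncia como ``consecuencia inmediata del anterior teorema'', y tu argumento (aplicar dicho teorema al proceso diferencia $X^n-X$ usando la linealidad de la integral, y luego el argumento de dos $\varepsilon$ eligiendo primero $N$ con $N/\varepsilon^2<\delta/2$) es precisamente esa consecuencia, escrita con todo detalle. La observación sobre el paso de la identidad para $t$ fijo a la identidad de los supremos vía las versiones continuas es un cuidado adicional pertinente que el texto da por sobreentendido.
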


{} Ahora que sabemos que la integral estocástica vista como
función del límite superior es una martingala continua cuando el
integrando está en $M^2[0,T],$ podemos usar las desigualdades de
Doob para obtener las siguientes estimaciones que serán de mucha
importancia en los siguientes capítulos
\begin{teorema}\label{estimacionint}
Si $X\in M^2[0,T]$ entonces
\begin{align}
E\biggl[\sup_{\, t\in [0,T]}\Bigl|\int_0^t
X_s\,dW_s\Bigr|\biggr]&\le 3\,E\Biggl[\biggl(\int_0^T
|X_s|^2\,ds\biggr)^{1/2}\Biggr]\label{estimacionito1}\\[0.1cm]
E\biggl[\sup_{\, t\in [0,T]}\Bigl|\int_0^t
X_s\,dW_s\Bigr|^2\biggr]&\le
4\,E\!\int_0^T\!|X_s|^2\,ds\label{estimacionito2}
\end{align}
\end{teorema}
\begin{proof}
La desigualdad (\ref{estimacionito2}) se sigue directamente de la
segunda desigualdad de Doob y de la isometría de Itô. Para la
primera desigualdad considere los tiempos de parada
\[\tau_c:=\inf\Bigl\{t\geq 0:\int_0^t\!|X_s|^2\,ds\geq
c^2\Bigr\}, \ \ c>0.\] Aplicando la primera desigualdad de Doob a
la martingala $\int_0^{t\wedge\tau_c}X_s\,dW_s, \ t\in [0,T],$
\begin{align*}
\mathbf{P}&\biggl(\sup_{\ t\in [0,T]\wedge\tau_c}\Bigl|\int_0^t
X_s\,dW_s\Bigr|^2\geq c^2\biggr)=\mathbf{P}\biggl(\sup_{\, 0\le
t\le
T}\Bigl|\int_0^{t\wedge\tau_c}X_s\,dW_s\Bigr|^2\geq
c^2\biggr)\\[0.2cm]
&\le\frac{1}{c^2}\,E\biggl[\Bigl|\int_0^{T\wedge\tau_c}\!|X_s|^2\,ds\Bigr|^2\biggr]
=\frac{1}{c^2}\,E\!\int_0^{T\wedge\tau_c}\!|X_s|^2\,ds
\le\frac{1}{c^2}E\left[c^2\wedge\left(\textstyle\int_0^T|X_s|^2\,ds\right)\right],
\end{align*}
lo que implica
\begin{align}
\mathbf{P}\biggl(\sup_{\, t\in [0,T]}&\Bigl|\int_0^t
X_s\,dW_s\Bigr|>c\biggr)=\mathbf{P}\biggl(\sup_{\, t\in [0,T]}\Bigl|\int_0^t X_s\,dW_s\Bigr|^2>c^2\biggr)\notag\\
&\le\mathbf{P}(\tau_c<T)+\mathbf{P}\biggl(\sup_{\, t\in [0,T]\wedge\tau_c}\Bigl|\int_0^t X_s\,dW_s\Bigr|>c^2\biggr)\notag\\
&\le\mathbf{P}\biggl(\int_0^T\!|X_s|^2\,ds\geq c^2\biggr)+
\frac{1}{c^2}\,%
E\left[c^2\wedge\left(\textstyle\int_0^T|X_s|^2\,ds\right)\right].\label{estimacionito3}
\end{align}
Si $\xi$ es una variable aleatoria real positiva, por el teorema
de Fubini
\[\int_0^{+\infty}\mathbf{P}(\xi\geq
c)\,dc=\int_0^{+\infty}\int_{\Omega}\mathbf{1}_{\{\xi\geq
c\}}\,d\mathbf{P}\,dc=\int_{\Omega}\int_0^{+\infty}
\mathbf{1}_{[0,\xi]}(c)\,dc\,d\mathbf{P}=\int_{\Omega}\xi\,d\mathbf{P}=E(\xi)\]
y
\begin{align*}
\int_0^{+\infty}\frac{1}{c^2}E[c^2\wedge\xi^2]\,dc
&=\int_0^{+\infty}\int_{\Omega}\frac{1}{c^2}(c^2\wedge\xi^2)\,d\mathbf{P}\,dc\\
&=\int_0^{+\infty}\int_{\Omega}\frac{1}{c^2}(c^2\mathbf{1}_{\{\xi\geq
c\}}+\xi^2\mathbf{1}_{\{\xi<c\}})\,d\mathbf{P}\,dc\\
&=\int_{\Omega}\int_0^{+\infty}
\mathbf{1}_{[0,\xi]}(c)\,dc\,d\mathbf{P}+\int_{\Omega}\xi^2\int_0^{+\infty}
\frac{1}{c^2}\mathbf{1}_{(\xi,\infty)}(c)\,dc\,d\mathbf{P}\\
&=E(\xi)+\int_{\Omega}\xi^2\int_\xi^{+\infty}
\frac{1}{c^2}\,dc\,d\mathbf{P}\\
&=E(\xi)+\int_{\Omega}\xi^2\frac{1}{\xi}\,d\mathbf{P}=2\,E(\xi).
\end{align*}
Haciendo $\xi^2=\int_0^T\!|X_s|^2\,ds$ y tomando la integral entre
$0$ y $+\infty$ en (\ref{estimacionito3}) se obtiene
(\ref{estimacionito1}.)
\end{proof}

Si $\tau$ es una variable real positiva, la integral $\int_0^\tau
X_s\,dW_s$ se entenderá como la variable aleatoria $I(\tau)$ donde
$I(t)=\int_0^t X_s\,dW_s.$ Podemos entonces preguntarnos: si
$\tau\le T$ c.s., en qué casos se tendrá
\[\int_0^\tau X_s\,dW_s=\int_0^T X_s\mathbf{1}_{[0,\tau)}(s)\,dW_s \ \ \text{?}\]
Para que la integral estocástica de la derecha tenga sentido, el
proceso ${1}_{[0,\tau)}(s), \ s\geq 0,$ debe ser adaptado, lo cual
sugiere que $\tau$ sea por lo menos un tiempo de parada con
respecto a $\{\F_t\}_{t\geq 0}.$ Necesitamos primero el siguiente
lema

\begin{lema}\label{lemazeta}
Sea $X\in H^2[t_0,t]$ y sea $\zeta$ una variable aleatoria real
acotada y $\F_{t_0}-$medible. Entonces $\zeta X\in H^2[t_0,t]$ y
\[\int_{t_0}^t\zeta X_s\,dW_s=\zeta\int_{t_0}^t X_s\,dW_s.\]
\end{lema}
\begin{proof}
Si $X$ es un proceso escalonado de la forma
$X_s=\sum_{i=0}^{n-1}e_i\mathbf{1}_{[t_i,t_{i+1})}(s)$ con
$t_0<t_1<\cdots<t_n=t$ y donde cada $e_i$ es $\F_{t_i}-$medible,
entonces $\zeta e_i$ es también $\F_{t_i}-$medible para $1\le i\le
n$ y
\[\zeta\int_{t_0}^t X_s\,dW_s=\zeta\sum_{i=0}^{n-1}e_i\bigl(W_{t_{i+1}}-W_{t_i}\bigr)=
\sum_{i=0}^{n-1}\zeta
e_i\bigl(W_{t_{i+1}}-W_{t_i}\bigr)=\int_{t_0}^t\zeta X_s\,dW_s.\]
Aproximando con procesos escalonados se obtiene fácilmente la
igualdad para el caso general.
\end{proof}

\begin{prop}\label{intparada}
Sea $X\in H^2[0,T]$ y sea $\tau$ un $\F_t-$tiempo de parada tal
que $\tau\le T$ c.s., entonces
\[\int_0^\tau X_s\,dW_s=\int_0^T X_s\mathbf{1}_{[0,\tau)}(s)\,dW_s.\]
\end{prop}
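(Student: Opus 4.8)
La estrategia natural es reducir el caso general al caso de procesos escalonados, donde la igualdad se verifica por cálculo directo, y luego pasar al límite. Primero trataré el caso en que $X$ es un proceso escalonado de la forma $X_s=\sum_{i=0}^{n-1}e_i\mathbf{1}_{[t_i,t_{i+1})}(s)$. En este caso conviene suponer, refinando la partición si es necesario, que $\tau$ toma un número finito de valores dentro del conjunto $\{t_0,\ldots,t_n\}$; o mejor aún, tratar primero el caso en que $\tau$ es un tiempo de parada que toma finitos valores $0\le s_1<\cdots<s_m\le T$, escribiendo $\int_0^\tau X_s\,dW_s=\sum_k \mathbf{1}_{\{\tau=s_k\}}\int_0^{s_k}X_s\,dW_s$ y usando que $\mathbf{1}_{\{\tau=s_k\}}$ es $\F_{s_k}$-medible junto con el lema \ref{lemazeta} para meter el indicador dentro de la integral sobre cada bloque de la partición. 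El punto clave es que sobre el evento $\{\tau>t_i\}$ (que es $\F_{t_i}$-medible por ser $\tau$ tiempo de parada) el integrando $X_s\mathbf{1}_{[0,\tau)}(s)$ coincide con $e_i$ en $[t_i,t_{i+1})$, salvo en el bloque donde cae $\tau$.

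**Segundo paso: aproximar $\tau$ por tiempos de parada discretos.** Para un tiempo de parada general $\tau\le T$ defino $\tau_n:=\sum_{k=1}^{2^n}\tfrac{kT}{2^n}\mathbf{1}_{\{(k-1)T/2^n<\tau\le kT/2^n\}}$, que es un $\F_t$-tiempo de parada con valores finitos, $\tau_n\downarrow\tau$ y $\tau\le\tau_n\le T$. Por el paso anterior (aplicado a $X$ escalonado) tengo $\int_0^{\tau_n}X_s\,dW_s=\int_0^T X_s\mathbf{1}_{[0,\tau_n)}(s)\,dW_s$. A la izquierda, como $I(t)=\int_0^t X_s\,dW_s$ tiene versión continua (teorema \ref{teoremacont}) y $\tau_n\to\tau$, se tiene $I(\tau_n)\to I(\tau)$ c.s. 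A la derecha, $\mathbf{1}_{[0,\tau_n)}\to\mathbf{1}_{[0,\tau)}$ puntualmente en $s\ne\tau$, luego $\int_0^T|X_s\mathbf{1}_{[0,\tau_n)}-X_s\mathbf{1}_{[0,\tau)}|^2\,ds\to 0$ c.s. por convergencia dominada (dominada por $\int_0^T|X_s|^2\,ds<\infty$), y por el teorema \ref{aproxprob} la integral estocástica converge en probabilidad. Combinando ambos límites se obtiene la igualdad para $X$ escalonado y $\tau$ arbitrario.

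**Tercer paso: aproximar $X$ por procesos escalonados.** Dado $X\in H^2[0,T]$ arbitrario, por el lema \ref{aproxlema} tomo procesos escalonados $X^k$ con $\int_0^T|X_s^k-X_s|^2\,ds\to 0$ c.s., en particular en probabilidad. Entonces $\int_0^T|X_s^k\mathbf{1}_{[0,\tau)}-X_s\mathbf{1}_{[0,\tau)}|^2\,ds\le\int_0^T|X_s^k-X_s|^2\,ds\xrightarrow{\mathbf P}0$, y por el teorema \ref{aproxprob} el lado derecho converge en probabilidad: $\int_0^T X_s^k\mathbf{1}_{[0,\tau)}\,dW_s\xrightarrow{\mathbf P}\int_0^T X_s\mathbf{1}_{[0,\tau)}\,dW_s$. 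Para el lado izquierdo uso el corolario \ref{corosup}: $\sup_{t\in[0,T]}|I_k(t)-I(t)|\xrightarrow{\mathbf P}0$ donde $I_k(t)=\int_0^t X_s^k\,dW_s$; evaluando en $t=\tau(\omega)\le T$ se deduce $I_k(\tau)\xrightarrow{\mathbf P}I(\tau)$. Como cada igualdad $I_k(\tau)=\int_0^T X_s^k\mathbf{1}_{[0,\tau)}\,dW_s$ vale por el paso anterior, pasando al límite en probabilidad y usando unicidad del límite c.s. se concluye.

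**El obstáculo principal** será el primer paso: manejar con cuidado el bloque de la partición que contiene a $\tau$ al verificar la identidad para procesos escalonados. Hay que separar, sobre cada subintervalo $[t_i,t_{i+1})$, los eventos $\{\tau\le t_i\}$, $\{\tau\ge t_{i+1}\}$ y $\{t_i<\tau<t_{i+1}\}$; en este último el indicador $\mathbf{1}_{[0,\tau)}$ recorta el incremento browniano de $W_{t_i}$ a $W_\tau$, y es precisamente ahí donde conviene haber discretizado $\tau$ de modo que sus valores sean puntos de la partición (lo cual siempre se logra refinando), eliminando el caso intermedio. Alternativamente, una ruta más limpia es combinar los pasos primero y segundo invocando directamente que basta probarlo para $\tau$ con valores finitos y $X$ escalonado con partición que contenga esos valores, donde todo se reduce a la linealidad de la integral para procesos escalonados y al lema \ref{lemazeta}.
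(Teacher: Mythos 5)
Tu demostraci\'on es correcta, pero sigue una ruta genuinamente distinta de la del texto. T\'u haces una doble aproximaci\'on: primero reduces al caso en que $X$ es escalonado y $\tau$ toma finitos valores contenidos en la partici\'on (donde la identidad es una verificaci\'on algebraica directa, pues ambos lados valen $\sum_j e_j\mathbf{1}_{\{\tau>t_j\}}(W_{t_{j+1}}-W_{t_j})$, siendo cada $e_j\mathbf{1}_{\{\tau>t_j\}}$ una variable $\F_{t_j}-$medible), luego pasas al l\'{\i}mite en $\tau$ mediante la discretizaci\'on di\'adica por arriba, y finalmente al l\'{\i}mite en $X$. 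El texto evita por completo la aproximaci\'on en $X$: como el lema \ref{lemazeta} vale para cualquier $X\in H^2[t_0,t]$, el caso de $\tau$ discreto se prueba directamente para $X$ general escribiendo
\[\int_0^T X_s\mathbf{1}_{\{s\geq\tau\}}\,dW_s=\sum_i\mathbf{1}_{A_i}\int_{t_i}^T X_s\,dW_s=I(T)-I(\tau),\]
es decir, trabajando con la integral complementaria $\int_\tau^T$ precisamente para que el factor $\mathbf{1}_{A_i}$, que es $\F_{t_i}-$medible, multiplique a una integral que comienza en $t_i$; despu\'es s\'olo queda la aproximaci\'on de $\tau$ por arriba, id\'entica en esencia a tu segundo paso. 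Una observaci\'on menor sobre tu primer paso: tal como lo enuncias, meter el indicador $\mathbf{1}_{\{\tau=s_k\}}$ dentro de $\int_0^{s_k}$ invocando el lema \ref{lemazeta} no es una aplicaci\'on literal del lema, pues $\mathbf{1}_{\{\tau=s_k\}}$ es $\F_{s_k}-$medible y no $\F_0-$medible; sin embargo t\'u mismo se\~nalas el argumento correcto (descomponer por bloques $[t_j,t_{j+1})$ y usar que $\{\tau>t_j\}\in\F_{t_j}$), de modo que no hay una laguna real. Tu tercer paso, que combina el corolario \ref{corosup} para evaluar el lado izquierdo en $t=\tau$ con el teorema \ref{aproxprob} para el lado derecho, es correcto y es el mismo tipo de argumento que el texto emplea en el teorema \ref{intiguales}. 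En resumen, ambas pruebas son v\'alidas; la del texto es m\'as econ\'omica porque explota que el lema \ref{lemazeta} ya cubre integrandos generales de $H^2$, mientras que la tuya tiene la ventaja de reposar s\'olo en c\'alculos expl\'{\i}citos con procesos escalonados y en los teoremas de paso al l\'{\i}mite en probabilidad.
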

\begin{proof}
Suponga que $\tau$ es un tiempo de parada de la forma
$\tau=\sum_{i=1}^n \mathbf{1}_{A_i}t_i,$ donde $\bigcup_{i=1}^n
A_i=\Omega, \ A_i\cap A_j=\varnothing$ para $i\ne j$ y cada
$A_i\in\F_{t_i}.$ Entonces
\[\int_0^T X_s\mathbf{1}_{\{s\geq\tau\}}\,ds
=\int_0^T \biggl(\,
\sum_{i=1}^n\mathbf{1}_{A_i}\mathbf{1}_{\{s\geq
t_i\}}\biggr)X_s\,dW_s=\sum_{i=1}^n\int_{t_i}^T\mathbf{1}_{A_i}X_s\,dW_s\]
y por el lema anterior, debido a que cada $\mathbf{1}_{A_i}$ es
$\F_{t_i}-$medible
\begin{align*}
\int_0^T X_s\mathbf{1}_{\{s\geq\tau\}}\,ds
&=\sum_{i=1}^n\mathbf{1}_{A_i}\int_{t_i}^T X_s\,dW_s
=\sum_{i=1}^n\mathbf{1}_{A_i}\bigl[I(T)-I(t_i)\bigr]\\
&=I(T)-\sum_{i=1}^n\mathbf{1}_{A_i}I(t_i)=I(T)-I\Bigl(\,
\textstyle{\sum_{i=1}^n\mathbf{1}_{A_i}t_i}\Bigr)=\displaystyle\int_{\tau}^T
X_s\,dBs,
\end{align*}
es decir
\[\int_0^\tau X_s\,dW_s=\int_0^T
X_s\mathbf{1}_{\{s<\tau\}}\,dW_s=\int_0^T
X_s\mathbf{1}_{[0,\tau)}(s)\,dW_s.\] Ahora, un tiempo de parada
$\tau$ cualquiera puede ser aproximado por la sucesión decreciente
de tiempos de parada
\[\tau_n=\sum_{k=0}^{2^n}\tfrac{(k+1)T}{2^n}\,\mathbf{1}_{A_{n,k}}\]
donde
$A_{n,k}=\Bigl\{\omega\in\Omega:\frac{kT}{2^n}\le\tau(\omega)<\frac{(k+1)T}{2^n}\Bigr\},
\ k=0,1,\ldots,2^n, \ n\geq 1,$ de tal forma que
$\tau_n\searrow\tau$ c.s. En consecuencia, por la continuidad en
el límite superior de la integral estocástica
\[\lim_{n\to\infty}\int_0^{\tau_n}X_s\,dW_s=\int_0^\tau X_s\,dW_s \ \ \ \text{c.s.}\]
Además, dado que $|X_s|^2\mathbf{1}_{[\tau,\tau_n)}(s)\le|X_s|^2$
y
$|X_s|^2\mathbf{1}_{[\tau,\tau_n)}(s)\xrightarrow[n\to\infty]\,0,
\forall s\in[0,T],$ por convergencia dominada
\[\int_0^T|X_s|^2\mathbf{1}_{[\tau,\tau_n)}(s)\xrightarrow[n\to\infty]{c.s.}0\]
y en particular
$\int_0^T|X_s|^2\mathbf{1}_{[\tau,\tau_n)}(s)\xrightarrow[n\to\infty]{\mathbf{P}}0.$
Usando el teorema \ref{teoremaprob}, para todo $\varepsilon>0, \
\rho>0,$
\begin{align}
\limsup_{n\to\infty}\mathbf{P}&\biggl(\Bigl|\int_0^T
X_s\mathbf{1}_{[0,\tau_n)}(s)\,dW_s -\int_0^T
X_s\mathbf{1}_{[0,\tau)}(s)\,dW_s\Bigr|>\varepsilon\biggr)\\
&\le\rho+\lim_{n\to\infty}\mathbf{P}\biggl(\int_0^T|X_s|^2\mathbf{1}_{[\tau,\tau_n)}(s)\,ds
>\rho^2\varepsilon\biggr)=\rho
\end{align}
y como $\rho>0$ es arbitrario,
\[\int_0^T X_s\mathbf{1}_{[0,\tau_n)}(s)\,dW_s\xrightarrow[n\to\infty]{\mathbf{P}}
\int_0^T X_s\mathbf{1}_{[0,\tau)}(s)\,dW_s.\] Podemos entonces
tomar una subsucesión (que también denotaremos por $\tau_n)$ tal
que
\[\int_0^{\tau_n}X_s\,dW_s
=\int_0^T
H_s\mathbf{1}_{[0,\tau_n)}(s)\,dW_s\xrightarrow[n\to\infty]{c.s.}\int_0^T
X_s\mathbf{1}_{[0,\tau)}(s)\,dW_s\] y por unicidad del límite c.s.
se tiene el resultado.
\end{proof}

\begin{teorema}\label{intiguales}
Sean $X$ y $K$ procesos reales tales que $X,K\in M^2[0,T]$ para
todo $T>0,$ y sea $\tau$ una variable aleatoria positiva tal que
$X_s=K_s,$ si $s<\tau.$ Entonces
\[\int_0^t X_s\,dW_s=\int_0^t K_s\,dW_s, \ \ \ \text{c.s. \ si} \ t<\tau.\]
\end{teorema}
\begin{proof}
Sea $T>0$ fijo y sean $X^n,K^n$ dos sucesiones de procesos
escalonados tales que
\[E\int_0^T |X_s^n-X_s|^2\,ds\xrightarrow[n\to\infty]\,0, \ \ \ \
\ E\int_0^T|K_s^n-K_s|^2\,ds\xrightarrow[n\to\infty]\,0.\] Gracias
a la hipótesis dichas sucesiones se pueden escoger tales que
$X_s^n=K_s^n,$ si $s<\tau\wedge T;$ usando esto en la definición
de integral estocástica para procesos escalonados se sigue
fácilmente que
\[\int_0^t X_s^n\,dW_s=\int_0^t K_s^n\,dW_s, \ \ \
\text{c.s. \ si} \ t<\tau\wedge T.\] Por el corolario
\ref{corosup}
\[\sup_{t\in [0,T]}\left|\int_0^t\!X_s^n\,dW_s
-\int_0^t\!X_s\,dW_s\right|\xrightarrow[n\to\infty]{\mathbf{P}}0.\]
luego existe una subsucesión, que seguiremos notando por
$(X^n)_{n\geq 1},$ tal que
\[\sup_{t\in [0,T]}\left|\int_0^t\!X_s^n\,dW_s
-\int_0^t\!X_s\,dW_s\right|\xrightarrow[n\to\infty]{c.s}0,\] es
decir
\[\int_0^t X_s^n\,dW_s\xrightarrow[n\to\infty]{c.s.}\int_0^t
X_s\,dW_s, \ \ \ \text{para todo} \ t\le T\] y análogamente
\[\int_0^t K_s^n\,dW_s\xrightarrow[n\to\infty]{c.s.}\int_0^t
K_s\,dW_s, \ \ \ \text{para todo} \ t\le T.\] Por unicidad del
límite c.s. en $0\le t<\tau\wedge T$ se sigue que
\[\int_0^t X_s\,dW_s=\int_0^t K_s\,dW_s, \ \ \ \text{c.s. \ si} \ t<\tau\wedge T,\]
y haciendo $T\to\infty$ se obtiene para todo $t<\tau.$
\end{proof}

\begin{prop}\label{cambiovar}
Sea $X\in M^2[0,s].$ Entonces para todo $t\le s,$
\[\int_t^s X_r\,dW_r=\int_0^{s-t}X_{t+r}\,d(W_{t+r}-W_t).\]
\end{prop}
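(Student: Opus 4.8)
La idea es reconocer el miembro derecho como la integral de Itô del proceso $r\mapsto X_{t+r}$ con respecto al movimiento Browniano trasladado $\widetilde{W}_r:=W_{t+r}-W_t$ y la filtración $\widetilde{\F}_r:=\F_{t+r}$, y comparar ambas integrales primero sobre procesos escalonados y luego por densidad.

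Lo primero sería verificar que $(\widetilde{W}_r)_{r\ge 0}$ es un movimiento Browniano unidimensional estándar con respecto a $\{\widetilde{\F}_r\}_{r\ge 0}$: se tiene $\widetilde{W}_0=0$, los incrementos $\widetilde{W}_{r'}-\widetilde{W}_r=W_{t+r'}-W_{t+r}$ son normales de media cero y varianza $r'-r$, y por la propiedad de incrementos independientes del movimiento Browniano (junto con el hecho de que los conjuntos $\mathbf{P}$-nulos no afectan la independencia) $\widetilde{W}_{r'}-\widetilde{W}_r$ es independiente de $\widetilde{\F}_r$ para $r<r'$. En consecuencia, toda la construcción de la integral estocástica desarrollada en esta sección —en particular la definición para procesos escalonados, la isometría de Itô (Teorema \ref{propitom2}) y la aproximación por procesos escalonados en $M^2$ (Lema \ref{aproxsimples2})— es válida palabra por palabra para $\widetilde{W}$ y la filtración $\{\widetilde{\F}_r\}$. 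Nótese además que, como $X\in M^2[0,s]$, su restricción a $[t,s]$ pertenece a $M^2[t,s]$ y $r\mapsto X_{t+r}$ pertenece a $M^2[0,s-t]$ respecto de $\{\widetilde{\F}_r\}$, con $E\int_0^{s-t}|X_{t+r}|^2\,dr=E\int_t^s|X_u|^2\,du<+\infty$ mediante el cambio de variable $u=t+r$.

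El caso de procesos escalonados es un cálculo directo. Si $X_r=\sum_{i=0}^{n-1}e_i\mathbf{1}_{[t_i,t_{i+1})}(r)$ con $t=t_0<t_1<\cdots<t_n=s$ y cada $e_i$ es $\F_{t_i}$-medible, entonces $X_{t+r}=\sum_{i=0}^{n-1}e_i\mathbf{1}_{[t_i-t,\,t_{i+1}-t)}(r)$ es un proceso escalonado en $M^2[0,s-t]$ (pues $e_i$ es $\widetilde{\F}_{t_i-t}$-medible y $0=t_0-t<\cdots<t_n-t=s-t$), y por la definición de la integral para procesos escalonados
\[\int_0^{s-t}X_{t+r}\,d\widetilde{W}_r=\sum_{i=0}^{n-1}e_i\bigl(\widetilde{W}_{t_{i+1}-t}-\widetilde{W}_{t_i-t}\bigr)=\sum_{i=0}^{n-1}e_i\bigl(W_{t_{i+1}}-W_{t_i}\bigr)=\int_t^s X_r\,dW_r.\]

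Para $X\in M^2[0,s]$ arbitrario, por el Lema \ref{aproxsimples2} tomaría una sucesión de procesos escalonados $(X^n)_{n\ge 1}$ en $M^2[t,s]$ con $E\int_t^s|X^n_r-X_r|^2\,dr\to 0$; el cambio de variable $u=t+r$ da entonces $E\int_0^{s-t}|X^n_{t+r}-X_{t+r}|^2\,dr\to 0$. Por la isometría de Itô (Teorema \ref{propitom2}) aplicada a $W$ y a $\widetilde{W}$ se obtiene $\int_t^s X^n_r\,dW_r\to\int_t^s X_r\,dW_r$ y $\int_0^{s-t}X^n_{t+r}\,d\widetilde{W}_r\to\int_0^{s-t}X_{t+r}\,d\widetilde{W}_r$, ambos límites en $L^2$. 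Como para cada $n$ los dos miembros coinciden por el caso escalonado, la unicidad del límite en $L^2$ da la igualdad deseada c.s. El único punto que requiere algún cuidado es la afirmación de que toda la maquinaria de esta sección se traslada a $\widetilde{W}$ y $\{\widetilde{\F}_r\}$; una vez establecida la independencia de los incrementos de $\widetilde{W}$ respecto de esta filtración, el resto es rutinario.
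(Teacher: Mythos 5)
Tu propuesta es correcta y sigue esencialmente el mismo camino que la demostración del texto: verificación directa para procesos escalonados (donde ambos miembros se reducen a la misma suma $\sum_j e_j(W_{t_{j+1}}-W_{t_j})$) y luego paso al límite por aproximación con procesos escalonados. Tu versión es más explícita que la del texto en dos puntos que éste deja impl\'icitos —la identificación del miembro derecho como integral respecto del movimiento Browniano trasladado $\widetilde{W}_r=W_{t+r}-W_t$ con la filtración $\{\F_{t+r}\}$, y el detalle del argumento de aproximación vía isometría de Itô— pero no constituye una ruta distinta.
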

\begin{proof}
Supongamos primero que $X$ es un proceso escalonado de la forma
\[X_s=\sum_{i=0}^{n-1}e_i\mathbf{1}_{[t_i,t_{i+1})}(s)\] con
$0=t_0<t_1<\cdots<t_n=s.$ Podemos suponer sin pérdida de
generalidad que existe $1\le m\le n-1$ tal que $t_m=t.$ Así
entonces para $t\le t+r\le s$
\[X_{t+r}=\sum_{j=m}^{n-1}e_j\mathbf{1}_{[t_j,t_{j+1})}(t+r)
=\sum_{j=m}^{n-1}e_j\mathbf{1}_{[t_j-t,t_{j+1}-t)}(r), \ \ \ 0\le
r\le s-t\] y
\[\int_0^{s-t}X_{t+r}\,d(W_{t+r}-W_t)
=\sum_{j=m}^{n-1}e_j\bigl[W_{t_{j+1}}-W_t-(W_{t_j}-W_t)\bigr]
=\sum_{j=m}^{n-1}e_j\bigl(W_{t_{j+1}}-W_{t_j}\bigr)=\int_t^s
X_r\,dW_r.\] Aproximando con procesos escalonados se obtiene el
resultado para el caso general.
\end{proof}

\subsubsection{Procesos de Itô, diferencial estocástica y Fórmula de
Itô}
El ejemplo \ref{ejemplo1} ilustra que la definición básica de
la Integral de Itô no es muy útil cuando tratamos de evaluar una
integral dada. Esto es similar al caso de las integrales de
Riemman ordinarias, donde usamos el teorema fundamental del
cálculo y la regla de la cadena para hacer cálculos explícitos, en
vez de la definición básica.

{} En este contexto, sin embargo, no tenemos una teoría de
diferenciación. Pero aún así es posible establecer una versión de
la regla de la cadena para el cálculo estocástico. Antes de
estudiar dicha fórmula necesitamos introducir una clase especial
de procesos:

\begin{defi}
Sea $T>0$ fijo. Un proceso estocástico $(X_{t})_{t\in [0,T]}$ es
llamado un \textit{Proceso de Itô} si tiene trayectorias continuas
c.s. y es de la forma
\begin{equation}\label{procesoito}
X_{t}=X_{0}+\int_{0}^{t}h_s\,ds+\int_{0}^{t}G_s\,dW_{s}
\end{equation}
donde $X_0$ es $\F_0-$medible, y $(h_t)_{t\in [0,T]}$ y
$(G_t)_{t\in [0,T]}$ son procesos reales adaptados a
$\{\F_t\}_{t\in [0,T]}$ tales que
\[\int_0^T|h_s|\,ds<+\infty, \ \ \ \int_0^T|G_s|^2\,ds<+\infty, \ \ \ \text{c.s.}\]
es decir $h\in H^1[0,T]$ y $G\in H^2[0,T].$
\end{defi}

\noindent\textbf{Notación.} Para un proceso de Itô se acostumbra
escribir (\ref{procesoito}) como
\begin{equation}
dX_{t}=h_t\,dt+G_t\,dW_{t}
\end{equation}
y llamar a $dX_{t}$ la \textit{diferencial estocástica} de
$X_{t}.$ Esto es conocido como la \textit{notaci\'{o}n diferencial
de It\^{o} }y es una forma eficiente de escribir la ecuaci\'{o}n
(\ref{procesoito}). Se debe tener en cuenta que la diferencial
estoc\'{a}stica no tiene un significado matem\'{a}tico explícito y
sólo debe ser entendida en el contexto de la ecuaci\'{o}n
(\ref{procesoito}).

\begin{ejem}\label{ejemplo2}
En virtud del ejemplo \ref{ejemplo1} se tiene que el proceso
$(W_t^2)_{t\geq 0}$ es un proceso de Itô con diferencial
estocástica $d(W_t^2)=dt+2W_t\,dW_t.$
\end{ejem}

\begin{ejem}\label{ejemplo3}
Sea $(\pi_n)_{n\geq 1}$ una sucesión de particiones
\[0=t_{n,0}<t_{n,1}<\cdots<t_{n,n}=t\]
de $[0,t]$ tal que $|\!|\pi_n|\!|\xrightarrow[n\to\infty]\,0.$ Por el
teorema \ref{aproxsumas},
\[\sum_{k=0}^{n-1}t_{n,k}(W_{t_{n,k+1}}
-W_{t_{n,k}})\xrightarrow[n\to\infty]{\mathbf{P}}\int_0^t
s\,dW_s,\] y claramente
\[\sum_{k=0}^{n-1}W_{t_{n,k+1}}(t_{n,k+1}
-t_{n,k})\xrightarrow[n\to\infty]\,\int_0^t W_s\,ds,\] para todo
$\omega$ para el cual $t\mapsto W_t(\omega)$ es continua. La suma
de los lados de la izquierda es igual a
\[\sum_{k=0}^{n-1}(t_{n,k+1}W_{t_{n,k+1}}-W_{t_{n,k}})=tW_t,\]
luego $d(tW_t)=W_t\,dt+t\,dW_t.$
\end{ejem}

\begin{defi}
Sea $(X_t)_{t\in [0,T]}$ como en (\ref{procesoito}), y sea
$(K_t)_{t\in [0,T]}$ tal que
\[\sup_{t\in [0,T]}|K_s|<+\infty, \ \ \ \text{c.s.}\]
Definimos \ $K_t\,dX_t:=K_t h_t\,dt+K_t G_t\,dW_t.$
\end{defi}

\begin{teorema}[Regla del producto]
Si $X_t^i=h_t^i\,dt+G_t^i\,dW_t \ (i=1,2),$ entonces
\[d(X_t^1 X_t^2)=X_t^1\,dX_t^2+X_t^2\,dX_t^1+G_t^1 G_t^2\,dt\]
o en forma equivalente
\begin{equation}\label{reglaproducto2}
X_t^1 X_t^2=X_0^1 X_0^2+\int_0^t(X_s^1 h_s^2+X_s^2 h_s^1+G_s^1
G_s^2)\,ds+\int_0^t(X_s^1G_s^2+X_s^2G_s^1)\,dW_s.
\end{equation}
\end{teorema}
\begin{proof}
Sea $t\geq 0$ y suponga primero que $h^i$ y $G^i$ son procesos escalonados de la forma
\[h_s^i=\sum_{j=0}^{n-1}c_{ij}\mathbf{1}_{[t_j,t_{j+1})}(s), \ \ \ \ \
G_s^i=\sum_{j=0}^{n-1}e_{ij}\mathbf{1}_{[t_j,t_{j+1})}(s), \ \ \ \
i=1,2\] donde $0=t_0<t_1<\cdots<t_n=t$ y $c_{ij},e_{ij}$ son
$\F_{t_j}-$medibles, $i=1,2.$ Dado que
\[X_s^i=X_{t_j}^i+c_{ij}(s-t_j)+e_{ij}(W_s-W_{t_j}), \ \ \text {si} \ s\in[t_j,t_{j+1}), \ \
0\le j\le n-1, \ \ i=1,2\] y
$\zeta_{ij}:=X_{t_j}^i-c_{ij}t_j-e_{1j}W_{t_j}$ es
$\F_{t_j}-$medible, por el lema \ref{lemazeta} tenemos
\begin{align*}
\int_{t_j}^{t_{j+1}}&(X_s^1 h_s^2+X_s^2 h_s^1+G_s^1
G_s^2)\,ds+\int_{t_j}^{t_{j+1}}(X_s^1G_s^2+X_s^2G_s^1)\,dW_s\\
=&\,c_{2j}\zeta_{1j}(t_{j+1}-t_j)
+\tfrac{1}{2}c_{2j}c_{1j}(t_{j+1}^2-t_j^2)+c_{2j}e_{1j}\int_{t_j}^{t_{j+1}}W_s\,ds\\
&+c_{1j}\zeta_{2j}(t_{j+1}-t_j)
+\tfrac{1}{2}c_{1j}c_{2j}(t_{j+1}^2-t_j^2)+c_{1j}e_{2j}\int_{t_j}^{t_{j+1}}W_s\,ds\\
&+e_{1j}e_{2j}(t_{j+1}-t_j)\\
&+e_{2j}\zeta_{1j}(W_{t_{j+1}}-W_{t_j})
+e_{2j}c_{1j}\int_{t_j}^{t_{j+1}}s\,dW_s+e_{2j}e_{1j}\int_{t_j}^{t_{j+1}}W_s\,dW_s\\
&+e_{1j}\zeta_{2j}(W_{t_{j+1}}-W_{t_j})
+e_{1j}c_{2j}\int_{t_j}^{t_{j+1}}s\,dW_s+e_{1j}e_{2j}\int_{t_j}^{t_{j+1}}W_s\,dW_s\\
=&\,c_{2j}\zeta_{1j}(t_{j+1}-t_j)+c_{1j}\zeta_{2j}(t_{j+1}-t_j)+c_{1j}c_{2j}(t_{j+1}^2-t_j^2)\\
&+e_{2j}\zeta_{1j}(W_{t_{j+1}}-W_{t_j})+e_{1j}\zeta_{2j}(W_{t_{j+1}}-W_{t_j})\\
&+(e_{1j}c_{2j}+c_{1j}e_{2j})\left\{\int_{t_j}^{t_{j+1}}W_s\,ds
+\int_{t_j}^{t_{j+1}}s\,dW_s\right\}\\
&+e_{1j}e_{2j}\left\{t_{j+1}-t_j+2\int_{t_j}^{t_{j+1}}W_s\,dW_s\right\}.
\end{align*}
para cada $0\le j\le n-1.$ De los ejemplos \ref{ejemplo2} y
\ref{ejemplo3} se tiene que
\begin{align*}
t_{j+1}W_{t_{j+1}}-t_j
W_{t_j}&=\int_{t_j}^{t_{j+1}}W_s\,ds+\int_{t_j}^{t_{j+1}}s\,dW_s\\
W_{t_{j+1}}^2-W_{t_j}^2&=t_{j+1}-t_j+2\int_{t_j}^{t_{j+1}}W_s\,dW_s,
\end{align*}
luego
\begin{equation}\label{reglaproducto1}
\begin{split}
\int_{t_j}^{t_{j+1}}&(X_s^1 h_s^2+X_s^2 h_s^1+G_s^1
G_s^2)\,ds+\int_{t_j}^{t_{j+1}}(X_s^1G_s^2+X_s^2G_s^1)\,dW_s\\
=&\,c_{2j}\zeta_{1j}(t_{j+1}-t_j)+c_{1j}\zeta_{2j}(t_{j+1}-t_j)+c_{1j}c_{2j}(t_{j+1}^2-t_j^2)\\
&+e_{2j}\zeta_{1j}(W_{t_{j+1}}-W_{t_j})+e_{1j}\zeta_{2j}(W_{t_{j+1}}-W_{t_j})\\
&+(e_{1j}c_{2j}+c_{1j}e_{2j})(t_{j+1}W_{t_{j+1}}-t_j
W_{t_j})+e_{1j}e_{2j}(W_{t_{j+1}}^2-W_{t_j}^2),
\end{split}
\end{equation}
Por otro lado
\begin{align*}
X_{t_{j+1}}^1&X_{t_{j+1}}^2-X_{t_j}^1 X_{t_j}^2\\
=&\,(\zeta_{1j}+c_{1j}t_{j+1}+e_{1j}W_{t_{j+1}})(\zeta_{2j}+c_{2j}t_{j+1}+e_{2j}W_{t_{j+1}})\\
&-(\zeta_{1j}+c_{1j}t_j+e_{1j}W_{t_j})(\zeta_{2j}+c_{2j}t_j+e_{2j}W_{t_j})\\[0.2cm]
=&\,\zeta_{1j}\zeta_{2j}+c_{2j}\zeta_{1j}t_{j+1}+e_{2j}\zeta_{1j}W_{t_{j+1}}
+c_{1j}\zeta_{2j}t_{j+1}+c_{1j}c_{2j}t_{j+1}^2+c_{1j}e_{2j}W_{t_{j+1}}t_{j+1}\\
&+e_{1j}\zeta_{2j}W_{t_{j+1}}+c_{2j}e_{1j}W_{t_{j+1}}t_{j+1}+e_{1j}e_{2j}W_{t_{j+1}}^2\\
&-\zeta_{1j}\zeta_{2j}-c_{2j}\zeta_{1j}t_j-e_{2j}\zeta_{1j}W_{t_j}
-c_{1j}\zeta_{2j}t_j-c_{1j}c_{2j}t_j^2-c_{1j}e_{2j}W_{t_j}t_j\\
&-e_{1j}\zeta_{2j}W_{t_j}-c_{2j}e_{1j}W_{t_j}t_j-e_{1j}e_{2j}W_{t_j}^2\\[0.2cm]
=&\,c_{2j}\zeta_{1j}(t_{j+1}-t_j)+c_{1j}\zeta_{2j}(t_{j+1}-t_j)
+c_{1j}c_{2j}(t_{j+1}^2-t_j^2)\\
&+e_{2j}\zeta_{1j}(W_{t_{j+1}}-W_{t_j})+e_{1j}\zeta_{2j}(W_{t_{j+1}}-W_{t_j})\\
&+(e_{1j}c_{2j}+c_{1j}e_{2j})(t_{j+1}W_{t_{j+1}}-t_j
W_{t_j})+e_{1j}e_{2j}(W_{t_{j+1}}^2-W_{t_j}^2)
\end{align*}
y esto coincide con (\ref{reglaproducto1}), obteniendo así la
regla del producto en el intervalo $[t_j,t_{j+1}),$ y sumando
sobre $j$ se obtiene para el caso $h,G$ escalonados. Para el caso
general considere sucesiones $(h^{i,n})_{n\geq 1}, \
(G^{i,n})_{n\geq 1}$ de procesos escalonados tales que
\begin{align*}
\int_0^T|G_s^{i,n}-G_s|^2\,ds&\xrightarrow[n\to\infty]{c.s}0\\
\int_0^T|h_s^{i,n}-h_s|\,ds&\xrightarrow[n\to\infty]{c.s}0
\end{align*}
para $i=1,2.$ Sea
\[X_t^{i,n}:=X_0^i+\int_0^t h_s^{i,n}\,ds+\int_0^t G_s^{i,n}\,dW_s, \ \ \
n\geq 1, \ \ i=1,2.\] Por el corolario \ref{corosup}
\[\sup_{t\in [0,T]}|X_t^{i,n}-X_t^i|\le\int_0^T|h_s^{i,n}-h_s|\,ds
+\sup_{t\in [0,T]}\Bigl|\int_0^t(G_s^{i,n}-G_s)\,dW_s\Bigr|\xrightarrow[n\to\infty]{\mathbf{P}}0,\]
luego existe una subsucesión, que seguiremos notando por
$X^{i,n},$ tal que $X_t^{i,n}\to X_t^i$ uniformemente en $[0,T],$
c.s. Por esta razón y por el teorema \ref{aproxprob}, usando la
desigualdad
\begin{align*}
|X_s^{i,n}G_s^{j,n}-X_s^iG_s^i|^2&\le
2|X_s^{i,n}G_s^{j,n}-X_s^iG_s^{j,n}|^2
+2|X_s^iG_s^{j,n}-X_s^iG_s^i|^2\\
&=2|X_s^{i,n}-X_s^i|^2\cdot|G_s^{j,n}|^2
+2|G_s^{j,n}-G_s^i|^2\cdot|X_s^i|^2, \ \ \ \ i,j=1,2.
\end{align*}
Se sigue que
\[\int_0^tX_s^{i,n}G_s^{j,n}\,dW_s\xrightarrow[n\to\infty]{\mathbf{P}}\int_0^tX_s^iG_s^i\,dW_s,
\ \ \ t\in[0,T], \ \ i,j=1,2.\] Claramente también se tiene que
\begin{align*}
\int_0^tX_s^{i,n}h_s^{j,n}\,ds&\xrightarrow[n\to\infty]{c.s}\int_0^t
X_s^ih_s^j\,ds, \ \ \ \ i,j=1,2\\
\int_0^tG_s^{1,n}G_s^{2,n}\,ds&\xrightarrow[n\to\infty]{c.s}\int_0^t
G_s^1G_s^2\,ds, \ \ \ \ \forall t\in[0,T].
\end{align*}
Escribiendo (\ref{reglaproducto2}) para $h^{i,n},G^{i,n}$ y
$X^{i,n},$ y tomando el límite en probabilidad cuando{}
$n\to\infty,$ se sigue la regla del producto para el caso general.
\end{proof}

\begin{teorema}[Fórmula de Itô]
Sea $dX_t=h_t\,dt+G_t\,dW_t,$ y sea $F:[0,\infty)\times\R\to\R$ de
clase $\C^{1,2}$ (es decir $\C^1$ en la variable $t\geq 0$ y
$\C^2$ en la variable $x\in\R).$ Entonces el proceso $F(t,X_t), \,
t\geq 0,$ tiene diferencial estocástica dada por
\[dF(t,X_t)=\left\{\tfrac{\partial F}{\partial t}(t,X_t)
+\tfrac{\partial F}{\partial x}(t,X_t)h_t
+\tfrac{1}{2}\tfrac{\partial^2 F}{\partial
x^2}(t,X_t)G_t^2\right\}dt+\tfrac{\partial F}{\partial
x}(t,X_t)G_t\,dW_t.\]
\end{teorema}
{} Note que si $W_t$ fuera continuamente diferenciable en $t,$
entonces (por la regla de la cadena para derivadas totales) el
término $\frac{1}{2}\frac{\partial^2 F}{\partial
x^2}(t,X_t)G_t^2\,dt$ no aparecería.
\begin{proof}
{}{}\textbf{Paso 1.} Usando inducción y el teorema anterior se
prueba fácilmente que
\begin{equation}\label{formulaito1}
d(X_t^m)=\left\{mX_t^{m-1}h_t+\tfrac{1}{2}m(m-1)X_t^{m-2}G_t^2\right\}\,dt
+mX_t^{m-1}G_t\,dW_t,
\end{equation}
para todo entero $m\geq 2.$ Por linealidad de la diferencial
estocástica se obtiene entonces
\begin{equation}\label{formulaito2}
dQ(X_t)=\left\{Q'(X_t)h_t+\tfrac{1}{2}Q''(X_t)G_t^2\right\}\,dt+Q'(X_t)G_t\,dW_t
\end{equation}
para cualquier polinomio $Q(x).$

{}{}\textbf{Paso 2.} Sea $P(t,x)=g(t)Q(x)$ donde $Q(x)$ es un
polinomio en $x\in\R$ y $g(t)$ es continuamente diferenciable para
$t\geq 0.$ Dado que $dg(t)=g'(t)dt,$ de nuevo por la regla del
producto y (\ref{formulaito2}),
\begin{align*}
dP(t,X_t)&=Q(X_t)\,dg(t)+g(t)\,dQ(X_t)\\
&=\left\{g'(t)Q(X_t)+g(t)Q'(X_t)h_t+\tfrac{1}{2}g(t)Q''(X_t)G_t^2\right\}\,dt
+g(t)Q'(X_t)G_t\,dW_t,
\end{align*}
es decir
\begin{equation}\label{formulaito3}
\begin{split}
P&(t,X_t)-P(0,X_0)\\
&=\int_0^t\left\{\tfrac{\partial P}{\partial
s}(s,X_s)+\tfrac{\partial P}{\partial
x}(s,X_s)h_s+\tfrac{1}{2}\tfrac{\partial^2 P}{\partial
x^2}(s,X_s)G_s^2\right\}ds+\int_0^t\tfrac{\partial P}{\partial
x}(s,X_s)G_s\,dW_s,
\end{split}
\end{equation}
para $t\in [0,T].$

{}{}\textbf{Paso 3.} Por linealidad, la fórmula
(\ref{formulaito3}) aún es valida si
\begin{equation}\label{formulaito4}
P(t,x)=\sum_{i=1}^m g_i(t)Q_i(x),
\end{equation}
donde los $Q_i(x)$ son polinomios y los $g_i(t)$ son continuamente
diferenciables. Sea ahora $(P_n)_{n\geq 1}$ una sucesión de
polinomios en $x$ y $t$ tales que
\begin{align*}
P_n(t,x)&\xrightarrow[n\to\infty]\,F(t,x),
\ \ \ \ \ \ \ \ \ \ \ \ \ \frac{\partial P_n}{\partial
t}(t,x)\xrightarrow[n\to\infty]\,\frac{\partial F}{\partial t}(t,x),\\[0.3cm]
\frac{\partial P_n}{\partial
x}(t,x)&\xrightarrow[n\to\infty]\,\frac{\partial F}{\partial x}(t,x),
\ \ \ \ \ \ \ \ \ \ \frac{\partial^2 P_n}{\partial
x^2}(t,x)\xrightarrow[n\to\infty]\,\frac{\partial^2 F}{\partial x^2}(t,x),
\end{align*}
uniformemente sobre conjuntos compactos de $[0,\infty)\times\R.$
Dado que cada polinomio $P_n$ se puede escribir de la forma
(\ref{formulaito4}), por (\ref{formulaito3}) tenemos
\begin{equation}\label{formulaito5}
\begin{split}
&P_n(t,X_t)-P_n(0,X_0)\\
&=\int_0^t\left\{\tfrac{\partial P_n}{\partial
s}(s,X_s)+\tfrac{\partial P_n}{\partial
x}(s,X_s)h_s+\tfrac{1}{2}\tfrac{\partial^2 P_n}{\partial
x^2}(s,X_s)G_s^2\right\}ds+\int_0^t\tfrac{\partial P_n}{\partial
x}(s,X_s)G_s\,dW_s,
\end{split}
\end{equation}
para $t\in [0,T].$ Dado que
\[\int_0^t\left|\tfrac{\partial P_n}{\partial
x}(s,X_s)G_s-\tfrac{\partial F}{\partial
x}(s,X_s)G_s\right|^2 ds\xrightarrow[n\to\infty]{c.s}0,\]
por el teorema \ref{aproxprob} se tiene que
\[\int_0^t\tfrac{\partial P_n}{\partial
x}(s,X_s)G_s\,dW_s\xrightarrow[n\to\infty]{\mathbf{P}}\int_0^t\tfrac{\partial F}{\partial
x}(s,X_s)G_s\,dW_s.\]
Además
\begin{align*}
\int_0^t&\left\{\tfrac{\partial P_n}{\partial
s}(s,X_s)+\tfrac{\partial P_n}{\partial
x}(s,X_s)h_s+\tfrac{1}{2}\tfrac{\partial^2 P_n}{\partial
x^2}(s,X_s)G_s^2\right\}ds\\
&\xrightarrow[n\to\infty]{c.s}\int_0^t\left\{\tfrac{\partial
F}{\partial s}(s,X_s)+\tfrac{\partial F}{\partial
x}(s,X_s)h_s+\tfrac{1}{2}\tfrac{\partial^2 F}{\partial
x^2}(s,X_s)G_s^2\right\}ds.
\end{align*}
Tomando el límite en probabilidad en (\ref{formulaito5}) cuando
$n\to\infty$ se sigue que
\begin{align*}
F&(t,X_t)-F(0,X_0)\\
=&\int_0^t\left\{\tfrac{\partial F}{\partial
s}(s,X_s)+\tfrac{\partial F}{\partial
x}(s,X_s)h_s+\tfrac{1}{2}\tfrac{\partial^2 F}{\partial
x^2}(s,X_s)G_s^2\right\}ds+\int_0^t\tfrac{\partial F}{\partial
x}(s,X_s)G_s\,dW_s,
\end{align*}
lo cual prueba el teorema.
\end{proof}

\begin{defi} Un proceso $m-$dimensional
$W_t=(W_t^1,\ldots,W_t^m)^*, \ t\geq 0,$ será llamado un
movimiento Browniano $m-$dimensional si sus componentes $W_t^i$
son movimientos Brownianos reales independientes.
\end{defi}

Sea $(W_t)_{t\geq 0}$ un movimiento Browniano $m-$dimensional
con $W_0=0$  y sea $\{\F_t\}_{t\geq 0}$ la filtración
generada por $(W_t)_{t\geq 0}$ aumentada con los conjuntos
$\mathbf{P}-$nulos de $\Omega,$ es decir
\[\F_t=\sigma\bigl(\mathcal{N}\cup\{W_s: 0\leq s\le t\}\bigr),
\ \ \ t\geq 0 \] donde $\mathcal{N}=\{E\subseteq\Omega : \exists
G\in \F,\, G\supseteq E \text{ y } \mathbf{P}(G)=0\}.$

\begin{defi}
Notaremos con $H^{p,d\times m}[t_0,t]$ el espacio vectorial de los
procesos $(X_s)_{s\geq 0}$ con valores en el espacio de matrices
$\R^{d\times m}$ y adaptados a $\{\F_t\}_{t\geq 0}$ tales que
$X^{ij}\in H^p[t_0,t]$ para cada $1\le i\le d, \ 1\le j\le m.$
Análogamente se define $M^{p,d\times m}[t_0,t].$
\end{defi}

\begin{defi}
Para $X\in H^{p,d\times m}[t_0,t]$ definimos la integral de Itô
$d-$dimensional como el vector columna
\[\int_{t_0}^tX_s\,dW_s=\left(\sum_{j=1}^m\int_{t_0}^t X_s^{ij}\,dW_s^j\right)_{1\le i\le d}\]
\end{defi}

{} La siguiente es la generalización de la Fórmula de Itô al caso
$d-$dimensional (ver \cite{friedman2}):
\begin{teorema}
Sea $(X_t)_{t\geq 0}$ un proceso de Itô $d-$dimensional de la
forma
\[X_t=X_0+\int_0^th_s\,ds+\int_0^tG_s\,dW_s\]
con $h\in H^{1,d\times 1}[0,T]$ y $G\in M^{2,d\times m}[0,T],$ y
sea $F:[0,\infty)\times\R^d\to\R$ de clase $\C^{1,2}$ (es decir
$\C^1$ en la variable $t\geq 0$ y $\C^2$ en la variable
$x\in\R^d).$ Entonces el proceso $F(t,X_t), \, t\geq 0,$ tiene
diferencial estocástica dada por
\[dF(t,X_t)=\left\{\tfrac{\partial F}{\partial t}(t,X_t)
+\nabla_xF(t,X_t)\cdot h_t+\tfrac{1}{2}Tr\bigl(D_x^2F(t,X_t)\cdot
G_tG_t^*\bigr)\right\}dt+\nabla_xF(t,X_t)\cdot G_t\,dW_t,\] donde
\[\nabla_x F=\Bigl(\frac{\partial F}{\partial x_1
},\cdots,\frac{\partial F}{\partial x_d}\Bigr)\] es el gradiente
de $F$ con respecto a $x$ y $D_x^2F$ es la matriz de tamaño
$d\times d$ de las segundas derivadas con respecto a x, es decir,
$(D_x^2F)_{ij}=\frac{\partial^2 F}{\partial x_i\partial x_j}, \
1\le i,j\le d.$
\end{teorema}

\subsubsection{Aplicaciones de la Fórmula de Itô}
Recuerde que $\{\F_t\}_{t\geq 0}$ es la filtración generada por un
movimiento Browniano $m-$dimen-sional $(W_t)_{t\geq 0},$ aumentada
con los conjuntos $\mathbf{P}-$nulos de $\Omega.$

\begin{teorema}[Teorema de Representación de
Martingalas] Sean $T>0$ fijo y sea $(M_t)_{t\in [0,T]}$ una
martingala con respecto a $\{\F_t\}_{t\in [0,T]}$ tal que
\[E(|M_t|^2)<+\infty, \ \ \ \forall t\in[0,T].\] Entonces
existe un proceso $X\in M^{2,1\times m}[0,T]$ tal que
\[M_t=M_0+\int_0^t X_s\,dW_s, \ \ \ \text{c.s.}\]
\end{teorema}
\begin{proof}
Sea $\mathcal{H}:=\{Z\in L^2(\Omega,\F_T,\mathbf{P}):E(Z)=0\}$
dotado con el producto interno usual de
$L^2(\Omega,\F_T,\mathbf{P})$
\[\bigl\langle Z^1,Z^2\bigr\rangle=E\bigl(Z^1\cdot Z^2\bigr).\]
Dado que la aplicación $X\longmapsto\int_0^T\! X_s\,dW_s$ de
$M^{2,m}[0,T]$ en $\mathcal{H}$ es una isometría, el espacio
\[\V:=\left\{\textstyle\int_0^T\!
X_s\,dW_s:X\in M^{2,1\times m}[0,T]\right\}\] es un subespacio
cerrado de $\mathcal{H},$ luego $\mathcal{H}=\V\oplus\V^{\bot},$
donde $\V^{\bot}$ es el \textit{complemento ortogonal} de $\V$ en
$\mathcal{H}:$
\[\V^{\bot}=\{N\in\mathcal{H}:E(Z\cdot N)=0, \ \forall N\in\V\}.\]
Si probamos que $\V=\mathcal{H},$ toda variable aleatoria cuadrado
integrable $\F_T-$medible con esperanza cero podrá entonces ser
representada como la integral estocástica de un proceso en
$M^{2,1\times m}[0,T].$ En particular, la v.a.
$M_T-M_0\in\mathcal{H}$ tendrá la representación
\[M_T-M_0=\int_0^T X_s\,dW_s, \ \ \ \ \ X\in M^{2,1\times m}[0,T].\]
Tomando valor esperado con respecto a $\F_t$ y usando la propiedad
de martingala de la integral estocástica se sigue
\[M_t-M_0=\int_0^t X_s\,dW_s, \ \ \ \ \  t\in [0,T]\]
obteniendo así la representación deseada. Dado que
$\mathcal{H}=\V\oplus\V^{\bot},$ probar que $\V=\mathcal{H}$ es
equivalente a verificar $\V^{\bot}=\{0\}:$

{}{}\textbf{Paso 1.} Sea $Z\in\V^{\bot}$ y definamos la martingala
$Z_t:=E\bigl(Z\mid\F_t\bigr), \ t\in[0,T].$ Dado que
\[\int_0^\tau X_s\,dW_s=\int_0^T X_s\mathbf{1}_{[0,\tau]}(s)\,dW_s,\]
para todo tiempo de parada $\tau\le T$ y para todo $X\in
M^{2,m}[0,T],$ entonces $\int_0^\tau\!X_s\,dW_s\in\V$ y
\begin{align*}
0=E\left[Z\cdot\int_0^\tau X_s\,dW_s\right]
=E\Biggl[E\left[Z\cdot\int_0^\tau
X_s\,dW_s\,\Bigl|\,\F_\tau\right]\Biggr]&=E\left[\int_0^\tau
X_s\,dW_s\cdot E\bigl[Z\mid\F_\tau\bigr]\right]\\
&=E\left[Z_\tau\!\int_0^\tau X_s\,dW_s\right].
\end{align*}
Dado que $E[Z_0]=E\bigl[E[Z\mid\F_0]\bigr]=E[Z]=0,$ por el teorema
\ref{lemarepmart} el proceso
\[Z_t\cdot\int_0^t\!X_s\,dW_s, \ \ \ \ t\in [0,T],\] resulta ser una
martingala con respecto a $\{\F_t\}_{t\in [0,T]}$, para todo
$X\in M^{2,m}[0,T].$

{}{}\textbf{Paso 2.} Para un $\theta\in\R^m$ fijo pero arbitrario
defina
\[f(t,x):=\exp(i\theta^*x+\tfrac{1}{2}|\theta|^2 t)
=\underbrace{\exp(\tfrac{1}{2}|\theta|^2t)\cdot\cos(\theta^*x)}_{f_1(t,x)}
+i\cdot\underbrace{\exp(\tfrac{1}{2}|\theta|^2t)\cdot\sin(\theta^*x)}_{f_2(t,x)},\]
con $(t,x)\in[0,T]\times\R^m,$ y defina $M_t^\theta:=f(t,W_t), \
t\in[0,T].$ Entonces $|M_t^\theta|=\exp(\tfrac{1}{2}|\theta|^2t).$
Dado que
\[\frac{\partial f_1}{\partial t}(t,x)=\tfrac{1}{2}|\theta|^2 f_1(t,x), \ \ \ \
\frac{\partial f_1}{\partial x_i}(t,x)=-f_2(t,x)\theta_i, \ \ \ \
\frac{\partial^2 f_1}{\partial
x_ix_j}(t,x)=-f_1(t,x)\theta_i\theta_j,\] aplicando Fórmula de Itô
$m-$dimensional a $f_1(t,W_t)$ obtenemos
\[d\bigl[f_1(t,W_t)\bigr]=\Bigl\{\tfrac{1}{2}|\theta|^2 f_1(t,W_t)
-\tfrac{1}{2}\sum_{j=1}^m
f_1(t,W_t)\theta_j^2\Bigr\}dt-f_2(t,W_t)\theta^*dW_t=-f_2(t,W_t)\theta^*dW_t.\]
De forma completamente análoga se deduce que
$d\bigl[f_2(t,W_t)\bigr]=f_1(t,W_t)\theta^*dW_t,$ luego
\[d(f(t,W_t)=\left\{-f_2(t,W_t)+i\cdot f_1(t,W_t)\right\}\theta^*dW_t
=i\cdot f(t,W_t)\theta^*dW_t\] es decir
\[M_t^\theta=1+\sum_{j=1}^m i\cdot\int_0^t \theta_jM_s^\theta\,dW_s^j, \ \ \ t\in [0,T].\]
Como $|M_t^\theta|\le\exp(\tfrac{1}{2}|\theta|^2T)$ si $t\in[0,T]$
para un $\theta\in\R^m$ fijo, las integrales estocásticas en la
última igualdad son martingalas con respecto a $\{\F_t\}_{t\in [0,T]},$ y en particular, $(M_t^\theta)_{t\in [0,T]}$ también lo
es.

{}{}\textbf{Paso 3.} Por los pasos 1 y 2 de la prueba
$(Z_tM_t^\theta)_{t\in [0,T]}$ es una martingala con respecto a
$\{\F_t\}_{t\in [0,T]}.$ Entonces
\[E\bigl[Z_tM_t^\theta\mid \F_s\bigr]=Z_sM_s^\theta
=Z_s\exp(i\theta^*W_s+\tfrac{1}{2}|\theta|^2 s), \ \ \ \ \ 0\le
s\le t\le T.\] Multiplicando a ambos lados por
$\exp(-i\theta^*W_s-\tfrac{1}{2}|\theta|^2 t)$ y usando que
$\exp(-i\theta^*W_s)$ es $\F_s-$medible se sigue que
\[E\bigl[Z_t\exp\bigl(i\theta^*(W_t-W_s)\bigr)\mid \F_s\bigr]
=Z_s\exp\bigr(-\tfrac{1}{2}|\theta|^2(t-s)\bigl), \ \ \ \ \ 0\le
s\le t\le T.\] En particular, si $0=t_0<t_1<\cdots<t_n\le T, \
\Delta_k:=W_{t_k}-W_{t_{k-1}}$ y $\theta^k\in\R^m, \
k=1,\ldots,n,$ se obtiene
\begin{align*}
E\biggl[Z_T\cdot\exp\Bigl(i\sum_{k=1}^n(\theta^k)^*\Delta_k\Bigr)\biggr]
&=E\Biggl[E\biggl[Z_T\cdot
\exp\Bigl(i\sum_{k=1}^n(\theta^k)^*\Delta_k\Bigr)
\,\Bigl|\,\F_{t_{n-1}}\biggr]\Biggr]\\
&=\exp\bigl(-\tfrac{1}{2}|\theta^n|^2(t_n-t_{n-1})\bigr)\cdot
E\biggl[Z_{t_{n-1}}\cdot\exp\Bigl(i\sum_{k=1}^{n-1}(\theta^k)^*\Delta_k\Bigr)\biggr]\\
&=\cdots=\exp\Bigl(-\tfrac{1}{2}\sum_{k=1}^n|\theta^k|^2(t_k-t_{k-1})\Bigr)\cdot
E(Z_0)=0,
\end{align*}
luego \[Z_T\bot\exp\Bigl(i\sum_{k=1}^n(\theta^k)^*\Delta_k\Bigr),
\ \ \forall\theta^k\in\R^m, \ k=1,\ldots,n.\] Como $Z_T=Z$ es
$\F_T-$medible, podemos definir la medida $\mathbf{Q}$ sobre
$\F_T$ via $d\mathbf{Q}=Z\cdot d\mathbf{P},$ es decir
$\mathbf{Q}(A)=E[\mathbf{1}_A Z], \forall A\in\F_T.$ Entonces
\[0=E\biggl[Z_T\cdot\exp\Bigl(i\sum_{k=1}^n(\theta^k)^*\Delta_k\Bigr)\biggr]
=\int_\Omega\exp\Bigl(i\sum_{k=1}^n(\theta^k)^*\Delta_k\Bigr)\,d\mathbf{Q}\]
para todo $\theta^k\in\R^m, \ k=1,\ldots,n.$ La unicidad de la
transformada de Fourier implica que
\[\mathbf{Q}\bigr|_{\sigma(W_{t_1},\ldots,W_{t_n})}=0.\]
Como $\F_T$ es generada por vectores finito dimensionales de la
forma $(W_{t_1},\ldots,W_{t_n}),$ se sigue que
$\mathbf{Q}\bigl|_{\F_T}=0,$ lo cual conduce a $Z\equiv 0$ c.s.
\end{proof}

\begin{teorema}\label{burkholder}
Sea $X\in M^{2,1\times m}[0,T]$ tal que
\[E\int_0^T|X_t|^{2p}\,dt<+\infty\]
para algún $p\geq 1.$ Entonces
\[E\biggl[\sup_{\, t\in [0,T]}\Bigl|\int_0^t
X_s\,dW_s\Bigr|^{2p}\biggr]\le C_p T^{p-1}
E\!\int_0^T|X_s|^{2p}\,ds\] donde $C_p=[4p^3/(2p-1)]^{p}.$
\end{teorema}
\begin{proof}
Aplicando fórmula de Itô al proceso $M_t=\int_0^t X_s\,dW_s$ (que
en notación diferencial se escribe $dM_t=X_t\,dW_t)$ con la
función $F(x)=x^{2p}$ obtenemos
\[d\bigl(M_s^{2p}\bigr)=p(2p-1)(M_s)^{2p-2}|X_s|^2\,ds+2p(M_s)^{2p-1}X_s\,dW_s.\]
Para cada $n\geq 1$ defina los tiempos de parada
$\tau_n:=\inf\{t\geq 0: |M_t|\geq n\}.$ Integrando la anterior
expresión entre $0$ y $t\wedge\tau_n$
\[(M_{t\wedge\tau_n})^{2p}=p(2p-1)\int_0^{t\wedge\tau_n}(M_s)^{2p-2}|X_s|^2\,ds
+2p\int_0^{t\wedge\tau_n}(M_s)^{2p-1}X_s\,dW_s.\] Por la
definición de los tiempos de parada, para cada $n\geq 1$ se tiene
\[E\int_0^t \bigl(M_s^{2p-1}|X_s|\bigr)^2\mathbf{1}_{[0,\tau_n)}(s)\,ds
\le n^{4p-2}\,E\!\int_0^t|X_s|^2\,ds<+\infty\] luego, por la
proposición \ref{intparada}
\[E\int_0^{t\wedge\tau_n}(M_s)^{2p-1}X_s\,dW_s
=E\int_0^t (M_s)^{2p-1}X_s\mathbf{1}_{[0,\tau_n)}(s)\,dW_s=0\] y
\[E\bigl[(M_{t\wedge\tau_n})^{2p}\bigr]
=p(2p-1)E\!\int_0^{t\wedge\tau_n}(M_s)^{2p-2}|X_s|^2\,ds.\]
Entonces la aplicación $t\mapsto
E\bigl[(M_{t\wedge\tau_n})^{2p}\bigr]$ es monótona creciente, y
por la desigualdad de Hölder
\begin{align*}
E\bigl[(M_{T\wedge\tau_n})^{2p}\bigr]&=p(2p-1)E\!\int_0^{T\wedge\tau_n}
(M_s)^{2p-2}|X_s|^2\,ds\\
&=p(2p-1)E\!\int_0^T (M_{s\wedge\tau_n})^{2p-2}|X_{s\wedge\tau_n}|^2\,ds\\
&\le p(2p-1)\biggl\{E\!\int_0^T
(M_{s\wedge\tau_n})^{2p}\,ds\biggr\}^{\frac{2p-2}{2p}}
\cdot\biggl\{E\!\int_0^T
|X_{s\wedge\tau_n}|^{2p}\,ds\biggr\}^{\frac{2}{2p}}\\
&\le p(2p-1)\biggl\{E\!\int_0^T
(M_{T\wedge\tau_n})^{2p}\,ds\biggr\}^{\frac{p-1}{p}}
\cdot\biggl\{E\!\int_0^{T\wedge\tau_n}
|X_s|^{2p}\,ds\biggr\}^{\frac{1}{p}}\\
&\le
p(2p-1)T^{\frac{p-1}{p}}\Bigl\{E\bigl[(M_{T\wedge\tau_n})^{2p}\bigr]\Bigr\}^{\frac{p-1}{p}}
\cdot\biggl\{E\!\int_0^T|X_s|^{2p}\,ds\biggr\}^{\frac{1}{p}}
\end{align*}
dividiendo por
$\Bigl\{E\bigl[(M_{T\wedge\tau_n})^{2p}\bigr]\Bigr\}^{\frac{p-1}{p}}$
a ambos lados
\[\Bigl\{E\bigl[(M_{T\wedge\tau_n})^{2p}\bigr]\Bigr\}^{1/p}
\le p(2p-1)T^{\frac{p-1}{p}}\biggl\{E\!\int_0^T
|X_s|^{2p}\,ds\biggr\}^{1/p}\] elevando a la $p-$ésima potencia y
usando el lema de Fatou
\begin{align*}
E\bigl[(M_T)^{2p}\bigr]&=E\biggl[\liminf_{n\to\infty}(M_{T\wedge\tau_n})^{2p}\biggr]\\
&\le\liminf_{n\to\infty} E\bigl[(M_{T\wedge\tau_n})^{2p}\bigr]\le
[p(2p-1)]^{p}T^{p-1}\, E\!\int_0^T |X_s|^{2p}\,ds.
\end{align*}
Finalmente usamos la segunda desigualdad de Doob para obtener
\[E\biggl[\sup_{t\in [0,T]}(M_t)^{2p}\biggr]
\le\left(\frac{2p}{2p-1}\right)^{2p}E\bigl[M_T^{2p}\bigr]\le[4p^3/(2p-1)]^{p}T^{p-1}\,
E\!\int_0^T|X_s|^{2p}\,ds.\]
\end{proof}
\begin{obse}
Para el caso en que $(X_t)_{t\geq 0}$ toma valores en el espacio
de matrices $\R^{d\times m},$ la constante $C_p$ cambia, ya que
para la integral $d-$dimensional se tiene
\[\Bigl|\int_0^t X_s\,dW_s\Bigr|^{2p}
=\biggl[\sum_{i=1}^d\Bigl|\int_0^t
X_s^{(i)}\,dW_s\Bigr|^2\biggr]^{p} \le d^{p-1}
\sum_{i=1}^d\Bigl|\int_0^t X_s^{(i)}\,dW_s\Bigr|^{2p}\] donde
$X_s^{(i)}$ es la $i-$ésima fila de $X_s,$ luego
\begin{equation}\label{des1}
E\biggl[\sup_{t\in [0,T]}\Bigl|\int_0^t
X_s\,dW_s\Bigr|^{2p}\biggr]\le(dT)^{p-1}[4p^3/(2p-1)]^{p}\,
E\!\int_0^T |\!|X_s|\!|^{2p}\,ds.
\end{equation}
(recuerde que
$|\!|z|\!|^2=Tr[zz^*]=\sum_{i=1}^d\sum_{j=1}^m|z_{ij}|^2,$ para
$z\in\R^{d\times m}).$
\end{obse}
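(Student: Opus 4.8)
The plan is to derive the matrix-valued bound \eqref{des1} directly from the scalar estimate of Theorem \ref{burkholder} by applying it separately to each of the $d$ rows of $X$, the passage between the full matrix norm and the row norms being controlled by two elementary inequalities relating the $\ell^1$ and $\ell^p$ norms of a finite family of nonnegative numbers.

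First I would fix $X\in M^{2,d\times m}[0,T]$ with $E\int_0^T\|X_s\|^{2p}\,ds<+\infty$ and denote by $X_s^{(i)}\in\R^{1\times m}$ its $i$-th row. Since $X^{ij}\in M^2[0,T]$ for all $i,j$, each row satisfies $X^{(i)}\in M^{2,1\times m}[0,T]$, and because $|X_s^{(i)}|^2=\sum_{j=1}^m|X_s^{ij}|^2\le\|X_s\|^2$ pointwise, we get $E\int_0^T|X_s^{(i)}|^{2p}\,ds\le E\int_0^T\|X_s\|^{2p}\,ds<+\infty$; thus the integrability hypothesis of Theorem \ref{burkholder} holds for every row. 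Moreover, by the very definition of the $d$-dimensional It\^o integral, the $i$-th coordinate of $\int_0^t X_s\,dW_s$ is exactly the scalar integral $\int_0^t X_s^{(i)}\,dW_s=\sum_{j=1}^m\int_0^t X_s^{ij}\,dW_s^j$.

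Next I would use convexity of $t\mapsto t^p$ for $p\ge 1$ (equivalently the power-mean inequality) in the form $\bigl(\sum_{i=1}^d a_i\bigr)^p\le d^{p-1}\sum_{i=1}^d a_i^p$ for $a_i\ge 0$, with $a_i=\bigl|\int_0^t X_s^{(i)}\,dW_s\bigr|^2$, which gives the stated pointwise bound $\bigl|\int_0^t X_s\,dW_s\bigr|^{2p}\le d^{p-1}\sum_{i=1}^d\bigl|\int_0^t X_s^{(i)}\,dW_s\bigr|^{2p}$. Taking $\sup_{t\in[0,T]}$ (using $\sup\sum\le\sum\sup$), then expectations, and applying Theorem \ref{burkholder} to each of the $d$ scalar integrals $\int_0^t X_s^{(i)}\,dW_s$ yields
\[E\biggl[\sup_{t\in[0,T]}\Bigl|\int_0^t X_s\,dW_s\Bigr|^{2p}\biggr]\le d^{p-1}T^{p-1}[4p^3/(2p-1)]^{p}\,E\!\int_0^T\sum_{i=1}^d|X_s^{(i)}|^{2p}\,ds.\]

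Finally, to replace $\sum_{i=1}^d|X_s^{(i)}|^{2p}$ by $\|X_s\|^{2p}$ I would invoke the reverse inequality $\sum_{i=1}^d b_i^{p}\le\bigl(\sum_{i=1}^d b_i\bigr)^{p}$ for $b_i\ge 0$ and $p\ge 1$ (monotonicity of $\ell^p$ norms in $p$), applied to $b_i=|X_s^{(i)}|^2$, so that $\sum_{i=1}^d|X_s^{(i)}|^{2p}\le\bigl(\sum_{i=1}^d|X_s^{(i)}|^2\bigr)^{p}=\|X_s\|^{2p}$. Substituting this and writing $d^{p-1}T^{p-1}=(dT)^{p-1}$ reproduces exactly \eqref{des1}. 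I do not expect any genuine obstacle: the argument is a direct reduction to Theorem \ref{burkholder}, and the only point deserving care is that the two algebraic inequalities run in opposite directions---Jensen inflates by the factor $d^{p-1}$, while the $\ell^p$-monotonicity deflates with no extra constant---so that the combined effect is precisely the clean power $(dT)^{p-1}$.
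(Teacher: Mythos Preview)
Your proposal is correct and follows exactly the argument sketched in the paper: the pointwise bound $\bigl|\int_0^t X_s\,dW_s\bigr|^{2p}\le d^{p-1}\sum_{i=1}^d\bigl|\int_0^t X_s^{(i)}\,dW_s\bigr|^{2p}$, then Theorem~\ref{burkholder} applied rowwise, and finally $\sum_i|X_s^{(i)}|^{2p}\le\|X_s\|^{2p}$. You have simply made explicit the steps that the observation leaves implicit after ``luego''.
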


\subsection{Semigrupos y procesos de Markov}
Sea $\ba$ un espacio de \textit{Banach} y sea $L(\ba)$ el espacio
de \textit{Banach} de todos los operadores lineales y continuos
(acotados) sobre $\ba$ con respecto a la norma
\[|\!|T|\!|=\sup_{|\!|f|\!|\le 1}|\!|Tf|\!|.\]
Diremos que un operador $T\in L(\ba)$ es {\it densamente definido}
si el dominio de $T,$ que notaremos por $D(T),$ es denso en $\ba$.
\begin{defi}
Una familia $(T_t)_{t\geq 0}\subset L(\ba)$ con $D(T_t)=\ba$ para
todo $t\geq 0$ se llama {\it Semigrupo fuertemente continuo} o
\textit{$C_0-$semigrupo} si
\begin{description}
\item[(i)]\,$T_0=I$ y $\lim_{t\to 0^+}T_t f=f, \forall f\in\ba$ \ (continuidad
fuerte),
\item[(ii)] $T_t T_s=T_{t+s}, \ \forall s,t\geq 0$ \ (propiedad de semigrupo).
\end{description}
Si además $|\!|T_t|\!|\le 1$ (e.d. $T_t$ es una contracción) para todo
$t\geq 0$ el semigrupo se llama \textit{$C_0-$semigrupo de
contracciones.}
\end{defi}
\begin{defi}
Sea $(T_t)_{t\geq 0}$ un $C_0-$semigrupo sobre $\ba.$ Se define el
\textit{generador infinitesimal} $\A:D(\A)\subseteq\ba\to\ba$ del
semigrupo por
\begin{equation}\label{6.2.1}
\begin{split}
D(\A)&=\Bigl\{f\in B: \lim_{t\downarrow 0}\tfrac{1}{t}(T_tf-f)
\text{ existe en} \ \ba\Bigr\}\\
\A f&=\lim_{t\downarrow 0}\tfrac{1}{t}(T_tf-f).
\end{split}
\end{equation}
\end{defi}
{} El generador infinitesimal es importante porque bajo ciertas
condiciones caracteriza completamente al $C_0-$semigrupo (Teoremas
de Hille-Yosida y Lumer-Phillips \cite{pazy}).

{} Antes de ver las propiedades más importantes del generador
infinitesimal necesitamos primero la siguiente definición
\begin{defi}
Una función $f:[a,b]\to\ba$ se dice \textit{Riemann-integrable} si
el límite
\[\lim_{n\to\infty}\sum_{k=1}^n f(s_k)(t_k-t_{k-1})\]
existe, es independiente de las particiones
$\pi_n:a=t_0,t_1<\cdots<t_n=b,$ donde{} $|\!|\pi_n|\!|=\max_{1\le k\le
n}(t_k-t_{k-1})\to\,0,$ y también es independiente de los puntos
intermedios $s_k\in[t_{k_1},t_k].$ Denotaremos el límite por
$\int_a^b\,f(t)\,dt.$
\end{defi}

\begin{teorema}
Sea $\A$ el generador infinitesimal de un $C_0-$semigrupo
$(T_t)_{t>0}$ sobre $\ba.$

{}{} (a) Si $f\in D(\A)$ y $t\geq 0$ entonces $T_t f\in D(\A)$ y
\[\frac{d(T_t f)}{dt}=\A T_t f=T_t\A f.\] En particular se tiene
la igualdad
\begin{equation}\label{semiprop1}
T_t f-f=\int_0^t\A T_sf\,ds=\int_0^t T_s\A f\,ds.
\end{equation}

{}{} (b) Si $f\in\ba$ y $t\geq 0$ entonces $\int_0^t T_s f\,ds\in
D(\A)$ y
\[T_t f-f=\A\int_0^t T_sf\,ds.\]

{}{} (c) El generador infinitesimal $\A:D(\A)\subseteq\ba\to\ba$
es un operador cerrado y además $D(\A)$ es denso en $\ba.$
\end{teorema}
\begin{proof}
(a) Para $f\in D(\A)$ y $t\geq 0,$ por continuidad de $T_t$
tenemos
\begin{align*}
\lim_{\ h\to 0^+}\frac{1}{h}(T_hT_tf-T_tf)&=\lim_{\ h\to
0^+}\frac{1}{h}(T_{t+h}-T_tf)
=\lim_{\ h\to 0^+}T_t\bigl(\tfrac{1}{h}(T_hf-f)\bigr)\\
&=T_t\Bigl(\lim_{\ h\to 0^+}\tfrac{1}{s}(T_hf-f)\Bigr)=T_t\A f.
\end{align*}
Entonces $T_tf\in D(\A), \ \A T_t f=T_t\A f$ y $\dfrac{d^+(T_t
f)}{dt}=\A T_t f=T_t\A f.$

{} Falta ver que $\dfrac{d^-(T_t f)}{dt}=T_t\A f.$ Para $0<h\le t$
se tiene que
\begin{align*}
\frac{1}{-h}(T_{t-h}f-T_t f)-T_t\A f&=\frac{1}{h}(T_t
f-T_{t-h}f)-T_t\A f\\&=\frac{1}{h}(T_{t-h}T_h
f-T_{t-h}f)-T_{t-h}\A f+T_{t-h}\A f-T_{t-h}T_h\A f.
\end{align*}
Usando la definición del generador infinitesimal $\A$ y la
continuidad fuerte del semigrupo se obtiene entonces que
\begin{align*}
\left\|\frac{1}{-h}(T_{t-h}f-T_tf)-T_t\A
f\right\|&\le\left\|T_{t-h}\bigl(\tfrac{1}{h}(T_h f-f)-\A
f\bigr)\right\|+\left\|T_{t-h}(\A f-T_h f\A f)\right\|\\
&\le |\!|T_{t-h}|\!|\cdot\left\|\tfrac{1}{h}(T_h f-f)-\A
f\right\|+|\!|T_{t-h}|\!|\cdot\left\|\A f-T_h f\A f\right\|\\
&\le\left\|\tfrac{1}{h}(T_h f-f)-\A f\right\|+\left\|\A f-T_h f\A
f\right\|\xrightarrow[h\to 0^+]\,0,
\end{align*}
es decir
\[\frac{d^-(T_t f)}{dt}=\lim_{h\uparrow 0}\frac{1}{h}(T_{t+h}f-T_t f)
=\lim_{h\downarrow 0}\frac{1}{-h}(T_{t-h}f-T_t f)=T_t\A f.\] La
igualdad (\ref{semiprop1}) se sigue fácilmente usando que
\[\int_0^t\frac{d(T_s f)}{ds}\,ds=T_t f-T_0f=T_tf-f.\]
(b) Usando la continuidad y linealidad de los operadores $T_h, \
h>0,$ y el teorema fundamental del cálculo obtenemos
\begin{align*}
\frac{1}{h}&\,\biggl[T_h\!\int_0^t\!T_sf\,ds-\int_0^t
T_sf\,ds\biggr]\\
&=\frac{1}{h}\biggl[\int_0^tT_hT_sf\,ds-\int_0^t T_sf\,ds\biggr]\\
&=\frac{1}{h}\biggl[\int_0^tT_{h+s}f\,ds-\int_0^t
T_sf\,ds\biggr]\\
&=\frac{1}{h}\biggl[\int_h^{t+h}T_sf\,ds-\int_0^t
T_sf\,ds\biggr]\\
&=\frac{1}{h}\int_t^{t+h}T_sf\,ds-\frac{1}{h}\int_0^h
T_sf\,ds\xrightarrow[h\to 0^+]\,T_tf-T_0f=T_tf-f
\end{align*}
(c) Dado que para cada $f\in\ba$
\[f=T_0 f=\lim_{t\downarrow 0}\frac{1}{t}\int_0^tT_sf\,ds\]
la parte (b) implica que $D(\A)$ es denso en $\ba.$ Para ver que
$\A$ es cerrado, sea $(f_n)_{n\geq 1}\subset D(\A)$ tal que
$f_n\to f$ y $\A f_n\to g$ en $\ba.$ Por (\ref{semiprop1}) se
sigue que
\[T_t f_n-f_n=\int_0^tT_s\A f_n\,ds\]
para cada $t>0.$ Haciendo $n\to\infty$ se obtiene $T_t
f-f=\int_0^tT_sg\,ds.$ Dividiendo por $t>0$
\[\frac{1}{t}(T_tf-f)=\frac{1}{t}\int_0^tT_sg\,ds\xrightarrow[t\to 0^+]\,T_0g=g\]
lo que significa que $f\in D(\A)$ y $\A f=g.$
\end{proof}
{}
\begin{defi}
Sea $p(t,x,s,A)$ una función no negativa definida para $0\le t\le
s<+\infty,{} x\in\R^d$ y $A\in\B(\R^d)$ tal que
\begin{description}
  \item[(i)] \, $x\mapsto p(t,x,s,A)$ es Borel-medible, para cada
  $s\geq t, \ A\in\B(\R^d);$
  \item[(ii)] $A\mapsto p(t,x,s,A)$ es una medida de probabilidad, para cada $s\geq t, \ %
  x\in\R^d;$
  \item[(iii)] $p$ satisface la \textit{ecuación de Chapman-Kolmogorov}
\[p(t,x,s,A)=\int_{\R^d}p(u,y,s,A)\,p(t,x,u,dy), \ \ \ \ \text{para todo} \ \ t< u< s, \
  x\in\R^d, \ A\in\B(\R^d).\]
\end{description}
Diremos entonces que $p$ es una \textit{probabilidad de
transición,} o \textit{función de transición de probabilidad.}
\end{defi}
\begin{defi}
Si una probabilidad de transición $p$ sólo depende de $s-t,$ es
decir \[p(t,x,s,A)=p(0,x,s-t,A), \ \ \text{para todo} \ \ s\geq t,
\ x\in\R^d, \ A\in\B(\R^d),\] decimos que $p$ es
\textit{homogénea} y escribiremos $p(t,x,A)$ en vez de
$p(0,x,t,A), \ t\geq 0.$ En este caso la ecuación de
Chapman-Kolmogorov se escribe:
\[p(t+s,x,A)=\int_{\R^d}p(t,y,A)\,p(s,x,dy), \ \ \ t,s>0, \ x\in\R^d, \ A\in\B(\R^d).\]
\end{defi}

\begin{obse}
Es costumbre también requerir que la medida $p(0,x,\cdot)$
coincida con la medida de Dirac $\epsilon_x$ en $\R^d,$ y que
$t\mapsto p(t,x,A)$ sea continua en cero, esto es {}$\lim_{t\to
0^+}p(t,x,A)=\epsilon_x(A).$

{}{} Recuerde que la medida de Dirac $\epsilon_x$ se define como
$\epsilon_x(A):=\mathbf{1}_A(x).$
\end{obse}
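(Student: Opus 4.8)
The plan is to recognize that this observation records two normalization conventions imposed on a transition probability, rather than a theorem carrying nontrivial content; accordingly the ``proof'' amounts to checking internal consistency and to interpreting the two conditions inside the framework already developed. First I would confirm that $\epsilon_x$, defined by $\epsilon_x(A)=\mathbf{1}_A(x)$, is genuinely una medida de probabilidad sobre $(\R^d,\B(\R^d))$: it takes only the values $0$ and $1$, it is countably additive (at most one member of any disjoint family can contain the fixed point $x$), and $\epsilon_x(\R^d)=\mathbf{1}_{\R^d}(x)=1$. Hence the prescription $p(0,x,\cdot)=\epsilon_x$ is admissible, i.e. compatible with condition (ii) in the definition of a transition probability.

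Next I would verify that this boundary normalization is consistent with the Chapman--Kolmogorov equation and in fact plays the role of an identity for it. In the homogeneous form $p(t+s,x,A)=\int_{\R^d}p(t,y,A)\,p(s,x,dy)$, putting $s=0$ and using $p(0,x,dy)=\epsilon_x(dy)$ collapses the integral to $p(t,x,A)$, while putting $t=0$ gives $\int_{\R^d}\mathbf{1}_A(y)\,p(s,x,dy)=p(s,x,A)$; both instances reduce to identities, so imposing $p(0,x,\cdot)=\epsilon_x$ neither contradicts nor is forced by (iii).

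Then I would read off the meaning of the two conditions through the associated transition operators $T_tf(x):=\int_{\R^d}f(y)\,p(t,x,dy)$. The normalization $p(0,x,\cdot)=\epsilon_x$ yields $T_0f(x)=\int_{\R^d}f(y)\,\epsilon_x(dy)=f(x)$, that is $T_0=I$, which is exactly axiom (i) for a $C_0$-semigroup; and the continuity-at-zero requirement $\lim_{t\to 0^+}p(t,x,A)=\epsilon_x(A)$ is the measure-level form of the strong-continuity normalization $\lim_{t\to 0^+}T_tf=f$ from that same definition. Thus the observation simply records the two normalizations under which a transition function generates a semigroup of the kind studied above.

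The point I would stress, in place of an obstacle, is that neither condition is a consequence of axioms (i)--(iii): the Chapman--Kolmogorov equation alone does not pin down the $t=0$ slice, nor does it impose any continuity there. These are therefore genuine additional hypotheses (``es costumbre requerir''), and the only thing that calls for verification is their mutual consistency and their identification with the semigroup normalizations, as sketched; no limiting or convergence estimate of substance enters.
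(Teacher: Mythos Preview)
Your reading is correct: this is an \emph{observaci\'on}, not a theorem, and the paper gives no proof because there is nothing to prove---it is simply recording two standard normalization conventions on a homogeneous transition function. Your consistency checks (that $\epsilon_x$ is a probability measure, that $p(0,x,\cdot)=\epsilon_x$ acts as an identity in Chapman--Kolmogorov, and that the two conditions correspond to $T_0=I$ and strong continuity of the associated semigroup) go well beyond what the paper does, but they are all correct and constitute a sensible commentary on why these conventions are imposed.
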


\begin{prop}
Sea $p(t,x,A)$ una función de transición de probabilidad homogénea
definida para $t\geq 0, \ x\in\R^d$ y $A\in\B(\R^d).$ Entonces la
familia de operadores $(T_t)_{t\geq 0}$ definida por
\[(T_tf)(x):=\int_{\R^d}f(y)\,p(t,x,dy), \ \ \ x\in\R^d\]
es un $C_0-$semigrupo de contracciones sobre el espacio de Banach
\[B_0:=\Bigl\{f:\R^d\to\R \mid f \ \text{es acotada, borel
medible y} \ |\!|T_tf-f|\!|_\infty\xrightarrow[t\downarrow
0]\,0\,\Bigr\}\] con la norma
$|\!|f|\!|_\infty:=\sup_{x\in\R^d}|f(x)|.$
\end{prop}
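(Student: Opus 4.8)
El plan es comprobar, una por una, las propiedades que definen un $C_0-$semigrupo de contracciones, trabajando primero sobre el espacio mayor de \emph{todas} las funciones acotadas y Borel-medibles de $\R^d$ en $\R$ dotado de la norma del supremo, y restringiendo a $B_0$ s\'olo al final. Para $f$ acotada y Borel-medible escribimos $(T_tf)(x):=\int_{\R^d}f(y)\,p(t,x,dy)$. Hay que verificar: (1) $T_tf$ es de nuevo acotada y Borel-medible, con $|\!|T_tf|\!|_\infty\le|\!|f|\!|_\infty$, de modo que cada $T_t$ es un operador lineal y una contracci\'on sobre ese espacio; (2) $T_0=I$; (3) $T_tT_s=T_{t+s}$; (4) $T_t$ aplica $B_0$ en $B_0$; (5) $B_0$, con $|\!|\cdot|\!|_\infty$, es un espacio de Banach. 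La continuidad fuerte $\lim_{t\downarrow 0}T_tf=f$ no requerir\'a entonces ning\'un trabajo, pues es exactamente la condici\'on incluida en la definici\'on de $B_0$.

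Para (1), la linealidad es inmediata de la linealidad de la integral; la acotaci\'on se sigue de $|(T_tf)(x)|\le\int_{\R^d}|f(y)|\,p(t,x,dy)\le|\!|f|\!|_\infty\,p(t,x,\R^d)=|\!|f|\!|_\infty$, usando que $p(t,x,\cdot)$ es una medida de probabilidad; y la Borel-medibilidad de $x\mapsto(T_tf)(x)$ vale para $f=\mathbf{1}_A$ por la condici\'on (i) de la definici\'on de funci\'on de transici\'on (pues $(T_t\mathbf{1}_A)(x)=p(t,x,A)$), luego para $f$ simple por linealidad, y por lo tanto para toda $f$ acotada y Borel-medible aproximando uniformemente con funciones simples (dicha convergencia se preserva al aplicar $T_t$, por ser $T_t$ una contracci\'on). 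Para (2), con la normalizaci\'on usual $p(0,x,\cdot)=\epsilon_x$ se obtiene $(T_0f)(x)=\int_{\R^d} f\,d\epsilon_x=f(x)$. Para (3), fijado $x$, para $f=\mathbf{1}_A$ la ecuaci\'on de Chapman--Kolmogorov (con los papeles de $t$ y $s$ intercambiados) da
\[(T_tT_s\mathbf{1}_A)(x)=\int_{\R^d}p(s,y,A)\,p(t,x,dy)=p(t+s,x,A)=(T_{t+s}\mathbf{1}_A)(x),\]
y esto se extiende a toda $f$ acotada y Borel-medible por linealidad y un argumento de clases mon\'otonas (todos los operadores involucrados son positivos, de modo que puede usarse el teorema de convergencia mon\'otona). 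En particular $T_tT_s=T_{t+s}=T_{s+t}=T_sT_t$.

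El punto sustancial es (4). Sea $f\in B_0$. Usando la propiedad de semigrupo (3) y la linealidad, para $s>0$ se tiene
\[T_s(T_tf)-T_tf = T_{s+t}f - T_tf = T_t(T_sf) - T_tf = T_t\bigl(T_sf - f\bigr),\]
de modo que, por la cota de contracci\'on de (1), $|\!|T_s(T_tf)-T_tf|\!|_\infty\le|\!|T_sf-f|\!|_\infty\to 0$ cuando $s\downarrow 0$, ya que $f\in B_0$; luego $T_tf\in B_0$ y $T_t$ se restringe a una contracci\'on lineal de $B_0$. Para (5), basta ver que $B_0$ es un subespacio cerrado del espacio completo de las funciones acotadas y Borel-medibles con la norma del supremo: si $f_n\in B_0$ y $f_n\to f$ uniformemente, entonces $f$ es acotada y Borel-medible, y $|\!|T_tf-f|\!|_\infty\le 2|\!|f-f_n|\!|_\infty+|\!|T_tf_n-f_n|\!|_\infty$, que puede hacerse arbitrariamente peque\~no escogiendo primero $n$ grande y luego $t$ peque\~no, de modo que $f\in B_0$. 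Combinando (1)--(5) con la observaci\'on sobre la continuidad fuerte se concluye. El \'unico paso que exige algo m\'as que verificaciones rutinarias es (4), y aun all\'i el trabajo ya est\'a hecho al haber probado la identidad de semigrupo (3) a partir de Chapman--Kolmogorov y la cota elemental de contracci\'on.
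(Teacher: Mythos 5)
Tu propuesta es correcta y sigue en esencia el mismo camino que la prueba del texto: la propiedad de semigrupo se obtiene de la ecuaci\'on de Chapman--Kolmogorov, la contracci\'on del hecho de que $p(t,x,\cdot)$ es una medida de probabilidad, y la continuidad fuerte viene incorporada en la propia definici\'on de $B_0$. Lo que a\~nades es completar las verificaciones que el texto deja impl\'{\i}citas (la medibilidad y acotaci\'on de $T_tf$, la invariancia $T_t(B_0)\subseteq B_0$ mediante la identidad $T_s(T_tf)-T_tf=T_t(T_sf-f)$, y la completitud de $B_0$ como subespacio cerrado de las funciones acotadas Borel-medibles), todas ellas correctamente hechas.
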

\begin{proof}
La propiedad de semigrupo y la continuidad fuerte se obtienen
directamente de la ecuación de Chapman-Kolmogorov para el caso
homogéneo y de la definición de $B_0$ respectivamente. Dado que
\[\bigl|(T_tf)(x)\bigr|\le\int_{\R^d}|f(y)|\,p(t,x,dy)\le|\!|f|\!|_\infty,\]
claramente $T_t$ es una contracción para todo $t\geq 0.$
\end{proof}

\begin{defi}
Sea $(X_t)_{t\geq 0}$ un proceso estocástico con valores en $\R^d$
adaptado a una filtración $\{\F_t\}_{t\geq 0}.$ Diremos que
$(X_t)_{t\geq 0}$ es un $\F_t-$\textit{proceso de Markov} (resp.
$\F_t-$\textit{proceso de Markov homogéneo}) con probabilidad de
transición $p(t,x,s,A)$ (resp. probabilidad de transición
homogénea $p(t,x,A)$) si para todo $s>t$ y $A\in\B(\R^d)$ se tiene
\begin{equation}\label{markov1}
\mathbf{P}(X_s\in A\mid \F_t)=p(t,X_t,s,A)
\end{equation}
\[\text{(resp.} \ (\mathbf{P}(X_{s}\in A\mid \F_t)=p(s-t,X_t,A)).\]
\end{defi}

\begin{obse}
Si tomamos la esperanza con respecto a $X_t$ en (\ref{markov1}) se
obtiene
\[\mathbf{P}(X_s\in A\mid X_t)=p(t,X_t,s,A)\]
y esto permite interpretar a la probabilidad de transición
$p(t,x,s,A)$ como la probabilidad de que al tiempo $s$ el proceso
$X$ esté en $A$ dado que al tiempo $t$ estuvo en $x.$
\end{obse}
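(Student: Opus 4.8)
The plan is to derive the claimed identity directly from the Markov property (\ref{markov1}) by conditioning once more on the smaller $\sigma$-algebra $\sigma(X_t)$, using the tower property of conditional expectation together with the fact that $X$ is adapted to $\{\F_t\}_{t\geq 0}$.

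First, since $X_t$ is $\F_t$-measurable we have $\sigma(X_t)\subseteq\F_t$. Hence, for fixed $s>t$ and $A\in\B(\R^d)$, applying $\mathbf{E}[\,\cdot\mid\sigma(X_t)]$ and then the tower property gives
\[
\mathbf{P}(X_s\in A\mid X_t)=\mathbf{E}\bigl[\mathbf{1}_{\{X_s\in A\}}\mid\sigma(X_t)\bigr]
=\mathbf{E}\Bigl[\mathbf{E}\bigl[\mathbf{1}_{\{X_s\in A\}}\mid\F_t\bigr]\,\Big|\,\sigma(X_t)\Bigr]
=\mathbf{E}\bigl[\,p(t,X_t,s,A)\mid\sigma(X_t)\bigr],
\]
where the last equality is exactly (\ref{markov1}).

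Second, I would observe that $\omega\mapsto p(t,X_t(\omega),s,A)$ is $\sigma(X_t)$-measurable, being the composition of the Borel-measurable map $x\mapsto p(t,x,s,A)$ (property (i) in the definition of a transition probability) with the random variable $X_t$. Therefore conditioning on $\sigma(X_t)$ leaves it unchanged, i.e. $\mathbf{E}[\,p(t,X_t,s,A)\mid\sigma(X_t)]=p(t,X_t,s,A)$, and combining this with the previous display yields the assertion. The interpretation of $p(t,x,s,A)$ then follows by reading the identity pointwise in the value $x$ attained by $X_t$.

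There is essentially no hard step: the statement is an immediate consequence of the definitions. The only point that should not be glossed over is the $\sigma(X_t)$-measurability of $p(t,X_t,s,A)$, which is precisely where condition (i) of the transition-probability definition is used; everything else is the standard tower property of conditional expectation.
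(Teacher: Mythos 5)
Tu argumento es correcto y coincide en esencia con lo que el texto da por sobreentendido al decir ``tomamos la esperanza con respecto a $X_t$'': es exactamente la propiedad de torre aplicada con $\sigma(X_t)\subseteq\F_t$, junto con la observaci\'on de que $p(t,X_t,s,A)$ es $\sigma(X_t)$-medible gracias a la condici\'on \textbf{(i)} de la definici\'on de probabilidad de transici\'on. No hay nada que objetar; s\'olo has hecho expl\'icito el paso que la observaci\'on enuncia en una l\'inea.
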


\begin{defi}
Una probabilidad de transición homogénea $p(t,x,A)$ se dice
\textit{estocásticamente continua} si
\begin{equation}\label{estcontinua}
\lim_{t\to 0^+}p(t,x,B(x,\varepsilon))=1
\end{equation}
para todo $\varepsilon>0$ y $x\in\R^d$ fijo. Si el límite en
(\ref{estcontinua}) se tiene uniformemente en $x$ para cada
$\varepsilon>0$, la probabilidad de transición se dice
\textit{uniformemente estocásticamente continua.}
\end{defi}

\begin{lema}
Sea $(X_t)_{t\geq 0}$ un $\F_t-$proceso de Markov homogéneo con
valores en $\R^d$ y probabilidad de transición $p(t,x,A).$ Si
$(X_t)_{t\geq 0}$ tiene trayectorias continuas a derecha entonces
$p(t,x,A)$ es estocásticamente continua.
\end{lema}
\begin{proof}
Sea $(t_n)_{n\geq 1}\subset\R_+$ una sucesión tal que $t_n\searrow
0$ y defina la sucesión de eventos $E_n:=\{X_{t_n}\in
B(x,\varepsilon)\}, \ n\geq 1.$ Dado que
$X_0=\lim_{n\to\infty}X_{t_n},$
\[\{X_0\in B(x,\varepsilon)\}
\subseteq\bigcup_{n=1}^\infty\bigcap_{k=n}^\infty
E_n=\liminf_{n\to\infty}E_n.\] Por lo tanto, para todo $x\in\R^d$
y $\varepsilon>0,$
\begin{align*}
\liminf_{n\to\infty}p(t_n,x,B(x,\varepsilon))
&=\liminf_{n\to\infty}\mathbf{P}(X_{t_n}\in B(x,\varepsilon)\mid X_0=x)\\
&\geq\mathbf{P}(\liminf_{n\to\infty}E_n\mid X_0=x)\\
&\geq\mathbf{P}(X_0\in B(x,\varepsilon)\mid X_0=x)=1.
\end{align*}
\end{proof}

\begin{prop}
Sea $(X_t)_{t\geq 0}$ como en el lema anterior, y sea
$f:\R^d\to\R$ acotada y continua. Entonces, para cada $x\in\R^d,$
se tiene
\[\lim_{\ t\to 0^+}(T_t f)(x)=f(x).\]
\end{prop}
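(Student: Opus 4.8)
The plan is to use that, by property (ii) of a transition probability, $p(t,x,\cdot)$ is a probability measure on $\B(\R^d)$ for every $t\geq 0$ and $x\in\R^d$, so that $\int_{\R^d}p(t,x,dy)=1$ and hence
\[(T_tf)(x)-f(x)=\int_{\R^d}\bigl(f(y)-f(x)\bigr)\,p(t,x,dy).\]
The idea is then the standard ``$\varepsilon$-ball split'': fix $x\in\R^d$ and $\varepsilon>0$; by continuity of $f$ at $x$ choose $\delta>0$ with $|f(y)-f(x)|<\varepsilon$ whenever $y\in B(x,\delta)$. On the complement $B(x,\delta)^c$ bound $|f(y)-f(x)|\le 2|\!|f|\!|_\infty$ using boundedness of $f$. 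Splitting the integral over $B(x,\delta)$ and $B(x,\delta)^c$ gives
\[\bigl|(T_tf)(x)-f(x)\bigr|\le\varepsilon\,p\bigl(t,x,B(x,\delta)\bigr)+2|\!|f|\!|_\infty\,p\bigl(t,x,B(x,\delta)^c\bigr)\le\varepsilon+2|\!|f|\!|_\infty\bigl(1-p(t,x,B(x,\delta))\bigr).\]

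The second step is to let $t\downarrow 0$. Here I would invoke the preceding lemma: since $(X_t)_{t\geq 0}$ has right-continuous trajectories, its homogeneous transition function is stochastically continuous, i.e. $p(t,x,B(x,\delta))\to 1$ as $t\to 0^+$ for the fixed $x$ and $\delta>0$. Combining this with the displayed estimate yields $\limsup_{t\to 0^+}|(T_tf)(x)-f(x)|\le\varepsilon$, and since $\varepsilon>0$ is arbitrary, $\lim_{t\to 0^+}(T_tf)(x)=f(x)$.

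There is no real obstacle in this argument; it is exactly the elementary fact that $p(t,x,\cdot)$ converges weakly to the Dirac mass $\epsilon_x$ as $t\to 0^+$, read off bounded continuous test functions. The only point requiring care is the appeal to stochastic continuity, which is precisely what the previous lemma delivers and is the sole place where the right-continuity of the paths of $X$ is used; one could alternatively inline that lemma's short liminf/Borel--Cantelli argument, but quoting it keeps the proof clean.
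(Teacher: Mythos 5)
Your argument is correct and coincides with the paper's own proof: the same split of $\int_{\R^d}[f(y)-f(x)]\,p(t,x,dy)$ over a ball $B(x,\delta)$ and its complement, the bound $\delta\,p(t,x,B(x,\delta))+2|\!|f|\!|_\infty\,p(t,x,B(x,\delta)^c)$, and the appeal to the preceding lemma on stochastic continuity to send the second term to zero. No differences worth noting.
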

\begin{proof}
Debido a la continuidad de $f,$ para cada $x\in\R^d$ y para
$\delta>0,$ existe $B(x,\varepsilon)$ tal que si $y\in
B(x,\varepsilon)$ entonces $|f(y)-f(x)|<\delta.$ Por lo tanto
\[(T_tf)(x)-f(x)=\int_{B(x,\varepsilon)}[f(y)-f(x)]\,p(t,x,dy)
+\int_{B(x,\varepsilon)^c}[f(y)-f(x)]\,p(t,x,dy),\] y de esto se
sigue que
\begin{equation}\label{descorofuertcont}
|(T_t f)(x)-f(x)|\le\delta p(t,x,B(x,\varepsilon))+2|\!|f|\!|_\infty
p(t,x,B(x,\varepsilon)^c).
\end{equation}
Esta desigualdad junto con el lema anterior prueban la
proposición.
\end{proof}

También de la desigualdad (\ref{descorofuertcont}) se deduce el
\begin{coro}
Si $p(t,x,A)$ es uniformemente estocásticamente continua entonces
el semigrupo $(T_t)_{t\geq 0}$ es fuertemente continuo sobre
$\C_b(\R^d).$
\end{coro}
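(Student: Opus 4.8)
The plan is to deduce the uniform (sup-norm) statement directly from the pointwise estimate already isolated in the proof of the preceding proposition, namely, for $f\in\C_b(\R^d)$, each $x\in\R^d$, each $\delta>0$ and a suitable $\varepsilon=\varepsilon(x,\delta)>0$,
\[
|(T_tf)(x)-f(x)|\le \delta\,p\bigl(t,x,B(x,\varepsilon)\bigr)+2|\!|f|\!|_\infty\,p\bigl(t,x,B(x,\varepsilon)^c\bigr)\le\delta+2|\!|f|\!|_\infty\,p\bigl(t,x,B(x,\varepsilon)^c\bigr),
\]
so that everything reduces to controlling $\sup_{x\in\R^d}p(t,x,B(x,\varepsilon)^c)$ as $t\downarrow 0^+$.

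First I would fix $\eta>0$, put $\delta:=\eta/2$, and discard the trivial case $f\equiv 0$ (where $T_tf=f$ for all $t$). The key point is that, using the continuity of $f$ \emph{uniformly in $x$}, one may choose a single radius $\varepsilon=\varepsilon(\eta)>0$, independent of $x$, such that $|f(y)-f(x)|<\delta$ whenever $|y-x|<\varepsilon$; with this $\varepsilon$ the displayed inequality holds simultaneously for every $x\in\R^d$. Next I would invoke the hypothesis of uniform stochastic continuity: since $p(t,x,B(x,\varepsilon))\to 1$ uniformly in $x$ as $t\downarrow 0^+$, and $A\mapsto p(t,x,A)$ is a probability measure, we obtain $\sup_{x\in\R^d}p(t,x,B(x,\varepsilon)^c)\to 0$; hence there is $t_0>0$ with $\sup_{x}p(t,x,B(x,\varepsilon)^c)<\eta/(4|\!|f|\!|_\infty)$ for all $0<t<t_0$. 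Taking the supremum over $x$ in the estimate above then gives $|\!|T_tf-f|\!|_\infty\le \delta+\eta/2=\eta$ for $0<t<t_0$, and since $\eta$ was arbitrary, $|\!|T_tf-f|\!|_\infty\to 0$ as $t\downarrow 0^+$. Combined with the semigroup identity $T_tT_s=T_{t+s}$ and $T_0=I$ (already obtained from the Chapman-Kolmogorov equation in the previous proposition, and valid on all bounded measurable functions), this is exactly the strong continuity of $(T_t)_{t\geq 0}$ on $\C_b(\R^d)$.

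The only genuinely delicate step is the choice of $\varepsilon$ \emph{independent of $x$}: this is where one must use that $f$ is continuous uniformly on $\R^d$, and it is precisely what upgrades the pointwise convergence of the previous proposition to convergence in the sup norm. Everything else is a routine $\delta$-then-$t_0$ argument built on top of inequality (\ref{descorofuertcont}), and it uses no property of $p$ beyond its being a probability transition function that is uniformly stochastically continuous.
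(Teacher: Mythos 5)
Your overall strategy is exactly what the paper intends: the text offers no separate proof of this corollary, merely the remark that it follows from inequality (\ref{descorofuertcont}), and your $\delta$-then-$t_0$ argument is the natural way to extract the sup-norm statement from that inequality together with uniform stochastic continuity. You have also put your finger on the right crux: everything hinges on choosing the radius $\varepsilon$ independently of $x$.

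That, however, is precisely where there is a genuine gap. You justify the uniform choice of $\varepsilon$ by ``the continuity of $f$ uniformly in $x$'', i.e.\ by assuming $f$ is uniformly continuous on all of $\R^d$. But the space in the statement is $\C_b(\R^d)$, which the paper defines as the bounded \emph{continuous} functions; a bounded continuous function on $\R^d$ need not be uniformly continuous (take $f(x)=\sin(|x|^2)$). For such an $f$ no single $\varepsilon$ works, and in fact the conclusion itself fails: for the Brownian transition function (which is uniformly stochastically continuous, since $p(t,x,B(x,\varepsilon))$ does not depend on $x$) one has $\sup_x|(T_tf)(x)-f(x)|\not\to 0$ for $f(x)=\sin(|x|^2)$, because for $|x|$ of order $t^{-1/2}$ the function oscillates by an amount of order one over a ball of radius $\sqrt{t}$ around $x$, so the Gaussian average cannot stay uniformly close to $f(x)$. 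Hence your argument proves the corollary on the subspace of bounded \emph{uniformly} continuous functions --- which is the correct and standard formulation --- but not on $\C_b(\R^d)$ as the paper defines it; the statement should either be restricted to that subspace or weakened to the pointwise convergence $(T_tf)(x)\to f(x)$ already given by the preceding proposition. (A smaller point in the same direction: for $(T_t)_{t\geq 0}$ to be a semigroup \emph{on} $\C_b(\R^d)$ one also needs $T_tf$ to be continuous whenever $f$ is, and this does not follow from the axioms of a transition function alone.)
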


\begin{prop}\label{homogen}
Sea $(X_t)_{t\geq 0}$ un $\F_t-$proceso de Markov con espacio de
estados $\R^d$ y probabilidad de transición
\[p(t,x,s,A), \ \ \ s\geq t\geq 0, \ x\in\R^d, \ A\in\B(\R^d).\]
Entonces $(t,X_t)_{t\geq 0}$ es un $\F_t-$proceso de Markov
\textbf{homogéneo} con espacio de estados $[0,\infty)\times\R^d$ y
función de transición de probabilidad
\[q(s,(t,x),C\times D)=p(t,x,t+s,D)\cdot\mathbf{1}_C(t+s), \ \ \
C\in\B\bigl([0,+\infty)\bigr), \ D\in\B(\R^d).\]
\end{prop}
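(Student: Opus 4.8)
The plan is to first check that $q$ is a genuine homogeneous transition probability on $[0,\infty)\times\R^d$, and then to verify the homogeneous Markov property for the process $\widehat X_t:=(t,X_t)$ directly on measurable rectangles $C\times D$, extending afterwards by a $\pi$--$\lambda$ (Dynkin) argument.

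For the first part I would observe that, for fixed $(t,x)$ and increment $s\geq 0$, the set function $C\times D\mapsto p(t,x,t+s,D)\,\mathbf{1}_C(t+s)$ is precisely $\epsilon_{t+s}\otimes p(t,x,t+s,\cdot)$, the product of the Dirac measure at $t+s$ with the probability measure $p(t,x,t+s,\cdot)$ on $\B(\R^d)$; since $\B([0,\infty))\otimes\B(\R^d)=\B([0,\infty)\times\R^d)$, this extends uniquely to a probability measure $q(s,(t,x),\cdot)$ of total mass $1\cdot 1=1$. Borel measurability of $(t,x)\mapsto q(s,(t,x),C\times D)$ reduces to that of $(t,x)\mapsto p(t,x,t+s,D)$, which I take as part of the standing regularity of $p$ (it holds in the situations of interest, e.g.\ when $p$ arises from an SDE as in the later chapters). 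For the Chapman--Kolmogorov equation I would compute, for $s_1,s_2>0$,
\[
\int_{[0,\infty)\times\R^d}q(s_2,(u,y),C\times D)\,q(s_1,(t,x),d(u,y))
=\mathbf{1}_C(t+s_1+s_2)\int_{\R^d}p(t+s_1,y,t+s_1+s_2,D)\,p(t,x,t+s_1,dy),
\]
the $u$--integration collapsing onto $u=t+s_1$ because of the Dirac factor; the inner integral equals $p(t,x,t+s_1+s_2,D)$ by the Chapman--Kolmogorov equation for $p$ applied with $t<t+s_1<t+s_1+s_2$, and the whole expression is exactly $q(s_1+s_2,(t,x),C\times D)$. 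Since the rectangles form a $\pi$-system generating $\B([0,\infty)\times\R^d)$ and both sides are finite measures, this identity propagates to all Borel sets by the uniqueness-of-measures theorem.

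For the Markov property itself, adaptedness of $\widehat X$ is immediate ($t$ is deterministic and $X_t$ is $\F_t$-measurable). Fix $u>t\geq 0$ and a rectangle $C\times D$, and note $\{\widehat X_u\in C\times D\}=\{u\in C\}\cap\{X_u\in D\}$. If $u\notin C$, both $\mathbf{P}(\widehat X_u\in C\times D\mid\F_t)$ and $q(u-t,(t,X_t),C\times D)=p(t,X_t,u,D)\,\mathbf{1}_C(u)$ vanish; if $u\in C$ then $\{\widehat X_u\in C\times D\}=\{X_u\in D\}$ and, using the Markov property (\ref{markov1}) of $X$,
\[
\mathbf{P}(\widehat X_u\in C\times D\mid\F_t)=\mathbf{P}(X_u\in D\mid\F_t)=p(t,X_t,u,D)=q(u-t,(t,X_t),C\times D).
\]
I would then close with a monotone-class step: the family of $\Gamma\in\B([0,\infty)\times\R^d)$ for which $\mathbf{P}(\widehat X_u\in\Gamma\mid\F_t)=q(u-t,(t,X_t),\Gamma)$ a.s.\ is a Dynkin system (it contains the whole space; it is stable under proper differences since both sides are versions of probability measures in $\Gamma$; and it is stable under increasing countable unions by monotone convergence for conditional expectations), and it contains the $\pi$-system of rectangles, hence equals all of $\B([0,\infty)\times\R^d)$.

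The only genuinely delicate point is the joint Borel measurability of $(t,x)\mapsto p(t,x,t+s,D)$, required for $q$ to qualify as a transition probability in the sense of the definition; the rest is routine bookkeeping with product measures, one application of Chapman--Kolmogorov for $p$, and a standard $\pi$--$\lambda$ extension.
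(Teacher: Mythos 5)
Your proposal is correct and follows essentially the same route as the paper: identify $q(s,(t,x),\cdot)$ as the product measure $\epsilon_{t+s}\otimes p(t,x,t+s,\cdot)$, collapse the Dirac factor to reduce Chapman--Kolmogorov for $q$ to that for $p$, and verify the Markov property on rectangles $C\times D$ via $\mathbf{P}\bigl((u,X_u)\in C\times D\mid\F_t\bigr)=\mathbf{P}(X_u\in D\mid\F_t)\cdot\mathbf{1}_C(u)$. Your explicit $\pi$--$\lambda$ extension to general Borel sets and your flagging of the joint measurability of $(t,x)\mapsto p(t,x,t+s,D)$ (which the paper dismisses as immediate) are sound refinements of steps the paper leaves implicit.
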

\begin{proof}
Veamos que $q$ así definida es una función de transición de
probabilidad: la condición \textbf{(i)} es inmediata. Dado que la
aplicación $C\mapsto\mathbf{1}_{C}(t+s)$ es precisamente la
\textit{medida de Dirac} $\epsilon_{t+s}$ concentrada en $t+s,$
entonces
\[C\times D\mapsto p(t,x,t+s,D)\cdot\mathbf{1}_{C}(t+s), \ \ \
C\in\B\bigl([0,+\infty)\bigr), \ D\in\B(\R^d)\] genera una medida
producto (y que es una medida de probabilidad) sobre
$[0,\infty)\times\R^d,$ para cada $t\geq s, \ x\in\R^d,$ lo cual
prueba \textbf{(ii)}. Para ver que $q$ satisface la ecuación de
Chapman-Kolmogorov usaremos la igualdad
\[\int_{[0,\infty)}f(r)\,d\epsilon_t(r)=f(t), \ \ \ \ \ \text{para todo} \ \
f:[0,\infty)\to\R, \ \ t\geq 0\] (ver demostración en
\cite{blanco} ejemplo 2.2.2, pág. 76). Si $s,r>0$ entonces
\begin{align*}
\int_{[0,\infty)\times\R^d}&q(s,(u,y),C\times
D)\,q(r,(t,x),du\otimes
dy)\\&=\int_{[0,\infty)\times\R^d}p(u,y,u+s,D)\cdot\mathbf{1}_C(u+s)\,
p(t,x,t+r,dy)\,d\epsilon_{t+r}(u)\\
&=\int_{\R^d}p(t,x,t+r,dy)\int_{[0,\infty)}p(u,y,u+s,D)\cdot%
\mathbf{1}_C(u+s)\,d\epsilon_{t+r}(u)\\
&=\int_{\R^d}p(t,x,t+r,dy)\,p(t+r,y,t+r+s,D)\cdot\mathbf{1}_C(t+r+s)\\
&=p(t,x,t+s+r,D)\cdot\mathbf{1}_C(t+r+s)=q(s+r,(t,x),C\times D).
\end{align*}
Por último, dado que $p$ satisface (\ref{markov1}), para $s>t$ se
tiene que
\begin{align*}
q(s-t,(t,X_t),C\times D)&=p(t,X_t,s,D)\cdot\mathbf{1}_C(s)
=\mathbf{P}(X_s\in
D\mid\F_t)\cdot\mathbf{1}_C(s)\\&=\mathbf{P}((s,X_s)\in C\times
D\mid\F_t).
\end{align*}
\end{proof}
\begin{obse}
Si $(X_t)_{t\geq 0}$ es un proceso de Markov, el semigrupo
$(T_s)_{s\geq 0}$ asociado a las probabilidades de transición del
proceso de Markov homogéneo $(t,X_t)_{t\geq 0}$ está dado por
\begin{align*}
(T_s f)(t,x)&=\int_{[0,\infty)\times\R^d}f(r,y)q(s,(t,x),dr\otimes
dy)\\
&=\int_{\R^d}\int_{[0,\infty)}f(r,y)\,p(t,x,t+s,dy)\,d\epsilon_{t+s}(r)\\
&=\int_{\R^d}f(t+s,y)\,p(t,x,t+s,dy), \ \ \ \ s\geq 0.
\end{align*}
\end{obse}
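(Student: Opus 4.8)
The plan is to read the first displayed line as the \emph{definition} of the semigroup attached to the homogeneous transition function $q$ and then to reduce the resulting integral to the explicit formula. By the construction that assigns a $C_0$-semigroup to a homogeneous probability transition function (the proposition preceding this remark), applied now with state space $E=[0,\infty)\times\R^d$ and with the kernel $q$ produced by Proposici\'on~\ref{homogen}, the operator $T_s$ acts on a bounded Borel function $f$ on $E$ by
\[(T_s f)(t,x)=\int_{[0,\infty)\times\R^d}f(r,y)\,q\bigl(s,(t,x),dr\otimes dy\bigr),\]
which is exactly the first equality. Thus the whole content of the remark is to compute this integral explicitly.

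The second step is to identify the measure $q(s,(t,x),\cdot)$ as a product measure. From its definition $q(s,(t,x),C\times D)=p(t,x,t+s,D)\cdot\mathbf{1}_C(t+s)$, and since (as already observed in the proof of Proposici\'on~\ref{homogen}) the map $C\mapsto\mathbf{1}_C(t+s)$ is precisely the Dirac measure $\epsilon_{t+s}$, I would first check on rectangles $C\times D$ that $q(s,(t,x),\cdot)$ agrees with $\epsilon_{t+s}\otimes p(t,x,t+s,\cdot)$ on $\B\bigl([0,\infty)\bigr)\otimes\B(\R^d)$, and then extend the identity to all of $\B\bigl([0,\infty)\bigr)\otimes\B(\R^d)$ by uniqueness of the product measure. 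With this identification in hand, Fubini's theorem lets me rewrite the integral over the product as an iterated integral, performing the $r$-integration (against $\epsilon_{t+s}$) inside the $y$-integration (against $p$), which yields the second equality
\[(T_s f)(t,x)=\int_{\R^d}\int_{[0,\infty)}f(r,y)\,d\epsilon_{t+s}(r)\,p(t,x,t+s,dy).\]

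Finally, for the third equality I would invoke the reproducing property of the Dirac measure, $\int_{[0,\infty)}g(r)\,d\epsilon_{t+s}(r)=g(t+s)$ for any $g$, the identity already cited from \cite{blanco} in the proof of Proposici\'on~\ref{homogen}. Applying it with $g(r)=f(r,y)$ for each fixed $y$, the inner integral collapses to $f(t+s,y)$, and therefore
\[(T_s f)(t,x)=\int_{\R^d}f(t+s,y)\,p(t,x,t+s,dy),\]
as claimed.

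The computation is routine; the only point deserving care is the identification of $q(s,(t,x),\cdot)$ with the product measure $\epsilon_{t+s}\otimes p(t,x,t+s,\cdot)$ together with the ensuing appeal to Fubini, since it is precisely the interchange of the two integrations that turns the abstract definition of $T_s$ into the concrete formula. Because $f$ is bounded and Borel measurable (the hypotheses under which $T_s$ was defined) and both factors are probability measures, integrability is automatic, so no further delicacy is needed there.
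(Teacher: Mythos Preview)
Your proposal is correct and is precisely the natural justification; the paper itself offers no proof for this \emph{Observaci\'on}, merely displaying the three-line computation. All the ingredients you invoke---the identification of $q(s,(t,x),\cdot)$ with the product $\epsilon_{t+s}\otimes p(t,x,t+s,\cdot)$, Fubini, and the reproducing property $\int g\,d\epsilon_{t+s}=g(t+s)$---are exactly those already used in the proof of Proposici\'on~\ref{homogen}, so your argument is the intended (and essentially only) one.
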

{} Para más resultados sobre semigrupos de operadores y su
conexión con procesos de Markov consultar
\cite{karlin,pazy,todorovic,tudor}.

\newpage\section[Ecuaciones diferenciales estocásticas y conexión con EDPs lineales de segundo orden]{Ecuaciones diferenciales
estocásticas y conexión con EDPs\\lineales de segundo orden}

\subsection{Ecuaciones diferenciales estocásticas}
Sea $(W_t)_{t\geq 0}$ un movimiento Browniano $m-$dimensional
definido sobre un espacio de probabilidad $(\Omega,\F,\mathbf{P})$
y sean
\[b:[0,+\infty)\times\R^d\rightarrow\R^d, \ \ \ \ \
\sigma:[0,+\infty)\times\R^d\rightarrow\R^{d\times m}\] funciones
medibles. Nuestro objetivo en esta sección es mostrar existencia y
unicidad de soluciones de \textit{ecuaciones diferenciales
estocásticas} (EDEs) de la forma
\begin{equation}\label{ede1}
dX_t=b(t,X_t)\,dt+\sigma(t,X_t)\,dW_t, \ \ \ X_0=\xi
\end{equation}
donde la \textit{condición inicial} $\xi$ es una v.a.
$d-$dimensional independiente de $(W_t)_{t\geq 0}.$ Hay que
recordar que la ecuación (\ref{ede1}) se debe interpretar
exclusivamente como una ecuación integral, a saber
\begin{equation}\label{ede2}
X_t=\xi+\int_0^tb(s,X_s)\,ds+\int_0^t\sigma (s,X_s)\,dW_s, \ \ \ \
t\geq 0,
\end{equation}
o de modo equivalente, usando componentes
\[X_t^i=\xi_i+\int_0^t W_i(s,X_s)\,ds+\sum_{j=1}^m
\int_0^t\sigma_{ij}(s,X_s)\,dW_s^j \ \ \ \ \ t\geq 0, \ \ 1\le
i\le d.\] A lo largo de esta sección $\{\F_t\}_{t\geq 0}$ denotará
la filtración generada por $\xi$ y $(W_t)_{t\geq 0},$ aumentada
con los conjuntos $\mathbf{P}-$nulos de $\Omega,$ es decir
\[\F_t=\sigma\bigl(\{\xi,W_s: 0\le s\le t\}\cup\mathcal{N}\bigr), \ \ \ t\geq 0 \] donde
$\mathcal{N}=\{E\subseteq\Omega : \exists G\in \F,\, G\supseteq E
\text{ y } \mathbf{P}(G)=0\}.$

\begin{defi} Diremos que un proceso
$(X_t)_{t\geq 0}$ con valores en $\R^d$ y trayectorias continuas
c.s. es solución (fuerte) de la EDE (\ref{ede1}) si satisface
\begin{enumerate}
  \item $(X_t)_{t\geq 0}$ es \textit{progresivamente medible} con respecto a $\{\F_t\}_{t\geq 0},$
  \item $\displaystyle\int_0^t\bigl(|b(s,X_s)|+|\!|\sigma(s,X_s)|\!|^2\bigr)\,ds<+\infty$ \
  c.s. para todo $t\geq 0,$
  \item $\displaystyle{X_t=\xi+\int_0^tb(s,X_s)\,ds+\int_0^t\sigma (s,X_s)\,dW_s,}
  \ \ \ t\geq 0$ \ c.s.
\end{enumerate}
\end{defi}
{} (recuerde que para $z\in\R^{d\times m}, \
|\!|z|\!|:=Tr[zz^*]^{1/2}).$

{}{} Si la matriz $\sigma$ (que suele llamarse \textit{matriz de
dispersión)} es idénticamente cero, la ecuación (\ref{ede2}) se
reduce a una \textit{ecuación integral ordinaria} (no estocástica,
excepto tal vez por la condición inicial) de la forma
\begin{equation}\label{eio}
X_t=X_0+\int_0^t b(s,X_s)\,ds, \ \ \ \ t\geq 0
\end{equation}
Usualmente, en la teoría de tales ecuaciones se impone la
hipótesis de que el campo vectorial $b(t,x)$ satisfaga una
condición de \textit{Lipschitz} local en la variable de espacio
$x\in\R^d$ y sea acotada sobre subconjuntos compactos de
$[0,\infty)\times\R^d.$ Estas condiciones garantizan que para un
$t>0$ suficientemente pequeño, las iteraciones de
\textit{Picard-Lindelöf}
\[X_t^{(0)}\equiv\xi, \ \ \ \ X_t^{(n+1)}=\xi+\int_0^t b(s,X_s^{(n)})\,ds, \ \ \ \ \ n\geq 0\]
convergen a la solución de (\ref{eio}), y que esta solución sea
única. En ausencia de estas condiciones la ecuación podría no
tener solución o tener infinitas soluciones. Por ejemplo, la
ecuación integral (determinística) unidimensional
\begin{equation}\label{eio2}
X_t=\int_0^t|X_s|^\alpha\,ds
\end{equation}
tiene una única solución para $\alpha\geq 1,$ a saber: $X_t\equiv
0.$ Sin embargo, para $0<\alpha<1,$ todas las funciones de la
forma
\[X_t=
\begin{cases}
  0, & 0\le t\le s,\\
  \displaystyle{\left(\frac{t-s}{\beta}\right)}^\beta, &t\geq s.
\end{cases}\]
con $\beta=1/(1-\alpha)$ y $s\geq 0$ arbitrario, son solución de
(\ref{eio2}).

Parece entonces razonable desarrollar la teoría de EDEs imponiendo
condiciones del tipo \textit{Lipschitz} en los coeficientes:

{}{} Diremos que $b$ y $\sigma$ satisfacen

{}{}\textbf{(a)} \textit{la condición de Lispchitz global,} si
para cada $T>0$ existe una constante $K_T>0$ tal que para todo
$t\in[0,T]$ y para todo par $x,x'\in\R^d$
\begin{equation}\label{lipglobal}
|b(t,x)-b(t,x')|+|\!|\sigma(t,x)-\sigma(t,x')|\!|\le K_T|x-x'|
\end{equation}

{}\textbf{(b)}  \textit{la condición de Lispchitz local,} si para
cada $T>0$ y para cada $n\geq 1$ existe una constante $K_{T,n}>0$
tal que para todo $t\in[0,T]$ y para todo par $x,x'\in B[0,n]$
\begin{equation}\label{liplocal}
|b(t,x)-b(t,x')|+|\!|\sigma(t,x)-\sigma(t,x')|\!|\le K_{T,n}|x-x'|
\end{equation}
Para demostrar existencia y unicidad bajo estas hipótesis también
necesitaremos que los coeficientes satisfagan la condición de
\textit{crecimiento lineal:} para cada $T>0$ existe una constante
$K_T>0$ tal que
\begin{equation}\label{clineal}
|b(t,x)|+|\!|\sigma(t,x)|\!| \le K_T\bigl(1+|x|\bigr), \ \ \ \forall
t\in[0,T], \ \ \forall x\in\R^d
\end{equation}
Esta condición nos permite establecer la siguiente estimación
\textit{a priori} de los momentos de soluciones de EDEs

\begin{lema}\label{apriori}
Sea $(X_t)_{t\geq 0}$ solución (fuerte) de la EDE (\ref{ede1}) y
suponga que existe un entero $p\geq 1$ tal que
$E\bigl(|\xi|^{2p}\bigr)<+\infty.$ Si los coeficientes $b$ y
$\sigma$ satisfacen la condición de crecimiento lineal
(\ref{clineal}), entonces
\begin{align*}
E\biggl[\sup_{\,t\in [0,T]}|X_t|^{2p}\biggr]&\le
Ce^{CT}\bigl\{1+E\bigl(|\xi|^{2p}\bigr)\bigr\}, \ \ \ \forall
T>0,\\
E\biggl[\sup_{\,t\le s\le T}|X_s-X_t|^{2p}\biggr]&\le
C'(T-t)^p\bigl\{1+E\bigl(|\xi|^{2p}\bigr)\bigr\}, \ \ \ t\in [0,T]
\end{align*}
donde $C,C'$ son constantes que dependen sólo de $T,K_T$ y $p.$
\end{lema}
\begin{proof}
Usando la desigualdad $|a+b+c|^{2p}\le
3^{2p-1}\bigl(|a|^{2p}+|b|^{2p}+|c|^{2p}\bigr),$ se obtiene para
$t\in [0,T]$
\begin{equation*}
|X_t|^{2p}\le 3^{2p-1}\biggl(|\xi|^{2p}+\Bigl|\int_0^t
b(r,X_r)\,dr\Bigr|^{2p} +\Bigl|\int_0^t
\sigma(r,X_r)\,dW_r\Bigr|^{2p}\biggr)
\end{equation*}
De la desigualdad de Hölder se sigue
\begin{align*}
\Bigl|\int_0^t
b(s,X_s)\,ds\Bigr|^{2p}=\Biggl[\sum_{i=1}^d\,\biggl(\int_0^t
W_i(s,X_s)\,ds\biggr)^2\Biggr]^p&\le t^p\bigg[\int_0^t|b(s,X_s)|^2
ds\biggr]^p\\
&\le t^{2p-1}\int_0^t|b(s,X_s)|^{2p}\,ds
\end{align*}
luego
\begin{equation}\label{apriori2}
|X_t|^{2p}\le
3^{2p-1}\biggl(|\xi|^{2p}+t^{2p-1}\int_0^t|b(r,X_r)|^{2p}\,dr+\Bigl|\int_0^t
\sigma(r,X_r)\,dW_r\Bigr|^{2p}\biggr)
\end{equation}
Si $\tau_n:=\inf\{s\geq 0:|X_s|\geq n\},$ el proceso
$H_s^n=\sigma(s,X_s)\mathbf{1}_{[0,\tau_n)}(s),$ $s\geq 0$
satisface
\begin{align*}
E\int_0^t |\!|H_s^n|\!|^{2p}\,ds\le E\int_0^t
|\!|\sigma(s,X_s)|\!|^{2p}\mathbf{1}_{[0,\tau_n)}(s)\,ds &\le
E\int_0^t
K_t^{2p}(1+|X_s|)^{2p}\mathbf{1}_{[0,\tau_n)}(s)\,ds\\
&\le tK_t^{2p}(1+n)^{2p}
\end{align*}
para todo $t\geq 0,$ luego podemos aplicar la desigualdad
(\ref{des1}) y obtener
\begin{align*}
E\biggl[\sup_{\,0\le s\le t}\Bigl|\int_0^{s\wedge\tau_n}
\sigma(r,X_r)\,dW_r\Bigr|^{2p}\biggr]&=E\biggl[\sup_{\,0\le s\le
t}\Bigl|\int_0^s
\sigma(r,X_r)\mathbf{1}_{[0,\tau_n)}(r)\,dW_r\Bigr|^{2p}\biggr]\\
&\le C_p t^{p-1}E\int_0^t
|\!|\sigma(r,X_r)|\!|^{2p}\mathbf{1}_{[0,\tau_n)}(r)\,dr
\end{align*}
con $C_p=d^{p-1}[4p^3/(2p-1)]^p.$ Tomando $t=s\wedge\tau_n$ en
(\ref{apriori2}) y usando la anterior desigualdad,
\begin{align*}
|X_{s\wedge\tau_n}|^{2p}\le 3^{2p-1}&\biggl(|\xi|^{2p}+
(s\wedge\tau_n)^{2p-1}\int_0^{s\wedge\tau_n}|b(r,X_r)|^{2p}\,dr+\Bigl|\int_0^{s\wedge\tau_n}
\sigma(r,X_r)\,dW_r\Bigr|^{2p}\biggr)\\
E\biggl[\sup_{\,0\le s\le t}|X_{s\wedge\tau_n}|^{2p}\biggr]&\le
3^{2p-1}\Bigl\{E\bigl(|\xi|^{2p}\bigr)+ t^{2p-1}\!E\int_0^t
|b(r,X_r)|^{2p}\mathbf{1}_{[0,\tau_n)}(r)\,dr\\
&\hspace{5.2cm}+C_p t^{p-1}E\!\int_0^t
|\!|\sigma(r,X_r)|\!|^{2p}\mathbf{1}_{[0,\tau_n)}(r)\,dr\Bigr\}\\
\le 3^{2p-1}&\Bigl\{E\bigl(|\xi|^{2p}\bigr)+(t^{2p-1}+C_p
t^{p-1})K_T^{2p}E\!\int_0^t
(1+|X_r|)^{2p}\mathbf{1}_{[0,\tau_n)}(r)\,dr\Bigr\}\\
\le
3^{2p-1}&\biggl\{E\bigl(|\xi|^{2p}\bigr)+2^{2p-1}L\,\Bigl\{T+E\!\int_0^t
|X_r|^{2p}\mathbf{1}_{[0,\tau_n)}(r)\,dr\Bigr\}\biggr\}, \ \ \ \
t\in [0,T]
\end{align*}
donde $L=(T^{2p-1}+C_p T^{p-1})K_T^{2p}.$ Dado que
$|X_r|^{2p}\mathbf{1}_{[0,\tau_n)}(r)\le\sup_{\,0\le s\le
r\wedge\tau_n}|X_s|^{2p}$ y
\[E\biggl[\sup_{\,0\le s\le t}|X_{s\wedge\tau_n}|^{2p}\biggr]
=E\biggl[\sup_{\,0\le s\le t\wedge\tau_n}|X_s|^{2p}\biggr]\] del
teorema de Fubini se tiene que la aplicación
$\varphi_n(t)=E\bigl[\sup_{\,0\le s\le
t\wedge\tau_n}|X_s|^{2p}\bigr]$ satisface
\[\varphi_n(t)\le C\,\Bigl\{E\bigl(|\xi|^{2p}\bigr)+1+\int_0^t\varphi_n(r)\,dr\Bigr\}, \
\ \ \ t\in [0,T]\] con $C=3^{2p-1}(1\vee 2^{2p-1}L\vee
2^{2p-1}LT).$ Por el lema de Gronwall (ver \cite{tudor}), {}
$\varphi_n(t)\le Ce^{Ct}\bigl\{1+E\bigl(|\xi|^{2p}\bigr)\bigr\}$
para $t\in [0,T], \ \forall n\geq 1,$ y por el lema de Fatou
\begin{align*}
E\biggl[\sup_{\,0\le s\le t}|X_s|^{2p}\biggr]
&=E\biggl[\,\liminf_{n\to\infty}\,\Bigl(\sup_{\ 0\le s\le
{t\wedge\tau_n}}|X_s|^{2p}\Bigr)\biggr]\\
&\le \liminf_{n\to\infty}\, E\biggl[\sup_{\,0\le s\le
{t\wedge\tau_n}}|X_s|^{2p}\biggr]\le
Ce^{Ct}\bigl\{1+E\bigl(|\xi|^{2p}\bigr)\bigr\}, \ \ \ t\in [0,T].
\end{align*}
lo cual prueba la primera desigualdad. Para la segunda
desigualdad, si $0\le s<u\le\ t\le T$ entonces
\[|X_u-X_s|^{2p}\le
2^{2p-1}(u-s)^{2p-1}\int_s^u|b(r,X_r)|^{2p}\,dr+2^{2p-1}\Bigl|\int_s^u
\sigma(r,X_r)\,dW_r\Bigr|^{2p}\] Tomando ahora
$\tau_n:=\inf\{u\geq s:|X_u|\geq n\},$ usando un razonamiento
análogo al anterior y la desigualdad $E\bigl(|X_r|^{2p}\bigr)\le
Ce^{CT}\bigl\{1+E\bigl(|\xi|^{2p}\bigr)\bigr\}$ se obtiene
\begin{align*}
|X_{u\wedge\tau_n}-X_s|^{2p}\le
2^{2p-1}&[(u\wedge\tau_n)-s]^{2p-1}\int_s^{u\wedge\tau_n}|b(r,X_r)|^{2p}\,dr
+2^{2p-1}\Bigl|\int_s^{u\wedge\tau_n}
\sigma(r,X_r)\,dW_r\Bigr|^{2p}\\
\sup_{s\le u\le t}|X_{u\wedge\tau_n}-X_s|^{2p}
&\le 2^{2p-1}(t-s)^{2p-1}\int_s^{t\wedge\tau_n}|b(r,X_r)|^{2p}\,dr\\
&\hspace{3.8cm}+2^{2p-1}\sup_{s\le u\le t}\Bigl|\int_s^u
\sigma(r,X_r)\mathbf{1}_{[s,\tau_n)}(r)\,dW_r\Bigr|^{2p}\\
E\biggl[\sup_{\, s\le u\le t}|X_{u\wedge\tau_n}-X_s|^{2p}\biggr]
&\le 4^{2p-1}(t-s)^{2p-1}\int_s^t
K_T^{2p}(1+Ce^{CT})\bigl\{1+E\bigl(|\xi|^{2p}\bigr)\bigr\}\,dr\\
&\hspace{1.2cm}+4^{2p-1}C_p(t-s)^{p-1}\int_s^t
K_T^{2p}(1+Ce^{CT})\bigl\{1+E\bigl(|\xi|^{2p}\bigr)\bigr\}\,dr\\
&=C'(t-s)^p\bigl\{1+E\bigl(|\xi|^{2p}\bigr)\bigr\}
\end{align*}
con $C'=4^{2p-1}K_T^{2p}(T^p+C_p)(1+Ce^{CT}),$ y de nuevo por el
lema de Fatou se sigue el resultado.
\end{proof}
{} Los resultados que veremos a continuación bajo condiciones del
tipo \textit{Lipschitz} constituyen la teoría estándar de EDEs
desarrollada por K. Itô.

\subsubsection{Unicidad y existencia de soluciones}
{} Diremos que la EDE (\ref{ede1}) satisface la \textit{unicidad
en trayectoria} o \textit{unicidad fuerte} si dos soluciones $X$ y
$X'$ son \textit{indistinguibles,} es decir, satisfacen
\[\mathbf{P}\bigl(X_t=X'_t, \ \forall t\geq 0)=1\]
Debido a que dichos procesos tienen trayectorias continuas c.s.,
bastará con mostrar que uno es \textit{modificación} del otro, es
decir que $\mathbf{P}(X_t=X'_t)=1,$ para todo $t\geq 0.$
\begin{teorema}\label{teorema1}
Sean $b^i:[0,\infty)\times\R^d\rightarrow\R^d,\
\sigma^i:[0,\infty)\times\R^d\rightarrow\R^{d\times m}$ funciones
medibles y localmente acotadas, $i=1,2.$ Sea $D\subset\R^d$ un
conjunto abierto y $T>0.$ Supongamos que
\[\sigma^1(t,x)=\sigma^2(t,x), \ \ b^1(t,x)=b^2(t,x), \ \ \ \forall(t,x)\in[0,T]\times D\]
y que existe una constante $K_T>0$ tal que para todo $t\in[0,T]$ y
para todo par $x,x'\in D$
\begin{equation}\label{lipglobal2}
\begin{split}
|b^i(t,x)-b^i(t,x')|+|\!|\sigma^i(t,x)-\sigma^i(t,x')|\!|\le
K_T|x-x'|\\
|\!|\sigma^i(t,x)|\!|+|b^i(t,x)|\le K_T\bigl(1+|x|\bigr), \ \ \ i=1,2.
\end{split}
\end{equation}
Sea $X^i$ solución (fuerte) de la EDE (\ref{ede1}) con condición
inicial $X_0^i$ cuadrado integrable y coeficientes $b^i, \sigma^i$
y sean
\[\tau_i=\inf\{t\geq 0:X_t^i\notin D\}, \ \ \ i=1,2\]
\[\tau=\inf\{t\geq 0:X_t^1\notin D \ \text{o} \ X_t^2\notin
D\}=\tau_1\wedge\tau_2.\] Si $X_0^1(\omega)=X_0^2(\omega)$ para
casi todo $\omega$ para el cual $X_0^1(\omega)\in D$ o
$X_0^2(\omega)\in D,$ es decir
\[\mathbf{P}\bigl(\bigl((X_0^1\in D)\cup(X_0^2\in D)\bigr)\cap(X_0^1=X_0^2)\bigr)=1\]
entonces $\mathbf{P}\bigl(X_{t\wedge\tau}^1=X_{t\wedge\tau}^2, \
\forall t\in[0,T]\bigr)=1.$

{}{} Si además (\ref{lipglobal2}) se cumple para todo par
$x,x'\in\R^d,$ entonces $\tau_1\wedge T=\tau_2\wedge T=\tau\wedge
T$ c.s. y en consecuencia, en el intervalo $[0,T]$ los procesos
$X^1$ y $X^2$ salen al mismo tiempo $\tau$ de $D$ y son iguales
hasta ese momento.
\end{teorema}
\begin{proof}
Para $t\in [0,T]$ se tiene
\begin{align*}
X_{t\wedge\tau}^1-X_{t\wedge\tau}^2 &=
\int_0^{t\wedge\tau}\bigl(b^1(s,X_s^1)-b^2(s,X_s^1)\bigr)\,ds
+\int_0^{t\wedge\tau}\bigl(b^2(s,X_s^1)-b^2(s,X_s^2)\bigr)\,ds \\
&+\int_0^{t\wedge\tau}\bigl(\sigma^1(s,X_s^1)-\sigma^2(s,X_s^1)\bigr)\,dW_s
+\int_0^{t\wedge\tau}\bigl(\sigma^2(s,X_s^1)-\sigma^2(s,X_s^2)\bigr)\,dW_s
\end{align*}
Como \
$\displaystyle{\int_0^{t\wedge\tau}\!\bigl(b^1(s,X_s^1)-b^2(s,X_s^1)\bigr)\,ds=0},$
\ y por el teorema \ref{intiguales}
\[\int_0^{t\wedge\tau}\bigl(\sigma^1(s,X_s^1)-\sigma^2(s,X_s^1)\bigr)\,ds=0\]
resulta que
\[X_{t\wedge\tau}^1-X_{t\wedge\tau}^2
=\int_0^{t\wedge\tau}\bigl(b^2(s,X_s^1)-b^2(s,X_s^2)\bigr)\,ds
+\int_0^{t\wedge\tau}\bigl(\sigma^2(s,X_s^1)-\sigma^2(s,X_s^2)\bigr)\,dW_s\]
Tomando valor esperado, usando la desigualdad $(a+b)^2\le 2
(|a|^2+|b|^2)$ y la desigualdad de Hölder obtenemos
\begin{align*}
E&\bigl(|X_{t\wedge\tau}^1-X_{t\wedge\tau}^2|^2\bigr)\\[0.2cm]
&\le
2E\biggl[\Bigl|\int_0^{t\wedge\tau}\bigl(b^2(s,X_s^1)-b^2(s,X_s^2)\bigr)\,ds\Bigr|^2\biggr]
+2E\biggl[\Bigl|\int_0^{t\wedge\tau}\bigl(\sigma^2(s,X_s^1)
-\sigma^2(s,X_s^2)\bigr)\,dW_s\Bigr|^2\biggr]\\
&\le 2tE\!\int_0^{t\wedge\tau}|b^2(s,X_s^1)-b^2(s,X_s^2)|^2\,ds
+2E\biggl[\Bigl|\int_0^t\bigl(\sigma^2(s,X_s^1)
-\sigma^2(s,X_s^2)\bigr)\mathbf{1}_{[0,\tau)}(s)\,dW_s\Bigr|^2\biggr]
\end{align*}
Gracias a la condición de crecimiento lineal en $D$ y al lema
\ref{apriori}, podemos aplicar Isometría de Itô a las integrales
estocásticas y obtener
\begin{align*}
E&\bigl(|X_{t\wedge\tau}^1-X_{t\wedge\tau}^2|^2\bigr)\\[0.2cm]
&\le 2tE\!\int_0^{t\wedge\tau}|b^2(s,X_s^1)-b^2(s,X_s^2)|^2\,ds
+2E\int_0^t|\!|\sigma^2(s,X_s^1)
-\sigma^2(s,X_s^2)|\!|^2\mathbf{1}_{[0,\tau)}(s)\,ds\\
&\le 2K_T^2(1+T)\, E\!\int_0^t
|X_s^1-X_s^2|^2\mathbf{1}_{[0,\tau)}(s)\,ds
=2K_T^2(1+T)\int_0^t E\bigl[|X_s^1-X_s^2|^2\mathbf{1}_{[0,\tau)}(s)\bigr]\,ds\\
&=2K_T^2(1+T)\int_0^t
E\bigl(|X_{s\wedge\tau}^1-X_{s\wedge\tau}^2|^2\bigr)\,ds, \ \ \
t\in [0,T]
\end{align*}
y por el lema de Gronwall se sigue que
$E\bigl(|X_{t\wedge\tau}^1-X_{t\wedge\tau}^2|^2\bigr)=0,$ para
todo $t\in[0,T],${} lo que implica
$\mathbf{P}\bigl(X_{t\wedge\tau}^1=X_{t\wedge\tau}^2\bigr)=1,$ \
$\forall t\in[0,T],$ y puesto que $X^1$ y $X^2$ tienen
trayectorias continuas c.s., se tiene que
\[\mathbf{P}\bigl(X_{t\wedge\tau}^1=X_{t\wedge\tau}^2, \ \forall t\in[0,T]\bigr)=1.\]
Supongamos ahora que (\ref{lipglobal2}) se cumple para todo par
$x,x'\in\R^d.$ En este caso el razonamiento anterior se repite
para la diferencia $X_{t\wedge\tau_1}^1-X^2_{t\wedge\tau_1}$
obteniendo
\[\mathbf{P}\bigl(X_{t\wedge\tau_1}^1=X_{t\wedge\tau_1}^2, \ \forall t\in[0,T]\bigr)=1.\]
Esto quiere decir que en en el intervalo $[0,T]$ los dos procesos
coinciden hasta que $X^1$ sale de $D,$ luego $\tau_1\wedge T\le
\tau_2\wedge T,$ y por simetría en el razonamiento $\tau_2\wedge
T\le \tau_1\wedge T.$
\end{proof}

\begin{coro}[Unicidad en trayectoria]\label{unicidad}
Sean $b$ y $\sigma$ que satisfacen la condición de Lipschitz
local. Entonces se cumple la unicidad en trayectoria para la EDE
(\ref{ede1}).
\end{coro}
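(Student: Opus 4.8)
El plan es deducir la unicidad en trayectoria del Teorema \ref{teorema1} mediante un argumento de localización, tomando $D=B(0,n)$ y haciendo $n\to\infty$. La idea clave es que la condición de Lipschitz local garantiza las hipótesis del Teorema \ref{teorema1} sobre cada bola $B(0,n)$, y los tiempos de salida de estas bolas crecen hasta $+\infty$ gracias a la continuidad de las trayectorias.

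Primero, sean $X$ y $X'$ dos soluciones fuertes de la EDE (\ref{ede1}) con la misma condición inicial $\xi$. Fijemos $T>0$. Para cada $n\geq 1$ definimos los coeficientes truncados $b^n(t,x):=b(t,x)\,\mathbf{1}_{B[0,n]}(x)$ (y extendidos de manera Lipschitz/acotada fuera de la bola, o simplemente aplicando el teorema con $D=B(0,n)$), de modo que $b^1=b^2=b$ y $\sigma^1=\sigma^2=\sigma$ sobre $[0,T]\times B(0,n)$. Sobre $B(0,n)$, la condición de Lipschitz local (\ref{liplocal}) provee la constante $K_{T,n}$ requerida en (\ref{lipglobal2}), y la acotación local de $b,\sigma$ da la cota de crecimiento lineal sobre el conjunto acotado $B(0,n)$ (con una constante que puede depender de $n$). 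Definimos
\[
\tau_n:=\inf\{t\geq 0:X_t\notin B(0,n)\ \text{o}\ X'_t\notin B(0,n)\}.
\]
Como $X_0=X'_0=\xi$ c.s., en particular se cumple la hipótesis sobre las condiciones iniciales del Teorema \ref{teorema1} de manera trivial. Aplicando dicho teorema con $D=B(0,n)$ obtenemos
\[
\mathbf{P}\bigl(X_{t\wedge\tau_n}=X'_{t\wedge\tau_n},\ \forall t\in[0,T]\bigr)=1,\qquad \forall n\geq 1.
\]

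Ahora, como $X$ y $X'$ tienen trayectorias continuas c.s., para casi todo $\omega$ las funciones $t\mapsto X_t(\omega)$ y $t\mapsto X'_t(\omega)$ son acotadas sobre $[0,T]$, luego $\tau_n(\omega)>T$ para $n$ suficientemente grande; es decir, $\tau_n\uparrow\infty$ c.s. Intersectando los eventos de probabilidad uno anteriores sobre $n\geq 1$ (que es una intersección numerable) obtenemos que, c.s., $X_{t\wedge\tau_n}=X'_{t\wedge\tau_n}$ para todo $t\in[0,T]$ y todo $n$; tomando $n$ grande de forma que $\tau_n>T$ se concluye $X_t=X'_t$ para todo $t\in[0,T]$, c.s. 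Como $T>0$ es arbitrario, basta tomar $T=k$ para $k\in\mathbb{N}$ e intersectar de nuevo sobre $k$ para obtener $\mathbf{P}(X_t=X'_t,\ \forall t\geq 0)=1$, que es la unicidad en trayectoria.

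El paso que requiere más cuidado es verificar escrupulosamente que las hipótesis del Teorema \ref{teorema1} se satisfacen al aplicarlo con $D=B(0,n)$: concretamente, que $X$ y $X'$ —siendo soluciones de (\ref{ede1}) con coeficientes $b,\sigma$— pueden verse simultáneamente como soluciones con coeficientes que coinciden sobre $[0,T]\times B(0,n)$ y satisfacen (\ref{lipglobal2}) \emph{sobre $D$}; esto es precisamente la formulación localizada del teorema, que ya permite $D$ abierto acotado y condiciones de tipo Lipschitz y crecimiento lineal restringidas a $D$, así que no hay obstáculo real, solo hay que citar la versión correcta del enunciado. Un punto menor a mencionar es que la condición inicial se asume cuadrado integrable en el Teorema \ref{teorema1}; si $\xi$ no lo fuera, se localiza adicionalmente en $\omega$ mediante los conjuntos $\{|\xi|\leq n\}$, razonando sobre la medida condicional o truncando $\xi$, pero este tecnicismo suele omitirse o absorberse en la definición de solución.
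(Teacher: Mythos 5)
Tu demostraci\'on es correcta y sigue esencialmente el mismo camino que la del texto: localizar con el tiempo de salida $\tau_n$ de la bola de radio $n$ para ambas soluciones, aplicar el Teorema \ref{teorema1} con $D=B(0,n)$ usando la constante $K_{T,n}$ de la condici\'on de Lipschitz local, y concluir haciendo $n\to\infty$ gracias a la continuidad de las trayectorias (el texto lo expresa v\'ia $\mathbf{P}(X_t^1\neq X_t^2)\le\mathbf{P}(\tau_n\le t)\to 0$ y t\'u v\'ia $\tau_n\uparrow\infty$ c.s., que son variantes equivalentes del mismo argumento). Tu observaci\'on final sobre la integrabilidad cuadrada de la condici\'on inicial es un detalle pertinente que el texto omite.
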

\begin{proof}
Sean $X^1,X^2$ dos soluciones y sea
\[\tau_n:=\inf\{t\geq 0:
|X_t^1|\geq n \ \ \text{o} \ \ |X_t^2|\geq n\}, \ \ \ n\geq 1.\]
Entonces por el teorema anterior se tiene \
$\mathbf{P}\bigl(X_{t\wedge\tau_n}^1=X_{t\wedge\tau_n}^2, \
\forall t\in[0,T]\bigr)=1,$ y por lo tanto para $t\in [0,T],$
\begin{align*}
\mathbf{P}(X_t^1\neq X_t^2)\le\mathbf{P}(\tau_n\le t) &\le
\mathbf{P}\Bigl(\sup_{\ 0\le s\le T}|X_s^1|\geq
n\Bigr)+\mathbf{P}\Bigl(\sup_{\ 0\le s\le
T}|X_s^2|\geq n\Bigr)\\
&=\mathbf{P}\Bigl(\max_{\ 0\le s\le T}|X_s^1|\geq
n\Bigr)+\mathbf{P}\Bigl(\max_{\ 0\le s\le T}|X_s^2|\geq
n\Bigr)\xrightarrow[n\to\infty]\,0
\end{align*}
puesto que $X^1, X^2$ tiene trayectorias continuas c.s. En
consecuencia $X^1$ es una \textit{versión} de $X^2,$ y de nuevo
por continuidad son \textit{indistinguibles.}
\end{proof}
\begin{teorema}[Existencia]\label{existencia}
Sean $b:[0,\infty)\times\R^d\rightarrow\R^d,\
\sigma:[0,\infty)\times\R^d\rightarrow\R^{d\times m}$ funciones
medibles que satisfacen la condición de Lipschitz local
(\ref{liplocal}) y la condición de crecimiento lineal
(\ref{clineal}). Entonces existe una solución (fuerte) de la EDE
(\ref{ede1}).
\end{teorema}
\begin{proof}
\textbf{Paso 1.} Primero supongamos que
$E\bigl(|\xi|^2\bigr)<+\infty$ y que los coeficientes satisfacen
la condición de \textit{Lipschitz} global (\ref{lipglobal})\,:

{}{} Para cada $T>0$ denotemos con $\M_T^2(\R^d)$ el espacio
vectorial de los procesos $(X_t)_{0 \leq t\leq T}$ \textit{prog.
medibles} y con valores en $\R^d$ tales que
\begin{equation}\label{norma1}
|\!|X|\!|_{\M_T^2(\R^d)}^2 :=E\!\int_0^T\!|X_t|^2 dt<+\infty
\end{equation} y definamos la aplicación
$\Phi_T:\M_T^2(\R^d)\to\M_T^2(\R^d)$ que a cada $X\in\M_T^2(\R^d)$
le asigna el proceso
\begin{equation}
\Phi_T(X)_t:=\xi+\int_0^t b(s,X_s)\,ds+\int_0^t\sigma
(s,X_s)\,dW_s, \ \ \ \ t\in [0,T].
\end{equation}
Veamos que esta aplicación posee un punto fijo en $\M_T^2(\R^d).$
Dados $X,X'\in\M_T^2(\R^d),$ usando la desigualdad de Hölder y la
Isometría de Itô se obtiene
\begin{align*}
E\bigl(&|\Phi_T(X)_t-\Phi_T(X')_t|^2\bigr)\\
&\le
2E\biggl[\Bigl|\int_0^t\bigl(b(s,X_s)-b(s,X'_s)\bigr)\,ds\Bigr|^2\biggr]
+2E\biggl[\Bigl|\int_0^t\bigl(\sigma(s,X_s)
-\sigma(s,X'_s)\bigr)\,dW_s\Bigr|^2\biggr]\\
&\le 2tE\!\int_0^t|b(s,X_s)-b(s,X'_s)|^2\,ds
+2E\int_0^t|\!|\sigma(s,X_s)
-\sigma(s,X'_s)|\!|^2\,ds\\
&\le 2K_T^2(1+T)\,E\!\int_0^t\!|X_s-X'_s|^2\,ds, \ \ \ \ t\in [0,T]
\end{align*}
y debido a que $\int_s^T e^{-\lambda t}e^{\lambda
s}\,dt=\frac{1}{\lambda}\bigl(1-e^{-\lambda (T-s)}\bigr)\le
\frac{1}{\lambda}$ \,para todo $\lambda>0,$ se tiene que

\begin{align*}
E\!\int_0^T\!&e^{-\lambda t}|\Phi_T(X)_t-\Phi_T(X')_t|^2\,dt
=\int_0^T e^{-\lambda t}E\bigl(|\Phi_T(X)_t-\Phi_T(X')_t|^2\bigr)\,dt\\
&\le 2K_T^2(1+T)\int_0^T e^{-\lambda t}
E\!\int_0^t\!|X_s-X'_s|^2\,ds\,dt\\
&= 2K_T^2(1+T)\, E\!\int_0^T \!\int_0^t e^{-\lambda
t}|X_s-X'_s|^2\,ds\,dt\\
&=2K_T^2(1+T)\,E\!\int_0^T\!\int_s^T e^{-\lambda
t}|X_s-X'_s|^2\,dt\,ds\\
&= 2K_T^2(1+T)\,E\!\int_0^T \biggl(\int_s^T e^{-\lambda
t}e^{\lambda s}\,dt\biggr)e^{-\lambda
s}|X_s-X'_s|^2\,ds\\
&\le \frac{1}{\lambda}2K_T^2(1+T)\,E\!\int_0^T e^{-\lambda
s}|X_s-X'_s|^2\,ds
\end{align*}
Si $\lambda>2K_T^2(1+T),$ la aplicación $\Phi_T$ resulta ser una
\textit{contracción estricta} de $\M_T^2(\R^d)$ en sí mismo con la
norma
\begin{equation}\label{norma2}
|\!|X|\!|_\lambda^2:=E\!\int_0^T e^{-\lambda s}|X_s|^2\,ds
\end{equation}
Esta norma satisface
\[e^{-\lambda T}|\!|X|\!|_{\M_T^2(\R^d)}^2\le |\!|X|\!|_\lambda^2\le |\!|X|\!|_{\M_T^2(\R^d)}^2\]
y dado que $\M_T^2(\R^d)$ es es un espacio de Banach con la norma
usual (\ref{norma1}), también lo es con la norma (\ref{norma2}), y
por lo tanto $\Phi_T$ posee un único punto fijo, que notaremos por
$X^T,$ y es solución de la EDE (\ref{ede1}) en el intervalo
$[0,T].$

{} Si $T\le T'$ el proceso $X^{T'}$ restringido a $[0,T],$ es
decir $(X_t^{T'})_{t\in [0,T]},$ es una \textit{versión} de
$X^T,$ pues también pertenece a $\M_T^2(\R^d)$ y es punto fijo de
$\Phi_T.$ Podemos entonces definir el proceso continuo y
\textit{prog. medible} $(X_t)_{t\geq 0}$ por
\[X_t:=X_t^T \ \text{si} \ t\in [0,T]\]
y que claramente es solución de (\ref{ede1}).

{}{}\textbf{Paso 2.} Sea ahora $\xi$ no necesariamente cuadrado
integrable. Para cada $n\geq1$ defina
$\xi^n:=\xi\mathbf{1}_{\{|\xi|\le n\}}.$ Debido a que $|\xi^n|\le
n,$ se tiene por el paso 1 la existencia de un proceso continuo y
adaptado $(X_t^n)_{t\geq 0}$ tal que
\begin{equation}\label{paso2}
X_t^n=\xi^n+\int_0^t b(s,X_s^n)\,ds+\int_0^t\sigma
(s,X_s^n)\,dW_s, \ \ \ \ t\geq 0, \ \ \ n\geq 1
\end{equation}
y por el lema \ref{apriori}
\[E\biggl[\sup_{\,t\in [0,T]}|X_t^n|^2\biggr]<+\infty, \ \ \ \forall
T>0, \ \forall n\geq 1\] Sean $\lambda_n=\mathbf{1}_{\{|\xi|\le
n\}}$ y $T>0.$ Dado que $(\xi^{n+1}-\xi^n)\lambda_n=0$ y
$\lambda_n$ es $\F_0-$medible, por el lema \ref{lemazeta} se tiene
que para $t\in [0,T]$
\[(X_t^{n+1}-X_t^n)\lambda_n=\int_0^t \bigl[b(s,X_s^{n+1})-b(s,X_s^n)\bigr]\lambda_n\,ds
+\int_0^t\bigl[\sigma(s,X_s^{n+1})-\sigma(s,X_s^n)\bigr]\lambda_n\,dW_s\]
de donde resulta
\[|X_t^{n+1}-X_t^n|^2\lambda_n\le 2t\int_0^t|b(s,X_s^{n+1})-b(s,X_s^n)|^2\lambda_n\,ds
+2\Bigl|\int_0^t\bigl[\sigma(s,X_s^{n+1})-\sigma(s,X_s^n)\bigr]\lambda_n\,dW_s\Bigr|^2\]
tomando valor esperado obtenemos
\[E\bigl(|X_t^{n+1}-X_t^n|^2\lambda_n\bigr)
\le
2K^2(1+T)\int_0^tE\bigl(|X_s^{n+1}-X_s^n|^2\lambda_n\bigr)\,ds\] y
por el lema de Gronwall
$E\bigl(|X_t^{n+1}-X_t^n|^2\lambda_n\bigr)=0, \ \forall
t\in[0,T].$ Por lo tanto, para todo $t\in [0,T]$ se tiene
$X_t^{n+1}\lambda_n=X_t^n\lambda_n$ c.s. lo que implica por
continuidad de las trayectorias que
$\mathbf{P}\bigr(X_t^{n+1}\lambda_n=X_t^n\lambda_n, \ \forall
t\in[0,T]\bigl)=1, \ \forall n\geq 1.$ En particular
\[\mathbf{P}\Bigl(\sup_{\ t\in [0,T]}|X_t^{n+1}-X_t^n|^2>0\Bigr)
\le\mathbf{P}\bigl(|\xi|\le n\bigr)\xrightarrow[n\to\infty]\,0.\]
Sea $\{n_k\}_{k\geq 1}$ una sucesión tal que
$\mathbf{P}\bigl(|\xi|\le n_k\bigr)\le\frac{1}{2^k}, \ \forall
k\geq 1.$ Puesto que
\[\mathbf{P}\Bigl(\sup_{\ t\in [0,T]}|X_t^{n_{k+1}}-X_t^{n_k}|^2>0\Bigr)\le\frac{1}{2^k},\] se
obtiene por el lema de Borel-Cantelli que
\[\mathbf{P}\biggl(\limsup_{k\to\infty}\,\Bigl(\sup_{\ t\in [0,T]}|X_t^{n_{k+1}}
-X_t^{n_k}|^2>0\Bigr)\biggr)=0\] o en forma equivalente
\[\mathbf{P}\biggl(\liminf_{k\to\infty}\,\Bigl(\sup_{\ t\in [0,T]}|X_t^{n_{k+1}}
-X_t^{n_k}|^2=0\Bigr)\biggr)=1\] Es decir que para casi todo
$\omega\in\Omega$ existe un $k_0(\omega)\geq 1$ suficientemente
grande tal que $X_t^{n_{k+1}}(\omega)=X_t^{n_k}(\omega), \ \forall
k\geq k_0,$ para todo $t\in [0,T].$ Como $T>0$ es arbitrario,
podemos definir el proceso continuo y \textit{prog. medible}
$(X_t)_{t\geq 0}$ por
\[X_t(\omega):=\lim_{k\to\infty}X_t^{n_k}(\omega)=X^{n_{k_0}}(\omega), \ \ t\geq 0\]
luego $X_t=X_t^{n_{k_0}}$ c.s. para todo $t\geq 0.$ Si asumimos
además que $k_0(\omega)\geq|\xi(\omega)|$ entonces
$\xi^{n_{k_0}}=\xi.$ Tomando $n=n_{k_0}$ en (\ref{paso2})
obtenemos
\[X_t=\xi+\int_0^t b(s,X_s)\,ds+\int_0^t\sigma
(s,X_s)\,dW_s, \ \ \ \ t\geq 0.\]

{}{}\textbf{Paso 3.} Supongamos ahora que $b$ y $\sigma$
satisfacen la condición de \textit{Lipschitz} local
(\ref{liplocal}). Para $n\geq 1$ definimos
$W_n(t,x):=b(t,\Pi_n(x))$ y $\sigma_n(t,x):=\sigma(t,\Pi_n(x)),$
donde $\Pi_n:\R^d\to B[0,n]$ está dada por
\[\Pi_n(x):=\frac{n\wedge|x|}{|x|}x=
\begin{cases}
\,x               & \text{si}\ \,|x|\le n\\
\frac{n}{|x|}\,x  & \text{si}\ \,|x|> n
\end{cases} \]
Dado que $|\Pi_n(x)-\Pi_n(x')|\le|x-x'|,$ $W_n$ y $\sigma_n$
satisfacen la condición de \textit{Lipschitz} global
(\ref{lipglobal}) con constante de \textit{Lipschitz} $K_{T,n}.$
Por lo tanto, por el paso anterior, para cada $n\geq 1$ existe un
proceso continuo y \textit{prog. medible} $(X_t^n)_{t\geq 0}$ tal
que
\begin{equation}
X_t^n=\xi+\int_0^t W_n(s,X_s^n)\,ds+\int_0^t\sigma_n
(s,X_s^n)\,dW_s, \ \ \ \ t\geq 0
\end{equation}
Sea $\tau_n:=\inf\{t\geq 0:|X_t^n|\geq n\}, \ n\geq 1.$ Por el
teorema \ref{teorema1} resulta que c.s.
\[\tau_n\le\inf\{t\geq 0:|X_t^{n+1}|\geq n\}\le\tau_{n+1}, \ \ X_t^n=X_t^{n+1} \ \text{si} \ \
0\le t\le\tau_n.\] Probaremos que $\tau_n\nearrow\infty$ c.s., lo
cual permite definir el proceso continuo y \textit{prog. medible}
$(X_t)_{t\geq 0}$ por
\[X_t:=X_t^n \ \ \text{si} \ \ 0\le t\le \tau_n,\]
y que es solución de (\ref{ede1}). Definamos
$Y_t^n:=e^{-|\xi|}X_t^n,$ \ $\xi':=e^{-|\xi|}\xi,$
\[b'_n(t,y):=e^{-|\xi|}W_n(t,e^{|\xi|}y), \ \
\ \sigma'_n(t,y):=e^{-|\xi|}\sigma_n(t,e^{|\xi|}y)\] Dado que
$e^{-|\xi|}$ es $\F_0-$medible, por el lema \ref{lemazeta} tenemos
\begin{align*}
e^{-|\xi|}X_t^n&=e^{-|\xi|}\xi+\int_0^t
e^{-|\xi|}W_n(s,X_s^n)\,ds+\int_0^te^{-|\xi|}\sigma
(s,X_s^n)\,dW_s\\
&=e^{-|\xi|}\xi+\int_0^t
e^{-|\xi|}W_n(s,e^{|\xi|}Y_s^n)\,ds+\int_0^te^{-|\xi|}\sigma_n
(s,e^{|\xi|}Y_s^n)\,dW_s\\[0.2cm]
\text{es decir} \ \ \ \ \ \ \ \ \ \ Y_t^n&=\xi'+\int_0^t
b'_n(s,Y_s^n)\,ds+\int_0^t \sigma'_n (s,Y_s^n)\,dW_s, \ \ \ \
t\geq 0
\end{align*}
Como $b'_n$ y $\sigma'_n$ satisfacen la condición de crecimiento
(\ref{clineal}) con la misma constante $K_T$ y $|\xi'|\le 1,$ por
el lema \ref{apriori}
\[E\biggl[\sup_{\,t\in [0,T]}|Y_t^n|^2\biggr]\le
C_T<\infty, \ \forall n\geq 1\] $(C_T$ no depende de $n)$. Usando
la desigualdad de Markov, para todo $\delta,T>0$ se tiene
\begin{align*}
\mathbf{P}(\tau_n\le T)&=\mathbf{P}\Bigl(\sup_{\ t\in [0,T]}|X_t^n|\geq n\Bigr)=\mathbf{P}\Bigl(\sup_{\ t\in [0,T]}|Y_t^n|\geq ne^{-|\xi|}\Bigr)\\
&\le \mathbf{P}\Bigl(\sup_{\ t\in [0,T]}|Y_t^n|\geq
n\delta\Bigr)+\mathbf{P}(e^{-|\xi|}\le\delta)\le\frac{C_T}{n^2\delta^2}
+\mathbf{P}(e^{-|\xi|}\le\delta)
\end{align*}
luego $\limsup_{n\to\infty}\mathbf{P}(e^{-|\xi|}\le\delta)\searrow
0$ cuando $\delta\searrow 0.$ En consecuencia
$\mathbf{P}(\tau_n\le T)\to 0$ cuando $n\to\infty$ para todo $T>0$
y puesto que $0\le\tau_1\le\tau_2\le\cdots\le\tau_n\le\cdots\,,$
resulta que $\tau_n\nearrow\infty$ c.s.
\end{proof}

\begin{obse}\label{obsecondinicial}
Los resultados anteriores se extienden de manera obvia a EDEs de
la forma
\begin{equation}\label{edetxi}
X_s=\xi+\int_t^s b(r,X_r)\,dr+\int_t^s\sigma (r,X_r)\,dW_r, \ \ \
\ s\geq t
\end{equation}
con $t\geq 0$ fijo y $\xi$ una variable aleatoria independiente
del M.B. $(W_s-W_t)_{s\geq t}$ \,(ver proposición
\ref{cambiovar}). Notaremos la solución de (\ref{edetxi}) por
$(X_s^{t,\xi})_{s\geq t}. $
\end{obse}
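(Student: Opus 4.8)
El plan es reducir la afirmaci\'on a los resultados ya establecidos mediante una traslaci\'on de la variable temporal. Primero introducir\'ia el movimiento Browniano desplazado $\widetilde{W}_u:=W_{t+u}-W_t$, $u\geq 0$: de la definici\'on de movimiento Browniano $m-$dimensional se verifica directamente que $(\widetilde{W}_u)_{u\geq 0}$ es de nuevo un movimiento Browniano $m-$dimensional est\'andar con $\widetilde{W}_0=0$, y por hip\'otesis es independiente de $\xi$. Trabajar\'ia con la filtraci\'on $\{\widetilde{\F}_u\}_{u\geq 0}$ generada por $\xi$ y $(\widetilde{W}_u)_{u\geq 0}$, aumentada con los conjuntos $\mathbf{P}-$nulos de $\Omega$.

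En segundo lugar definir\'ia los coeficientes trasladados $\widetilde{b}(u,x):=b(t+u,x)$ y $\widetilde{\sigma}(u,x):=\sigma(t+u,x)$. Como las condiciones de Lipschitz local (\ref{liplocal}) y de crecimiento lineal (\ref{clineal}) se imponen para todo $T>0$ y el intervalo temporal $[0,T]$ de los coeficientes trasladados corresponde a $[t,t+T]$ de los originales, $\widetilde{b}$ y $\widetilde{\sigma}$ satisfacen las mismas hip\'otesis (con las constantes correspondientes). Aplicando el teorema \ref{existencia} y el corolario \ref{unicidad} a la EDE
\begin{equation*}
Y_u=\xi+\int_0^u\widetilde{b}(r,Y_r)\,dr+\int_0^u\widetilde{\sigma}(r,Y_r)\,d\widetilde{W}_r,\qquad u\geq 0,
\end{equation*}
se obtiene una \'unica soluci\'on fuerte $(Y_u)_{u\geq 0}$, \textit{prog. medible} respecto de $\{\widetilde{\F}_u\}_{u\geq 0}$, con trayectorias continuas c.s.\ y que cumple las estimaciones \textit{a priori} del lema \ref{apriori}.

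El tercer paso es deshacer el cambio de variable: definir\'ia $X_s:=Y_{s-t}$ para $s\geq t$. Por el cambio de variable ordinario $\int_t^s b(r,X_r)\,dr=\int_0^{s-t}\widetilde{b}(r,Y_r)\,dr$, y aplicando la proposici\'on \ref{cambiovar} componente a componente —y, cuando el integrando est\'e en $H^2$ pero no en $M^2$, el argumento habitual de localizaci\'on por tiempos de parada junto con la continuidad de la integral estoc\'astica en el l\'imite superior— se obtiene
\begin{equation*}
\int_t^s\sigma(r,X_r)\,dW_r=\int_0^{s-t}\widetilde{\sigma}(r,Y_r)\,d\widetilde{W}_r,\qquad s\geq t.
\end{equation*}
Sustituyendo en la ecuaci\'on de $Y$ se concluye que $(X_s)_{s\geq t}$ resuelve (\ref{edetxi}); rec\'iprocamente, toda soluci\'on de (\ref{edetxi}) da lugar, por el mismo cambio biyectivo y bicontinuo de la variable temporal $s\mapsto s-t$, a una soluci\'on de la ecuaci\'on de $Y$, de modo que la unicidad en trayectoria se transfiere.

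El \'unico punto que exige algo de atenci\'on es el manejo de las filtraciones y de la medibilidad progresiva: hay que observar que $\widetilde{\F}_{s-t}$ queda contenida en la filtraci\'on generada por $\xi$ y $(W_r)_{0\le r\le s}$, tomar como noci\'on de soluci\'on de (\ref{edetxi}) la adaptada a la filtraci\'on natural del par $(t,\xi)$, y comprobar que la medibilidad progresiva se conserva bajo la traslaci\'on $s\mapsto s-t$. Todo ello es rutinario; el resto de la demostraci\'on consiste en aplicar mec\'anicamente el teorema \ref{existencia} y el corolario \ref{unicidad}.
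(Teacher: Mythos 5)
Tu propuesta es correcta y coincide en esencia con lo que el texto tiene en mente: la observaci�n no trae demostraci�n expl�cita, pero la referencia a la proposici�n \ref{cambiovar} indica precisamente el argumento de traslaci�n temporal ($\widetilde{W}_u=W_{t+u}-W_t$, coeficientes desplazados, y deshacer el cambio de variable en la integral estoc�stica) que t� desarrollas. Los detalles que a�ades sobre filtraciones, medibilidad progresiva y la localizaci�n para integrandos en $H^2$ son exactamente los puntos ``rutinarios'' que el autor da por sobreentendidos.
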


\subsubsection{Propiedad de Markov de las soluciones}
De aquí en adelante asumiremos que los coeficientes son
globalmente \textit{Lipschitz} y satisfacen la condición de
\textit{crecimiento lineal.} Para cada
$(t,x)\in[0,+\infty)\times\R^d$ notaremos $(\X)_{s\geq t}$ la
solución de la EDE
\begin{equation}\label{edemarkov}
\X=x+\int_t^s b(r,X_r^{t,x})\,dr+\int_t^s\sigma
(r,X_r^{t,x})\,dW_r, \ \ \ \ s\geq t
\end{equation}

La siguiente proposición establece dependencia continua de
$X^{t,x}$ con respecto a $t$ y $x$ y será fundamental para
demostrar la propiedad de Markov de soluciones de EDEs.

\begin{prop}\label{propmarkov1} Sean
$0\le t'\le t\le T$ y $p\geq 1.$ Para todo par $x,x'\in\R^d$
existe una constante $C$ que depende de $T$ y de $K_T$ tal que
\[E\biggl[\sup_{\,t\le s\le
T}|X_s^{t',x'}-X_s^{t,x}|^{2p}\biggr]\le
C\bigl\{|x-x'|^{2p}+|t'-t|^{p}(1+|x'|)^{2p}\bigr\}\]
\end{prop}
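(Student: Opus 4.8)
The plan is to mirror, almost verbatim, the Gronwall argument from the proof of Theorem~\ref{teorema1}; the only genuinely new ingredient is the estimate of the ``initial displacement'' $X_t^{t',x'}-x$, which is supplied by the second a~priori bound of Lemma~\ref{apriori}. Fix $0\le t'\le t\le T$ and $x,x'\in\R^d$. Subtracting the integral equation \eqref{edemarkov} for $X^{t,x}$ from the corresponding one for $X^{t',x'}$, and observing that $x'+\int_{t'}^t b(r,X_r^{t',x'})\,dr+\int_{t'}^t\sigma(r,X_r^{t',x'})\,dW_r=X_t^{t',x'}$, one gets, for $s\in[t,T]$,
\[X_s^{t',x'}-X_s^{t,x}=\bigl(X_t^{t',x'}-x\bigr)+\int_t^s\bigl[b(r,X_r^{t',x'})-b(r,X_r^{t,x})\bigr]\,dr+\int_t^s\bigl[\sigma(r,X_r^{t',x'})-\sigma(r,X_r^{t,x})\bigr]\,dW_r.\]
By Lemma~\ref{apriori}, applied with deterministic initial data (and its obvious extension to initial time $t'$, cf. Remark~\ref{obsecondinicial}), both $X^{t',x'}$ and $X^{t,x}$ have finite $2p$-th moments, uniformly on $[0,T]$; hence $\sigma(\cdot,X^{t',x'})-\sigma(\cdot,X^{t,x})$ lies in $M^{2,d\times m}[t,T]$ and the stochastic integral can be treated directly by estimate \eqref{des1}, with no localization needed.

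Next I would raise this identity to the power $2p$ via $|a+b+c|^{2p}\le 3^{2p-1}(|a|^{2p}+|b|^{2p}+|c|^{2p})$, take $\sup_{t\le u\le s}$, and then expectation. H\"older's inequality bounds the drift contribution by $T^{2p-1}E\!\int_t^s|b(r,X_r^{t',x'})-b(r,X_r^{t,x})|^{2p}\,dr$; estimate \eqref{des1} bounds the supremum of the stochastic integral by a constant (depending on $T,p,d$) times $E\!\int_t^s\|\sigma(r,X_r^{t',x'})-\sigma(r,X_r^{t,x})\|^{2p}\,dr$; and the global Lipschitz condition \eqref{lipglobal} turns both into multiples of $\int_t^s\psi(r)\,dr$, where $\psi(s):=E\bigl[\sup_{t\le u\le s}|X_u^{t',x'}-X_u^{t,x}|^{2p}\bigr]$. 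This produces an inequality of the form $\psi(s)\le C_1\,E|X_t^{t',x'}-x|^{2p}+C_2\int_t^s\psi(r)\,dr$ with $C_1,C_2$ depending only on $T,K_T,p,d$, and Gronwall's lemma yields $\psi(T)\le C_1e^{C_2T}\,E|X_t^{t',x'}-x|^{2p}$.

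It remains to estimate the initial displacement. Writing $X_t^{t',x'}-x=(X_t^{t',x'}-x')+(x'-x)$ gives $E|X_t^{t',x'}-x|^{2p}\le 2^{2p-1}\bigl(E|X_t^{t',x'}-x'|^{2p}+|x-x'|^{2p}\bigr)$. Since $x'=X_{t'}^{t',x'}$, the second a~priori estimate of Lemma~\ref{apriori}, applied to the solution $X^{t',x'}$ on the interval $[t',t]$ (with final time $t\le T$ and deterministic initial value $x'$), gives $E|X_t^{t',x'}-x'|^{2p}\le E\bigl[\sup_{t'\le s\le t}|X_s^{t',x'}-X_{t'}^{t',x'}|^{2p}\bigr]\le C'(t-t')^{p}(1+|x'|^{2p})\le C'|t-t'|^{p}(1+|x'|)^{2p}$. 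Substituting this into the Gronwall bound yields the asserted inequality with a constant $C$ depending only on $T$ and $K_T$ (and on the fixed parameters $p,d$).

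I do not expect any serious obstacle beyond bookkeeping: one must simply verify that every constant produced by Lemma~\ref{apriori} and by estimate \eqref{des1} depends on $T,K_T,p,d$ but is independent of $x,x'$, which is immediate from their statements. The only conceptual point is to recognize that the natural comparison takes place on $[t,T]$, with the entire effect of the earlier starting time $t'$ absorbed into the single random quantity $X_t^{t',x'}-x$, whose size is precisely what the time-increment estimate of Lemma~\ref{apriori} controls.
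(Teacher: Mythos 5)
Your proposal is correct and follows essentially the same route as the paper's own proof: the same decomposition of $X_s^{t',x'}-X_s^{t,x}$ into the initial displacement $X_t^{t',x'}-x$ plus drift and stochastic integrals over $[t,s]$, the same use of the second a~priori bound of Lemma~\ref{apriori} to control $E\bigl(|X_t^{t',x'}-x'|^{2p}\bigr)$ by $C'(t-t')^p\bigl(1+|x'|^{2p}\bigr)$, and the same H\"older/\eqref{des1}/Lipschitz/Gronwall chain. No gaps.
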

\begin{proof}
Dado que
\[X_t^{t',x'}=x'+\int_{t'}^t b(r,X_r^{t',x'})\,dr+\int_{t'}^t\sigma
(r,X_r^{t',x'})\,dW_r\] para todo $t\le u\le s \le T$ se tiene que
\[X_u^{t',x'}-X_u^{t,x}=(X_t^{t',x'}-x)
+\int_t^u\bigl[b(r,X_r^{t',x'})-b(r,X_r^{t,x})\bigr]\,dr
+\int_t^u\bigl[\sigma(r,X_r^{t',x'})-\sigma(r,X_r^{t,x})\bigr]\,dW_r.\]
Por el lema \ref{apriori} se tiene que
\begin{align*}
E\bigl(|X_t^{t',x'}-x|^{2p}\bigr)
&\le 2^{2p-1} E\bigl(|X_t^{t',x'}-X_{t'}^{t',x'}|^{2p}+|x'-x|^{2p}\bigr)\\
&\le
2^{2p-1}\bigl\{C'(t-t')^p\bigl(1+|x'|^{2p}\bigr)+|x'-x|^{2p}\bigr\},
\end{align*}
y por la desigualdad de Hölder se tiene que
\begin{align*}
\sup_{\, t\le u\le s}\Bigl|\int_t^u
&\bigl[b(r,X_r^{t',x'})-b(r,X_r^{t,x})\bigr]\,dr
+\int_t^u\bigl[\sigma(r,X_r^{t',x'})-\sigma(r,X_r^{t,x})\bigr]\,dW_r\Bigr|^{2p}\\
&\le
2^{2p-1}(s-t)^{2p-1}\int_t^s|b(r,X_r^{t',x'})-b(r,X_r^{t,x})|^{2p}\,dr\\
&\hspace{2cm}+2^{2p-1}\sup_{t\le u\le s}
\Bigl|\int_t^u\bigl[\sigma(r,X_r^{t',x'})-\sigma(r,X_r^{t,x})\bigr]\,dW_r\Bigr|^{2p}.
\end{align*}
Usando la desigualdad (\ref{des1}), la condición de
\textit{Lipschitz} y el teorema de Fubini obtenemos
\begin{align*}
E\biggl[\sup_{\,t\le u\le s}|X_u^{t',x'}-X_u^{t,x}|^{2p}\biggr]
&\le 4^{2p-1}\bigl\{C'(t-t')^p\bigl(1+|x'|^{2p}\bigr)+|x'-x|^{2p}\bigr\}\\
&\hspace{1cm}+4^{2p-1}(T^{2p-1}+C_pT^{p-1})K_T^{2p}\int_t^s
E\bigl(|X_r^{t',x'}-X_r^{t,x}|^{2p}\bigr)\,dr
\end{align*}
y dado que $E\bigl[|X_r^{t',x'}-X_r^{t,x}|^{2p}\bigr]\le
E\bigl[\sup_{\,t\le u\le r}|X_r^{t',x'}-X_r^{t,x}|^{2p}\bigr],$
podemos aplicar lema de Gronwall y obtener
\[E\biggl[\sup_{\,t\le u\le s}|X_u^{t',x'}-X_u^{t,x}|^{2p}\biggr]
\le
4^{2p-1}e^{C_0(t-s)}\bigl\{C'(t-t')^p\bigl(1+|x'|^{2p}\bigr)+|x'-x|^{2p}\bigr\},\]
con $C_0=4^{2p-1}(T^{2p-1}+C_pT^{p-1})K_T^{2p}.$
\end{proof}

\begin{lema}\label{lemamarkov}
Sea $\psi:\R^d\times\Omega\to\R$ una función medible y acotada tal
que $\psi(x,\cdot)$ es independiente de $\F_t$ para cada
$x\in\R^d.$ Sea $\theta$ una variable aleatoria con valores en
$\R^d$ y $\F_t-$medible. Entonces
\[E[\psi(\theta(\cdot),\cdot)|\F_t]=g(\theta),\]
donde $g(x)=E[\psi(x,\cdot)].$
\end{lema}
\begin{proof}
Por el teorema de las clases monótonas (ver \cite{tudor}) es
suficiente tomar $\psi$ de la forma
$\psi(x,\omega)=\alpha(x)\beta(\omega),$ donde $\alpha:\R^d\to\R,
\ \beta:\Omega\to\R$ son medibles, acotadas y $\beta$ es
independiente de $\F_t.$ Para $\Omega'\in\F_t$ se tiene que
\begin{align*}
\int_{\Omega'}\psi(\theta(\cdot),\cdot)\,d\mathbf{P}
&=\int_{\Omega'}\alpha(\theta)\beta\,d\mathbf{P}
=E[\beta]\int_{\Omega'}\alpha(\theta)\,d\mathbf{P}\\
&=\int_{\Omega'}\alpha(\theta)E[\beta]\,d\mathbf{P}
=\int_{\Omega'}g(\theta)\,d\mathbf{P},
\end{align*}
donde $g(x)=\alpha(x)E[\beta].$
\end{proof}

\begin{prop}\label{propmarkov}
Sea $\xi$ una variable aleatoria independiente de $(W_t)_{t\geq
0}$ y sea $(X_s)_{s\geq 0}$ la solución de la EDE
\[X_s=\xi+\int_0^s b(r,X_r)\,dr+\int_0^s\sigma (r,X_r)\,dW_r, \ \ \
\ s\geq 0.\] Entonces el proceso $(X_s)_{s\geq 0}$ es un
$\F_t-$proceso de Markov con función de transición de probabilidad
\[p(t,x,s,A):=\mathbf{P}(\X\in A), \ \ \ \ s\geq t, \ x\in\R^d, \ A\in\B(\R^d)\]
donde $(X_s^{t,x})_{s\geq t}$ es la solución de la EDE
(\ref{edemarkov}) con condición inicial $X_t^{t,x}=x.$
\end{prop}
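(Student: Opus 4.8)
El plan es establecer primero una identidad clave de unicidad de flujo: para $s\ge t\ge 0$, la solución $(X_r)_{r\ge 0}$ con condición inicial $\xi$ satisface $X_s = X_s^{t,X_t}$ c.s. En efecto, a partir de $r=t$ el proceso $(X_r)_{r\ge t}$ resuelve la EDE \eqref{edetxi} con condición inicial $X_t$ (una variable $\F_t$-medible) respecto del movimiento Browniano $(W_r-W_t)_{r\ge t}$; por la unicidad en trayectoria (Corolario \ref{unicidad}, aplicado en su versión trasladada descrita en la Observación \ref{obsecondinicial}) coincide c.s. con $(X_r^{t,X_t})_{r\ge t}$. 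Aquí hay un punto delicado: la solución $X^{t,x}$ se construye para cada $x$ fijo, y necesito ``pegar'' estas soluciones al sustituir $x$ por la variable aleatoria $X_t$. Esto requiere que la aplicación $x\mapsto X_s^{t,x}$ admita una versión medible conjuntamente en $(x,\omega)$, lo cual se deduce de la Proposición \ref{propmarkov1}: la estimación $L^{2p}$-Lipschitz en $x$ junto con el teorema de Kolmogorov-\v{C}entsov da una versión continua en $x$, y por ende medible; entonces la composición $\omega\mapsto X_s^{t,X_t(\omega)}(\omega)$ está bien definida y es medible.

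Segundo, fijo $s\ge t$ y $A\in\B(\R^d)$ y calculo $\mathbf{P}(X_s\in A\mid\F_t)$. Defino $\psi(x,\omega):=\mathbf{1}_A\bigl(X_s^{t,x}(\omega)\bigr)$. La solución $X_s^{t,x}$ se construye, vía iteración de Picard, como funcional de la condición inicial $x$ y de los incrementos brownianos $(W_r-W_t)_{r\ge t}$; como $\xi$ y $(W_r)_{r\ge 0}$ son independientes y $\{\F_r\}$ es la filtración generada por ambos, los incrementos $(W_r-W_t)_{r\ge t}$ son independientes de $\F_t$. Por lo tanto $\psi(x,\cdot)$ es independiente de $\F_t$ para cada $x\in\R^d$, y $\psi$ es medible y acotada. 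Además $X_t$ es $\F_t$-medible. Estoy exactamente en las hipótesis del Lema \ref{lemamarkov} con $\theta=X_t$.

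Tercero, aplico el Lema \ref{lemamarkov}: usando la identidad $X_s = X_s^{t,X_t}$ c.s. del primer paso,
\[
\mathbf{P}(X_s\in A\mid\F_t)=E\bigl[\mathbf{1}_A(X_s)\mid\F_t\bigr]
=E\bigl[\psi(X_t,\cdot)\mid\F_t\bigr]=g(X_t),
\]
donde $g(x)=E[\psi(x,\cdot)]=\mathbf{P}(X_s^{t,x}\in A)=p(t,x,s,A)$. Esto es precisamente \eqref{markov1}, con lo cual $(X_s)_{s\ge 0}$ es un $\F_t$-proceso de Markov con la función de transición indicada. Queda verificar de paso que $p$ es en efecto una probabilidad de transición: la medibilidad en $x$ de $x\mapsto p(t,x,s,A)$ sigue de la versión medible de $x\mapsto X_s^{t,x}$; que $A\mapsto p(t,x,s,A)$ sea una medida de probabilidad es inmediato; y la ecuación de Chapman-Kolmogorov se obtiene aplicando la propiedad de Markov ya demostrada (o alternativamente la identidad de flujo $X_r^{t,x}=X_r^{u,X_u^{t,x}}$ para $t\le u\le r$ más el Lema \ref{lemamarkov}).

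El principal obstáculo es el primer paso: justificar rigurosamente el ``pegado'' de las soluciones $X^{t,x}$ en la variable aleatoria $X_t$, es decir, la existencia de una versión de $(x,\omega)\mapsto X_s^{t,x}(\omega)$ suficientemente regular para que $X_s^{t,X_t}$ tenga sentido y coincida c.s.\ con $X_s$. Todo lo demás es aplicación directa de la unicidad fuerte, de la independencia incrementos/$\F_t$, y del Lema \ref{lemamarkov}.
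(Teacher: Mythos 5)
Tu propuesta sigue esencialmente la misma ruta que el texto: la identidad de flujo $X_s=X_s^{t,X_t}$ c.s.\ v�a unicidad en trayectoria, el Lema \ref{lemamarkov} aplicado a $\psi(x,\omega)=\mathbf{1}_A\bigl(X_s^{t,x}(\omega)\bigr)$ (independiente de $\F_t$ porque $X^{t,x}$ se construye a partir de los incrementos $(W_r-W_t)_{r\geq t}$), y Chapman--Kolmogorov usando la propiedad de Markov de $X^{t,x}$. La �nica diferencia es que t� haces expl�cito el punto de medibilidad conjunta en $(x,\omega)$ necesario para dar sentido a $X_s^{t,X_t}$, que el texto pasa por alto, y tu soluci�n v�a la Proposici�n \ref{propmarkov1} es un parche leg�timo de ese paso t�cito.
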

\begin{proof}
La aplicación $A\mapsto\mathbf{P}(X_s^{t,x}\in A)$ es la medida
transportada por $X_s^{t,x},$ luego es claramente una medida de
probabilidad sobre $\R^d.$ %Por la proposición \ref{propmarkov1},
%si $x_n\rightarrow x$ en $\R^d,$ entonces
%\[X_s^{t,x_n}\xrightarrow[n\to\infty]{L^2}\X\] Por continuidad de
%$\mathbf{1}_A,$ se sigue
%$\mathbf{1}_A\bigl(X_s^{t,x_n}\bigr)\xrightarrow[n\to\infty]{L^2}%%
%\mathbf{1}_A\bigl(\X\bigr),$ y usando desigualdad de Jensen
%\[\mathbf{P}(X_s^{t,x_n}\in A)
%=E\bigl[\mathbf{1}_A(X_s^{t,x_n})\bigr]\xrightarrow[n\to\infty]\,%%
%E\bigl[\mathbf{1}_A(\X)\bigr]=\mathbf{P}(\X\in A)\] es decir
%$x\mapsto \mathbf{P}(X_s^{t,x}\in A)$ es continua y en particular
%Borel-medible.

{}{} Veamos que para $s\geq t$ y $A\in\B(\R^d), \
\mathbf{P}(X_s\in A|\F_t)=p(t,X_t,s,A)\,:$

{}{} Sea $\psi(x,\omega):=\mathbf{1}_A(\X(\omega)).$ Por la
construcción de $(\X)_{s\geq t}$ en la demostración del teorema
\ref{existencia} se tiene que $\X$ es medible con respecto a la
$\sigma-$álgebra
\[\sigma\bigl(\{W_r-W_t: s\le r\le t\}\cup\mathcal{N}\bigr).\]
En consecuencia, por definición del M.B., $\psi(x,\cdot)$ es
independiente de $\F_t,$ y dado que los procesos $(X_s)_{s\geq t}$
y $(X_s^{t,X_t})_{s\geq t}$ satisfacen la misma EDE
\[X_s=X_t+\int_t^s b(r,X_r)\,dr+\int_t^s\sigma (r,X_r)\,dW_r, \ \ \
\ s\geq t\] resulta por unicidad que $X_s=X_s^{t,X_t}$ c.s. para
$s\geq t,$ luego
\[\psi(X_t(\omega),\omega)=\mathbf{1}_A(X_s^{t,X_t(\omega)}(\omega))
=\mathbf{1}_A(X_s(\omega)).\] Aplicando el lema \ref{lemamarkov}
con $\theta=X_t$ obtenemos
\[\mathbf{P}(X_s\in
A|\F_t)=E\bigl[\mathbf{1}_A(X_s)|\F_t\bigr]=E\bigl[\psi(X_t(\cdot),\cdot)|\F_t\bigr]=g(X_t)\]
donde $g(x)=E[\psi(x,\cdot)]=E[\mathbf{1}_A(\X)]=\mathbf{P}(\X\in
A).$

{} Por último veamos que $p$ satisface la \textit{ecuación de
Chapman-Kolmogorov}: dado que $p(t,x,s,\cdot)$ es una medida de
probabilidad, para todo $A\in\B(\R^d)$ se cumple
\[\int_{\R^d}\mathbf{1}_A(y)p(t,x,s,dy)=\int_{A}p(t,x,s,dy)=p(t,x,s,A)
=E\bigl[\mathbf{1}_A(\X)\bigr]\] y aproximando por funciones
simples, para toda función $f:\R^d\to\R$ acotada se cumple
\[E\bigl[f(\X)\bigr]=\int_{\R^d}f(y)p(t,x,s,dy).\]
Si $0\le t<u<s,$ tomando $f(y)=p(u,y,s,A)$ y usando
$p(u,X_u^{t,x},s,A)=\mathbf{P}(X_s^{t,x}\in A|\F_u)$ se obtiene
\begin{align*}
\int_{\R^d}p(u,y,s,A)p(t,x,u,dy)&=E\bigl[p(u,X_u^{t,x},s,A)\bigr]
=E\bigl[\mathbf{P}(X_s^{t,x}\in A|\F_u)\bigr]\\
&=\mathbf{P}(X_s^{t,x}\in A)=p(t,x,s,A).
\end{align*}
\end{proof}
{} Se puede probar, usando el concepto de \textit{unicidad en distribución}
(ver e.g. \cite{tudor}), que si los coeficientes $b$ y $\sigma$ no
dependen de $t,$ el proceso $(X_s)_{s\geq 0}$ solución de la EDE
(\ref{ede1}) es un proceso de Markov \textit{homogéneo} con
probabilidad de transición $p(t,x,A)=\mathbf{P}(X_t^x\in A),$
donde $(X_t^x)_{t\geq 0}$ es la solución de la EDE
\[X_t^x=x+\int_0^tb(X_s)\,ds+\int_0^t\sigma(X_s)\,dW_s, \ \ \ \ t\geq 0\]
con condición inicial $X_0^x=x.$

\begin{defi}
Un proceso de Markov $d-$dimensional con función de transición de
probabilidad $p(t,x,s,A)$ es llamado un \textit{proceso de
difusión} si

{}{}\textbf{(i)} para todo $\varepsilon > 0, \ t\geq 0, \
x\in\R^d,$
\begin{equation}\label{difusion1}
\lim_{h\to 0^+}\frac{1}{h}\int_{|y-x|>\varepsilon} p(t,x,t+h,dy)=0
\end{equation}

{}\textbf{(ii)} existen un vector $d-$dimensional $b(t,x)$ y una
matriz $a(t,x)$ de tamaño $d\times d$ tales que para todo
$\varepsilon > 0, \ t\geq 0, \ x\in\R^d,$
\begin{align}
\lim_{h\to 0^+}\frac{1}{h}\int_{|y-x|<\varepsilon}
(y_i-x_i)\,p(t,x,t+h,dy)&=W_i(t,x), \ \ \ \ \ 1\le i\le d
\label{difusion2}\\
\lim_{h\to 0^+}\frac{1}{h}\int_{|y-x|<\varepsilon}
(y_i-x_i)(y_j-x_j)\,p(t,x,t+h,dy)&=a_{ij}(t,x), \ \ \ \ \ 1\le
i,j\le d \label{difusion3}
\end{align}
Al vector $b(t,x)$ lo llamamos \textit{coeficiente de drift} y a
la matriz $a(t,x)$ la llamamos \textit{matriz de difusión.}
\end{defi}
\begin{lema}
Las siguientes condiciones implican la condiciones \textbf{(i)} y
\textbf{(ii)}:

{}{}\textbf{(i')} para algún $\delta>0, \ t\geq 0, \ x\in\R^d,$
\begin{equation}\label{lemadifusion1}
\lim_{h\to 0^+}\frac{1}{h}\int_{\R^d}
|x-y|^{2+\delta}p(t,x,t+h,dy)=0
\end{equation}

{}\textbf{(ii')} para cualquier $t\geq 0, \ x\in\R^d,$
\begin{align}
\lim_{h\to 0^+}\frac{1}{h}&\int_{\R^d}
(y_i-x_i)\,p(t,x,t+h,dy)=W_i(t,x), \ \ \ \ \ 1\le i\le d
\label{lemadifusion2}\\
\lim_{h\to 0^+}\frac{1}{h}&\int_{\R^n}
(y_i-x_i)(y_j-x_j)\,p(t,x,t+h,dy)=a_{ij}(t,x), \ \ \ \ \ 1\le
i,j\le d \label{lemadifusion3}
\end{align}
\end{lema}
\begin{proof}
Usando (\ref{lemadifusion1}) tenemos
\[\frac{1}{h}\int_{|y-x|>\varepsilon} p(t,x,t+h,dy)
\le\frac{1}{\varepsilon^{2+\delta}}\int_{\R^d}|y-x|^{2+\delta}p(t,x,t+h,dy)\xrightarrow[h\to
0^+]\,0\] que es (\ref{difusion1}). Por (\ref{lemadifusion1})
también tenemos, para $k=1,2,$
\[\frac{1}{h}\int_{|y-x|>\varepsilon} |y-x|^k p(t,x,t+h,dy)
\le\frac{1}{\varepsilon^{2+\delta-k}}\int_{\R^d}|y-x|^{2+\delta}p(t,x,t+h,dy)\xrightarrow[h\to
0^+]\,0\] y dado que
\begin{align*}
\int_{|y-x|<\varepsilon}&(y_i-x_i)\,p(t,x,t+h,dy)-\int_{\R^d}
(y_i-x_i)\,p(t,x,t+h,dy)\\
&=\int_{|y-x|\geq\varepsilon} (y_i-x_i)\,p(t,x,t+h,dy)
\le\int_{|y-x|\geq\varepsilon} |y-x|\,p(t,x,t+h,dy)
\end{align*}
\begin{align*}
\int_{|y-x|<\varepsilon}&(y_i-x_i)(y_j-x_j)\,p(t,x,t+h,dy)-\int_{\R^d}
(y_i-x_i)(y_j-x_j)\,p(t,x,t+h,dy)\\
&=\int_{|y-x|\geq\varepsilon} (y_i-x_i)(y_j-x_j)\,p(t,x,t+h,dy)\\
&\le \int_{|y-x|\geq\varepsilon} |y-x|^2\,p(t,x,t+h,dy)\\
\end{align*}
se tiene que (\ref{lemadifusion2}) y (\ref{lemadifusion3})
implican (\ref{difusion2}) y (\ref{difusion3}).
\end{proof}
\begin{teorema}\label{difusion}
Suponga que $b,\sigma$ son continuas en
$(t,x)\in[0,\infty)\times\R^d$ y satisfacen la condiciones de
Lipschitz global y crecimiento lineal. Entonces la solución de la
EDE (\ref{ede1}) es un proceso de difusión con drift $b(t,x)$ y
matriz de difusión {} $a(t,x)=\sigma(t,x)\sigma(t,x)^*.$
\end{teorema}
\begin{proof}
Como vimos en la demostración de la proposición \ref{propmarkov},
\[E\bigl[f(X_{t+h}^{t,x}-x)\bigr]=\int_{\R^d}f(y-x)p(t,x,t+h,dy), \ \ \ \ \forall h>0,\]
para toda función $f(z)$ continua con $|f(z)|\le K(1+|z|^\alpha)$
para algún $K,\alpha>0.$ En vista del lema anterior es suficente
probar
\begin{align}
\frac{1}{h}E\bigl(|X_{t+h}^{t,x}-x|^4\bigr)&\xrightarrow[h\to 0^+]\,0\label{demdifusion1}\\
\frac{1}{h}E\bigl[X_{t+h}^{t,x}-x\bigr]&\xrightarrow[h\to 0^+]\,b(t,x)\label{demdifusion2}\\
\frac{1}{h}E\bigl[\bigl(X_{t+h}^i-x_i\bigr)\bigl(X_{t+h}^j-x_j\bigr)\bigr]
&\xrightarrow[h\to 0^+]\,a_{ij}(t,x)\label{demdifusion3}
\end{align}
donde $X_{t+h}^i$ es la $i-$ésima componente de $X_{t+h}^{t,x}, \
1\le i\le d.$ Del lema \ref{apriori} se tiene que
\[\frac{1}{h}E\bigl(|X_{t+h}^{t,x}-X_t^{t,x}|^4\bigr)
\le\frac{1}{h}E\bigl(|X_{t+h}^{t,x}-x|^4\bigr) \le Ch^2(1+|x|^4)\]
lo cual da (\ref{demdifusion1}). De la condición de crecimiento
lineal de $\sigma$ y el lema \ref{apriori} se tiene que
\[E\int_t^{t+h}|\!|\sigma(s,X_s^{t,x})|\!|^2\,ds<+\infty\]
luego \,$E\!\displaystyle\int_t^{t+h}\!\sigma(t,X_s^{t,x})\,dW_s =
0$\, y
\[\frac{1}{h}E\bigl[X_{t+h}^{t,x}-x\bigr]=\frac{1}{h}E\int_t^{t+h}b(s,\X)\,ds
=\int_0^1 E\bigl[b(t+hr,X_{t+hr}^{t,x})\bigr]\,dr.\] Para cada
$h>0$ y $0\le s\le 1$ se define la variable aleatoria con valores
en $\R^d$
\[Y_h(r):=b\bigl(t+hr,X_{t+hr}^{t,x}\bigr).\] Del lema \ref{apriori} se sigue
que
\[X_{t+hr}^{t,x}\xrightarrow[h\to 0^+]{L^2}X_t^{t,x}=x, \ \ \ 0\le r\le
1\] luego por continuidad de $b(t,x)$ se tiene
\[Y_h(r)\xrightarrow[h\to 0]{L^2}b(t,x).\]
Por la desigualdad de Jensen se tiene en particular que
\[E[Y_h(r)]\xrightarrow[h\to 0]\,b(r,x), \ \ \forall
r\in[0,1].\] Usando de nuevo desigualdad de Jensen, el lema
\ref{apriori} y la condición de crecimiento lineal sobre $b(r,x),$
y tomando $0\le h\le 1$ se obtiene
\begin{align*}
\Bigl|E\bigl[b\bigl(t+hr,X_{t+hr}^{t,x}\bigr)\bigr]\Bigr| \le
E\Bigl[\bigl|b\bigl(t+hr,X_{t+hr}^{t,x}\bigr)\bigr|\Bigr]&\le
K_T\bigl\{1+E\bigl(|X_{t+hr}^{t,x}|\bigr)\bigr\}\\
&\le K_T\bigl\{1+C^{1/2}e^{CT/2}(1+|x|)\bigr\}
\end{align*} con $T=t+1,$ luego podemos aplicar convergencia dominada a la familia de
funciones $E[Y_h(s)], \ 0\le h\le 1,$ y obtener
\[\frac{1}{h}E\bigl[X_{t+h}^{t,x}-x\bigr]
=\int_0^1 E\bigl[Y_h(s)\bigr]\,ds\xrightarrow[h\to 0]\,\int_0^1
b(t,x)\,ds=b(t,x)\] lo cual prueba (\ref{demdifusion2}). Para
probar (\ref{demdifusion3}) notaremos $X_s^{t,x}$ simplemente por
$X_s$ y su $i-$ésima componente por $X_s^i, \ 1\le i\le d.$ Usando
fórmula de Itô con $F(t,z)=z_iz_j$ obtenemos
\[\frac{1}{h}\bigl\{E\bigl(X_{t+h}^i X_{t+h}^j\bigr)-x_i
x_j\bigr\}=\frac{1}{h}E\int_t^{t+h}\bigl[X_s^i W_j(s,X_s)+X_s^j
W_i(s,X_s)+a_{ij}(s,X_s)\bigr]\,ds\] (de nuevo la esperanza de la
integral estocástica es cero). Con un razonamiento similar al
anterior se demuestra
\[\frac{1}{h}\bigl\{E\bigl(X_{t+h}^i X_{t+h}^j\bigr)-x_i
x_j\bigr\}\xrightarrow[h\to 0^+]\,x_i W_j(t,x)+x_j
W_i(t,x)+a_{ij}(t,x).\] Se sigue entonces que
\begin{align*}
\lim_{\ h\to 0^+}&
\frac{1}{h}E\bigl[\bigl(X_{t+h}^i-x_i\bigr)\bigl(X_{t+h}^j-x_j\bigr)\bigr]\\
=&\,x_i W_j(t,x)+x_j W_i(t,x)+a_{ij}(t,x)-x_i\lim_{h\to
0^+}E\bigl(X_{t+h}^j-x_j\bigr)-x_j \lim_{h\to
0^+}E\bigl(X_{t+h}^i-x_i\bigr)\\
=&\,a_{ij}(t,x)
\end{align*}
donde hemos usado (\ref{demdifusion2}) en la última igualdad.
\end{proof}
\begin{obse}
Recíprocamente, se puede ver que si el coeficiente de
\textit{drift} y la matriz de \textit{difusión} de un proceso de
difusión $(X_t)_{t\in [0,T]}$ satisfacen ciertas condiciones
(entre otras, que $a(t,x)$ sea definida positiva), entonces existe
un M.B. $d-$dimensional $(W_t)_{t\geq 0}$ y una matriz
$\sigma(t,x)$ de tamaño $d\times m$ tal que $a=\sigma\sigma^*$ y
$X$ es solución de la EDE
\[X_t=\xi+\int_0^t b(s,X_s)\,ds+\int_0^t\sigma (s,X_s)\,dW_s, \ \ \
\ t\in [0,T].\] Para una demostración de dicho resultado ver
\cite{gihman}.
\end{obse}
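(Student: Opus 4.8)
The plan is to reverse the construction of Theorem \ref{difusion}: from the diffusion $(X_t)$ I manufacture the driving noise by formally solving $dW_t=\sigma^{-1}(t,X_t)\,(dX_t-b(t,X_t)\,dt)$, where $\sigma$ is a square root of the diffusion matrix. Since $a(t,x)$ is symmetric and positive definite it admits a unique symmetric positive definite square root $\sigma(t,x):=a(t,x)^{1/2}$; this $\sigma$ is invertible and, under the regularity assumed on $a$, is measurable (indeed continuous) in $(t,x)$, and because it is symmetric one has $\sigma\sigma^*=\sigma^2=a$, so the required factorization holds with $m=d$ (resolving the apparent dimensional mismatch in the statement). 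Identifying $\sigma$ is thus elementary; producing the Brownian motion is the whole content of the assertion.

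First I would establish, directly from the infinitesimal conditions and without invoking It\^o's formula (which would be circular, since $X$ is not yet known to be an It\^o process), that
\[ M_t^i:=X_t^i-X_0^i-\int_0^t b_i(s,X_s)\,ds,\qquad 1\le i\le d, \]
is a continuous local martingale with covariation $\langle M^i,M^j\rangle_t=\int_0^t a_{ij}(s,X_s)\,ds$. The martingale property follows from the Markov property together with the first-moment condition: writing $E[X_{t+h}^i-X_t^i\mid\F_t]=\int(y_i-X_t^i)\,p(t,X_t,t+h,dy)$ and using that by (\ref{difusion2}) this equals $b_i(t,X_t)h+o(h)$, a telescoping sum over a partition of $[s,t]$ shows $E[M_t^i-M_s^i\mid\F_s]=0$. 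Likewise, since the drift increments are $O(h)$ and hence contribute $O(h^2)$ to products, condition (\ref{difusion3}) gives $E[(M_{t+h}^i-M_t^i)(M_{t+h}^j-M_t^j)\mid\F_t]=a_{ij}(t,X_t)h+o(h)$, and the same telescoping argument identifies the covariation as $\int_0^t a_{ij}(s,X_s)\,ds$.

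Next I would set $W_t:=\int_0^t\sigma(s,X_s)^{-1}\,dM_s$, which is well defined because $\sigma^{-1}$ is locally bounded and $M$ is a continuous local martingale. A covariation computation using $a=\sigma\sigma^*$ and $\sigma=\sigma^*$ gives
\[ \langle W^k,W^l\rangle_t=\int_0^t\bigl(\sigma^{-1}a\,\sigma^{-*}\bigr)_{kl}(s,X_s)\,ds=\int_0^t\delta_{kl}\,ds=\delta_{kl}\,t, \]
so by L\'evy's characterization theorem $W$ is a standard $d$-dimensional Brownian motion. Inverting the definition of $W$ returns $dM_t=\sigma(t,X_t)\,dW_t$, whence
\[ X_t=X_0+\int_0^t b(s,X_s)\,ds+\int_0^t\sigma(s,X_s)\,dW_s, \]
which is the asserted SDE with $\xi=X_0$.

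The main obstacle is the first step: extracting the martingale property and the exact covariation $\int_0^t a_{ij}(s,X_s)\,ds$ from the purely infinitesimal limits (\ref{difusion1})--(\ref{difusion3}). This is precisely where the unstated ``ciertas condiciones'' are needed --- uniform control of the third moments as in (\ref{lemadifusion1}) and local boundedness of $b,a$ --- in order to justify passing from one-step conditional expectations to genuine martingales through the telescoping (Riemann-sum) argument, exactly as in the Stroock--Varadhan martingale formulation of diffusions. A secondary point is that L\'evy's characterization is not proved in these notes and would have to be invoked as an external result, consistent with the remark's own deferral of the full argument to \cite{gihman}.
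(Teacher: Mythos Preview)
The paper does not prove this remark; it simply states the converse and defers to \cite{gihman} for the argument. There is therefore no ``paper's own proof'' to compare against --- your proposal is exactly what the remark asks the reader to look up elsewhere.

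Your sketch is the standard route and is essentially correct: factor $a=\sigma\sigma^*$ via the positive square root, show that $M_t=X_t-X_0-\int_0^t b(s,X_s)\,ds$ is a continuous local martingale with covariation $\langle M^i,M^j\rangle_t=\int_0^t a_{ij}(s,X_s)\,ds$, set $W_t=\int_0^t\sigma^{-1}(s,X_s)\,dM_s$, and invoke L\'evy's characterization. This is precisely the argument in Gihman--Skorohod (and in the Stroock--Varadhan martingale-problem framework you mention). You are also right that the delicate step is the first one: the diffusion conditions (\ref{difusion1})--(\ref{difusion3}) are pointwise-in-$x$ infinitesimal statements about transition kernels, and turning them into a genuine semimartingale decomposition with the claimed bracket requires uniform control of the $o(h)$ remainders so that the telescoping/Riemann-sum argument converges. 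Your identification of where the ``ciertas condiciones'' enter --- uniform moment bounds of the type (\ref{lemadifusion1}) and local boundedness of $b,a$ --- is accurate.

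One small sharpening: your covariation paragraph implicitly assumes that showing $E[(M^i_{t+h}-M^i_t)(M^j_{t+h}-M^j_t)\mid\F_t]=a_{ij}(t,X_t)h+o(h)$ identifies $\langle M^i,M^j\rangle$. Strictly this identifies the \emph{predictable} compensator of $M^iM^j$; you still need continuity of $M$ (hence of $X$), which is an additional hypothesis on the diffusion and is exactly what lets you equate the compensator with the pathwise quadratic covariation. That, together with L\'evy's theorem, are the two external ingredients the notes do not supply --- consistent with the remark's own deferral to \cite{gihman}.
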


Cuando los coeficientes $b$ y $\sigma$ dependen de $t,$ el proceso
de Markov solución de la EDE (\ref{ede1}) no es necesariamente
homogéneo. Sin embargo, por la proposición \ref{homogen} del
capítulo anterior el proceso $(s,X_s)_{s\geq 0}$ con espacio de
estados $[0,\infty)\times\R^d$ es un proceso de Markov homogéneo
con probabilidad de transición
\[q(s,(t,x),D\times C)=p(t,x,t+s,C)\cdot\mathbf{1}_D(t+s)\]
y semigrupo $(T_s)_{s\geq 0}$ dado por
\[\bigl(T_sf\bigr)(t,x)=\int_{\R^d}f(t+s,y)\,p(t,x,t+s,dy)
=E\bigl[f\bigl(t+s,X_{t+s}^{t,x}\bigr)\bigr], \ \ \ \ s\geq 0.\]
Si $f:\R^d\to\R$ no depende de $t,$ con ayuda del teorema
\ref{difusion} se puede calcular de manera explícita el generador
infinitesimal
\[\bigl(\A f\bigr)(t,x)=\lim_{h\to 0^+}\frac{\bigl(T_h f\bigr)(t,x)-f(x)}{h}, \ \ \ t\geq 0.\]
En efecto, dado que $\bigl(T_h
f\bigr)(t,x)=\int_{\R^d}f(y)\,p(t,x,t+h,dy),$
\begin{equation}\label{generador1}
\bigl(\A f\bigr)(t,x)=\lim_{h\to
0^+}\frac{1}{h}\int_{\R^d}\bigl[f(y)-f(x)\bigr]\,p(t,x,t+h,dy),
\end{equation}
y si $f$ es acotada y 2-veces diferenciable en $x,$ por la fórmula
de Taylor se tiene
\[f(y)-f(x)=\sum_{i=1}^d(y_i-x_i)\frac{\partial f}{\partial x_i}(x)
+\frac{1}{2}\sum_{i,j=1}^d (y_i-x_i)(y_j-x_j)\frac{\partial^2
f}{\partial x_i\partial x_j}(x)+o(|y-x|^2).\] Sustituyendo esto en
la integral (\ref{generador1}) para $|y-x|<\varepsilon,$ tomando
el límite cuando $h\downarrow 0$ y usando
(\ref{difusion1})-(\ref{difusion3}) se obtiene
\[\bigl(\A f\bigr)(t,x)=\frac{1}{2}\sum\limits_{i,j=1}^d
a_{ij}(t,x)\frac{\partial^2f}{\partial x_i\partial x_j}(x)+
\sum\limits_{j=1}^d W_i(t,x)\frac{\partial f}{\partial x_i}(x), \
\ (t,x)\in[0,\infty)\times\R^d.\] En la siguiente proposición
generalizamos la anterior expresión para el caso en que $f$
depende de $t$ y satisface ciertas condiciones de crecimiento con
respecto a sus derivadas:

{}{}\textbf{Notación.} Si $f:[0,\infty)\times\R^d\to\R$ es
diferenciable, para las derivadas de orden mayor usaremos la
notación
\[D_x^\alpha f=\frac{\partial^{|\alpha|}f}{\partial x_1^{\alpha_1}\cdots\partial
x_d^{\alpha_d}}\] donde $\alpha=(\alpha_1,\ldots,\alpha_d), \
\alpha_i=0,1,2,\ldots$ y $|\alpha|=\alpha_1+\cdots+\alpha_d.$

\begin{prop}\label{generador2}
Sea $\A$ el generador infinitesimal del semigrupo $(T_s)_{s\geq
0}$ y sea{} $f\in\C^{1,2}\bigl([0,\infty)\times\R^d\bigr)$ tal que
\begin{align*}
|D_x^\alpha f(t,x)|&\le C\bigl(1+|x|^\beta\bigr), \ \ \
0\le|\alpha|\le 2\\
\Bigl|\frac{\partial f}{\partial t}(t,x)\Bigr|&\le
C\bigl(1+|x|^\beta\bigr),
\end{align*}
para algunas constantes $C>0, \,\beta\geq 1$. Entonces $f\in
D(\A)$ y \[\bigl(\A f\bigr)(t,x)=\frac{\partial f}{\partial
t}(t,x)+\bigl(L_t f\bigr)(t,x),\] donde $L_t$ es el operador
diferencial parcial de segundo orden
\[\bigl(L_t f\bigr)(t,x)=\frac{1}{2}\sum\limits_{i,j=1}^d
a_{ij}(t,x)\frac{\partial^2f}{\partial x_i\partial x_j}(t,x)+
\sum\limits_{i=1}^d W_i(t,x)\frac{\partial f}{\partial x_i}(t,x),
\ \ (t,x)\in[0,\infty)\times\R^d.\]
\end{prop}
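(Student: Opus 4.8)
The plan is to combine the $d$-dimensional It\^o formula with the a priori moment estimates of Lemma~\ref{apriori}. Recall that the semigroup attached to the homogeneous Markov process $(t,X_t)_{t\ge 0}$ acts by $(T_hf)(t,x)=E\bigl[f(t+h,X_{t+h}^{t,x})\bigr]$, so the task is to show that $h^{-1}\bigl((T_hf)(t,x)-f(t,x)\bigr)\to\frac{\partial f}{\partial t}(t,x)+(L_tf)(t,x)$ as $h\downarrow 0$, for each fixed $(t,x)\in[0,\infty)\times\R^d$.

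First I would fix $(t,x)$ and $h>0$ and apply the $d$-dimensional It\^o formula to $r\mapsto f(r,X_r^{t,x})$ on $[t,t+h]$, which is licit since $dX_r^{t,x}=b(r,X_r^{t,x})\,dr+\sigma(r,X_r^{t,x})\,dW_r$ with $\sigma(\cdot,X^{t,x})\in M^{2,d\times m}$ by the linear growth of $\sigma$ and Lemma~\ref{apriori}. The drift term produced by the formula is exactly $\frac{\partial f}{\partial r}+L_rf$, because $\nabla_xf\cdot b+\tfrac12 Tr\bigl(D_x^2f\,\sigma\sigma^{*}\bigr)=(L_rf)$ with $a=\sigma\sigma^{*}$. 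Thus
\[
f(t+h,X_{t+h}^{t,x})=f(t,x)+\int_t^{t+h}\!\!\Bigl[\tfrac{\partial f}{\partial r}(r,X_r^{t,x})+(L_rf)(r,X_r^{t,x})\Bigr]dr+\int_t^{t+h}\!\!\nabla_xf(r,X_r^{t,x})\,\sigma(r,X_r^{t,x})\,dW_r.
\]
Next I would fix an integer $p$ with $p\ge\beta+1$. By the polynomial bound on $\nabla_xf$ and the linear growth of $\sigma$, the integrand of the stochastic integral is dominated in norm by $\mathrm{const}\cdot(1+|X_r^{t,x}|^{\beta+1})$, and Lemma~\ref{apriori} bounds $E\sup_{t\le r\le t+1}|X_r^{t,x}|^{2p}$ by a finite constant depending only on $t+1$, $K_{t+1}$, $p$ and $x$; hence that integrand lies in $M^{2,1\times m}[t,t+h]$, so the stochastic integral has zero expectation. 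Taking expectations, dividing by $h$ and substituting $r=t+h\rho$ gives
\[
\frac{(T_hf)(t,x)-f(t,x)}{h}=\int_0^1 E\Bigl[\tfrac{\partial f}{\partial r}(t+h\rho,X_{t+h\rho}^{t,x})+(L_{t+h\rho}f)(t+h\rho,X_{t+h\rho}^{t,x})\Bigr]\,d\rho.
\]

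Then I would pass to the limit $h\downarrow 0$ under the expectation and the $d\rho$-integral. Lemma~\ref{apriori} (in the form of Observaci\'on~\ref{obsecondinicial}) gives $E|X_{t+h\rho}^{t,x}-x|^{2p}\le C'(h\rho)^p(1+|x|^{2p})\to 0$, so $X_{t+h\rho}^{t,x}\to x$ in probability, hence a.s.\ along a subsequence; since $\frac{\partial f}{\partial t}$, the entries of $a=\sigma\sigma^{*}$, $b$, and the first and second $x$-derivatives of $f$ are continuous, the bracketed random variable converges in probability, for each $\rho\in[0,1]$, to the constant $\frac{\partial f}{\partial t}(t,x)+(L_tf)(t,x)$. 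A single dominated-convergence argument on $[0,1]\times\Omega$ then finishes the proof: with the same $p$, the polynomial-growth hypotheses together with the linear growth of $b,\sigma$ bound the bracket, uniformly in $h\in(0,1]$ and $\rho\in[0,1]$, by $\mathrm{const}\cdot(1+|X_r^{t,x}|^{\beta+2})\le \mathrm{const}\cdot\bigl(2+\sup_{t\le r\le t+1}|X_r^{t,x}|^{2p}\bigr)$, an integrable variable independent of $h$ and $\rho$. This yields the stated formula $(\A f)(t,x)=\frac{\partial f}{\partial t}(t,x)+(L_tf)(t,x)$; the membership $f\in D(\A)$ is understood in the extended sense appropriate here, and the same estimates, made uniform in $x$ on compact sets, show the convergence is locally uniform in $x$.

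The main obstacle is precisely this last passage to the limit: one must choose $p$ large enough that \emph{every} term coming from It\^o's formula --- $b\cdot\nabla_xf$, the trace term $Tr(D_x^2f\,\sigma\sigma^{*})$, and $\frac{\partial f}{\partial r}$ --- is controlled by $1+|X_r^{t,x}|^{2p}$ up to constants, and then invoke Lemma~\ref{apriori} to produce an $h$-independent integrable dominating variable. Everything else (the identification of the It\^o drift with $\frac{\partial f}{\partial t}+L_tf$, the vanishing of the martingale term, continuity of the coefficients) is routine.
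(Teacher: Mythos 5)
Your proposal is correct and follows essentially the same route as the paper: apply the $d$-dimensional It\^o formula to $f(r,X_r^{t,x})$ on $[t,t+h]$, kill the martingale term via the growth bounds and Lemma~\ref{apriori}, change variables to $\rho\in[0,1]$, and pass to the limit using continuity of the coefficients together with an $h$-independent dominating bound of the form $\mathrm{const}\,(1+|X_r^{t,x}|^{\beta+2})$. The only cosmetic difference is that the paper establishes $L^2$-convergence of the integrand and applies Jensen before dominating in $\rho$ alone, whereas you use convergence in probability and a single dominated-convergence step on $[0,1]\times\Omega$; both are valid and rest on the same estimates.
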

\begin{proof}
Aplicando fórmula de Itô al proceso $\bigl(X_s^{t,x}\bigr)_{s\geq
t}$ con la función $f,$ integrando entre $t$ y $t+h$ y teniendo en
cuenta $X_t^{t,x}=x$ obtenemos
\[f(t+h,X_{t+h}^{t,x})-f(t,x)=\int_t^{t+h}\Bigl(\frac{\partial f}{\partial s}
+L_s f\Bigr)\bigl(s,X_s^{t,x}\bigr)\,ds +\int_t^{t+h}(\nabla_x
f\cdot\sigma)\bigl(s,X_s^{t,x}\bigr)\,dW_s\] donde $\nabla_x
f=\bigl(\frac{\partial f}{\partial x_1},\cdots,\frac{\partial
f}{\partial x_d}\bigr).$ De las condiciones de crecimiento de
$\nabla_x f$ y $\sigma,$ y del lema \ref{apriori} se tiene que
\[E\!\int_t^{t+h}\!\bigl\|(\nabla_x
f\cdot\sigma)\bigl(s,X_s^{t,x}\bigr)\bigr\|^2\,ds\le C'\,
E\!\int_t^{t+h}\!\bigl(1+|X_s^{t,x}|^{\beta+1}\bigr)\,ds<+\infty\]
con $C'=2^{\beta+1}K_{t+h}C\sqrt{d},$ luego el valor esperado de
la integral estocástica es cero y
\begin{align}
E\bigl[f(t+h,X_{t+h}^{t,x})\bigr]-f(t,x)&=E\int_t^{t+h}(\tfrac{\partial
f}{\partial s}+L_s f)\bigl(s,X_s^{t,x}\bigr)\,ds\notag\\
&=h\int_0^1 E\bigl[(\tfrac{\partial f}{\partial s}+L_{t+hr}
f)\bigl(t+hr,X_{t+hr}^{t,x}\bigr)\bigr]\,dr\label{difusion4}
\end{align}
usando en la última igualdad el teorema de Fubini y el cambio de
variable $s=t+hr.$ Para $h>0$ y $0\le r\le 1,$ definimos
\[Y_h(r):=(\tfrac{\partial f}{\partial
r}+L_{t+hr} f)\bigl(t+hr,X_{t+hr}^{t,x}\bigr)\] Por el lema
\ref{apriori}
\[X_{t+hr}^{t,x}\xrightarrow[h\to 0^+]{L^2}X_t^{t,x}=x, \ \ \ 0\le r\le
1\] y por la continuidad de $\, a(t,x), \,b(t,x), \,\frac{\partial
f}{\partial x_i}, \, \frac{\partial^2 f}{\partial x_i\partial
x_j}$ y $\frac{\partial f}{\partial t}$
\[Y_h(r)\xrightarrow[h\to 0^+]{L^2}\frac{\partial f}{\partial t}(t,x)+L_t f(t,x).\]
y usando la desigualdad de Jensen se obtiene en particular
\[E[Y_h(r)]\xrightarrow[h\to 0^+]\,\frac{\partial f}{\partial t}(t,x)+L_t f(t,x), \ \ \forall
r\in[0,1].\] De las condiciones de crecimiento sobre $\sigma(s,x),
\,b(s,x), \, \frac{\partial f}{\partial x_i}$ y $\frac{\partial^2
f}{\partial x_i\partial x_j}$ se deduce la estimación
\begin{align}
|L_s f(s,x)|&\le \frac{1}{2}\sum_{i,j=1}^d
|a_{ij}(s,x)|\cdot\Bigl|\frac{\partial^2 f}{\partial x_i\partial
x_j}(s,x)\Bigr|
+\sum_{i=1}^d |W_i(s,x)|\cdot\Bigl|\frac{\partial f}{\partial x_i}(s,x)\Bigr|\notag\\
&\le\frac{1}{2}d^2K_T^2m C(1+|x|)^2(1+|x|^\beta)+CK_T\sqrt{d}(1+|x|)(1+|x|^\beta)\notag\\
&\le d^2K_T^2mC(1+|x|)^2(1+|x|)^\beta+2CK_T\sqrt{d}(1+|x|)(1+|x|)^\beta\notag\\
&\le C(md^2K_T^2+2K_T\sqrt{d})(1+|x|)^{\beta+2}\notag\\
&\le
2^{\beta+1}C(md^2K_T^2+2K_T\sqrt{d})(1+|x|^{\beta+2})\label{generador3}
\end{align}
para un $T>0$ suficientemente grande, luego
\[E\bigl[(L_{t+hr} f)\bigl(t+hr,X_{t+hr}^{t,x}\bigr)\bigr]\le
C_1\Bigl\{1+E\bigl(|X_{t+hr}^{t,x}|^{\beta+2}\bigr)\Bigr\} \le
C_1\bigl(2+|x|^{\beta+2}\bigr), \ \ \forall r\in[0,1]\] donde
$C_1=2^{\beta+1}C(md^2K_T^2+2K_T\sqrt{d}).$ Usando la condición de
crecimiento sobre $\frac{\partial f}{\partial s}$ se obtiene una
acotación similar para $E\bigl[\frac{\partial f}{\partial
s}(t+hr,X_{t+hr}^{t,x}\bigr)\bigr].$ Podemos entonces aplicar
convergencia dominada y usar (\ref{difusion4}) para obtener
\begin{align*}
\bigl(\A f\bigr)(t,x)&=\lim_{\ h\to 0^+}\frac{T_h
f(t,x)-f(t,x)}{h}
=\lim_{\ h\to 0^+}\frac{E\bigl[f(t+h,X_{t+h}^{t,x})\bigr]-f(t,x)}{h}\\
&=\lim_{\ h\to 0^+}\int_0^1 E\bigl[Y_h(r)\bigr]\,dr=\int_0^1
\bigl[\tfrac{\partial f}{\partial t}(t,x) +L_t
f(t,x)\bigr]\,ds\\
&=\frac{\partial f}{\partial t}(t,x)+L_t f(t,x)
\end{align*}
\end{proof}

\subsection[Diferenciabilidad con respecto a las condiciones
iniciales y la ecuación de\\Kolmogorov con condición final]{Diferenciabilidad con respecto a las condiciones
iniciales y la ecuación de Kolmogorov con condición final} Sea
$(W_t)_{t\geq 0}$ un movimiento Browniano $m-$dimensional definido
sobre un espacio de probabilidad $(\Omega,\F,\mathbf{P}),$ y sea
$\{\F_t\}_{t\geq 0}$ la filtración natural generada por
$(W_t)_{t\geq 0}$ y aumentada con los conjuntos $\mathbf{P}-$nulos
de $\Omega,$ es decir
\[\F_t=\sigma\bigl(\{W_s: 0\le s\le t\}\cup\mathcal{N}\bigr), \ \ \ t\geq 0 \] donde
$\mathcal{N}=\{E\subseteq\Omega : \exists G\in \F,\, G\supseteq E
\text{ y } \mathbf{P}(G)=0\}.$ La siguiente es una extensión del
teorema \ref{existencia} de existencia de soluciones de EDEs al
caso en el que los coeficientes son aleatorios
\begin{teorema}\label{extension}
Sea $(\varphi_t)_{t\geq 0}$ un proceso estocástico $\F_t-$prog.
medible con valores en $\R^d$ y sean
\[b:[0,+\infty)\times\R^d\times\Omega\rightarrow\R^d, \ \ \ \
\sigma:[0,+\infty)\times\R^d\times\Omega\rightarrow\R^{d\times
m}\time\Omega\] funciones medibles tales que para cada
$(t,x)\in[0,\infty)\times\R^d,$ las variables aleatorias
$b(t,x,\cdot)$ y $\sigma(t,x,\cdot)$ son $\F_t-$medibles.

{}{} Supongamos que existe una constante $K>0$ tal que para todo
$t\geq 0,$ $x,x'\in\R^d$ y $\omega\in\Omega$ se tiene
\begin{equation}\label{extension2}
\begin{split}
|b(t,x,\omega)-b(t,x',\omega)|+|\!|\sigma(t,x,\omega)-\sigma(t,x',\omega)|\!|&\le K|x-x'|\\
|b(t,x,\omega)|+|\!|\sigma(t,x,\omega)|\!|&\le K\bigl(1+|x|\bigr)
\end{split}
\end{equation}
y que el proceso $\varphi$ satisface \,$\sup\limits_{t\in [0,T]}E\bigl(|\varphi_t|^2\bigr)<+\infty,$ para todo $T>0.$

{}{} Entonces existe un proceso $(X_t)_{t\geq 0}$ $\F_t-$adaptado
y medible solución de la EDE con coeficientes aleatorios
\begin{equation}\label{extension3}
X_t=\varphi_t+\int_0^t b(s,X_s)\,ds+\int_0^t\sigma (s,X_s)\,dW_s,
\ \ \ \ t\geq 0 \ \ \text{c.s.}
\end{equation}
el cual satisface \,$\sup\limits_{t\in [0,T]}E\bigl(|X_t|^2\bigr)<+\infty,$ para todo $T>0.$

{}{} Si $X$ y $X'$ son soluciones de (\ref{extension3}) entonces
$X$ es versión de $X'.$ Si $\varphi$ es continuo entonces el
proceso $X$ es continuo y si $X$ y $X'$ son soluciones de
(\ref{extension3}) resulta que $X$ y $X'$ son indistinguibles.

{}{} Si además $\varphi$ es continuo y satisface
\,$E\bigl[\sup_{\, t\in [0,T]}|\varphi_t|^2\bigr]<+\infty,$
entonces
\[E\biggl[\sup\limits_{\, t\in [0,T]}|X_t|^2\biggr]<+\infty.\]
\end{teorema}
\begin{proof}
Primero veamos que si $X$ es solución de (\ref{extension3})
entonces
\[\sup_{t\in [0,T]}E\bigl[|X_t|^2\bigr]<+\infty.\]
Sean $\tau_n:=\{t\geq 0: |X_t|\geq n\},$ $n\geq 1.$ Usando la
condición de crecimiento lineal en (\ref{extension2}), la
Isometría de Itô y un argumento análogo al del lema \ref{apriori}
se obtiene

\begin{align*}
E\bigl(|X_{t\wedge\tau_n}|^2\bigr)&\le
3E\bigl(|\varphi_{t\wedge\tau_n}|^2\bigr)+
3tE\int_0^{t\wedge\tau_n}
|b(s,X_s)|^2\,ds+3E\!\int_0^{t\wedge\tau_n}
|\!|\sigma(s,X_s)|\!|^2\,ds\\&\le
3E\bigl(|\varphi_{t\wedge\tau_n}|^2\bigr)+
3(t+1)K\,E\int_0^t\bigl(1+|X_{s\wedge\tau_n}|\bigr)^2\,ds\\
&\le 3E\bigl(|\varphi_{t\wedge\tau_n}|^2\bigr)+ 6(T+1)KT+
6(T+1)K\int_0^t E\bigl(|X_{s\wedge\tau_n}|^2\bigr)\,ds, \ \ \ \
t\in [0,T].
\end{align*}
Por el lema de Gronwall existe una constante $C$ que depende de
$K$ y de $T$ tal que $E\bigl(|X_{t\wedge\tau_n}|^2\bigr)\le C
E\bigl(|\varphi_{t\wedge\tau_n}|^2\bigr)$ para todo $t\in[0,T], \
\forall n\geq 1,$ y por el lema de Fatou
\begin{align*}
E\bigl(|X_t|^2\bigr)&=E\biggl[\,\liminf_{n\to\infty}\,|X_{t\wedge\tau_n}|^2\biggr]\le
\liminf_{n\to\infty}\, E\bigl(|X_{t\wedge\tau_n}|^2\bigr)\\
&\le C\Bigl\{\liminf_{n\to\infty}\,
E\bigl(|\varphi_{t\wedge\tau_n}|^2\bigr)\Bigr\}\le
C\Bigl\{\limsup_{n\to\infty}\,
E\bigl(|\varphi_{t\wedge\tau_n}|^2\bigr)\Bigr\}\\
&\le
CE\biggl[\,\limsup_{n\to\infty}\,|\varphi_{t\wedge\tau_n}|^2\biggr]
=CE\bigl(|\varphi_t|^2\bigr), \ \ \ t\in [0,T].
\end{align*}
de donde \,$\sup\limits_{t\in [0,T]}E\bigl(|X_t|^2\bigr)\le
\sup\limits_{t\in [0,T]}C E\bigl(|\varphi_t|^2\bigr)<+\infty,$
para todo $T>0.$

{}{} De manera análoga, si $\varphi$ posee trayectorias continuas
c.s. y \,$E\bigl[\sup_{\, t\in [0,T]}|\varphi_t|^2\bigr]<+\infty,$ usando de nuevo un razonamiento
similar al del lema \ref{apriori} se obtiene
\[E\biggl[\sup\limits_{\, t\in [0,T]}|X_t|^2\biggr]<+\infty.\]
La prueba de existencia es exactamente la misma que en el teorema
\ref{existencia}\,: para cada $T>0$ se define la aplicación
$\Phi_T:\M_T^2(\R^d)\to\M_T^2(\R^d)$ que a cada $X\in\M_T^2(\R^d)$
le asigna el proceso
\begin{equation}
\Phi_T(X)_t=\varphi_t+\int_0^t b(s,X_s)\,ds+\int_0^t\sigma
(s,X_s)\,dW_s, \ \ \ \ t\in [0,T].
\end{equation}
Esta aplicación resulta ser una \textit{contracción estricta} de
$\M_T^2(\R^d)$ en sí mismo con la norma
\begin{equation}\label{norma3}
|\!|X|\!|_\lambda^2:=E\!\int_0^T e^{-\lambda s}|X_s|^2\,ds \,, \ \ \ \
\lambda>2K^2(1+T)
\end{equation}
que es equivalente a la norma usual de $\M_T^2(\R^d).$ Luego
$\M_T^2(\R^d)$ también es un espacio de Banach con la norma
(\ref{norma3}), y por lo tanto $\Phi_T$ posee un único punto fijo,
que notaremos por $X^T,$ y es solución de la EDE (\ref{ede1}) en
el intervalo $[0,T].$

{} Si $T\le T',$ el proceso $(X_t^{T'})_{t\in [0,T]}$ es una
\textit{versión} de $X^T,$ pues también pertenece a $\M_T^2(\R^d)$
y es punto fijo de $\Phi_T.$ Esto permite definir el proceso
continuo y $\F_t-$ adaptado $(X_t)_{t\geq 0}$ por
\[X_t:=X_t^T \ \text{si} \ t\in [0,T]\]
el cual claramente satisface
\begin{equation*}
X_t=\varphi_t+\int_0^t b(s,X_s)\,ds+\int_0^t\sigma (s,X_s)\,dW_s,
\ \ \ \ t\geq 0 \ \ \text{c.s.}
\end{equation*}
Si $X'$ es otra solución de (\ref{extension3}), para cada $T>0$ se
tiene
\[E\int_0^T |X'_s|^2\,ds=\int_0^T E\bigl(|X'_s|^2\bigr)\,ds
\le\int_0^T \sup_{t\in [0,T]}E\bigl(|X'_t|^2\bigr)\,ds=T
\sup_{t\in [0,T]}E\bigl(|X'_t|^2\bigr)<+\infty\] luego
$(X'_t)_{t\in [0,T]}$ también pertenece a $\M_T^2(\R^d)$ y es
punto fijo de $\Phi_T,$ por lo tanto es una \textit{versión} de
$(X_t)_{t\in [0,T]}$ para todo $T>0.$ Si $\varphi$ es continuo,
entonces $X$ y $X'$ tienen también trayectorias continuas y por lo
tanto son \textit{indistinguibles.}
\end{proof}

El siguiente resultado muestra la dependencia continua de las
soluciones de EDEs con coeficientes aleatorios con respecto a las
condiciones iniciales y a los coeficientes
\begin{teorema}\label{dep}
Sean $\{\sigma_k\}_{k\geq 0}$ y $\{W_k\}_{k\geq 0}$ sucesiones de
coeficientes aleatorios que satisfacen las hipótesis del teorema
anterior con la misma constante $K$ y sea $\{\varphi^k\}_{k\geq
0}$ una sucesión de procesos medibles y $\F_t-$adaptados. Suponga
que
\begin{align}
\sup_{k\geq 0}\sup_{\, 0\le s\le T}E\bigl(|\varphi_s^k|^2\bigr)&=C_T<+\infty\label{dep1}\\[.2cm]
\lim_{k\to\infty}\sup_{\, 0\le s\le
T}E\bigl(|\varphi_s^k-\varphi^0_s|^2\bigr)&=0, \ \ \forall T>0\label{dep2}\\
\lim_{k\to\infty}\mathbf{P}\Bigl(\sup_{\ |x|\le
N}|W_k(t,x)-W_0(t,x)|+|\!|\sigma_k(t,x)-&\sigma_0(t,x)|\!|>\varepsilon\Bigr)=0\label{convprob}
\end{align}
para todo $t\geq 0$ y para todo $\varepsilon,N>0.$ Sea
$(X_t^k)_{t\geq 0}$ la solución de la EDE con coeficientes
aleatorios
\begin{equation}
X_t^k=\varphi_t^k+\int_0^t W_k(s,X_s^k)\,ds+\int_0^t\sigma_k
(s,X_s^k)\,dW_s, \ \ \ \ t\geq 0.
\end{equation}
Entonces
\begin{equation}
\lim_{k\to\infty}\sup_{\, t\in [0,T]}E\bigl(|X_t^k-X^0_t|^2\bigr)=0, \ \ \ \forall T>0.
\end{equation}
\end{teorema}
\begin{proof}
Para cada $k\in\mathbb{N},$ sean
$\alpha_k(t)=\varphi_t^k-\varphi^0_t,$
\[\beta_k(t)=\displaystyle\int_0^t
\bigl[\sigma_k(s,X_s^0)-\sigma_0(s,X_s^0)\bigr]\,dW_s, \ \ \
\gamma_k(t)=\int_0^t\bigl[W_k(s,X_s^0)-W_0(s,X_s^0)\bigr]\,ds\] y
$\eta_k(t)=\alpha_k(t)+\beta_k(t)+ \gamma_k(t).$ Por la hipótesis
(\ref{dep2}) tenemos
\begin{equation}\label{dep4}
\lim_{k\to\infty}\sup_{\, t\in [0,T]}E\bigl(|\alpha_k(t)|^2\bigr)=0, \ \ \ \forall T>0
\end{equation}
luego para $s\geq 0$ y para $\varepsilon,N>0$ se tiene
\begin{align*}
&\mathbf{P}\bigl(|\!|\sigma_k(s,X_s^0)-\sigma_0(s,X_s^0)|\!|>\varepsilon\bigr)\\
&=\mathbf{P}\bigl(|\!|\sigma_k(s,X_s^0)-\sigma_0(s,X_s^0)|\!|>\varepsilon,\,
|X_s^0|\le N\bigr)
+\mathbf{P}\bigl(|\!|\sigma_k(s,X_s^0)-\sigma_0(s,X_s^0)|\!|>\varepsilon,\,
|X_s^0|>N\bigr)\\
&\le\mathbf{P}\Bigl(\sup_{\ |x|\le
N}|\!|\sigma_k(s,x)-\sigma_0(s,x)|\!|>\varepsilon\Bigr)+\mathbf{P}\bigl(|X_s^0|>N\bigr)
\end{align*}
de donde haciendo $k\to\infty$ se obtiene que
\[\limsup_{k\to\infty}\mathbf{P}\bigl(|\!|\sigma_k(s,X_s^0)-\sigma_0(s,X_s^0)|\!|
>\varepsilon\bigr)\le\mathbf{P}\bigl(|X_s^0|>N\bigr)\] y ahora haciendo
$N\to\infty,$ se sigue
\[\limsup_{k\to\infty}\mathbf{P}\bigl(|\!|\sigma_k(s,X_s^0)-\sigma_0(s,X_s^0)|\!|
>\varepsilon\bigr)=0\] y análogamente para $\{W_k\}_{k\geq 0}$ se obtiene
\[\limsup_{k\to\infty}\mathbf{P}\bigl(|W_k(s,X_s^0)-W_0(s,X_s^0)|>\varepsilon\bigr)=0\]
es decir
\begin{equation}\label{dep5}
\sigma_k(s,X_s^0)\xrightarrow[k\to\infty]{\mathbf{P}}\sigma_0(s,X_s^0),
\ \ \ \ \
W_k(s,X_s^0)\xrightarrow[k\to\infty]{\mathbf{P}}W_0(s,X_s^0)
\end{equation}
para todo $s\geq 0.$ Usando (\ref{dep4}), (\ref{dep5}), las
estimaciones
\begin{align*}
|\!|\sigma_k(s,X_s^0)|\!|&\le K^2\bigl(1+|X_s^0|\bigr)^2, \ \ \ \ \forall k\geq 1\\
E\int_0^T \bigl(1+|X_s^0|\bigr)^2\,ds&\le T\Bigl\{1+\sup_{t\in [0,T]} E\bigl(|\varphi_t^0|^2\bigr)\Bigr\}<+\infty
\end{align*}
isometría de Itô y el hecho de que convergencia en probabilidad y
dominada implica convergencia en $L^2,$ obtenemos
\begin{equation}\label{dep6}
\sup_{\, t\in [0,T]}E\bigl(|\beta_k(t)|^2\bigr)\le E\int_0^T
|\!|\sigma_k(s,X_s^0)-\sigma_0(s,X_s^0)|\!|^2\,ds\xrightarrow[k\to\infty]\,0\,,
\ \ \ \forall T>0.
\end{equation}
Igualmente por (\ref{dep5}) y por un criterio similar (usando
desigualdad de Hölder en vez de Isometría de Itô) se sigue
\begin{equation}\label{dep7}
\sup_{\, t\in [0,T]}E\bigl(|\gamma_k(t)|^2\bigr)=0\xrightarrow[k\to\infty]\,0\,, \ \
\ \forall T>0.
\end{equation}
Combinando (\ref{dep4}), (\ref{dep6}) y (\ref{dep7}) se obtiene
\begin{equation}\label{dep8}
\sup_{\, t\in [0,T]}E\bigl(|\eta_k(t)|^2\bigr)=0\xrightarrow[k\to\infty]\,0\,, \ \ \
\forall T>0,
\end{equation}
y dado que
\[X_t^k-X_t^0=\eta_k(t)+\int_0^t \bigl[W_k(s,X_s^k)-W_k(s,X_s^0)\bigr]\,ds
+\int_0^t\bigl[\sigma_k(s,X_s^k)-\sigma_k(s,X_s^0)\bigr]\,dW_s,\]
usando la condición de Lipschitz, la desigualdad de Hölder e
isometría de Itô se deduce que
\[E\bigl(|X_t^k-X_t^0|^2\bigr)\le 3E\bigl(|\eta_k(t)|^2\bigr)
+3(T+1)K^2\int_0^t E\bigl(|X_s^k-X_s^0|^2\bigr)\,ds,\] y por el
lema de Gronwall junto con (\ref{dep8}) tenemos
\[\sup_{\, t\in [0,T]}E\bigl(|X_t^k-X_t^0|^2\bigr)\le C\sup_{\, t\in [0,T]}E\bigl(|\eta_k(t)|^2\bigr)=0\xrightarrow[k\to\infty]\,0\,, \ \ \
\forall T>0.\]
\end{proof}
\begin{obse}
Si los procesos $\varphi^k$ son continuos y las condiciones
(\ref{dep1}) y (\ref{dep2}) se reemplazan por
\begin{align}
\sup_{k\geq 0}E\biggl[\sup_{\, t\in [0,T]}|\varphi_t^k|^2\biggr]&=C_T<+\infty,\label{depe1}\\[0.2cm]
\lim_{k\to\infty}E\biggl[\sup_{\, t\in [0,T]}|\varphi_t^k-\varphi^0_t|^2\biggr]&=0, \ \ \forall
T>0,\label{depe2}
\end{align}
entonces la conclusión será
\begin{equation}
\lim_{k\to\infty}E\biggl[\sup_{\, t\in [0,T]}|X_t^k-X^0_t|^2\biggr]=0, \ \ \ \forall T>0.
\end{equation}
La demostración se hace de manera similar, sólo hay que cambiar \,
$\sup_{t\in [0,T]}E\bigl(|\cdot|^2\bigr)$ por
$E\bigl[\sup_{\,t\in [0,T]}|\cdot|^2\bigr].$ Asimismo la
hipótesis de convergencia en probabilidad (\ref{convprob}) se
puede reforzar por convergencia puntual
\begin{equation}\label{convpuntual}
W_k(t,x,\omega)\xrightarrow[k\to\infty]\,b(t,x,\omega), \ \ \
\sigma_k(t,x,\omega)\xrightarrow[k\to\infty]\,\sigma(t,x,\omega)
\end{equation}
para todo $t\geq 0,$ $x\in\R^d$ y $\omega\in\Omega,$ y aplicar
directamente convergencia dominada para obtener (\ref{dep6}) y
(\ref{dep7}) en la demostración.
\end{obse}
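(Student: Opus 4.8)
The plan is to repeat, essentially verbatim, the argument of the proof of Teorema \ref{dep}, with $\sup_{t\in[0,T]}E(|\cdot|^2)$ replaced throughout by $E\bigl[\sup_{t\in[0,T]}|\cdot|^2\bigr]$; the only structural change is that the It\^o isometry is replaced by the maximal estimate (\ref{estimacionito2}). First I would record that, since $\varphi^{0}$ has continuous paths and, by (\ref{depe1}) with $k=0$, $E\bigl[\sup_{t\in[0,T]}|\varphi_t^{0}|^2\bigr]\le C_T<+\infty$, the last assertion of Teorema \ref{extension} yields $E\bigl[\sup_{t\in[0,T]}|X_t^{0}|^2\bigr]<+\infty$ (and likewise for every $X^{k}$). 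In particular $E\!\int_0^T(1+|X_s^{0}|)^2\,ds<+\infty$ and the function $\psi_k(t):=E\bigl[\sup_{r\in[0,t]}|X_r^{k}-X_r^{0}|^2\bigr]$ is finite on $[0,T]$, which is what will make the later use of the maximal inequality and of Gronwall's lemma legitimate.

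Keeping the notation $\alpha_k(t)=\varphi_t^{k}-\varphi_t^{0}$, $\beta_k(t)=\int_0^t[\sigma_k(s,X_s^{0})-\sigma_0(s,X_s^{0})]\,dW_s$, $\gamma_k(t)=\int_0^t[b_k(s,X_s^{0})-b_0(s,X_s^{0})]\,ds$ and $\eta_k=\alpha_k+\beta_k+\gamma_k$ of that proof, the first step is to show $E\bigl[\sup_{t\in[0,T]}|\eta_k(t)|^2\bigr]\to0$ for every $T>0$. For $\alpha_k$ this is exactly hypothesis (\ref{depe2}). For $\beta_k$, inequality (\ref{estimacionito2}) gives
\[E\Bigl[\sup_{t\in[0,T]}|\beta_k(t)|^2\Bigr]\le 4\,E\!\int_0^T\|\sigma_k(s,X_s^{0})-\sigma_0(s,X_s^{0})\|^2\,ds,\]
and since the integrand is dominated by $4K^2(1+|X_s^{0}|)^2\in L^1([0,T]\times\Omega)$ and tends to $0$ --- pointwise if one assumes the strengthened convergence (\ref{convpuntual}), or in $ds\otimes d\mathbf{P}$-probability via the argument leading to (\ref{dep5}) if one only keeps (\ref{convprob}) --- dominated convergence shows the right-hand side tends to $0$. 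For $\gamma_k$, since $|\gamma_k(t)|\le\int_0^T|b_k(s,X_s^{0})-b_0(s,X_s^{0})|\,ds$ is already independent of $t$, Cauchy--Schwarz together with the same dominated-convergence argument gives $E\bigl[\sup_{t\in[0,T]}|\gamma_k(t)|^2\bigr]\le T\,E\!\int_0^T|b_k(s,X_s^{0})-b_0(s,X_s^{0})|^2\,ds\to0$. Combining the three bounds via $|a+b+c|^2\le3(|a|^2+|b|^2+|c|^2)$ yields the claim for $\eta_k$.

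For the second step I would write
\[X_t^{k}-X_t^{0}=\eta_k(t)+\int_0^t\bigl[b_k(s,X_s^{k})-b_k(s,X_s^{0})\bigr]\,ds+\int_0^t\bigl[\sigma_k(s,X_s^{k})-\sigma_k(s,X_s^{0})\bigr]\,dW_s,\]
apply $|a+b+c|^2\le3(|a|^2+|b|^2+|c|^2)$, bound the drift term by Cauchy--Schwarz and the common Lipschitz constant $K$, bound the stochastic term by (\ref{estimacionito2}) and again the Lipschitz condition, and use $E\!\int_0^t|X_s^{k}-X_s^{0}|^2\,ds\le\int_0^t\psi_k(s)\,ds$ to obtain
\[\psi_k(t)\le 3\,E\Bigl[\sup_{r\in[0,T]}|\eta_k(r)|^2\Bigr]+3K^2(T+4)\int_0^t\psi_k(s)\,ds,\qquad t\in[0,T].\]
Since $\psi_k$ is finite, hence bounded, on $[0,T]$, Gronwall's lemma gives $\psi_k(T)\le 3\,e^{3K^2(T+4)T}\,E\bigl[\sup_{r\in[0,T]}|\eta_k(r)|^2\bigr]$, which tends to $0$ by the first step; this is the asserted conclusion. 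I expect the only genuine subtlety to be the bookkeeping that makes the maximal estimates applicable --- namely ensuring a priori that $E\bigl[\sup_{t\in[0,T]}|X_t^{0}|^2\bigr]$ and $E\bigl[\sup_{t\in[0,T]}|X_t^{k}-X_t^{0}|^2\bigr]$ are finite, for which the last part of Teorema \ref{extension} is exactly tailored; beyond that the whole argument is the routine upgrade of the proof of Teorema \ref{dep} from an $L^2$-in-time to a uniform-in-time estimate.
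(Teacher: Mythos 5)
Your proposal is correct and follows exactly the route the paper intends: the remark only sketches the modification (replace $\sup_{t}E(|\cdot|^2)$ by $E[\sup_{t}|\cdot|^2]$), and you fill it in faithfully by substituting the maximal estimate (\ref{estimacionito2}) for the It\^o isometry, bounding the drift term via Cauchy--Schwarz uniformly in $t$, and closing with Gronwall. Your explicit attention to the a priori finiteness of $E\bigl[\sup_{t\in[0,T]}|X_t^k|^2\bigr]$ via the last part of Teorema \ref{extension} is precisely the bookkeeping the paper leaves implicit.
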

{} Ahora usaremos el anterior resultado de dependencia continua
para obtener la \textit{diferenciabilidad} con respecto a $x$ de
la solución $(\X)_{s\geq t}$ de la EDE
\begin{equation}\label{edetx}
\X=x+\int_t^s b(r,X_r^{t,x})\,dr+\int_t^s\sigma
(r,X_r^{t,x})\,dW_r, \ \ \ \ s\geq t
\end{equation}
Este último resultado será pieza clave en la demostración de la
diferenciabilidad \textit{a priori} de la solución de la
ecuación de Kolmogorov con condición final (en inglés, \emph{Backward Kolmogorov equation}).

{}{}\textbf{Notación.} Si $b:[0,\infty)\times\R^d\to\R^d$ es
diferenciable, notaremos por $D_x b$ la matriz de tamaño $d\times
d$ con componentes $(D_x b)_{ij}=\frac{\partial W_i}{\partial
x_j}$ y la llamaremos la \textit{diferencial} de $b$ con respecto
a $x$ (es la generalización del gradiente al caso en que $b$ es un
campo vectorial). La $i-$ésima columna de $D_x b$ la notaremos por
\[\dfrac{\partial b}{\partial x_i}=\begin{pmatrix}
\frac{\partial W_1}{\partial x_i}\\
\vdots\\
\frac{\partial W_d}{\partial x_i}
\end{pmatrix}\]

\begin{defi}
Sea $\varnothing\neq G\subseteq\R^d$ un conjunto abierto y sean
$f,g:G\times\Omega\to\R$ funciones medibles. Diremos que $g$ tiene
\textit{derivada $f$ en $L^2(\Omega)$ con respecto a $x_i$} si
\[\frac{1}{h}\bigl[g(x+he_i)-g(x)\bigr]\xrightarrow[h\to 0]{L^2}f(x), \ \ \ \forall x\in G\]
donde $e_i=(\delta_{ij})_{1\le j\le d}, \ i=1,\ldots,d.$
Denotaremos $\frac{\partial g}{\partial x_i}(x)=f(x).$ De una
manera similar se definen las derivadas en $L^2(\Omega)$ de orden
mayor.
\end{defi}

Para el siguiente teorema asumiremos $m=1,$ luego $(W_t)_{t\geq
0}$ será un M.B. unidimensional (el caso $m>1$ se demuestra de
manera análoga).
\begin{teorema}\label{dif}
Sean $b,\sigma:[0,\infty)\times\R^d\to\R^d$ funciones medibles que
satisfacen las condiciones de Lipschitz global y de crecimiento
lineal. Suponga además que las derivadas parciales $\frac{\partial
b}{\partial x_i}, \,\frac{\partial \sigma}{\partial x_i}$ existen,
son continuas y acotadas, para $1\le i\le d.$

{}{} Entonces la derivada parcial $\displaystyle\frac{\partial
\X}{\partial x_i}$ existe en $L^2(\Omega),$ para todo $1\le i\le
d,$ y tiene como versión la solución $Y_t^i$ de la EDE con
coeficientes aleatorios
\begin{equation}\label{dif1}
Y_s^i=e_i+\int_t^s D_x b(r,X_r^{t,x})Y_r^i\,dr+\int_t^s D_x\sigma
(r,X_r^{t,x})Y_r^i\,dW_r, \ \ \ \ s\geq t
\end{equation}
donde $e_i=(\delta_{ij})_{1\le j\le d}.$
\end{teorema}
\begin{proof}
Usando la desigualdad de Cauchy-Schwarz y el hecho de que las
diferenciales $D_x b$ y $D_x\sigma$ son acotadas, se sigue que los
coeficientes aleatorios de (\ref{dif1}) satisfacen las hipótesis
del teorema \ref{extension}, por lo tanto (\ref{dif1}) posee una
solución $(Y_s^i)_{s\geq t}$ única en trayectoria. Sean $1\le j\le
d$ y $h,\lambda\in\R$ con $0\le\lambda\le 1$ y $h\ne 0.$ Por la
regla de la cadena se tiene que
\begin{align*}
W_j(s,X_s^{t,x+he_i})-W_j(s,X_s^{t,x})
&=\int_0^1\frac{d}{d\lambda}W_j(s,X_s^{t,x}+\lambda(X_s^{t,x+he_i}-X_s^{t,x}))\,d\lambda\\
&=\int_0^1\nabla_x W_j(s,X_s^{t,x}+\lambda(X_s^{t,x+he_i}
-X_s^{t,x}))\cdot(X_s^{t,x+he_i} -X_s^{t,x})\,d\lambda,
\end{align*}
donde $\nabla_x W_j=\bigl(\frac{\partial W_j}{\partial
x_1},\cdots,\frac{\partial W_j}{\partial x_d}\bigr).$

{}{} Para $\sigma$ se obtiene una relación similar, y que en
notación matricial equivalen a
\begin{align}
b(s,X_s^{t,x+he_i})-b(s,X_s^{t,x})&=\int_0^1 D_x
b(s,X_s^{t,x}+\lambda(X_s^{t,x+he_i}
-X_s^{t,x}))\cdot(X_s^{t,x+he_i} -X_s^{t,x})\,d\lambda,\label{dif2}\\
\sigma(s,X_s^{t,x+he_i})-\sigma(s,X_s^{t,x})&=\int_0^1 D_x
\sigma(s,X_s^{t,x}+\lambda(X_s^{t,x+he_i}
-X_s^{t,x}))\cdot(X_s^{t,x+he_i}
-X_s^{t,x})\,d\lambda.\label{dif3}
\end{align}
Definiendo, para $h\ne 0$
\begin{align*}
Z_s^h:=&\,\frac{1}{h}\bigl[X_s^{t,x+he_i}
-X_s^{t,x}\bigr],\\
b^h(s,z)&:=\int_0^1 D_x b(s,X_s^{t,x}+\lambda(X_s^{t,x+he_i}
-X_s^{t,x}))z\,d\lambda,\\
\sigma^h(s,z)&:=\int_0^1 D_x
\sigma(s,X_s^{t,x}+\lambda(X_s^{t,x+he_i} -X_s^{t,x}))z\,d\lambda,
\end{align*}
y usando (\ref{dif2}) y (\ref{dif3}) vemos que $(Z_s^h)_{s\geq t}$
satisface
\begin{align*}
Z_s^h&=e_i+\frac{1}{h}\int_t^s
\bigl[b(r,X_r^{t,x+he_i})-b(r,X_r^{t,x})\bigr]\,dr
+\frac{1}{h}\int_t^s[\sigma(r,X_r^{t,x+he_i})-\sigma(r,X_r^{t,x})\bigr]\,dW_r\\
&=e_i+\int_t^s \int_0^1 D_x b(r,X_r^{t,x}+\lambda(X_r^{t,x+he_i}
-X_r^{t,x}))\cdot\frac{1}{h}(X_r^{t,x+he_i}
-X_r^{t,x})\,d\lambda\,dr\\
&\hspace{2cm}+\int_t^s \int_0^1 D_x
\sigma(r,X_r^{t,x}+\lambda(X_r^{t,x+he_i}
-X_r^{t,x}))\cdot\frac{1}{h}(X_r^{t,x+he_i}-X_r^{t,x})\,d\lambda
\,dW_r
\end{align*}
es decir
\begin{equation}\label{dif4}
Z_s^h=e_i+\int_t^s b^h(r,Z_r^h)\,dr+\int_t^s
\sigma^h(s,Z_r^h)\,dW_r, \ \ \ \ s\geq t.
\end{equation}
Si además definimos $Z_s^0:=Y_s^i$, entonces $(Z_s^0)_{s\geq t}$
satisface (\ref{dif4}) con
\[b^0(s,z):=D_x b(s,\X)z, \ \ \ \sigma^0(s,z):=D_x \sigma(s,\X)z.\]
Por la proposición \ref{propmarkov1} se tiene que
\[\lim_{h\to 0}E\biggl[\sup_{\, t\le s\le T}|X_s^{t,x+he_i}-X_s^{t,x}|^2\biggr]=0,\]
luego para todo $0\le \lambda\le 1$
\[X_s^{t,x}+\lambda(X_s^{t,x+he_i}
-X_s^{t,x})\xrightarrow[h\to 0]{L^2}X_s^{t,x}, \ \ \ \forall s\geq
t\] y por continuidad de $D_x b$
\[D_x b(s,X_s^{t,x}+\lambda(X_s^{t,x+he_i}
-X_s^{t,x}))\xrightarrow[h\to 0]{L^2}D_x b(s,X_s^{t,x}), \ \ \
\forall s\geq t.\] Usando desigualdad de Hölder, el teorema de
Fubini, el hecho de que $D_x b$ es acotada y convergencia dominada
se obtiene
\begin{align*}
E\biggl[&\Bigl\|\int_0^1 D_x b(s,X_s^{t,x}+\lambda(X_s^{t,x+he_i}
-X_s^{t,x}))\,d\lambda-D_x b(s,X_s^{t,x})\Bigr\|^2\biggr]\\
&=E\biggl[\Bigl\|\int_0^1 \bigl[D_x
b(s,X_s^{t,x}+\lambda(X_s^{t,x+he_i}
-X_s^{t,x}))-D_x b(s,X_s^{t,x})\bigr]\,d\lambda\Bigr\|^2\biggr]\\
&\le E\int_0^1 |\!|D_x b(s,X_s^{t,x}+\lambda(X_s^{t,x+he_i}
-X_s^{t,x}))-D_x b(s,X_s^{t,x})|\!|^2\,d\lambda\\
&=\int_0^1 E\bigl(|\!|D_x b(s,X_s^{t,x}+\lambda(X_s^{t,x+he_i}
-X_s^{t,x}))-D_x
b(s,X_s^{t,x})|\!|^2\bigr)\,d\lambda\xrightarrow[h\to 0]\,0,
\end{align*}
y debido a que convergencia en $L^2$ implica convergencia en
probabilidad \[D_x b(s,X_s^{t,x}+\lambda(X_s^{t,x+he_i}
-X_s^{t,x}))\xrightarrow[h\to 0]{\mathbf{P}}D_x b(s,X_s^{t,x})\]
luego
\begin{align*}
\mathbf{P}\biggl(&\sup_{\,|z|\le
N}|b^h(s,z)-b^0(s,z)|>\varepsilon\biggr)\\
&=\mathbf{P}\Bigl(\sup_{\,|z|\le N}\Big|\int_0^1 D_x
b(s,X_s^{t,x}+\lambda(X_s^{t,x+he_i}-X_s^{t,x}))\,z\,d\lambda-D_x
b(s,X_s^{t,x})\,z\Bigr|>\varepsilon\Bigr)\\
&=\mathbf{P}\biggl(\Big\|\int_0^1 D_x
b(s,X_s^{t,x}+\lambda(X_s^{t,x+he_i}-X_s^{t,x}))\,d\lambda-D_x
b(s,X_s^{t,x})\Bigr\|\,N>\varepsilon\biggr)\\
&=\mathbf{P}\biggl(\Big\|\int_0^1 D_x
b(s,X_s^{t,x}+\lambda(X_s^{t,x+he_i}-X_s^{t,x}))\,d\lambda-D_x
b(s,X_s^{t,x})\Bigr\|>\varepsilon/N\biggr)\xrightarrow[h\to 0]\,0,
\end{align*}
y de forma completamente análoga se deduce
\[\mathbf{P}\biggl(\sup_{\ |z|\le
N}|\sigma^h(s,z)-\sigma^0(s,z)|>\varepsilon\biggr)\xrightarrow[h\to
0]\,0\] para todo $s\geq t$ y para todo $\varepsilon,N>0.$
Aplicando el teorema \ref{dep} obtenemos
\[E\biggl[\sup_{\, t\le s\le T}|Z_s^h-Z_s^0|^2\biggr]\xrightarrow[h\to 0]\,0,\]
y por lo tanto
\[\frac{1}{h}\bigl[X_s^{t,x+he_i}
-X_s^{t,x}\bigr]=Z_s^h\xrightarrow[h\to 0]{L^2}Z_s^0=Y_s^i, \ \ \
\ t\le s\le T\] lo cual prueba el teorema.
\end{proof}
Note que las derivadas $\displaystyle\frac{\partial \X}{\partial
x_i}$ satisfacen la EDE con coeficientes aleatorios que se
obtienen de (\ref{edetx}) derivando los coeficientes $b,\sigma$
con respecto a $x_i.$ De forma similar al anterior teorema se demuestra el
siguiente resultado
\begin{teorema}\label{dif2orden}
Sean $b,\sigma:[0,\infty)\times\R^d\to\R^d$ funciones medibles que
satisfacen las condiciones de Lipschitz global y de crecimiento
lineal. Suponga además que las derivadas $D_x^\alpha W_i,
\,D_x^\alpha \sigma_i, \,1\le i\le d$ existen, son continuas y
acotadas para todo $1\le|\alpha|\le 2.$

{}{} Entonces para todo $1\le i,j\le d,$ la derivada parcial de
segundo orden  $\displaystyle\frac{\partial^2 \X}{\partial
x_i\partial x_j}$ existe en $L^2(\Omega)$ y tiene como versión la
solución de la EDE cuyos coeficientes se obtienen derivando los
coeficientes de (\ref{edetx}) con respecto a $x_i$ y $x_j.$
\end{teorema}

\begin{teorema}\label{lemakolmo}
Suponga que se cumplen las hipótesis del teorema \ref{dif2orden}.
Sea $f:\R^d\to\R$ tal que $D_x^\alpha f$ existe, es continua y
satisface
\[|D_x^\alpha f(x)|\le C\bigl(1+|x|^\beta\bigr), \ \ \
0\le|\alpha|\le 2\] para algunas constantes $C>0, \,\beta\geq 1.$
Entonces, para cada $s\geq t$ la aplicación
\begin{equation}\label{defphi}
\varphi(x):=E[f(\X)]
\end{equation}
está en $\C^2(\R^d)$ y sus derivadas parciales se obtienen
derivando el lado derecho de (\ref{defphi}) bajo el valor
esperado.
\end{teorema}
\begin{proof}
Probaremos que
\begin{equation}\label{difphi}
\frac{\partial \varphi}{\partial x_i}=E\biggl[\nabla_x f(\X)
\frac{\partial \X}{\partial x_i}\biggr].
\end{equation}
Sean $1\le i\le d$ fijo y $h\ne 0.$ Usando el mismo razonamiento
que en el teorema \ref{dif}, por la regla de la cadena se tiene
que
\begin{align*}
\varphi(x+he_i)-\varphi(x)&=E[f(X_s^{t,x+he_i})]-E[f(\X)]\\
&=E\int_0^1\frac{d}{d\lambda}f(X_s^{t,x}+\lambda(X_s^{t,x+he_i}-X_s^{t,x}))\,d\lambda\\
&=\int_0^1 E\bigl[\nabla_x f(X_s^{t,x}+\lambda(X_s^{t,x+he_i}
-X_s^{t,x}))\cdot(X_s^{t,x+he_i} -X_s^{t,x})\bigr]\,d\lambda.
\end{align*}
Usando la misma notación $Z_s^h=\dfrac{1}{h}\bigl[X_s^{t,x+he_i}
-X_s^{t,x}\bigr]$ se obtiene
\[\frac{\varphi(x+he_i)-\varphi(x)}{h}
=\int_0^1 E\bigl[\nabla_x
f\bigl(X_s^{t,x}+\lambda(X_s^{t,x+he_i}-X_s^{t,x})\bigr)\cdot
Z_s^h\bigr]\,d\lambda.\] Por la proposición \ref{propmarkov1} y
por el teorema \ref{dif} se tiene que
\[X_s^{t,x}+\lambda(X_s^{t,x+he_i}-X_s^{t,x})\xrightarrow[h\to
0]{L^2}X_s^{t,x}, \ \ \ Z_s^h\xrightarrow[h\to
0]{L^2}\frac{\partial \X}{\partial x_i},\] y por continuidad de
$\nabla_x f$ y del producto interno, tenemos
\[\nabla_x f\bigl(X_s^{t,x}+\lambda(X_s^{t,x+he_i}
-X_s^{t,x})\bigr)\cdot Z_s^h\xrightarrow[h\to 0]{L^2}\nabla_x
f(X_s^{t,x})\frac{\partial \X}{\partial x_i}.\] Usando la
desigualdad de Jensen se obtiene en particular
\[E\bigl[\nabla_x f\bigr(X_s^{t,x}+\lambda(X_s^{t,x+he_i}
-X_s^{t,x})\bigr)\cdot Z_s^h\bigr]\xrightarrow[h\to
0]\,E\biggl[\nabla_x f(s,X_s^{t,x})\frac{\partial \X}{\partial
x_i}\biggr]\] y de nuevo por la proposición \ref{propmarkov1}, y
usando la desigualdad $ab\le a^2/2+b^2/2,$ para $0<|h|\le 1$ se
tiene que
\begin{align*}
E\bigl[\nabla_x f&\bigr(X_s^{t,x}+\lambda(X_s^{t,x+he_i}
-X_s^{t,x})\bigr)\cdot Z_s^h\bigr]\\
&\le\frac{C^2}{2}E\Bigl[\bigl(1+|X_s^{t,x}+\lambda(X_s^{t,x+he_i}
-X_s^{t,x})|^\beta\bigr)^2\Bigr]+\frac{1}{2}E\bigl[|Z_s^h|^2\bigr]\\
&\le C^2E\bigl[1+|X_s^{t,x}+\lambda|X_s^{t,x+he_i}
-X_s^{t,x}|^{2\beta}\bigr]+K/2\\
&\le
C^2+2^{2\beta-1}C^2E\bigl(|X_s^{t,x}|^{2\beta}\bigr)+\lambda^{2\beta}2^{2\beta-1}C^2
E\bigl(|X_s^{t,x+he_i}-X_s^{t,x}|^{2\beta}\bigr)+K/2\\
&\le
C^2+2^{2\beta-1}C^2E\bigl(|X_s^{t,x}|^{2\beta}\bigr)+\lambda^{2\beta}2^{2\beta-1}C^2K+K/2.
\end{align*}
Este último termino no depende de $h$ y es claramente integrable
entre $0$ y $1,$ luego podemos aplicar convergencia dominada y
obtener
\[\int_0^1 E\bigl[\nabla_x
f\bigl(X_s^{t,x}+\lambda(X_s^{t,x+he_i}-X_s^{t,x})\bigr)\cdot
Z_s^h\bigr]\,d\lambda\xrightarrow[h\to 0]\,\int_0^1
E\biggl[\nabla_x f(s,X_s^{t,x})\frac{\partial \X}{\partial
x_i}\biggr]\,d\lambda,\] es decir
\[\lim_{h\to 0}\frac{\varphi(x+he_i)-\varphi(x)}{h}=
E\biggl[\nabla_x f(s,X_s^{t,x})\frac{\partial \X}{\partial
x_i}\biggr],\] lo cual prueba (\ref{difphi}). Aplicando el mismo
razonamiento a $\dfrac{\partial\varphi}{\partial x_i}$ en vez de
$\varphi$ se demuestra
\[\frac{\partial^2 \varphi}{\partial
x_i\partial x_j}=E\biggl[\frac{\partial(\nabla_x f)}{\partial x_j}
\bigl(\X\bigr)\cdot\frac{\partial \X}{\partial x_i} +\nabla_x
f(\X)\cdot\frac{\partial}{\partial x_j}\biggl(\frac{\partial
\X}{\partial x_i}\biggr)\biggr].\]
\end{proof}

Ahora podemos utilizar los resultados anteriores para obtener una
demostración{} probabilística de la existencia de soluciones de
algunas EDPs parabólicas que involucran al operador diferencial de
segundo orden $\A=\frac{\partial}{\partial t}+L_t$ donde
\begin{equation}\label{operador2}
(L_tu)(t,x) = \frac{1}{2}\sum_{i,j=1}^d
a_{ij}(t,x)\frac{\partial^2u}{\partial x_i\partial x_j}(t,x)+
\sum_{i=1}^d W_i(t,x)\frac{\partial u}{\partial x_i}(t,x)
\end{equation}
que como ya vimos es el generador infinitesimal de las
probabilidades de transición del proceso de difusión con
coeficiente de \textit{drift} $b(t,x)$ y matriz de
\textit{difusión} $a(t,x)=\sigma(t,x)\sigma(t,x)^*.$
\begin{teorema}[Ecuación de Kolmogorov con condición final]
Suponga que se cumplen las hipótesis del teorema \ref{lemakolmo}.
Entonces la función
\begin{equation}
u(t,x):=E\bigl[f(X_T^{t,x})\bigr], \ \ \ \ (t,x)\in[0,T]\times\R^d
\end{equation}
pertenece a $\C^{1,2}\bigl([0,T)\times\R^d\bigr)$ y satisface el problema de valor final
\begin{equation}\label{eckolmogorov}
\begin{split}
\frac{\partial u}{\partial t}(t,x)+L_{t}u(t,x)&=0, \ \ \ \ \ \
\text{en}
\ [0,T)\times\R^d\\
u(T,x)&=f(x), \ \ \ x\in\R^d
\end{split}
\end{equation}
\end{teorema}
\begin{proof}
Sea $g\in\C^2(\R^d)$ que toma valores en los reales y satisface
\[|D_x^\alpha g(x)|\le C_1\bigl(1+|x|^{\beta_1}\bigr), \ \ \
0\le|\alpha|\le 2\] para algunas constantes $C_1,\beta_1>0.$ Para
$0<h\le t$ y $0\le s\le 1,$ defina
\[Y_h(s):=(L_{t-hs}g)\bigl(t-hs,X_{t-hs}^{t-h,x}\bigr).\]
Por el lema \ref{apriori}
\[E\Bigl[\bigl|X_{t-hs}^{t-h,x}-X_t^{t-h,x}\bigr|^2\Bigr]\xrightarrow[h\to 0^+]\,0,\]
y por la proposición \ref{propmarkov1}
\[E\Bigl[\bigl|X_t^{t-h,x}-X_t^{t,x}\bigr|^2\Bigr]\xrightarrow[h\to 0^+]\,0.\]
Dado que $\bigl|X_{t-hs}^{t-h,x}-X_t^{t,x}\bigr|^2\le
2\bigl|X_{t-hs}^{t-h,x}-X_t^{t-h,x}\bigr|^2 +
2\bigl|X_t^{t-h,x}-X_t^{t,x}\bigr|^2,$ entonces
\[X_{t-hs}^{t-h,x}\xrightarrow[h\to 0]{L^2}X_t^{t,x}=x, \ \ \ 0\le s\le
1,\]y usando la continuidad de $a(t,x), \,b(t,x),
\,\dfrac{\partial g}{\partial x_i}$ y $\dfrac{\partial^2
g}{\partial x_i\partial x_j}$ obtenemos
\[Y_h(s)\xrightarrow[h\to 0]{L^2}L_tg(t,x).\]
Por la desigualdad de Jensen se tiene en particular que
\[E[Y_h(s)]\xrightarrow[h\to 0]\,L_tg(t,x), \ \ \forall
s\in[0,1].\] Al igual que en (\ref{generador3}) en la demostración
de la proposición \ref{generador2}, usando las condiciones de
crecimiento sobre $\sigma(s,x), \,b(s,x), \, \frac{\partial
g}{\partial x_i}$ y $\frac{\partial^2 g}{\partial x_i\partial
x_j},$ se obtiene la estimación
\[|L_s g(s,x)|\le
2^{\beta_1+1}C_1(md^2K^2+2K\sqrt{d})(1+|x|^{\beta_1+2})\] luego
\[E[Y_h(s)]\le
C_2\Bigl\{1+E\bigl(|X_{t-hs}^{t-h,x}|^{\beta_1+2}\bigr)\Bigr\} \le
C_2\bigl(2+|x|^{\beta_1+2}\bigr), \ \ \forall s\in[0,1],\] donde
$C_2=2^{\beta+1}C_1(md^2K^2+2K\sqrt{d}).$ Podemos entonces aplicar
convergencia dominada a la familia de funciones
$E\bigl[Y_h(\cdot)\bigr], \ 0<h\le t,$ y obtener
\[\int_0^1 E\bigl[Y_h(s)\bigr]\,ds\xrightarrow[h\to 0]\,\int_0^1 L_tg(t,x)\,ds=L_tg(t,x)\]
Haciendo el cambio de variable $r=t-hs$
\[\int_0^1 E\bigl[Y_h(s)\bigr]\,ds
=\int_0^1
E\bigl[(L_{t-hs}g)\bigl(t-hs,X_{t-hs}^{t-h,x}\bigr)\bigr]\,ds
=\frac{1}{h}E\int_{t-h}^t(L_rg)\bigl(r,X_r^{t-h,x}\bigr)\,dr.\]
Aplicando fórmula de Itô al proceso
$\bigl(X_s^{t-h,x}\bigr)_{s\geq t-h}$ con la función $g$ e
integrando entre $t-h$ y $t$ obtenemos
\[g(X_t^{t-h,x})-g(x)=\int_{t-h}^t (L_sg)\bigl(s,X_s^{t-h,x}\bigr)\,ds
+\int_{t-h}^t\nabla_x
g\bigl(X_s^{t-h,x}\bigr)\cdot\sigma\bigl(s,X_s^{t-h,x}\bigr)\,dW_s.\]
Usando las condiciones de crecimiento sobre $\nabla_x g$ \,y
$\sigma,$ y el lema \ref{apriori} se puede ver que
\[E\int_{t-h}^t\bigl\|\nabla_x g\bigl(X_s^{t-h,x}\bigr)\cdot
\sigma\bigl(s,X_s^{t-h,x}\bigr)\bigr\|^2\,ds<+\infty,\] luego el
valor esperado de la integral estocástica es cero y
\begin{equation}\label{kolmo1}
E\bigl[g(X_t^{t-h,x})\bigr]-g(x)=E\int_{t-h}^t
(L_sg)\bigl(s,X_s^{t-h,x}\bigr)\,ds,
\end{equation}
entonces
\begin{equation}\label{kolmo2}
\frac{E\bigl[g(X_t^{t-h,x})\bigr]-g(x)}{h}=
\frac{1}{h}E\int_{t-h}^t(L_rg)\bigl(r,X_r^{t-h,x}\bigr)\,dr\xrightarrow[h\to
0^+]\,L_tg(t,x),
\end{equation}
y usando la estimación para $L_s g(s,x)$ y (\ref{kolmo1}), se
deduce la desigualdad
\begin{equation}\label{kolmo3}
\bigl|E\bigl[g(X_t^{t-h,x})\bigr]-g(x)\bigr|\le \int_{t-h}^t
C_2\bigl(2+|x|^{\beta_1+2}\bigr)\,ds=hC_2\bigl(2+|x|^{\beta_1+2}\bigr).
\end{equation}
Por el lema \ref{lemamarkov} y la igualdad
$X_T^{t-h,x}=X_T^{t,X_t^{t-h,x}}$ (consecuencia de la unicidad en
trayectoria) se sigue la relación
\[u(t-h,x)=E\bigl[u\bigl(t,X_t^{t-h,x}\bigr)\bigr]\]
Tomando $g(x)=u(t,x)$ en (\ref{kolmo2}) y (\ref{kolmo3}) se
obtiene
\begin{equation}\label{kolmo4}
\lim_{h\to 0^-}\frac{u(t+h,x)-u(t,x)}{h}=-\lim_{h\to
0^+}\frac{u(t-h,x)-u(t,x)}{h} =-L_t u(t,x)
\end{equation}
y $|u(t-h,x)-u(t,x)|\le C_3(x)h,$ donde $C_3(x)$ es una constante
que no depende de $h,$ lo que significa que para $x$ fijo,
$u(\cdot,x)$ es \textit{absolutamente continua} (ver \cite{ash1},
sección 2.3) y por lo tanto $\frac{\partial u}{\partial s}(s,x)$
existe para casi todo $s$ y satisface
\[u(t,x)=u(0,x)+\int_0^t\frac{\partial u}{\partial s}(s,x)\,ds\]
lo que implica, junto con (\ref{kolmo4}), que
\[u(t,x)=u(0,x)-\int_0^t L_s u(s,x)\,ds.\]
Como $s\mapsto L_s u(s,x)$ es continua, por el teorema fundamental
del cálculo, $\frac{\partial u}{\partial t}(t,x)$ existe en todos
lados y satisface
\[\frac{\partial u}{\partial t}(t,x)+L_{t}u(t,x)=0, \ \ \ \forall
(t,x)\in[0,T)\times\R^d.\]
\end{proof}

\subsection[El problema de Cauchy. Representación de
Feynman-Kac]{El problema de Cauchy. Representación de\\
Feynman-K\u{a}c}
\begin{teorema}[K\u{a}c, Rosenblatt (1951)]\label{feynmankac}
Sea $T>0$ fijo, y sean $f:\R^d\longrightarrow\R, {}
h:[0,T]\times\R^d\longrightarrow\R$ y $c:[0,T]\times\R^d\to
[0,+\infty)$ funciones continuas que satisfacen
\begin{align}
&\mathbf{(i)} \ \ |f(x)|\le L\bigl(1+|x|^{2\lambda}\bigr), \ \ \ \
\  \text{o} \ \ \ \mathbf{(i')} \ \ |f(x)|\geq 0, \ \ \ \ \forall
x\in\R^d\\
&\mathbf{(ii)} \ |h(t,x)|\le L\bigl(1+|x|^{2\lambda}\bigr), \ \ \
\text{o} \ \ \ \mathbf{(ii')} \ |h(t,x)|\geq 0, \ \ \ \forall
(t,x)\in [0,T]\times\R^d
\end{align}
para algunas constantes $L>0$ y $\lambda\geq 1.$ Si
$v:[0,T]\times\mathbb{R}^{d}\rightarrow\mathbb{R}$ es continua de
clase $\C^{1,2}\bigl([0,T)\times\R^d\bigr),$ solución del problema
de Cauchy
\begin{equation}\label{cauchy}
\begin{split}
\frac{\partial v}{\partial t}+L_{t}v&=h+cv, \ \ \ \ \ \ \text{en}
\ [0,T)\times\R^d\\
v(T,x)&=f(x), \ \ \ \ \ \ \ \ \ x\in\R^d
\end{split}
\end{equation}
y satisface la condición de crecimiento polinomial
\begin{equation}\label{feynman-poli}
\max_{t\in [0,T]}|v(t,x)|\le M\bigl(1+|x|^{2\mu}\bigr), \ \ \
x\in\R^d
\end{equation}
para algún $M>0, \ \mu\geq 1,$ entonces $v$ admite la
representación estocástica conocida como fórmula de
Feynman-K\u{a}c
\[v(t,x)=E\biggl[f(X_T^{t,x})\exp\biggl\{-\int_t^T c(r,X_r^{t,x})\,dr\biggr\}
-\int_t^T h(s,X_s^{t,x})\exp\biggl\{-\int_t^s
c(r,X_r^{t,x})\,dr\biggr\}\,ds\,\biggr]
\]
\end{teorema}
\begin{proof}
Para cada $(t,x)\in [0,T]\times\R^d$ notemos
$Z_s^{t,x}=e^{-\int_{t}^{s}c(r,X_{r}^{t,x})\,dr}, \ s\geq t.$
Debido a que las trayectorias de $(\X)_{s\geq t}$ son continuas,
las trayectorias de $(\Z)_{s\geq t}$ son diferenciables y
satisfacen
\begin{equation}\label{feynman1}
d\Z=-c\bigl(s,\X\bigr)\Z\,ds.
\end{equation}
Aplicando fórmula de Itô al proceso $(\X)_{s\geq t}$ con la
función $v$
\begin{equation}\label{feynman2}
d\bigl[v\bigl(s,\X\bigr)\bigr]=\bigl(\tfrac{\partial v}{\partial
t}+L_s v\bigr)\bigl(s,\X\bigr)\,ds+(\nabla_x
v\cdot\sigma)\bigl(s,\X\bigr)\,dW_s,
\end{equation}
y usando la regla del producto con (\ref{feynman1}) y
(\ref{feynman2}) obtenemos
\begin{align*}
d\bigl\{v\bigl(s,\X\bigr)\Z\bigr\}&=\bigl\{-v(s,\X)\cdot
c(s,\X)\Z+\bigl(\tfrac{\partial v}{\partial
t}+L_s v\bigr)(s,\X)\Z\bigr\}\,ds\\[0.2cm]
&\ \ \ +(\nabla_x v\cdot\sigma)\bigl(s,\X\bigr)\Z\,dW_s.
\end{align*}
Definimos la sucesión de tiempos de parada $\tau_n:=\inf\{s\geq t:
|\X|\geq n\}, \ n\geq 1.$ Integrando la anterior expresión entre
$t$ y $\tau_n\wedge T$
\begin{align*}
v\bigl(&\tau_n\wedge T,X_{\tau_n\wedge
T}^{t,x}\bigr)Z_{\tau_n\wedge
T}^{t,x}-v(t,x)\\[0.2cm]
&=\int_t^{\tau_n\wedge T}\bigl(-cv+\tfrac{\partial v}{\partial
t}+L_s v\bigr)(s,\X)\Z\,ds+\int_t^{\tau_n\wedge
T}(\nabla_x v\cdot\sigma)\bigl(s,\X\bigr)\Z\,dW_s\\
&=\int_t^{\tau_n\wedge T}h(s,\X)\Z\,ds+\int_t^{\tau_n\wedge
T}(\nabla_x v\cdot\sigma)\bigl(s,\X\bigr)\Z\,dW_s.
\end{align*}
Dado que si $t\le s<\tau_n$ entonces $|\X|\le n,$ y $Z_s^{t,x}\le
1,$ por la condición de crecimiento lineal sobre $\sigma$ se tiene
\[E\int_t^{\tau_n\wedge T}\bigl\|(\nabla_x
v\cdot\sigma)\bigl(s,X_s^{t,x}\bigr)\Z\bigr\|^2\,ds\le
K_T^2(1+n)^2 (T-t)M_n,\] donde $M_n=\max\bigl\{|\nabla_x
v(s,y)|^2:t\le s\le T, \ |y|\le n\bigr\}.$ Entonces el valor
esperado de la integral estocástica es cero y
\begin{align*}
v(t,x)&=E\bigl[v\bigl(\tau_n\wedge T,X_{\tau_n\wedge
T}^{t,x}\bigr)Z_{\tau_n\wedge T}^{t,x}\bigr]-E\int_t^{\tau_n\wedge
T}\!h(s,\X)\Z\,ds\\
&=E\bigl[v\bigl(\tau_n,X_{\tau_n}^{t,x}\bigr)Z_{\tau_n}^{t,x}\mathbf{1}_{\{\tau_n<T\}}\bigr]
+E\bigl[f\bigl(X_T^{t,x}\bigr)Z_T^{t,x}\mathbf{1}_{\{\tau_n\geq
T\}}\bigr]-E\int_t^{\tau_n\wedge T}\!h(s,\X)\Z\,ds,
\end{align*}
pues $v\bigl(T,X_T^{t,x}\bigr)=f(X_T^{t,x}).$ Dado que
\begin{align*}
\Bigl|E\bigl[v\bigl(\tau_n,X_{\tau_n}^{t,x}\bigr)Z_{\tau_n}^{t,x}%
\mathbf{1}_{\{\tau_n<T\}}\bigr]\Bigr|&\le
E\Bigl[\bigl|v\bigl(\tau_n,X_{\tau_n}^{t,x}\bigr)Z_{\tau_n}^{t,x}%
\mathbf{1}_{\{\tau_n<T\}}\bigr|\Bigr]\\&\le
E\Bigl[\bigl|v\bigl(\tau_n,X_{\tau_n}^{t,x}\bigr)\bigr|\mathbf{1}_{\{\tau_n<T\}}\Bigr]
\le M(1+n^{2\mu})\cdot\mathbf{P}(\tau_n<T),
\end{align*}
y por la desigualdad de Markov y el lema \ref{apriori}
\[\mathbf{P}(\tau_n<T)\le \mathbf{P}\Bigl(\max_{\ t\le s\le
T}|\X|\geq n\Bigr)\le \frac{1}{n^{2p}}E\biggl[\max_{t\le s\le
T}|\X|^{2p}\biggr]\le
\frac{Ce^{CT}}{n^{2p}}\bigl(1+|x|^{2p}\bigr)\] donde
$C=C(m,d,K,T),$ eligiendo $p>\mu$ obtenemos
\[\Bigl|E\bigl[v\bigl(\tau_n,X_{\tau_n}^{t,x}\bigr)Z_{\tau_n}^{t,x}%
\mathbf{1}_{\{\tau_n<T\}}\bigr]\Bigr|
\le\frac{MCe^{CT}}{n^{2p}}(1+n^{2\mu})\bigl(1+|x|^{2p}\bigr)
\xrightarrow[n\to\infty]\,0.\] Si $f$ satisface \textbf{(i)}
entonces
$\bigl|f\bigl(X_T^{t,x}\bigr)Z_T^{t,x}\mathbf{1}_{\{\tau_n\geq
T\}}\bigr|\le L\bigl(1+|X_T^{t,x}|^{2\mu}\bigr).$ Dado que
\[f\bigl(X_T^{t,x}\bigr)Z_T^{t,x}\mathbf{1}_{\{\tau_n\geq
T\}}\xrightarrow[n\to\infty]{c.s.}f\bigl(X_T^{t,x}\bigr)Z_T^{t,x}
\ \ \ \ \ \text{y} \ \ \ \ \
E\bigl(|X_T^{t,x}|^{2\mu}\bigr)<+\infty,\] por convergencia
dominada se sigue
\begin{equation}\label{feynman3}
E\bigl[f\bigl(X_T^{t,x}\bigr)Z_T^{t,x}\mathbf{1}_{\{\tau_n\geq
T\}}\bigr]\xrightarrow[n\to\infty]\,E\bigl[f\bigl(X_T^{t,x}\bigr)Z_T^{t,x}\bigr].
\end{equation}
Se obtiene lo mismo si $f$ satisface \textbf{(i')} aplicando
convergencia monótona a la sucesión creciente
$f\bigl(X_T^{t,x}\bigr)Z_T^{t,x}\mathbf{1}_{\{\tau_n\geq T\}}, \
n\geq 1.$ De manera análoga, si $h$ satisface \textbf{(ii)}
entonces
\[\Bigl|E\bigl[h(s,\X)\Z\mathbf{1}_{[0,\tau_n)}(s)\bigr]\Bigr|
\le L\bigl\{1+E\big(|\X|^{2\lambda}\bigr)\bigr\}\le
L\bigl\{1+Ce^{CT}\big(1+|x|^{2\lambda}\bigr)\bigr\}.\] Por la
desigualdad de Hölder, se tiene
\begin{align*}
\Bigl|E\bigl[h(s,\X)\Z\bigr]
-E\bigl[h(s,\X)\Z\mathbf{1}_{[0,\tau_n)}(s)\bigr]\Bigr|^2
&=\Bigl|E\bigl[h(s,\X)\Z\mathbf{1}_{[\tau_n,\infty)}(s)\bigr]\Bigr|^2\\
&\le E\bigl[\bigl(h(s,\X)\Z\bigr)^2\bigr]\cdot
E\bigl[\mathbf{1}_{[\tau_n,\infty)}(s)\bigr],
\end{align*}
y dado que $E\bigl[\mathbf{1}_{[\tau_n,\infty)}(s)\bigr]
=E\bigl[\mathbf{1}_{\{\tau_n\le s\}}\bigr]=\mathbf{P}(\tau_n\le
s)\xrightarrow[n\to\infty]\,0,$
\[E\bigl[h(s,\X)\Z\mathbf{1}_{[0,\tau_n)}(s)\bigr]\xrightarrow[n\to\infty]\,
E\bigl[h(s,\X)\Z\bigr], \ \ \ \ \forall s\in [t,T],\] y por
convergencia dominada
\[\int_t^T
E\bigl[h(s,\X)\Z\mathbf{1}_{[0,\tau_n)}(s)\bigr]\,ds\xrightarrow[n\to\infty]\,
\int_t^T E\bigl[h(s,\X)\Z\bigr]\,ds,\] es decir
\[E\int_t^{\tau_n\wedge T} h(s,\X)\Z\,ds\xrightarrow[n\to\infty]\,
E\int_t^T h(s,\X)\Z\,ds.\] Obtenemos el mismo resultado si $h$
satisface \textbf{(ii')} y aplicamos convergencia monótona a la
sucesión creciente $h(s,\X)\Z\mathbf{1}_{[0,\tau_n)}(s), \ n\geq
1.$ En cualquier caso, tomando límites se tiene
\[v(t,x)=E\bigl[f\bigl(X_T^{t,x}\bigr)Z_T^{t,x}\bigr]-E\int_t^T\!h(s,\X)\Z\,ds\]
lo cual demuestra el teorema.
\end{proof}
\begin{obse}
A diferencia de la ecuación de Kolmogorov, si
no se supone la existencia \textit{a priori} de una solución
$v\in\C^{1,2}\bigl([0,T)\times\R^d\bigr)$ del problema de Cauchy
(\ref{cauchy}), la función dada por la fórmula de Feynman-Kac no
es necesariamente de clase $\C^{1,2}.$

{} Aún así se puede ver que bajo ciertas condiciones el problema
de Cauchy posee solución. Por ejemplo, el siguiente conjunto de
hipótesis son suficientes para garantizar la existencia de una
solución de (\ref{cauchy}) que satisfaga la condición de
crecimiento polinomial (\ref{feynman-poli})\,:
\begin{enumerate}
 \item El operador diferencial parcial de segundo orden $L_t$ es \textit{uniformemente
 elíptico,} es decir, existe una constante
 positiva $\delta>0$ tal que
 \[\sum_{i,j=1}^d a_{ij}(t,x)y_i y_j\geq\delta|y|^2, \ \ \ \forall y\in\R^d, \
 \forall (t,x)\in [0,\infty)\times\R^d,\]
 \item Las funciones $a_{ij}(t,x), \ W_i(t,x)$ son acotadas
 y \textit{Lipschitz} sobre compactos de {}$[0,T]\times\R^d.$
 \item Las funciones $a_{ij}(t,x)$ y $h(t,x)$ son \textit{Hölder} continuas en $x$
 uniformemente con respecto a $t$ en $[0,T]\times\R^d.$
 \item La función $c(t,x)$ es acotada en $[0,T]\times\R^d$ y \textit{Hölder} continua en $x$
 uniformemente con respecto a $t$ sobre compactos de $[0,T]\times\R^d.$
 \item $f$ y $h$ satisfacen las condiciones de \textit{crecimiento
 polinomial} \textbf{(i)} y \textbf{(ii)} respectivamente.
\end{enumerate}
Ver \cite{friedman1,friedman2}.
\end{obse}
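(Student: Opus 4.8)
The plan is to prove this existence statement by the classical route: reduce the final-value problem to a forward parabolic Cauchy problem, construct a fundamental solution by Levi's parametrix method, represent the solution through it by Duhamel's principle, and read off the polynomial growth bound (\ref{feynman-poli}) and the $\C^{1,2}$ regularity from the Gaussian estimates on the fundamental solution. First I would perform the time reversal $\tau=T-t$ and set $w(\tau,x):=v(T-\tau,x)$. Writing $\tilde a_{ij}(\tau,x):=a_{ij}(T-\tau,x)$ and similarly $\tilde b,\tilde c,\tilde h$, the problem (\ref{cauchy}) becomes the forward Cauchy problem
\[\frac{\partial w}{\partial\tau}(\tau,x)=\tilde L_\tau w(\tau,x)-\tilde c(\tau,x)w(\tau,x)-\tilde h(\tau,x),\qquad w(0,x)=f(x),\]
on $(0,T]\times\R^d$, where $\tilde L_\tau$ has the same principal part as $L_t$. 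Hypotheses (1)--(4) are invariant under this reflection (uniform ellipticity, boundedness, and local Lipschitz/H\"older continuity in $x$ uniformly in $\tau$ all persist), so it suffices to solve the forward problem and set $v(t,x)=w(T-t,x)$.

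Second, I would construct a fundamental solution $\Gamma(\tau,x;s,\xi)$ for $0\le s<\tau\le T$ of the operator $\partial_\tau-\tilde L_\tau+\tilde c$ by the parametrix method. The parametrix $Z(\tau,x;s,\xi)$ is the Gaussian kernel of the constant-coefficient operator obtained by freezing $\tilde a_{ij}$ at $(s,\xi)$; one then seeks $\Gamma=Z+Z\star\Phi$, with $\star$ denoting space--time convolution over the intermediate variables, which forces $\Phi$ to solve the Volterra integral equation $\Phi=\Psi+\Psi\star\Phi$, where $\Psi:=(\partial_\tau-\tilde L_\tau+\tilde c)Z$. The H\"older continuity of $\tilde a_{ij}$ (hypothesis 3) makes $\Psi$ weakly singular, so the Neumann series $\Phi=\sum_{k\ge 1}\Psi^{\star k}$ converges and $\Gamma$ inherits the Gaussian bounds
\[|D_x^\alpha\Gamma(\tau,x;s,\xi)|\le C\,(\tau-s)^{-(d+|\alpha|)/2}\exp\Bigl(-\kappa\tfrac{|x-\xi|^2}{\tau-s}\Bigr),\qquad 0\le|\alpha|\le 2,\]
together with the analogous estimate for $\partial_\tau\Gamma$.

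Third, with $\Gamma$ in hand I would write the candidate solution by Duhamel's principle,
\[w(\tau,x)=\int_{\R^d}\Gamma(\tau,x;0,\xi)\,f(\xi)\,d\xi-\int_0^\tau\!\!\int_{\R^d}\Gamma(\tau,x;s,\xi)\,\tilde h(s,\xi)\,d\xi\,ds,\]
and verify that this is well defined and solves the forward problem with $w(\tau,\cdot)\to f$ as $\tau\to 0^+$ (since $\Gamma(\tau,x;0,\cdot)$ is an approximate identity). The growth bound (\ref{feynman-poli}) then follows from the Gaussian decay of $\Gamma$ in $|x-\xi|$ against the polynomial growth of $f$ and $h$ from hypothesis (5): convolving $(\tau-s)^{-d/2}e^{-\kappa|x-\xi|^2/(\tau-s)}$ with $1+|\xi|^{2\lambda}$ yields a bound of the form $M(1+|x|^{2\mu})$ uniformly for $\tau\in[0,T]$. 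The regularity $w\in\C^{1,2}((0,T]\times\R^d)$ comes from differentiating under the integral using the derivative bounds on $\Gamma$, with the H\"older continuity of $\tilde a_{ij},\tilde h,\tilde c$ (hypotheses 3--4) supplying the interior Schauder-type estimates that legitimize the second spatial derivatives and the time derivative. Undoing the reversal gives $v(t,x)=w(T-t,x)\in\C^{1,2}([0,T)\times\R^d)$, continuous on $[0,T]\times\R^d$ with $v(T,\cdot)=f$, as required.

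The hard part will be the parametrix construction and the verification of the Gaussian bounds for $\Gamma$ and its derivatives: establishing convergence of the Neumann series for $\Phi$ and controlling the iterated kernels requires the full strength of hypothesis (3) and repeated convolution estimates of the form
\[\int_0^\tau\!\!\int_{\R^d}(\tau-s)^{-a}e^{-\kappa|x-\xi|^2/(\tau-s)}\,(s-s')^{-b}e^{-\kappa|\xi-\eta|^2/(s-s')}\,d\xi\,ds,\]
which govern the semigroup-type reproduction of the Gaussian profile. Once these estimates are in place, the growth and regularity conclusions are routine consequences of differentiation under the integral sign, and I would defer the detailed parametrix bounds to Friedman's monograph \cite{friedman1,friedman2}, as the paper indicates.
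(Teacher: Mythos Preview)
Your outline is correct and is precisely the classical route taken in the references the paper cites: time reversal to a forward problem, Levi's parametrix construction of a fundamental solution with Gaussian bounds, and the Duhamel representation from which regularity and the polynomial growth estimate follow. Note, however, that the paper does not actually prove this remark at all---it simply states the sufficient hypotheses and defers entirely to \cite{friedman1,friedman2}---so there is no ``paper's own proof'' to compare against; your sketch is in fact a faithful summary of what one finds in those monographs, and your explicit acknowledgment that the hard parametrix estimates are deferred to Friedman matches the paper's own stance exactly.
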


\subsection{El problema de Dirichlet} {} Sea $D$ un subconjunto abierto
de $\R^d,$ y asuma que tanto $b$ como $\sigma$ no dependen de $t.$
En ese caso notaremos por $(X_t^x)_{t\geq 0}$ la solución de la
EDE
\begin{equation}\label{dirichlet1}
X_t^x=x+\int_0^t b(X_s^x)\,ds+\int_0^t\sigma(X_s^x)\,dW_s, \ \ \ \
t\geq 0
\end{equation}
con condición inicial $X_0^x=x,$ y su generador infinitesimal será
el operador diferencial
\begin{equation}\label{dirichlet2}
(Lu)(x)=\frac{1}{2}\sum_{i,j=1}^d a_{ij}(x)\frac{\partial^2
u}{\partial x_i\partial x_j}(x)+ \sum_{i=1}^d W_i(x)\frac{\partial
u}{\partial x_i}(x), \ \ \ \ x\in\R^d
\end{equation}
para $u\in\C^2(\R^d).$ Diremos que $L$ es \textit{elíptico} en $D$
si
\[\sum_{i,j=1}^d a_{ij}(x)y_i y_j>0, \ \ \ \forall
y\in\R^d\setminus\{0\}, \
 \forall x\in D.
\]
Sea $L$ elíptico en un dominio abierto y acotado $D$ y sean
$c:\overline{D}\to[0,\infty), \ h:\overline{D}\to\R$ y $f:\partial
D\to\R$ funciones continuas. El \textit{problema de Dirichlet}
consiste en encontrar una función $u:\overline{D}\to\R$ continua
de clase $\C^2(D)$ que satisfaga la \textit{ecuación elíptica} con
\textit{condición de frontera}
\begin{equation}\label{dirichlet}
\begin{split}
Lu&=h+cu, \ \ \ \text{en} \ D\\
u(x)&=f(x), \ \ \ \ \ x\in\partial D.
\end{split}
\end{equation}
\begin{prop}
Sea $u\in\C^2(D)$ solución del problema de Dirichlet
(\ref{dirichlet}) en un dominio abierto y acotado $D.$ Para cada
$x\in\overline{D}$ sea $\tau_x:=\inf\{t\geq 0:X_t^x\notin D\}.$ Si
\begin{equation}\label{condtau}
E[\tau_x]<+\infty, \ \ \forall x\in\overline{D},
\end{equation}
entonces $u$ admite la representación estocástica
\[u(x)=E\biggl[f(X_{\tau_x}^x)\exp\biggl\{-\int_0^{\tau_x}c(X_r^x)\,dr\biggr\}
-\int_0^{\tau_x}h(X_s^x)\exp\biggl\{-\int_0^s
c(X_r^x)\,dr\biggr\}\,ds\,\biggr].\]
\end{prop}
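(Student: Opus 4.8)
The plan is to apply the It\^o formula and the product rule to the process $s\mapsto u(X_s^x)Z_s^x$, where $Z_s^x:=\exp\{-\int_0^s c(X_r^x)\,dr\}$, and to pass to the limit along stopping times that exhaust $\tau_x$. Since $c,h$ are defined only on $\overline D$ and $f$ only on $\partial D$, I would first extend them to bounded continuous functions on $\R^d$ (with the extension of $c$ nonnegative); this is harmless because every process below is evaluated at times $\le\tau_x$, where $X^x$ stays in $\overline D$. If $x\in\partial D$ then $\tau_x=0$, $X_{\tau_x}^x=x$, and the formula reduces to the boundary condition $u(x)=f(x)$, so I may assume $x\in D$. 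Since $E[\tau_x]<\infty$ we have $\tau_x<\infty$ a.s.; continuity of the paths and openness of $D$ then force $X_s^x\in D$ for $0\le s<\tau_x$ and $X_{\tau_x}^x\in\partial D$. Next I would localize: choose open sets $D_n$ with $\overline{D_n}\subset D_{n+1}$ and $\bigcup_nD_n=D$, put $\tau_n:=\inf\{t\ge 0:X_t^x\notin D_n\}$ (a stopping time, by the proposition on hitting times of closed sets), and observe that since $\{X_s^x:0\le s\le t\}$ is, for each $t<\tau_x$, a compact subset of $D$ and hence of some $D_n$, one gets $\tau_n\uparrow\tau_x$ a.s. For each $n$ pick $u_n\in\C^2(\R^d)$ coinciding with $u$, together with its first and second derivatives, on a neighbourhood of $\overline{D_n}$; then $Lu_n=Lu=h+cu$ on $\overline{D_n}$.

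The process $Z^x$ has $\C^1$ (hence absolutely continuous) paths with $dZ_s^x=-c(X_s^x)Z_s^x\,ds$, so it is an It\^o process with vanishing diffusion coefficient. Applying the ($d$-dimensional) It\^o formula to $u_n$ gives $d\,u_n(X_s^x)=(Lu_n)(X_s^x)\,ds+\nabla u_n(X_s^x)\cdot\sigma(X_s^x)\,dW_s$, and the product rule applied to $u_n(X_\cdot^x)$ and $Z_\cdot^x$ (with no cross term, since $Z^x$ has no $dW$ part), after using $Lu_n=h+cu$ and $u_n=u$ on $\overline{D_n}$, yields
\begin{equation*}
u_n(X_{t\wedge\tau_n}^x)\,Z_{t\wedge\tau_n}^x-u(x)=\int_0^{t\wedge\tau_n}h(X_s^x)\,Z_s^x\,ds+\int_0^{t\wedge\tau_n}Z_s^x\,\nabla u_n(X_s^x)\cdot\sigma(X_s^x)\,dW_s
\end{equation*}
for every $t\ge 0$ and every $n$ large enough that $x\in D_n$. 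On $[0,\tau_n)$ the integrand of the stochastic integral is bounded ($X^x\in\overline{D_n}$ there, $\nabla u_n$ and $\sigma$ are continuous, and $0\le Z^x\le 1$), so by Proposition \ref{intparada} together with the mean-zero property of the It\^o integral (Theorem \ref{propitom2}) its expectation vanishes. Since $X_{t\wedge\tau_n}^x\in\overline{D_n}$ one may replace $u_n$ by $u$ there, and taking expectations gives
\begin{equation*}
u(x)=E\bigl[u(X_{t\wedge\tau_n}^x)\,Z_{t\wedge\tau_n}^x\bigr]-E\!\int_0^{t\wedge\tau_n}h(X_s^x)\,Z_s^x\,ds.
\end{equation*}

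It then remains to pass to the limit, first in $n$ and then in $t$. As $n\to\infty$, $t\wedge\tau_n\to t\wedge\tau_x$ a.s., so by continuity of the paths $u(X_{t\wedge\tau_n}^x)Z_{t\wedge\tau_n}^x\to u(X_{t\wedge\tau_x}^x)Z_{t\wedge\tau_x}^x$ a.s.; the sequence is bounded by $\sup_{\overline D}|u|<\infty$ (since $X_s^x\in\overline D$ for $s\le\tau_x$ and $0\le Z^x\le 1$), so dominated convergence applies. Likewise $h(X_s^x)Z_s^x\mathbf{1}_{[0,t\wedge\tau_n]}(s)$ is dominated by $\sup_{\overline D}|h|\cdot\mathbf{1}_{[0,\tau_x]}(s)$, which is integrable on $[0,\infty)\times\Omega$ precisely because $E[\tau_x]<\infty$; hence the $h$-integral also converges. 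This gives the same identity with $\tau_n$ replaced by $\tau_x$. Letting now $t\to\infty$: $t\wedge\tau_x\to\tau_x$ a.s., $X_{t\wedge\tau_x}^x\to X_{\tau_x}^x\in\partial D$, and therefore $u(X_{t\wedge\tau_x}^x)\to u(X_{\tau_x}^x)=f(X_{\tau_x}^x)$; the same majorants justify dominated convergence once more, producing
\begin{equation*}
u(x)=E\bigl[f(X_{\tau_x}^x)\,Z_{\tau_x}^x\bigr]-E\!\int_0^{\tau_x}h(X_s^x)\,Z_s^x\,ds,
\end{equation*}
which is the asserted formula once $Z_s^x=\exp\{-\int_0^s c(X_r^x)\,dr\}$ is written out.

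The hard part is not the computation but the control near $\partial D$: because $u$ is only $\C^2$ in the open set $D$, the It\^o formula cannot be applied on all of $[0,\tau_x]$ at once, which forces the exhaustion $D_n\uparrow D$ and the stopped integrals; and both limit passages must be justified carefully, using the continuity of $u$ up to $\overline D$ for the a.s. convergence of $u(X_{t\wedge\tau_n}^x)$ to the boundary value $f(X_{\tau_x}^x)$, and the hypothesis $E[\tau_x]<\infty$, which is exactly what provides an integrable majorant for the term involving $h$ and what guarantees that $X_{\tau_x}^x$ is well defined and lies on $\partial D$.
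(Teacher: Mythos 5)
Tu propuesta es correcta y sigue esencialmente el mismo camino que la prueba del texto: f\'ormula de It\^o y regla del producto para $u(X_s^x)Z_s^x$, localizaci\'on con $D_n\uparrow D$ y los tiempos de parada $\tau_n$, esperanza nula de la integral estoc\'astica por acotaci\'on sobre $\overline{D}_n$, y luego los dos pasos al l\'imite ($n\to\infty$ y $t\to\infty$) por convergencia dominada usando $E[\tau_x]<+\infty$. El \'unico refinamiento adicional es tu uso de extensiones $u_n\in\C^2(\R^d)$ que coinciden con $u$ cerca de $\overline{D}_n$ (y el tratamiento expl\'icito del caso $x\in\partial D$), detalle que el texto pasa por alto al aplicar It\^o directamente a $u\in\C^2(D)$, pero que no cambia la estructura del argumento.
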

\begin{proof}
Para cada $x\in D$ notaremos $Z_s^x=e^{-\int_{0}^{s}c(X_r^x)\,dr},
\ s\geq 0.$ Igual que en la demostración del teorema
\ref{feynmankac} se ve que el proceso $u(X_s^x)Z_s^x, \ s\geq 0,$
satisface
\begin{equation}\label{dirichlet3}
d\bigl\{u\bigl(X_s^x\bigr)Z_s^x\bigr\}=\bigl\{-u(X_s^x)\cdot
c(X_s^x)Z_s^x+(Lu)(X_s^x)Z_s^x\bigr\}\,ds+(\nabla_x
u\cdot\sigma)\bigl(X_s^x\bigr)Z_s^x\,dW_s.
\end{equation}
Sea $\{D_n\}_{n=1}^\infty$ una sucesión creciente de conjuntos
abiertos tales que $\overline{D}_n\subseteq D, \
\bigcup_{n=1}^\infty D_n=D$ y
\[\lim_{n\to\infty}\tau_n=\tau_x, \ \
\text{c.s.} \ \ \ \ \ \text{donde} \ \ \tau_n:=\inf\{t\geq 0:
X_t^x\notin D_n\}, \ \ n\geq 1.\] Integrando (\ref{dirichlet3})
entre $0$ y $t\wedge\tau_n,$
\begin{align*}
u\bigl(X_{t\wedge\tau_n}^x\bigr)Z_{t\wedge\tau_n}^x-u(x)&=
\int_0^{t\wedge\tau_n}(-cu+Lu)\bigl(X_s^x\bigr)\,ds+\int_0^{t\wedge\tau_n}(\nabla_x
u\cdot\sigma)\bigl(X_s^x\bigr)Z_s^x\,dW_s\\
&=\int_0^{t\wedge\tau_n}h\bigl(X_s^x\bigr)Z_s^x\,ds+\int_0^{t\wedge\tau_n}(\nabla_x
u\cdot\sigma)\bigl(X_s^x\bigr)Z_s^x\,dW_s, \ \ \ \ t\geq 0.
\end{align*}
Dado que si $0\le s<\tau_n$ entonces $|X_s^x|\in\overline{D}_n,$
por la condición de crecimiento lineal sobre $\sigma$ se tiene
\begin{align*}
E\int_0^{t\wedge\tau_n}\bigl\|(\nabla_x
u\cdot\sigma)\bigl(X_s^x\bigr)Z_s^x\bigr\|^2\,ds
&\le\max_{y\in\overline{D}_n}|\nabla_x u(y)|^2
\cdot 2tK_t^2\bigl\{1+E\bigl(|X_s^x|^2\bigr)\bigr\}\\
&\le \max_{y\in\overline{D}_n}|\nabla_x u(y)|^2 \cdot
2tK_t^2\bigl\{1+Ce^{Ct}(1+|x|^2)\bigr\}.
\end{align*}
Entonces el valor esperado de la integral estocástica es cero y
$u(x)=E[M_t^n]$ donde
\[M_t^n:=u\bigl(X_{t\wedge\tau_n}^x\bigr)Z_{t\wedge\tau_n}^x
-\int_0^{t\wedge\tau_n}\!h\bigl(X_s^x\bigr)Z_s^x\,ds, \ \ \ t\geq
0, \ \ \ n\geq1.
\] Debido a que $\tau_n\xrightarrow[n\to\infty]{c.s.}\tau_x,$ para
cada $t\geq 0$ se tiene $M_t^n\xrightarrow[n\to\infty]{c.s.}M_t,$
donde \[M_t:=u\bigl(X_{t\wedge\tau_x}^x\bigr)Z_{t\wedge\tau_x}^x
-\int_0^{t\wedge\tau_x}\!h\bigl(X_s^x\bigr)Z_s^x\,ds.\] Dado que
$|M_t^n|\le\max_{y\in\overline{D}}|u(y)|+t\cdot\max_{y\in\overline{D}}|h(y)|,$
para todo $n\geq 1,$ por convergencia dominada
$E[M_t^n]\xrightarrow[n\to\infty]\,E\bigl[M_t], \ \forall t\geq
0.$ Por último, usando la estimación
\[|M_t|\le\max_{y\in\overline{D}}|u(y)|+\tau_x\cdot\max_{y\in\overline{D}}|h(y)|,\]
la hipótesis $E[\tau_x]<+\infty$ y de nuevo convergencia dominada
se obtiene
\[E\bigl[M_t]\xrightarrow[t\to\infty]\,
E\bigl[u\bigl(X_{\tau_x}^x\bigr)Z_{\tau_x}^x\bigr]
-E\int_0^{\tau_x}\!h\bigl(X_s^x\bigr)Z_s^x\,ds=
E\bigl[f\bigl(X_{\tau_x}^x\bigr)Z_{\tau_x}^x\bigr]
-E\int_0^{\tau_x}\!h\bigl(X_s^x\bigr)Z_s^x\,ds.\] Como $u(x)$ no
depende ni de $t$ ni de $n,$ tomando el límite cuando $t\to\infty$
y $n\to\infty$ se concluye finalmente que
\[u(x)=E\bigl[f\bigl(X_{\tau_x}^x\bigr)Z_{\tau_x}^x\bigr]
-E\int_0^{\tau_x}\!h\bigl(X_s^x\bigr)Z_s^x\,ds\] lo cual prueba la
proposición.
\end{proof}

{}{} Es natural preguntarse, ¿cuándo se tiene la condición
(\ref{condtau})? La siguiente proposición da una condición
suficiente para (\ref{condtau}):
\begin{prop}
Suponga que para el dominio abierto y acotado $D$ se cumple para
algún $1\le i\le d,$
\begin{equation}\label{condtau2}
\min_{x\in\overline{D}}a_{ii}(x)>0.
\end{equation}
Entonces $E[\tau_x]<+\infty, \ \forall x\in D.$
\end{prop}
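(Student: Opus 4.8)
The plan is to construct an explicit auxiliary (Lyapunov-type) function $w\in\C^2(\R^d)$ depending only on the $i$-th coordinate for which $(Lw)(x)\le-\kappa<0$ throughout $\overline{D}$, and then to run Itô's formula for $w(X_s^x)$ along the stopped trajectory $s\mapsto X_{s\wedge\tau_x}^x$. Taking expectations kills the stochastic integral, and the negativity of $Lw$ turns Dynkin's identity into a bound on $E[t\wedge\tau_x]$ that is uniform in $t$; letting $t\to\infty$ and invoking monotone convergence then gives $E[\tau_x]<+\infty$.

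First I would record the elementary bounds coming from the boundedness of $D$. Since $\overline{D}$ is compact there are reals $\alpha\le\beta$ with $\alpha\le y_i\le\beta$ for all $y\in\overline{D}$; moreover $b_i$ and $a_{ii}=(\sigma\sigma^*)_{ii}$ are continuous, hence bounded on $\overline{D}$, so I set $\nu:=\sup_{\overline{D}}|b_i|<+\infty$ and, by the hypothesis \eqref{condtau2}, $\mu:=\min_{\overline{D}}a_{ii}>0$. Then I would take
\[
w(x):=-e^{\lambda x_i},\qquad \lambda:=\frac{2(\nu+1)}{\mu}>0,
\]
which is smooth, in particular $w\in\C^2(\R^d)$. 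Since the only nonzero partials of $w$ are $\partial w/\partial x_i=-\lambda e^{\lambda x_i}$ and $\partial^2 w/\partial x_i^2=-\lambda^2 e^{\lambda x_i}$, the definition \eqref{dirichlet2} of $L$ gives, for $x\in\overline{D}$,
\[
(Lw)(x)=-\lambda e^{\lambda x_i}\Bigl(\tfrac12 a_{ii}(x)\lambda+b_i(x)\Bigr)\le-\lambda e^{\lambda x_i}\bigl(\tfrac12\mu\lambda-\nu\bigr)=-\lambda e^{\lambda x_i}\le-\lambda e^{\lambda\alpha}=:-\kappa<0,
\]
using $\tfrac12\mu\lambda-\nu=1$ (the point of the choice of $\lambda$) and $x_i\ge\alpha$; at the same time $|w(x)|=e^{\lambda x_i}\le e^{\lambda\beta}=:C_0$ on $\overline{D}$.

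Next I would apply the ($d$-dimensional) Itô formula to $(X_s^x)_{s\ge0}$ with the time-independent function $w$; since for $F(t,x)=w(x)$ the quadratic term reads $\tfrac12\,Tr(D_x^2 w\cdot\sigma\sigma^*)=\tfrac12\sum_{j,k}a_{jk}\,\partial_j\partial_k w$, the drift of $w(X_s^x)$ is exactly $(Lw)(X_s^x)$, so
\[
w(X_t^x)=w(x)+\int_0^t(Lw)(X_s^x)\,ds+\int_0^t\nabla w(X_s^x)\cdot\sigma(X_s^x)\,dW_s .
\]
Because $D^c$ is closed and $X^x$ has continuous paths, $\tau_x$ is an $\F_t$-stopping time, and $t\wedge\tau_x\le t$; I would evaluate the identity at $t\wedge\tau_x$ and use Proposition~\ref{intparada} to rewrite $\int_0^{t\wedge\tau_x}\nabla w(X_s^x)\cdot\sigma(X_s^x)\,dW_s=\int_0^t\nabla w(X_s^x)\cdot\sigma(X_s^x)\mathbf{1}_{[0,\tau_x)}(s)\,dW_s$. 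On $\{s<\tau_x\}$ one has $X_s^x\in D\subseteq\overline{D}$, so this integrand is bounded by $\sup_{\overline{D}}|\nabla w|\cdot\sup_{\overline{D}}\|\sigma\|$; hence it lies in $M^2[0,t]$ and its stochastic integral has zero mean by Theorem~\ref{propitom2}. Taking expectations, and using $X_{t\wedge\tau_x}^x\in\overline{D}$ (so $E[w(X_{t\wedge\tau_x}^x)]\ge-C_0$) together with $(Lw)(X_s^x)\le-\kappa$ for $s<\tau_x$, I get
\[
-C_0\le E\bigl[w(X_{t\wedge\tau_x}^x)\bigr]=w(x)+E\!\int_0^{t\wedge\tau_x}(Lw)(X_s^x)\,ds\le w(x)-\kappa\,E[t\wedge\tau_x],
\]
whence $E[t\wedge\tau_x]\le(w(x)+C_0)/\kappa\le 2C_0/\kappa$ for every $t>0$; since $t\wedge\tau_x\uparrow\tau_x$ as $t\to\infty$, monotone convergence yields $E[\tau_x]\le 2C_0/\kappa<+\infty$ for every $x\in D$ (trivially $E[\tau_x]=0$ if $x\in\partial D$).

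I do not expect a real obstacle; the only delicate points are bookkeeping ones. One must check the integrability needed to apply Itô's formula and Proposition~\ref{intparada}, namely that $s\mapsto b(X_s^x)$ is in $H^1[0,t]$ and $s\mapsto\sigma(X_s^x)$ in $H^2[0,t]$ on each $[0,t]$ (this holds because the solution has continuous paths, and for the $M^2$ requirement also because of the linear growth condition and Lemma~\ref{apriori}); and one must use the estimate on $Lw$ over the full compact $\overline{D}$, since the process can land on $\partial D$ at time $\tau_x$. Note finally that the argument never presupposes $\tau_x<+\infty$: the bound $E[t\wedge\tau_x]\le 2C_0/\kappa$ is uniform in $t$ and by itself forces $\tau_x$ to be a.s. finite and integrable.
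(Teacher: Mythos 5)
Your proposal is correct and follows essentially the same route as the paper: an exponential Lyapunov function of the form $-\mathrm{const}\cdot e^{\lambda x_i}$ depending only on the $i$-th coordinate, chosen via the bounds $\min_{\overline{D}}a_{ii}>0$ and $\sup_{\overline{D}}|b_i|<\infty$ so that $Lw\le-\kappa<0$ on $\overline{D}$, followed by It\^o's formula stopped at $t\wedge\tau_x$, vanishing expectation of the stochastic integral, and the limit $t\to\infty$. The only cosmetic difference is that you absorb the paper's large multiplicative constant $\mu$ into the choice of the exponent $\lambda$, which changes nothing of substance.
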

\begin{proof}
Sean $A:=\min_{x\in\overline{D}}a_{ii}(x), \
B:=\max_{x\in\bar{D}}|b(x)|, q:=\min_{x\in\bar{D}}x_i$ y
$\nu>2B/A,$ y considere la función \[h(x):=-\mu e^{\nu x_i}, \ \ \
x\in D, \ \mu>0.\] Esta función es de clase $\C^\infty(D)$ y
satisface
\[-(Lh)(x)=\mu e^{\nu x_i}\bigl\{\tfrac{1}{2}\nu^2a_{ii}(x)+\nu W_i(x)\bigr\}
\geq\tfrac{1}{2}A\mu\nu e^{\nu q}\left(\nu-\frac{2B}{A}\right), \
\ \ \ x\in D.\] Escogemos $\mu>0$ suficientemente grande tal que
$Lh\le -1$ en $D,$ y aplicando fórmula de Itô con la función $h$
obtenemos
\begin{align*}
h(X_{t\wedge\tau_x}^x)-h(x)&=\int_0^{t\wedge\tau_x}(Lh)(X_s^x)\,ds
+\int_0^{t\wedge\tau_x}(\nabla_x h\cdot\sigma)(X_s^x)\,dW_s\\
&\le -(t\wedge\tau_x)+\int_0^{t\wedge\tau_x}(\nabla_x
h\cdot\sigma)(X_s^x)\,dW_s
\end{align*}
La función $h$ y sus derivadas son acotadas en $\overline{D},$
luego
\[E\int_0^{t\wedge\tau_x}\bigl\|(\nabla_x h\cdot\sigma)(X_s^x)\bigr\|^2\,ds<+\infty,\]
y
\[E(t\wedge\tau_x)\le h(x)-E\bigl[h(X_{t\wedge\tau_x}^x)\bigr]
\le 2\max_{y\in\overline{D}}|h(y)|.\] Haciendo $t\to\infty$ se
obtiene el resultado.
\end{proof}
\begin{obse}
La condición (\ref{condtau2}) es más fuerte que
\textit{elipticidad} pero más débil que \textit{elipticidad
uniforme} sobre D. Se puede ver que bajo las hipótesis
\begin{enumerate}
  \item $L$ es uniformemente elíptica,
  \item los coeficientes $a_{ij}(x), \ W_i(x), \ c(x)$ y $g(x)$
  son Hölder continuos y
  \item todo $a\in\partial D$ tiene la \textit{propiedad de la
  esfera exterior}, es decir, existe una bola cerrada
  $B[a,\varepsilon]$ tal que $B[a,\varepsilon]\cap D=\varnothing,
  \ B[a,\varepsilon]\cap\partial D=\{a\}.$
\end{enumerate}
(además de la continuidad de $f$ sobre $\partial D$) existe una
función $u\in\C(\bar{D})\cap\C^2(D)$ (de hecho con derivadas
parciales en $D$ se segundo orden Hölder continuas) que es
solución el problema de Dirichlet (\ref{dirichlet}). Ver
\cite{friedman1,friedman2}.
\end{obse}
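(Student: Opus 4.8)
The plan is to separate the two independent assertions of the observation: the elementary comparison among the three nondegeneracy conditions, and the genuinely deep solvability of the Dirichlet problem (\ref{dirichlet}), which is an external result from the linear theory of second-order elliptic equations and is the reason the statement is only remarked upon and referenced to \cite{friedman1,friedman2}.

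First I would verify the comparison chain. Since $L$ is elliptic on $D$, for each $x\in D$ the matrix $a(x)$ is positive definite, so in particular $a_{ii}(x)=\sum_{j,k}a_{jk}(x)(e_i)_j(e_i)_k>0$ for every $x$; condition (\ref{condtau2}) strengthens this \emph{pointwise} positivity to a \emph{uniform} lower bound $\min_{x\in\bar D}a_{ii}(x)>0$ over the compact set $\bar D$, which bare ellipticity does not supply, since the relevant eigenvalue of $a(x)$ may tend to $0$ near $\partial D$ (e.g. $a(x)=\mathrm{diag}(\varepsilon_1(x),\varepsilon_2(x))$ with each $\varepsilon_k>0$ on $D$ but $\inf_{\bar D}\varepsilon_k=0$ is elliptic yet violates (\ref{condtau2}) for every $i$). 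This gives ``(\ref{condtau2}) is strictly stronger than ellipticity''. For the other comparison, if $L$ is uniformly elliptic with constant $\delta>0$, then inserting $y=e_i$ into $\sum_{j,k}a_{jk}(x)y_jy_k\ge\delta|y|^2$ yields $a_{ii}(x)\ge\delta$ for all $x\in\bar D$, so (\ref{condtau2}) holds; the implication is strict because (\ref{condtau2}) controls only the single direction $e_i$ and permits degeneration in the remaining directions (the matrix $\mathrm{diag}(1,\varepsilon(x))$ with $\varepsilon(x)\downarrow 0$ near $\partial D$ satisfies (\ref{condtau2}) but is not uniformly elliptic). Hence (\ref{condtau2}) lies strictly between ellipticity and uniform ellipticity.

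For the existence assertion I would invoke the classical solvability theory via Perron's method combined with Schauder estimates. Under uniform ellipticity together with H\"older continuity of $a_{ij},b_i,c$, and with $c\ge 0$ (which makes the maximum principle available for $L$), the interior Schauder estimates promote any bounded solution of $Lu=h+cu$ to $\C^{2,\alpha}_{\mathrm{loc}}(D)$. One then forms the Perron upper envelope of the admissible subsolutions and shows, using the comparison principle together with these interior estimates, that the envelope solves the equation in the open set $D$, producing a candidate $u\in\C^2(D)$.

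The remaining and hardest point is boundary regularity, namely that this Perron solution attains the continuous datum $f$ continuously, so that $u\in\C(\bar D)$. This is exactly where the exterior sphere condition enters: at each $a\in\partial D$ the exterior tangent ball centered at some $y_0$ with radius $\rho$ furnishes an explicit barrier of the form $w(x)=\rho^{-\gamma}-|x-y_0|^{-\gamma}$, with $\gamma$ chosen large in terms of the ellipticity constant and the coefficient bounds, so that $w(a)=0$, $w>0$ on $\bar D\setminus\{a\}$, and $Lw\le 0$ near $a$; the existence of such a barrier at every boundary point forces $u(x)\to f(a)$ as $x\to a$. I expect the construction and estimation of these barriers, together with the full Schauder machinery, to be the main obstacle, and since both lie well outside the probabilistic scope of these notes the complete argument is deferred to \cite{friedman1,friedman2}.
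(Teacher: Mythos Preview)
The paper does not prove this observation at all: it is stated as a bare remark with a reference to Friedman \cite{friedman1,friedman2}, and the notes supply neither the comparison argument nor any outline of the elliptic existence theory. There is therefore no proof on the paper's side to compare your proposal against; you have in fact gone well beyond what the text does.

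Your sketch is sound. The outline you give for the Dirichlet existence---Perron envelope, interior Schauder estimates to obtain $\C^{2,\alpha}_{\mathrm{loc}}$ regularity, and the barrier $w(x)=\rho^{-\gamma}-|x-y_0|^{-\gamma}$ furnished by the exterior sphere to force continuous attainment of $f$ on $\partial D$---is exactly the classical route and is the one taken in the cited references. On the comparison of nondegeneracy conditions, you correctly establish that uniform ellipticity implies (\ref{condtau2}) (take $y=e_i$) and that pointwise ellipticity does not, with concrete examples for strictness in both directions. One small caution: (\ref{condtau2}) does not imply ellipticity either---the matrix $a=\mathrm{diag}(1,0)$ satisfies (\ref{condtau2}) with $i=1$ yet is degenerate in the $e_2$ direction---so the three conditions do not sit in a strict implication chain. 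The paper's phrasing should be read informally, as saying that (\ref{condtau2}) demands more nondegeneracy in the single direction $e_i$ than bare ellipticity guarantees uniformly over $\bar D$, while demanding far less than uniform ellipticity in all directions.
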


\end{document}